\documentclass[11pt]{amsart}

\pagestyle{plain}

\setlength{\hoffset}{-1in}
\setlength{\voffset}{-1in}
\setlength{\oddsidemargin}{1.2in}
\setlength{\evensidemargin}{1.2in}
\setlength{\textwidth}{6.in}
\setlength{\textheight}{8.5in}
\setlength{\topmargin}{0.9in}
\setlength{\baselineskip}{14pt}
\setlength{\footskip}{29pt}

\usepackage{slashbox}
\usepackage[colorlinks = true,
            linkcolor = blue,
            urlcolor  = blue,
            citecolor = black,
            anchorcolor = blue]{hyperref}
\usepackage{color}
\usepackage[shortalphabetic,initials,nobysame]{amsrefs}
\usepackage{upgreek}
\usepackage{tikz}
\usepackage[applemac]{inputenc} 
\usetikzlibrary{arrows}
\usepackage{xcolor}
\usepackage[all]{xy}
\usepackage{graphicx}
\usepackage{wrapfig}
\usepackage{yfonts}
\usepackage{graphicx}
\usepackage{amssymb}
\usepackage{epstopdf}
\usepackage{mathrsfs}
\usepackage[mathcal]{eucal}
\usepackage{bm}

\usepackage{slashed}
\renewcommand{\eprint}[1]{\href{https://arxiv.org/abs/#1}{#1}}

\BibSpec{article}{%
    +{}  {\PrintAuthors}                {author}
    +{,} { \textit}                     {title}
     +{,} {}                            {note}
    +{.} { }                            {part}
    +{:} { \textit}                     {subtitle}
    +{,} { \PrintContributions}         {contribution}
    +{.} { \PrintPartials}              {partial}
    +{,} { }                            {journal}
    +{}  { \textbf}                     {volume}
    +{}  { \PrintDate}                {date}
    +{,} { \issuetext}                  {number}
    +{,} { \eprintpages}                {pages}
    +{,} { }                            {status}
    +{,} { \DOI}                   {doi}
    +{,} { \eprint}        {eprint}
      +{,} {\publisher}                {publisher}
    +{,} { \address}                   {address}
    +{}  { \parenthesize}               {language}
    +{}  { \PrintTranslation}           {translation}
    +{;} { \PrintReprint}               {reprint}
    +{.} {}                             {transition}
    +{}  {\SentenceSpace \PrintReviews} {review}
}


\newtheorem{Thm}{Theorem}[section]

\newtheorem{Prop}[Thm]{Proposition}

\newtheorem{Con}[Thm]{Conjecture}

\theoremstyle{definition}

\theoremstyle{remark}
\newtheorem{Rem}[Thm]{Remark}

\newtheoremstyle{named}{}{}{\itshape}{}{\bfseries}{.}{.5em}{#1 #3}
\theoremstyle{named}

\def\g{\mathfrak{g}}
\def\Frenkel:2013uda{\mathfrak{h}}

\def\h{\theta}

\def\t{\tau}

\def\bo{\textbf{o}}

\def\=>{\Longrightarrow}

\def\to{\longrightarrow}

\def\o+{\oplus}
\def\bo+{\bigoplus}

\def\<{\langle}
\def\>{\rangle}
\def\({\left(}
\def\){\right)}

\def\^{\wedge}
\def\+{\dagger}

\def\dd[#1,#2]{\frac{d#1}{d#2}}
\def\del[#1,#2]{\frac{\partial #1}{\partial #2}}
\def\over[#1]{\overline{#1}}
\def\vec[#1]{\overrightarrow{#1}}



\makeatletter
\def\mr@ignsp#1 {\ifx\:#1\@empty\else #1\expandafter\mr@ignsp\fi}%
\newcommand{\multiref}[1]{\begingroup
\xdef\mr@no@sparg{\expandafter\mr@ignsp#1 \: }%
\def\mr@comma{}%
\@for\mr@refs:=\mr@no@sparg\do{\mr@comma\def\mr@comma{,}\ref{\mr@refs}}%
\endgroup}
\makeatother

\newcommand{\hypref}[2]{\ifx\href\asklFrenkel:2013udaas #2\else\href{#1}{#2}\fi}

\tikzset{->-/.style={decoration={
  markings,
  mark=at position .5 with {\arrow{latex}}},postaction={decorate}}}
\tikzset{
    >=latex
    }


\def\h{\hbar}

\newcommand{\nc}{\newcommand}
\nc{\on}{\operatorname}
\nc{\la}{\lambda}
\nc{\wh}{\widehat}
\nc{\ghat}{\wh\g}
\nc{\mb}{\mathbf}

\begin{document}
\title[The BV double of a Courant algebroid]{The BV double of a Courant algebroid}

\author[A.M. Zeitlin]{Anton M. Zeitlin}
\address{\hspace{-0.42cm}School of Mathematics,\newline
Georgia Institute of Technology,\newline
 686 Cherry Street, \newline
Atlanta, GA 30332-0160, USA
\newline
Email: \href{mailto:zeitlin@gatech.edu}{zeitlin@gatech.edu},\newline
 \href{https://zeitlin.math.gatech.edu}{https://zeitlin.math.gatech.edu}}

\numberwithin{equation}{section}

\maketitle

\begin{abstract}
We characterize a Courant algebroid with a Calabi-Yau structure as a homotopy BV algebra with certain properties. We explain how it fits into recent double copy constructions relating Yang-Mills homotopy algebras to the ones of Double Field Theory and Gravity.
\end{abstract}

\tableofcontents

\section{Introduction}
Scattering amplitudes for various Gauge and Gravity Field Theories have been investigated in physics and mathematics for many years. One of the great discoveries in the study of the gauge theory amplitudes is the so-called {\it color-kinematics duality} \cite{colorkin2}, which allows separating the color (i.e., Lie algebraic) and kinematic building blocks of the amplitude and show that they satisfy the same relations. It also turns out that one can replace the color blocks with kinematic ones and, this way, obtain the gravity amplitudes from gauge theory ones. This phenomenon is known as the {\it double copy} construction \cite{colorkin1}, which allows us to treat gravitational amplitudes as a certain ``doubling" of gauge theory. 

More recently, a lot of this work was reinterpreted from a homological perspective \cite{saemann1}, \cite{saemann2},\cite{saemann3},\cite{hohmdouble1},\cite{hohmdouble2},\cite{hohmdouble3}. 
In the process, various field-theoretic homotopical algebras, such as examples of  $C_{\infty}$, $L_{\infty}$, $G_{\infty}$, $BV_{\infty}$, $BV^{\square}_{\infty}$, $\dots$ emerged as necessary ingredients of the construction: these were previously studied in various incarnations in \cite{movshev}, \cite{homym}, \cite{bvym}, \cite{bvym2}, \cite{cftym},\cite{zeitcour}, \cite{reiterer}. An important question is: where do these field-theoretical examples come from? One of the goals of this paper is to show that all of these algebras are naturally related to one object, namely Courant algebroid and its deformations.

In \cite{zeitcour}, we obtained that one can associate a homotopy BV algebra with the Courant algebroid, which has an extra structure known as the Calabi-Yau (CY) structure \cite{malikov}. In this construction, one first extends the Courant algebroid to the vertex algebroid, which generates a vertex algebra. The short subcomplex of the related semi-infinite (BRST) complex, which we called the {\it complex of light modes} $(\mathcal{F}^{\bullet}, Q)$ with $\mathcal{F}^i$ nonzero only for degrees $i=0,1,2,3$ has the structure of homotopy BV algebra due to Lian and Zuckerman  \cite{lz}. The corresponding quasiclassical limit, reducing the homotopy BV algebra back to the Courant algebroid level, is quite involved and discussed in detail in \cite{zeitcour}. This construction  gives a functor from the category of Courant algebroids to the category of homotopy BV algebras. In particular, the corresponding $L_{\infty}$-subalgebra on the half-complex (in degrees 0 and 1 only) is up to contractible part coincides with the one discovered by D. Roytenberg and A. Weinstein in  \cite{weinstein}. Furthermore, it leads to cyclic structure for the underlying $C_{\infty}$-subalgebra structure under certain mild conditions, while the deformation of this algebra leads to $C_{\infty}$-algebra of Yang-Mills equations \cite{cftym}. 
While one can view the Courant algebroid as the analog of the Manin double for Lie bialgebroid as described in \cite{courant}, this homotopy BV algebra can be viewed as some ``odd double" of the $L_{\infty}$-algebra of \cite{weinstein}. We called the resulting homotopy BV algebra the {\it BV double} of the Courant algebroid. 

The central part of this paper is devoted to constructing the inverse problem to the one from \cite{zeitcour}. Namely, we consider a short chain complex $(\mathcal{F}^{\bullet}, Q)$ with $\mathcal{F}^i$ nonzero only for degrees $i=0,1,2,3$ with a structure of homotopy BV algebra of Lian-Zuckerman type on this complex. We formulate simple conditions for such an algebra so that the homotopy BV algebra operations will degenerate into Courant algebroid operations and their relations. In other words, we want to describe an inverse functor to the one described above from the isomorphic subcategory in the category of homotopy BV algebras.

In Section 2, we describe the necessary conditions on complex $(\mathcal{F}^{\bullet}, Q)$ and the structure of the differential $Q$. We also describe the odd symplectic structure on this complex existing under certain assumptions, which makes the half-complex the Lagrangian subspace. In Section 3, we discuss the Courant algebroid, its realization on the half-complex 
as a Leibniz algebra with extra features, the CY structure, and geometric examples, where CY  structure emerges as a divergence operator for the volume form for the Courant $\mathcal{O}_X$-algebroid. In Section 4, we first discuss homotopy BV algebras of Lian-Zuckerman type. Then, using as a starting point the Courant algeboid operations on half-complex, we show how they uniquely extend to the complex $(\mathcal{F}^{\bullet}, Q)$ to satisfy properties of the homotopy BV algebra. Finally, at the end of Section 4, we prove a Theorem where we drop this initial condition and show the entire list of properties one should impose on the homotopy BV algebra on the complex $(\mathcal{F}^{\bullet}, Q)$ to reproduce Courant algebroid operations on half-complex and all the necessary relations between them. We also talk about $C_{\infty}$ subalgebra, the corresponding cyclic structure produced by the pairing, and $L_{\infty}$-subalgebra on $(\mathcal{F}^{\bullet}, Q)$.

In Section 5 we remind the vertex algebra realization of this homotopy BV algebra and the corresponding quasiclassical limit from \cite{zeitcour}.

The second part of the paper is devoted to the relation of this homotopy BV algebra to the homotopy algebras of Gauge Theory and Gravity reproducing many of the filed-theoretic examples studied recently. First, we describe what we called a flat metric deformation in \cite{zeitcour} (see also \cite{azbg}) of the $C_{\infty}$-subalgebra of this homotopy BV algebra. It is indeed parametrized by the constant symmetric invertible matrix $\eta^{ij}$ producing flat metric, so that in the case of standard Courant algebroid on $T^*M$, $M=\mathbb{R}^D$, it reproduces $C_{\infty}$-algebra of Yang-Mills theory \cite{cftym} on the subcomplex $(\mathcal{F}_{YM}^{\bullet}, Q^{\eta})$, so that the induced odd symplectic 
structure reproduces the standard one induced from the BV formalism 
(see, e.g., \cite{bvym}). Here we give the proof which does not use vertex algebra properties. 

While this deformation destroys the structure of the homotopy BV algebra, it leads to something which is known as $BV^{\square}_{\infty}$ algebra, the term coined in \cite{reiterer}, which takes into account the deformation and the related extra terms emerging in relations of the homotopy BV algebra. We also return to vertex operator algebra interpretation to indicate the relation to first-order sigma model perturbations, producing deformations of the BRST complex, made in the spirit of the deformations of the open-closed homotopy algebras \cite{openclosed}, \cite{openclosedrev}.

Finally, in Section 7, we go through several constructions describing the double copy trick, which relates the homotopy BV algebra to kinematical homotopy Lie algebras associated with Gravity, including Double Field Theory versions. 
First, we describe the construction of \cite{azginf},\cite{azginfrev}, which one can view as a holomorphic double copy: we look at the homotopy Lie algebra on the subcomplex of the completed tensor product $(\mathcal{F}^{\bullet}\otimes\bar{\mathcal{F}}^{\bullet}, Q+\bar{Q})$ corresponding to holomorphic and antiholomorphic Courant algebroids. Following the constructions of \cite{lmz},\cite{zeit2}, \cite{azginf},\cite{azginfrev}, we show that it leads to the homotopy Lie algebra reproducing Einstein equations with B-field and dilaton for specific reductions and formulate a conjecture about the general case. 

The double copy we go through after is more involved. 
First, we will take another look at the Yang-Mills $C_\infty$-algebra and its relation to the conformal field theory. We consider a construction similar to Section 7.1, but on the subcomplex of the completed tensor product of two Yang-Mills $C_{\infty}$-algebras: $(\mathcal{F}_{YM}^{\bullet}\otimes\bar{\mathcal{F}}_{YM}^{\bullet}, Q^{\eta}+\bar{Q}^{\eta})$, which is an algebra on $M\times \bar{M}$, where $M,\bar{M}=\mathbb{R}^D$. In this case, the homotopy Lie algebra doesn't exist in a standard sense: for the bracket operation to satisfy homotopy Jacobi identity, we require the so-called {\it strongly constrained Double Field Theory} bilinear condition on the products of any two elements of the complex. Given that condition, the relations of $L_{\infty}$-algebra hold,  as it was shown in \cite{hohmdouble1},\cite{hohmdouble2},\cite{hohmdouble3}, at least up to the level of homotopy Jacobi identity. 

In \cite{hohmdouble3}, the authors show that the corresponding Maurer-Cartan equations reproduce the Double Field theory equations \cite{dft}, \cite{gadft}, which involves field redefinitions.  
These equations are certain generalizations of the Einstein equations with B-field and dilaton: they reduce to those on the diagonal $\delta: M\to M\times {\bar M}$.  We connect these constructions to the conformal field theory considerations of \cite{zeit3}, \cite{zeit4}, where Einstein equations emerge from Maurer-Cartan-like structures associated with the generalization of Lian-Zuckerman operations.\\

\vspace*{2mm}

\noindent{\bf Acknowledgements.}
The research of A.M.Z. was partially supported by the NSF grant DMS-2203823.

\section{Complex $(\mathcal{F}^{\bullet}, Q)$ and its properties}

\subsection{The structure of differential $Q$}
Consider the following complex $(\mathcal{F}^{\bullet}, Q)$: 
\begin{align}
0\rightarrow\mathcal{F}^0\xrightarrow{Q} \mathcal{F}^1\xrightarrow{Q}\mathcal{F}^2\xrightarrow{Q} \mathcal{F}^3\rightarrow 0
\end{align}
of vector spaces over $\mathbb{K}$, where $\mathbb{K}=\mathbb{R},\mathbb{C}$.
We assume that there exist operators ${\bf b}$, ${\bf c}$, acting on $(\mathcal{F}^{\bullet}, Q)$, satisfying the following properties:
\begin{itemize}
\item
${\bf b}$, ${\bf c}$ are nilpotent of degrees $1$ and  $-1$ correspondingly:
\begin{equation}
{\bf b}:\mathcal{F}^{i}\rightarrow\mathcal{F}^{i-1},\quad{\bf c}:\mathcal{F}^{i}\rightarrow\mathcal{F}^{i+1},\quad 
{\bf b}^2=0,\quad {\bf c}^2=0,
\end{equation} 
\item the commutation relations between ${\bf b }$, ${\bf c}$ and $Q$ are: 
\begin{equation}
[Q,{\bf b}]=0,\quad  [{\bf b},{\bf c}]=1. \\
\end{equation}

\end{itemize}
The cohomology of ${\bf b}$ and ${\bf c}$ is trivial and we have the following decomposition:
\begin{equation}
\mathcal{F}^0=V_0,\quad 
\mathcal{F}^1=V'_1\oplus V'_0 ,\quad
\mathcal{F}^2=V''_1\oplus V''_0 ,\quad 
\mathcal{F}^3=V'''_0
\end{equation}
so that the operators ${\bf b}$, ${\bf c}$ are reduced to the following isomorphisms:
\begin{align}
{\rm b}:\quad V''_1\to V'_1, \quad V'_0\to V_0, \quad V'''_0\to V''_0,\nonumber\\
{\rm c}:\quad V'_1\to V''_1, \quad V_0\to V'_0, \quad V''_0\to V'''_0,
\end{align}
in other words:
\begin{equation}
{\bf b:}\xymatrixcolsep{30pt}
\xymatrixrowsep{3pt}
\xymatrix{
& {V}'_{1} & {V}''_{1}\ar[l]_{\rm{b}}& \\
& \bigoplus & \bigoplus & \\
{V}_{0} & {V}'_{0}\ar[l]_{\rm{b}} & {V}''_{0}  & {V}'''_{0}\ar[l]_{\rm{b}}
} \quad {\bf c:}
\xymatrixcolsep{30pt}
\xymatrixrowsep{3pt}
\xymatrix{
& {V}'_{1}\ar[r]^{\rm c} & {V}''_{1}& \\
& \bigoplus & \bigoplus & \\
{V}_{0} \ar[r]^{\rm c}& {V}'_{0} & {V}''_{0}\ar[r]^{\rm c}  & {V}'''_{0}
} 
\end{equation}
Notice that the operator ${\bf b}$ preserves the new grading indicated by subscript, 
so that 
\begin{equation}
\oplus^3_{i=0}\mathcal{F}^i=\mathcal{V}_0\oplus \mathcal{V}_1,
\end{equation}
where:
\begin{align}
\mathcal{V}_0=(1\oplus {\bf c})\oplus_{i\in even}{{\rm Ker  {\bf b}}}|_{\mathcal{F}^i}=V_0\oplus V'_0\oplus V''_0\oplus V'''_0.&&\nonumber\\
\mathcal{V}_1=(1\oplus {\bf c})\oplus_{i\in odd}{{\rm Ker  {\bf b}}}|_{\mathcal{F}^i}=V'_1\oplus V''_1,&&
\end{align}
The followng proposition describes the structure of the operator $Q$.
\begin{Thm}
The differential $Q$ on the complex $(\mathcal{F}, Q)$ admits the following decomposition:
\begin{equation}
Q={\rm d}+{\rm d^*}+\tilde{Q},
\end{equation}
so that
\begin{equation}
{\rm d}:~ \mathcal{V}_0\to \mathcal{V}_1;\quad {\rm d^*}:~ \mathcal{V}_1\rightarrow \mathcal{V}_0; \quad \tilde{Q}:~ V'_0\to V''_0,
\end{equation}
where $[{\rm d}, {\bf b}]=[{\rm d}, {\bf c}]=0$, $[{\rm d^*}, {\bf b}]=[{\rm d^*}, {\bf c}]=0$ so that
${\rm d^* ~d}=0$ and  ${\rm d}~ {\rm d^*}=0$:
\begin{align}
\xymatrixcolsep{30pt}
\xymatrixrowsep{3pt}
\xymatrix{
V_{0}\ar[r]^{\rm d}& V'_{1}
\ar[r]^{{\rm d}^*} & {V}''_{0}& \\
&&  &&\\
& \bigoplus & \bigoplus & \\
&&  &&\\
& V'_0\ar[uuuur]^{\tilde{Q}}\ar[r]^{\rm d} & V''_1\ar[r]^{{\rm d}^*}  & V'''_0
}
\end{align}

\end{Thm}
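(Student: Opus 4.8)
The plan is to use the two gradings available on $\bigoplus_i \mathcal{F}^i$ simultaneously: the cohomological degree recorded by $\mathcal{F}^i$ and the subscript grading $\mathcal{V}_0\oplus\mathcal{V}_1$. For the cohomological degree $Q$ and $\mathbf{c}$ raise by one while $\mathbf{b}$ lowers by one; for the subscript grading $\mathbf{b}$ and $\mathbf{c}$ are both homogeneous of degree zero, since $\mathbf{b}$ (hence $\mathbf{c}$) preserves it. First I would record the components of $Q$ permitted by the cohomological degree alone. Writing the three doublets $(V_0,V'_0)$, $(V'_1,V''_1)$, $(V''_0,V'''_0)$, each with its bottom in $\ker\mathbf{b}$ and its top in $\ker\mathbf{c}$, a degree count leaves exactly eight possible entries: three within a single doublet ($V_0\to V'_0$, $V'_1\to V''_1$, $V''_0\to V'''_0$), one subscript-preserving across doublets ($V'_0\to V''_0$), two subscript-raising ($V_0\to V'_1$, $V'_0\to V''_1$) and two subscript-lowering ($V'_1\to V''_0$, $V''_1\to V'''_0$). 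I would then set $\mathrm{d}$, $\mathrm{d}^*$ and $\tilde{Q}$ to be the subscript-raising, subscript-lowering, and remaining subscript-preserving parts, so that $Q=\mathrm{d}+\mathrm{d}^*+\tilde{Q}$ holds tautologically.

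Next I would substitute this decomposition into $[Q,\mathbf{b}]=0$. As $\mathbf{b}$ has subscript degree zero, the graded commutator $[Q,\mathbf{b}]$ splits into subscript-raising, -preserving and -lowering parts which must vanish independently; this gives $[\mathrm{d},\mathbf{b}]=0$ and $[\mathrm{d}^*,\mathbf{b}]=0$ at once. The subscript-preserving part of the identity also kills the three within-doublet entries: evaluating on a bottom vector, which $\mathbf{b}$ annihilates, forces $\mathbf{b}$ to annihilate the image of such an entry, and since $\mathbf{b}$ restricts to an isomorphism from each top onto its bottom, the entry vanishes. What remains in the subscript-preserving sector is precisely $\tilde{Q}\colon V'_0\to V''_0$, as claimed.

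The step I expect to be the main obstacle is upgrading $[\mathrm{d},\mathbf{b}]=0$ to $[\mathrm{d},\mathbf{c}]=0$ (and likewise for $\mathrm{d}^*$), because $[Q,\mathbf{c}]$ is not assumed to vanish and the naive splitting is unavailable. Here I would use the graded Jacobi identity for $\mathrm{d},\mathbf{b},\mathbf{c}$: combined with $[\mathrm{d},\mathbf{b}]=0$ and $[\mathbf{b},\mathbf{c}]=1$ it yields $[\mathbf{b},[\mathrm{d},\mathbf{c}]]=0$. A bidegree count shows $[\mathrm{d},\mathbf{c}]$ can only be a single map $V_0\to V''_1$; since it commutes with $\mathbf{b}$ and $\mathbf{b}\colon V''_1\to V'_1$ is an isomorphism, it must be zero. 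Running the identical argument for $\mathrm{d}^*$, whose commutator with $\mathbf{c}$ is forced to be a map $V'_1\to V'''_0$ and where $\mathbf{b}\colon V'''_0\to V''_0$ is again an isomorphism, gives $[\mathrm{d}^*,\mathbf{c}]=0$.

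Finally I would read off $\mathrm{d}^*\mathrm{d}=\mathrm{d}\,\mathrm{d}^*=0$ from $Q^2=0$. Expanding $(\mathrm{d}+\mathrm{d}^*+\tilde{Q})^2=0$ and separating by subscript degree, the degree $\pm2$ pieces give $\mathrm{d}^2=(\mathrm{d}^*)^2=0$ (automatic, the subscript grading having only two levels), and the subscript-preserving piece reads $\mathrm{d}\,\mathrm{d}^*+\mathrm{d}^*\mathrm{d}+\tilde{Q}^2=0$. Since $\tilde{Q}$ sends $V'_0$ into $V''_0$ and vanishes elsewhere, $\tilde{Q}^2=0$; and because $\mathrm{d}^*\mathrm{d}$ maps $\mathcal{V}_0$ to $\mathcal{V}_0$ while $\mathrm{d}\,\mathrm{d}^*$ maps $\mathcal{V}_1$ to $\mathcal{V}_1$, the two summands have disjoint supports and so each vanishes separately. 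This produces $\mathrm{d}^*\mathrm{d}=0$ and $\mathrm{d}\,\mathrm{d}^*=0$ and completes the proof.
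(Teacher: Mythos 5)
Your proof is correct and takes essentially the same route as the paper's: the paper's one-line argument likewise uses $[Q,\mathbf{b}]=0$ together with the fact that $\mathbf{b}$ annihilates the bottoms and restricts to an isomorphism on the tops to eliminate the within-doublet components of $Q$ (it spells out only the $V'_1\to V''_1$ case). You additionally verify the remaining assertions of the statement --- $[\mathrm{d},\mathbf{c}]=[\mathrm{d}^*,\mathbf{c}]=0$ via the graded Jacobi identity and $\mathrm{d}\,\mathrm{d}^*=\mathrm{d}^*\mathrm{d}=0$ from $Q^2=0$ --- which the paper's proof leaves implicit.
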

\begin{proof}
Notice that if $A\in V_1'$ we have $Q{\rm b}A=0$. Therefore ${\rm b}QA=0$ and therefore $QA|_{V''_1}=0$, which is one arrow which is missing in the action of the differential.
Explicitly we have:
\begin{align}
{\rm d^*}:\quad  V'_1\to V''_0,\quad  V''_1\to V'''_0,&&\\
 {\rm d}: \quad V_0\to V'_1,\quad V'_0\to V''_1,&&\nonumber\\
{\tilde {Q}}=[Q, {\rm c}{\rm b}]:\quad V'_0\to V''_0.&&\nonumber
\end{align}
\end{proof}

\subsection{Complex $(\mathcal{F}^{\bullet}_{1/2},Q)$ and an odd pairing}

One can view complex $(\mathcal{F}^{\bullet},Q)$ as an ``odd double" of a half-complex, which we denote as $(\mathcal{F}^{\bullet}_{1/2},Q)$:
\begin{equation}
0\rightarrow \mathcal{F}^0\xrightarrow{{\rm d}} \mathcal{F}^1
\end{equation}
Assume that there is a non-degenerate symmetric pairing on $\mathcal{F}^{\bullet}$ non-trivial for the following degrees:
\begin{equation}\label{pairing}
(~\cdot~,~ \cdot ~): \mathcal{F}^i\otimes \mathcal{F}^{3-i}\to \mathbb{K},
\end{equation}
and  satisfying the following conditions:
\begin{align}
(Qa_1,a_2)+(-1)^{|a_1||a_2|}(Qa_2,a_1)=0, \nonumber\\
({\bf b}a_1,a_2)-(-1)^{|a_1||a_2|}({\bf b}a_2,a_1)=0,\nonumber\\
({\bf c}a_1,a_2)+(-1)^{|a_1||a_2|}({\bf c}a_2,a_1)=0.
\end{align}
These conditions provide a non-degenerate pairing between $V_0$, $V'''_0$, between $V'_0$  and $V''_0$ as well as $V'_1$  and $V''_1$,  
making ${\rm d}$ and ${\rm d}^*$ conjugate to each other up to a sign.
We will later see examples of such a pairing.

Consider the subcomplex $(\mathcal{F}^{\bullet}_c,Q)$of 
$(\mathcal{F}^{\bullet},Q)$ consisting of the following subspaces:
\begin{align}
\mathcal{F}_c^0=V_0, \quad \mathcal{F}_c^1=\{(A,v):~ A\in V'_1,~v\in V'_0, ~v=-\tilde{Q}^{-1}{\rm d}^*A\},\quad \mathcal{F}_c^2=V''_1,\quad \mathcal{F}_c^3=V''_0
\end{align}
\begin{Prop}
If $\tilde{Q}$ is an isomorphism  then we have the following orthogonal decomposition with respect to $(\cdot ,\cdot )$:
$$
(\mathcal{F}^{\bullet},Q)=(\mathcal{F}_c^{\bullet},Q)\oplus(\mathcal{G}^{\bullet}, Q),
$$
where acyclic complex $(\mathcal{G}^{\bullet}, Q)$ is nonvanishing in degrees 1 and 2:
\begin{equation}
\mathcal{G}^1=V'_0, \quad \mathcal{G}^2=\{(\tilde{A}, \tilde{v}):~ \tilde{A}={\rm d}v\in V''_1,~ \tilde{v}=\tilde{Q}{v}\in V''_0\}.
\end{equation}
\end{Prop}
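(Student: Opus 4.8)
The plan is to establish the splitting degree by degree and then to verify the three things the statement packages together: that $(\mathcal{F}_c^{\bullet},Q)$ and $(\mathcal{G}^{\bullet},Q)$ are $Q$-stable subcomplexes, that $\mathcal{F}^i=\mathcal{F}_c^i\oplus\mathcal{G}^i$ for every $i$, that the splitting is orthogonal for $(\cdot,\cdot)$, and that $(\mathcal{G}^{\bullet},Q)$ is acyclic. Since $Q^2=0$ on all of $\mathcal{F}^{\bullet}$, $Q$-stability of a graded subspace already makes it a subcomplex, so no separate check of $Q^2=0$ is needed. In degrees $0$ and $3$ there is nothing to do: $\mathcal{G}^0=\mathcal{G}^3=0$, while $\mathcal{F}_c^0=V_0=\mathcal{F}^0$ and $\mathcal{F}_c^3=V'''_0=\mathcal{F}^3$.

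For the structural part I would read off $Q={\rm d}+{\rm d}^*+\tilde Q$ on each block. Using that $\tilde Q\colon V'_0\to V''_0$ is an isomorphism, $\mathcal{F}_c^1$ is exactly the graph of $A\mapsto-\tilde Q^{-1}{\rm d}^*A$ from $V'_1$ to $V'_0$, while $\mathcal{G}^1=V'_0$ is the second factor; these are complementary in $V'_1\oplus V'_0=\mathcal{F}^1$. Likewise $\mathcal{G}^2$ is the graph of $\tilde v\mapsto{\rm d}\tilde Q^{-1}\tilde v$ from $V''_0$ to $V''_1$, complementary to $\mathcal{F}_c^2=V''_1$ in $\mathcal{F}^2$, where surjectivity onto $\mathcal{G}^2$ again uses $\tilde Q$ iso. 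For $Q$-stability: $Q(V_0)={\rm d}(V_0)$ lands in $\mathcal{F}_c^1$ because ${\rm d}^*{\rm d}=0$ forces its $V'_0$-component to vanish; on $\mathcal{F}_c^1$ the $V''_0$-component of $Q(A,-\tilde Q^{-1}{\rm d}^*A)$ is ${\rm d}^*A+\tilde Q(-\tilde Q^{-1}{\rm d}^*A)=0$, so $Q$ maps $\mathcal{F}_c^1$ into $V''_1=\mathcal{F}_c^2$, and $Q(V''_1)={\rm d}^*(V''_1)\subseteq V'''_0$; for $\mathcal{G}$ one has $Q({\rm d}v+\tilde Qv)={\rm d}^*{\rm d}v=0$, again by ${\rm d}^*{\rm d}=0$.

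The crux is orthogonality, and the key preliminary is that the pairing carries no cross terms, i.e.\ $(V'_1,V''_0)=0$ and $(V'_0,V''_1)=0$. I would deduce these from the ${\bf b}$- and ${\bf c}$-compatibility of $(\cdot,\cdot)$: writing a $V''_0$-vector as ${\bf b}w$ with $w\in V'''_0$ and using $({\bf b}w,A)-(-1)^{|w||A|}({\bf b}A,w)=0$ with ${\bf b}|_{V'_1}=0$ gives $(V'_1,V''_0)=0$, and symmetrically writing a $V'_0$-vector as ${\bf c}v_0$ with $v_0\in V_0$ and using the ${\bf c}$-identity with ${\bf c}|_{V''_1}=0$ gives $(V'_0,V''_1)=0$. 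Granting this, $\mathcal{F}_c^2=V''_1\perp\mathcal{G}^1=V'_0$ is immediate, and for $\mathcal{F}_c^1\perp\mathcal{G}^2$ the pairing of $(A,-\tilde Q^{-1}{\rm d}^*A)$ against $({\rm d}v,\tilde Qv)$ collapses, after dropping the two cross terms, to $(A,{\rm d}v)+(-\tilde Q^{-1}{\rm d}^*A,\tilde Qv)$. Two adjunctions from the $Q$-invariance of $(\cdot,\cdot)$ finish it: applying the $Q$-identity to $v,A\in\mathcal{F}^1$ (and discarding $(\tilde Qv,A)=0$) gives $({\rm d}v,A)=({\rm d}^*A,v)$, while the same identity applied to $x,y\in V'_0$ (discarding $({\rm d}x,y)=0$) gives the self-adjointness $(\tilde Qx,y)=(x,\tilde Qy)$; together they turn the two surviving terms into $({\rm d}^*A,v)-({\rm d}^*A,v)=0$.

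Finally $(\mathcal{G}^{\bullet},Q)$ is acyclic because $Q|_{\mathcal{G}^1}\colon V'_0\to\mathcal{G}^2$, $v\mapsto({\rm d}v,\tilde Qv)$, is an isomorphism (injective since $\tilde Q$ is, surjective by the definition of $\mathcal{G}^2$), so $H^1(\mathcal{G})=H^2(\mathcal{G})=0$ and all other degrees vanish trivially. The one place I expect to have to be careful is sign bookkeeping in the graded-symmetry and in the $Q$-, ${\bf b}$-, ${\bf c}$-identities; this is tamed by the observation that every nonzero pairing sits in bidegree $(i,3-i)$ with $i(3-i)$ even, so the sign $(-1)^{|a||b|}$ is trivial on all terms that actually appear, and the cancellations above go through cleanly once the cross-pairings are known to vanish.
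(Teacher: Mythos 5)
Your proof is correct and follows essentially the same route as the paper: both rest on $\tilde Q$ being an isomorphism, the graph description of $\mathcal{F}_c^1$ and $\mathcal{G}^2$, the block-diagonality of the pairing, and the $Q$-invariance identity for the key orthogonality $(\mathcal{F}_c^1,\mathcal{G}^2)=0$ (the paper does this in one stroke by moving $Q$ across the pairing and observing $Q(\mathcal{F}_c^1)\subset V''_1\perp V'_0$, where you equivalently split it into the ${\rm d}$--${\rm d}^*$ adjunction and the self-adjointness of $\tilde Q$). You also supply the routine checks the paper leaves implicit (complementarity in each degree, $Q$-stability, acyclicity of $\mathcal{G}^{\bullet}$) and correctly read $\mathcal{F}_c^3$ as $V'''_0$ rather than the paper's evident typo $V''_0$.
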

\begin{proof}
Indeed, let $(A_2,v_2)=(A_2,-\tilde{Q}^{-1}{\rm d}^*A)$, so that $Q(A_2, v_2)=-{\rm d}\tilde{Q}^{-1}{\rm d}^*A\in V''_1$ and 
$$
(Qv_1, (A_2,v_2))=(Q(A_2,v_2), v_1)=0.
$$
Since $Qv$, where $v\in \mathcal{G}^1$, generates $\mathcal{G}^2$ this leads to the statement of the Proposition.
\end{proof}
From now on we will assume that $\tilde{Q}$ is an isomorphism in the complex $(\mathcal{F}^{\bullet}, Q)$.


\section{Courant algebroids and Leibniz algebras on $(\mathcal{F}^{\bullet}_{1/2},Q)$}

\subsection{Courant algebroids and homotopy algebras} 
In this section let us start from a slightly smaller subcomplex of a half-complex $(\mathcal{F}^{\bullet}_{1/2},Q)$ namely 
\begin{equation}\label{cc}
V_0\xrightarrow{\rm d} V'_1,
\end{equation}
which we denote as $(\mathcal{V}^{\bullet}_c, Q)$, so that 
$\mathcal{V}^0_c=V_0=\mathcal{F}^0$, $\mathcal{V}_c^1=V'_1$ and $Q={\rm d}$.
Let us denote $V_1=V'_1[1]$, so that $V_0$ and $V_1$ are of degree $0$.

We say that there is a structure of {\it $V_0$-Courant algebroid} on $V_1$ if the following properties  are satisfied (see, e.g., \cite{roytenberg}):
\begin{itemize}
\item $V_0$ is a commutative $\mathbb{K}$-algebra,
\item $V_1$ is a $V_0$-module, 
\item There is a symmetric bilinear form $\langle\cdot, \cdot \rangle: V_1\otimes_{V_0} V_1\rightarrow V_0$,
\item  $\partial: V_0\rightarrow V_1$ is a derivation
\item There is an operation $[\cdot, \cdot]: V_1\otimes V_1\rightarrow V_1$,
\end{itemize}
and the following conditions hold:
\begin{enumerate}
\item $[A_1, u A_2]=u[A_1, A_2]+\langle A_1, \partial u \rangle A_2$,
\item $\langle A_1, \partial \langle A_2, A_3 \rangle\rangle=\langle [A_1,A_2], A_3 \rangle+\langle A_2, [A_1,A_3] \rangle$,
\item $[A_1, A_2]+[a_2, a_1]=\partial \langle A_1, A_2 \rangle$,
\item $[A_1, [A_2, A_3]]=[[A_1,A_2], A_3]+[A_2, [A_1,A_3]]$,
\item $[\partial u, A]=0$,
\item $\langle \partial u_1, \partial u_2\rangle=0$,
\end{enumerate}
where $A, A_1, A_2, A_3\in V_1$ and $u, u_1, u_2\in V_0$.\\

Let us now reformulate the axioms of Courant bracket on $(\mathcal{V}^{\bullet}_c, Q)$. 
as a Leibniz algebra, namely, suppose there exist 
an operation of degree $-1$:
\begin{equation}
[~\cdot~, ~\cdot~]_c: \mathcal{V}_c^i\otimes  \mathcal{V}_c^j\to \mathcal{V}_c^{i+j-1}, 
\end{equation}
satisfying Leibniz algebra relation:
\begin{equation}\label{Leib}
[a_1,[a_2,a_3]_c]_c=[[a_1,a_2]_c,a_3]_c+(-1)^{(|a_1|-1)(|a_2|-1)}
[a_2,[a_1,a_3]_c]_c.
\end{equation}
We assume that the operation $[~\cdot~, ~\cdot~]_c$ is homotopy-anticommutative, i.e., 
\begin{align}
[a_1,a_2]_c+(-1)^{(|a_1|-1)(|a_2|-1)}[a_2,a_1]_c=\\
(-1)^{|f_1|-1}(Qn(a_1,a_2)-n(Qa_1, a_2)-(-1)^{|a_1|}n(a_1, Qa_2)),\nonumber
\end{align}
so that 
\begin{eqnarray}
n: V'_1\otimes V'_1 \to V_0.
\end{eqnarray}
is a symmetric pairing. 

We also assume that $[A,~\cdot~ ]_c$ is nontrivial only for $A\in V'_1$ and that  ${\rm d}$ is a derivation for the bracket.
Thus we have for $u_1, u_2\in V_0$, $A,B\in V'_1$: 
\begin{align}
[{\rm d}u_1, u_2]_c=0, \quad [A, B]_c=-[B, A]_c+{\rm d}n(A,B), \quad [A,u]_c=n(A,{\rm d}u), \quad [A,{\rm d}u]_c={\rm d}[A,u]
\end{align}
These two properties, in particular imply that
\begin{equation}
[{\rm d}u, A]_c=0,
\end{equation}
since $[{\rm d}u, A]_c=-[A,{\rm d}u]_c+{\rm d}n({\rm d} u, A)_c= -[A,{\rm d}u]+{\rm d}[A,u]_c=0$.

Suppose now that there is an abelian algebra structure on $V_0$ and $V_1'$ is a module for $V_0$. 
\begin{equation}
\mu_0: V_0\times V_0 \to V_0;\quad  \mu_0: V_0\times V'_1\to V'_1
\end{equation}
so that 
\begin{equation}
{\rm d}\mu_0(u_1,u_2)=\mu_0(u_1, {\rm d}u_2)+\mu_0(u_2, {\rm d}u_1),
\end{equation}
and we assume that $[A,~\cdot~]$ is a derivation of 
$\mu_0$ and $n$ is bilinear with respect to the $V_0$-action.
Explicitly that means the following:
\begin{align}
&&\mu_0(u, n(A,B))=n(\mu_0(u,A),B)=n(A, \mu_0(u,B))\\
&&[A, \mu_0(u_1,u_2)]_c=\mu_0([A, u_1]_cu_2)+\mu_0(u_1,[A, u_2]_c)\nonumber \\
&&[A, \mu_0(u,B)]_c=\mu_0([A, u]_c, B)+\mu_0(u,[A,B]_c)\nonumber\\
&&[A, n(B,C)]_c=n([A, B]_c, C)+n(B,[A,C]_c)\nonumber
\end{align}

If we set 
\begin{align}
&&uv:=\mu_0(u, v),\quad  uA=\mu_0(u, A)[1],  \quad \partial:={\rm d}[1],\nonumber\\
&&[A[1], B[1]]:=[A, B]_c[1], \quad \langle A[1], B[1]\rangle:=n(A,B)
\end{align}
we obtain that the resulting operations satisfy axioms of Courant algebroid. Let us summarize all of it in a Proposition.

\begin{Prop}
The complex $(\mathcal{V}^{\bullet}_c, {\rm d})$ with operations 
$$[~\cdot~, ~\cdot~]_c: \mathcal{V}_c^i\otimes  \mathcal{V}_c^j\to \mathcal{V}_c^{i+j-1}$$
satisfying Leibniz algebra relation (\ref{Leib}) and $\mu_0:\mathcal{V}_c^0\otimes \mathcal{V}^i_c\to \mathcal{V}^i_c$ satisfying the following properties:
\begin{enumerate}
\item ${\rm d}$ is a derivation of degree 1 for $[~\cdot~,~\cdot~]_c$, and $\mu_0(~\cdot~, ~\cdot~)$,
\item $[~\mathcal{V}_c^0~,~\cdot~]_c=0,$
\item $\mu_0$ gives the structure of abelian algebra to $\mathcal{V}_c^0$ and gives the complex $(\mathcal{V}^{\bullet}_c,{\rm d})$ the structure of module over $\mathcal{V}_c^0$,
\item The operation $[~\cdot~,~\cdot~]_c$ is homotopically commutative with homotopy being a symmetric bilinear form $$n:\mathcal{V}^i_c\otimes_{\mathcal{V}^0_c} \mathcal{V}^j_c\to \mathcal{V}_c^{i+j-2},$$ 
\item The operation $[~a~,~ \cdot~]_c$, for $a\in \mathcal{V}_c^{\bullet}$ is a derivation for $\mu_0(~\cdot~,~\cdot~)$, and $n(~\cdot~,~\cdot~)$.
\end{enumerate}
is equivalent to the structure of Courant $\mathcal{V}^0_c$-algebroid on $\mathcal{V}^1_c[1]$.
\end{Prop}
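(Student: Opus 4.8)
The plan is to prove the Proposition as an equivalence, establishing a dictionary between the Leibniz-algebra data $([~\cdot~,~\cdot~]_c, \mu_0, n, {\rm d})$ on $(\mathcal{V}^{\bullet}_c, {\rm d})$ and the Courant-algebroid data $([~\cdot~,~\cdot~], \langle\cdot,\cdot\rangle, \partial, \mu_0)$ on $V^1_c[1]$ over $V^0_c$. Since the statement claims \emph{equivalence}, I would prove both directions, but the constructions are already essentially packaged by the formulas $uv:=\mu_0(u,v)$, $uA:=\mu_0(u,A)[1]$, $\partial:={\rm d}[1]$, $[A[1],B[1]]:=[A,B]_c[1]$, $\langle A[1],B[1]\rangle:=n(A,B)$ appearing just above the Proposition. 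So the real content is verifying that the hypotheses $(1)$--$(5)$ translate precisely into the six Courant axioms and the defining structures (commutative algebra, module, bilinear form, derivation).

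First I would unwind the degree bookkeeping. The elements $A, B \in V'_1$ sit in $\mathcal{V}^1_c$, hence $|A|=1$, while $u \in V_0 = \mathcal{V}^0_c$ has $|u|=0$; after the shift $[1]$ the Courant-algebroid elements $A[1]$ have degree $0$. The bracket $[~\cdot~,~\cdot~]_c$ has degree $-1$, so $[A,B]_c \in \mathcal{V}^1_c$ as needed, $[A,u]_c \in \mathcal{V}^0_c = V_0$, and $n(A,B)\in \mathcal{V}^0_c=V_0$. I would check each sign factor $(-1)^{(|a_1|-1)(|a_2|-1)}$ on the relevant elements: for two degree-$1$ elements this sign is $+1$, so the Leibniz relation $(\ref{Leib})$ becomes the ordinary (ungraded) Leibniz/Jacobi identity $[A_1,[A_2,A_3]]=[[A_1,A_2],A_3]+[A_2,[A_1,A_3]]$, which is Courant axiom $(4)$. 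Similarly the homotopy-commutativity with homotopy $n$, evaluated on $Q$-closed elements, reduces to $[A,B]+[B,A]=\partial\langle A,B\rangle$ (axiom $(3)$), using that ${\rm d}$ becomes $\partial$ after the shift.

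Next I would match the remaining axioms. Axiom $(1)$ (the Leibniz rule over $V_0$) comes from hypothesis $(5)$, that $[A,~\cdot~]_c$ is a derivation of $\mu_0$, combined with the already-derived identities $[A,u]_c=n(A,{\rm d}u)$ and $[A,{\rm d}u]_c={\rm d}[A,u]$ displayed in the text; this is where the module-compatibility and the relation between $n$ and $\partial u$ conspire to give the $\langle A_1,\partial u\rangle A_2$ term. Axiom $(2)$, the invariance of $\langle\cdot,\cdot\rangle$, is exactly hypothesis $(5)$ applied to $n$: $[A,n(B,C)]_c=n([A,B]_c,C)+n(B,[A,C]_c)$ translates to $\langle A_1,\partial\langle A_2,A_3\rangle\rangle = \langle[A_1,A_2],A_3\rangle+\langle A_2,[A_1,A_3]\rangle$ once one uses that $[A,u]_c=n(A,{\rm d}u)$ identifies the left-hand adjoint action on a function with pairing against $\partial$. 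Axioms $(5)$ and $(6)$ ($[\partial u,A]=0$ and $\langle\partial u_1,\partial u_2\rangle=0$) follow from hypothesis $(2)$, $[~\mathcal{V}^0_c~,~\cdot~]_c=0$, together with the computation $[{\rm d}u,A]_c=0$ already carried out in the excerpt, and from the fact that $n({\rm d}u_1,{\rm d}u_2)$ vanishes because $n$ is a symmetric homotopy landing in $\mathcal{V}^0_c$ and ${\rm d}u$ is exact; I would make this last vanishing explicit using the symmetry of $n$ and ${\rm d}$-derivation property.

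I expect the main obstacle to be the careful treatment of the derivation/module axiom $(1)$ and the invariance axiom $(2)$, because these are the places where \emph{three} separate hypotheses ($\mu_0$-module structure, $n$ being $V_0$-bilinear, and $[A,\cdot]_c$ being a derivation) must be combined simultaneously, and where the identities $[A,u]_c=n(A,{\rm d}u)$ and $[A,{\rm d}u]_c={\rm d}[A,u]_c$ must be invoked to convert adjoint actions on functions into pairings with $\partial$. The converse direction (that a Courant algebroid yields the listed hypotheses) is comparatively routine: one sets $n=\langle\cdot,\cdot\rangle$, ${\rm d}=\partial[{-1}]$, extends $[~\cdot~,~\cdot~]_c$ by declaring it zero on $\mathcal{V}^0_c$ and by $[A,u]_c:=n(A,{\rm d}u)$, and reads the six Courant axioms backwards into hypotheses $(1)$--$(5)$, checking that the Leibniz relation $(\ref{Leib})$ and homotopy-commutativity hold with the correct signs on all degree combinations, which is just the degree-bookkeeping already done in the forward direction.
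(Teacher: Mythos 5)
Your proposal is correct and follows essentially the same route as the paper: the paper's "proof" of this Proposition is precisely the dictionary $uv=\mu_0(u,v)$, $\partial={\rm d}[1]$, $\langle A[1],B[1]\rangle=n(A,B)$, $[A[1],B[1]]=[A,B]_c[1]$ worked out in the paragraphs immediately preceding the statement, where the identities $[A,u]_c=n(A,{\rm d}u)$, $[{\rm d}u,A]_c=0$, and the derivation properties are derived and matched one-by-one to the six Courant axioms. Your degree bookkeeping, the identification of each axiom with the corresponding hypothesis, and the handling of axioms (5)--(6) via $[{\rm d}u,A]_c=0$ and $n({\rm d}u_1,{\rm d}u_2)=[{\rm d}u_1,u_2]_c=0$ all agree with what the paper does.
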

The homotopy commutativity condition for the bracket can be removed through symmetrization and and leads to the $L_{\infty}$-algebra \cite{stashbook}. The following result is due to Roytenberg and Weinstein \cite{weinstein}. 
\begin{Thm}\label{RWth}
The antisymmetrized version of the bracket $[~\cdot~,~ \cdot~]^{sym}_c$ on the complex $(\mathcal{V}_c,{\rm d})$ given by
\begin{equation}
[a_1,a_2]^{sym}_c=\frac{1}{2}([a_1,a_2]_c-(-1)^{{(|a_1|-1)}{(|a_2|-1)}}[a_2,a_1]_c), \quad a_1,a_2\in \mathcal{V}^{\bullet}_c
\end{equation} 
extends to the structure of an $L_3$-algebra.
\end{Thm}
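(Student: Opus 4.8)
The plan is to make the $L_3$-structure explicit by naming the three non-vanishing brackets and then checking the finite list of homotopy Jacobi identities, using throughout that $(\mathcal{V}^{\bullet}_c,{\rm d})$ has only two terms, so that every generalized Jacobi relation whose inputs or output fall outside degrees $0,1$ is vacuous. Concretely, I would set $l_1:={\rm d}$, take $l_2:=[~\cdot~,~\cdot~]^{sym}_c$ to be the graded-antisymmetrized bracket already written in the statement, and define a trilinear map $l_3\colon \mathcal{V}^1_c\otimes\mathcal{V}^1_c\otimes\mathcal{V}^1_c\to\mathcal{V}^0_c$ by totally graded-antisymmetrizing the assignment $(A_1,A_2,A_3)\mapsto n([A_1,A_2]_c,A_3)$, i.e.\ the Courant--Nijenhuis function. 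Since $n$ is symmetric and pairs $V'_1\otimes V'_1$ into $V_0$ while $[~\cdot~,~\cdot~]_c$ preserves $V'_1$, this is the only higher bracket that can be nonzero for degree reasons, and all $l_k$ with $k\ge 4$ are forced to vanish. In the shifted (Lie) grading, $V'_1=\mathcal{V}^1_c$ sits in degree $0$ and $V_0=\mathcal{V}^0_c$ in degree $1$, with $l_1={\rm d}$ the differential, so the degrees of $l_1,l_2,l_3$ are the expected $-1,0,+1$.

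First I would dispatch the low identities. The relation $l_1^2=0$ is automatic because $\mathcal{V}^{\ge 2}_c=0$. That $l_1$ is a graded derivation of $l_2$ (the two-input identity mixing degrees) follows directly from the standing assumptions that ${\rm d}$ is a derivation of $[~\cdot~,~\cdot~]_c$ together with $[A,{\rm d}u]_c={\rm d}[A,u]_c$ and $[{\rm d}u,A]_c=0$; these also give $l_2|_{\mathcal{V}^1_c\otimes\mathcal{V}^0_c}(A,u)=n(A,{\rm d}u)$ up to sign, as needed.

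The heart of the argument is the three-input homotopy Jacobi identity, which should read $l_2(l_2(A_1,A_2),A_3)+\text{c.p.}=\pm\, l_1\bigl(l_3(A_1,A_2,A_3)\bigr)$. I would begin from the Leibniz relation~\eqref{Leib} for the \emph{unsymmetrized} bracket, apply total graded antisymmetrization over the three arguments, and then repeatedly substitute the homotopy-anticommutativity $[A,B]_c+(-1)^{(|A|-1)(|B|-1)}[B,A]_c={\rm d}\,n(A,B)$ (valid on $V'_1$, where $Q={\rm d}$ vanishes) to convert the obstruction produced by antisymmetrizing a non-symmetric bracket into ${\rm d}$ of the Nijenhuis function. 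This is precisely the identity $\mathrm{Jac}_{l_2}=l_1\circ l_3$, and it is what forces the formula for $l_3$ above.

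The main obstacle will be the top coherence relation: the four-input identity valued in $\mathcal{V}^0_c$ relating the terms $l_2\bigl(l_3(-,-,-),-\bigr)$ to the terms $l_3\bigl(l_2(-,-),-,-\bigr)$. Proving that the full graded antisymmetrization over four section-arguments vanishes requires the invariance of the pairing, i.e.\ Courant axiom (2) written with $\langle\,\cdot\,,\,\cdot\,\rangle=n$ as $\;n(A_1,{\rm d}\,n(A_2,A_3))=n([A_1,A_2]_c,A_3)+n(A_2,[A_1,A_3]_c)$, used once more together with~\eqref{Leib}; the delicate part is the bookkeeping of Koszul signs under the fourfold antisymmetrization. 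Once this is verified, the remaining generalized Jacobi relations $\mathcal{J}_{k\ge 5}$ hold trivially, since any such composite either feeds more than three $V'_1$-inputs through the brackets (forcing some bracket to see two $V_0$-arguments, hence to die) or lands outside degrees $\{0,1\}$. This produces the desired $L_3$-structure, which up to the contractible part noted in the Introduction is the Roytenberg--Weinstein bracket.
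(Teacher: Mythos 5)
The paper does not actually prove this statement: it is quoted as a result of Roytenberg and Weinstein \cite{weinstein}, with only the explicit trilinear operation recorded later, in Section 4.5. So the real comparison is with that reference, and your outline is essentially their verification: the same $l_1={\rm d}$, $l_2=[~\cdot~,~\cdot~]^{sym}_c$, the same $l_3$ (antisymmetrizing $n([\cdot,\cdot]_c,\cdot)$ kills the symmetric part $\tfrac12{\rm d}\,n$ of the Leibniz bracket and leaves the cyclic sum $\tfrac13\sum_{cyc}n([A_i,A_j]^{sym}_c,A_k)$, which is the Roytenberg--Weinstein operator and, up to an apparent sign in its middle term, the formula the paper records in Section 4.5), and the same degree count forcing $l_k=0$ for $k\ge 4$. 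The strategy for the three- and four-input identities --- antisymmetrize the Leibniz relation (\ref{Leib}), convert the resulting defects into ${\rm d}\,n$ via homotopy anticommutativity, and invoke the invariance of the pairing --- is the right one.

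There is, however, one concrete omission. Your blanket claim that every remaining generalized Jacobi relation is vacuous because its inputs or output leave degrees $\{0,1\}$ does not cover the \emph{mixed} instance of the three-input identity, with arguments $A_1,A_2\in V'_1$ and $u\in V_0$: there all inputs and the output (which lies in $V_0$) are admissible. In that case $l_3(A_1,A_2,u)=0$ by your definition, but the term $l_3(A_1,A_2,l_1u)=l_3(A_1,A_2,{\rm d}u)$ survives and must cancel the mixed Jacobiator of $l_2$. This is a genuine computation --- it already uses Courant axiom (2) together with $[{\rm d}u,A]_c=0$ and $[A,{\rm d}u]_c={\rm d}\,n(A,{\rm d}u)$, so the invariance of the pairing enters before the four-input relation --- and it is one of the identities Roytenberg--Weinstein check explicitly. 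Your displayed three-input identity treats only the all-sections case. The gap is in the enumeration of cases rather than in the method (the same axioms close it), but as written the argument is incomplete without this check.
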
 
We will return to the discussion of the explicit trilinear operation forming this $L_3$-algebra in Section 5.3.

Our goal is to extend the result of the Theorem \ref{RWth} and ``invert" it, namely we will describe the Courant algebroid as a homotopy BV-algebra of a certain kind (see the  Subsection 4.1) on a complex $(\mathcal{F},Q)$. For that we will need an extra structure for Courant algebroid, which as we shall see essentialy describes the properties of operator ${\rm d}^*$ with respect to $\mu_0(~\cdot~ ,~\cdot~)$ and $[~\cdot~ ,~\cdot~]_c$ and $n(~\cdot~,~\cdot~)$.  
 
\subsection{Calabi-Yau structures and Courant $\mathcal{O}_X$-algebroids}  

 We say that the Courant algebroid admits a {\it Calabi-Yau (CY) structure}  if there is an operator 
 $${\rm div}:V_1\rightarrow V_0$$ 
such that the following properties hold:
\begin{align}
{\rm div}~{\partial}=0,&&\nonumber\\
{\rm div}(u A)=u {\rm div}A+\langle \partial u, A\rangle,&&\nonumber\\
{\rm div}[A_1,A_2]=[A_1,{\rm div}A_2]-[A_2, {\rm div}A_1].&&
\end{align}
This gives rise to an operator ${\rm div}: \mathcal{V}^1_c\to \mathcal{V}^0_c$ such that
\begin{align}
{\rm div}~{\rm d}=0,&&\nonumber\\
{\rm div}\mu_0(u,A)=\mu_0(u, {\rm div}A)+n ({\rm d} u, A),&&\nonumber\\
{\rm div}[A_1,A_2]_c=[A_1,{\rm div}A_2]_c-[A_2, {\rm div}A_1]_c.&&
\end{align}
for $u\in \mathcal{V}^0_c$ and $A, A_1, A_2\in \mathcal{V}^1_c$.

Now let us discuss Courant algebroids with CY structures in the geometric context. Namely, we consider Courant $\mathcal{O}_X$-algebroids (see, e.g., \cite{bressler}). Let $\mathcal{O}_X$ be the structure sheaf on smooth variety $X$, e.g, of differentiable or holomorphic functions. If we replace in the definition of the $V_0$-Courant algebroid, the commutative algebra $V_0$ by  structure sheaf $\mathcal{O}_X$ and $V_1$ by the corresponding $O_X$-module, where the map $$a\to \langle a, ~\partial ~\cdot~ \rangle,$$ where $a\in V_1$, $u\in V_0$ is a so-called {\it anchor map}:
$$
\pi: ~V_1\rightarrow \mathcal{T}_X,
$$
where $\mathcal{T}_X$ is the tangent sheaf. The standard example is given by $V_1=\mathcal{T}_X\oplus\mathcal{T}^*_X$ and the bracket is given by the Dorfman bracket:
\begin{align*}
[v_1,v_2]_c=[v,w]_{\rm Lie}, \quad [v, \omega]_c=\mathcal{L}_v\omega ,\quad 
[\omega, v]_c=-i_vd\omega,\quad [\omega_1, \omega_2]_c=0, 
\end{align*}
where $v, v_1,v_2\in \mathcal{T}_X$ and 
$\omega, \omega_1,\omega_2 \in \mathcal{T}_X$.

Now, as expected from the name, the emergence of CY structure in this case is a consequence of an existance of a global section of $\Lambda^n\mathcal{T}^*_X$, where $n={\rm dim} X $.

If $\Omega$ is a volume form, then 
$$
\mathcal{L}_{\pi(a)}\Omega=({\rm div}_{\Omega}a) \Omega,
~{\rm or, ~informally:}~
{\rm div}_{\Omega}a=\mathcal{L}_{\pi(a)}\log \Omega.
$$ 
Thus the volume form produces the ${\rm div}_{\Omega}$ operator staisfying properties of CY structure.
In particular, for compact $X$ we have:
$$
(a_1,a_2)_{\Omega}=\int \langle a_1, a_2 \rangle \Omega, \quad 
(u_1,u_2)_{\Omega}=\int u_1u_2\Omega,
$$
where $a_1, a_2\in V_1$ and $u_1, u_2\in V_0$ and then 
 we naturally have $-\partial$ and ${\rm div}$ conjugate to each other:
$$
({\rm div}_{\Omega}a, u)_{\Omega}=-(a, \partial u) _{\Omega}.
$$
These properties lead to the odd pairing from the Subsection 2.2, 
which we discuss later, in the Subsection 4.4.
\subsection{Extending the Leibniz algebra to $(\mathcal{F}^{\bullet}_{1/2},Q)$}

We are now interested in the half-complex $(\mathcal{F}^{\bullet}_{1/2},Q)$:

\begin{equation}
\mathcal{F}_{1/2}^0\xrightarrow{Q} \mathcal{F}_{1/2}^1,
\end{equation}

where $\mathcal{F}_{1/2}^0=V_0$, $\mathcal{F}_{1/2}^1=
V'_1\oplus V'_0$. 

Notice that we have operators:
\begin{equation}
{\rm b}: \mathcal{F}^1\to \mathcal{F}^0, \quad {\rm c}: \mathcal{F}^0\to \mathcal{F}^1
\end{equation}
and now we have two gradings. Let us denote: 
\begin{equation}
\mathcal{F}_{1/2}^0\oplus \mathcal{F}_{1/2}^1=\mathcal{V}^{1/2}_0\oplus \mathcal{V}^{1/2}_1,
\end{equation}
where $\mathcal{V}^{1/2}_0=V_0\oplus V'_0$. 
Assume that there is a degree $-1$ bracket operation $\{~\cdot ~,~\cdot ~\}$ on $\mathcal{F}_{1/2}$ so that 
\begin{equation}
\{\mathcal{V}_c^{\bullet}, \mathcal{V}_c^{\bullet}\}|_{\mathcal{V}_c^{\bullet}}=
[\mathcal{V}_c^{\bullet}, \mathcal{V}_c^{\bullet}]_c
\end{equation} 
Let us assume that $\{~\cdot ~,~\cdot~\}$ satisfies the following property:
\begin{equation}
\{\mathcal{V}^{1/2}_i, \mathcal{V}^{1/2}_j\}\subset\oplus_{k\ge 1}\mathcal{V}^{1/2}_{i+j-k},
\end{equation}
and operator ${\rm b}$ is a derivation of degree -1 for this bracket. Explicitly we obtain:
\begin{align}
&&\{V_0,V'_0\}=\{V'_0, V_0\}=0, \\
&& \{V_1,V'_0\}\subset V'_0\quad  \{V'_0, V_1\},\subset V'_0\nonumber\\
&&\{V_1,V_1\}\subset V_1\oplus V_0'.\nonumber
\end{align}

The fact that ${\rm b}$-operator is a derivation leads to 
\begin{eqnarray}
{\rm b}\{V_1, V_1\}=0,
\end{eqnarray}
and thus coincides with the Courant algebroid bracket:
$$
\{\mathcal{V}_c^{\bullet},\mathcal{V}_c^{\bullet}\}=[\mathcal{V}_c^{\bullet}, \mathcal{V}_c^{\bullet}]_c
\subset \mathcal{V}_c^{\bullet}.
$$
At the same time,
\begin{equation}
{\rm b}\{A, v\}=\{A, {\rm b}v\}=n(A, {\rm d}{\rm b}v), \quad {\rm b}\{v, A\}=\{{\rm b}v, A\}=0,
\end{equation}
where $v\in V'_0$, $A\in V'_1$. That fixes uniquely the $[\cdot ,\cdot ]$ operation on $\mathcal{F}^{\bullet}_{1/2}$. 
Moreover, assume there is an abelian algebra structure on $\mathcal{F}_{1/2}^0$ and $\mathcal{F}^1$ is a module for  $\mathcal{F}_{1/2}^0$, producing 
the operation $\mu$:
\begin{eqnarray}
\mu:\mathcal{F}_{1/2}^0\otimes \mathcal{F}_{1/2}^i\to \mathcal{F}_{1/2}^i, \quad i=0,1.
\end{eqnarray}
We also assume that 
\begin{equation}
\mu(\mathcal{V}^0_c, \mathcal{V}^i_c)|_{\mathcal{V}^i_c}=\mu_0(\mathcal{V}^0_c, \mathcal{V}^i_c), \quad i=0,1.
\end{equation}
We assume the following important properties:
\begin{align}
{\rm c}~\mu(\mathcal{F}^0_{1/2}, \cdot)=\mu(\mathcal{F}^0_{1/2}, ~{\rm c}~\cdot)\\
\{~\cdot~, ~\cdot~\}=[{\rm b},\mu](~\cdot~, ~\cdot~),
\end{align} 
which hold wherever $\mu$ is defined.
Notice that 
\begin{equation}
[{\rm b},\mu](V_0, V'_1)={\rm b}\mu(V_0, V'_1)=\{V_0,V'_1\}=0,
\end{equation}
and thus the operation has the following form:
\begin{equation}
\mu: V_0\times V_0 \to V_0;\quad  \mu: V_0\times V'_1\to V'_1;\quad  \mu: V_0\times V'_0\to V'_0
\end{equation}
The only unknown product 
\begin{equation}
\mu: V_0\times V'_0\to V'_0
\end{equation}
is fixed  $u\in V_0$, $v\in V'_0$
\begin{equation}
0=\{u,v\}={\rm b}\mu(u,v)-\mu(u, {\rm b}v),
\end{equation}
and hence $\mu(u,v)={\rm c}\mu(u,{\rm b}v)$. Thus we obtain the following Proposition.

\begin{Prop}\label{halfleib}
There is a unique Leibniz algebra structure on the complex 
$\mathcal{F}_{1/2}^{\bullet}$ and the operation $\mu$, extending the Courant bracket structure on $(\mathcal{V}_c^{\bullet}, {\rm d})$
satisfying the following conditions:
\begin{itemize}
\item The bracket operation $\{~\cdot~,~\cdot~\}: \mathcal{F}_{1/2}^i\otimes \mathcal{F}^j_{1/2}\to  \mathcal{F}^{i+j-1}_{1/2}$ satisfies:
\begin{equation}
\{\mathcal{V}^{1/2}_i, \mathcal{V}^{1/2}_j\}\subset\oplus_{k \ge 1}\mathcal{V}^{1/2}_{i+j-k}, \quad
\{\mathcal{V}_c^{\bullet}, \mathcal{V}_c^{\bullet}\}|_{\mathcal{V}_c^{\bullet}}=
[\mathcal{V}_c^{\bullet}, \mathcal{V}_c^{\bullet}]_c
\end{equation}
\item The operator ${\rm b}$ is its derivation for $\{~\cdot~,~\cdot~\}$.
\item The operation $\mu: \mathcal{F}_{1/2}^0\otimes \mathcal{F}^\bullet_{1/2}\to  \mathcal{F}^\bullet_{1/2}$ satisfies:
\begin{equation}
{\rm c}~\mu(\mathcal{F}^0_{1/2}, \cdot)=\mu(\mathcal{F}^0_{1/2}, ~{\rm c}~\cdot),\quad [{\rm b}, \mu](\mathcal{F}^0_{1/2}, ~\mathcal{F}^\bullet_{1/2})= [\mathcal{F}^0_{1/2},\mathcal{F}^\bullet_{1/2}],\quad 
\mu(\mathcal{V}^0_c, \mathcal{V}^\bullet_c)|_{\mathcal{V}^\bullet_c}=\mu_0(\mathcal{V}_c^0, \mathcal{V}^\bullet_c).
\end{equation}
\end{itemize}

\end{Prop}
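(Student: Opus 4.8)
The plan is to prove the statement in two stages: first establish uniqueness by showing that the stated conditions force every graded component of $\{~\cdot~,~\cdot~\}$ and of $\mu$, and then establish existence by defining these operations through the formulas so obtained and checking the axioms against the Courant structure on $(\mathcal{V}_c^{\bullet},{\rm d})$.

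For uniqueness of the bracket, I would first invoke the filtration condition $\{\mathcal{V}^{1/2}_i,\mathcal{V}^{1/2}_j\}\subset\oplus_{k\ge1}\mathcal{V}^{1/2}_{i+j-k}$ together with the vanishing of negative grading, which forces $\{\mathcal{V}^{1/2}_0,\mathcal{V}^{1/2}_0\}=0$, i.e. all brackets among $V_0$ and $V'_0$ vanish. On $V'_1$ the same filtration permits only a $V'_1\oplus V'_0$ component, and the ${\rm b}$-derivation property combined with ${\rm b}|_{V'_1}=0$ kills the $V'_0$ part, so $\{V'_1,V'_1\}$ is forced to equal the Courant bracket. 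For the mixed brackets I would use ${\rm b}A=0$ for $A\in V'_1$ and the derivation property to conclude that $\{A,v\}$ and $\{v,A\}$ lie in $V'_0$ and are determined by their images under the isomorphism ${\rm b}\colon V'_0\to V_0$; explicitly ${\rm b}\{A,v\}=\{A,{\rm b}v\}=[A,{\rm b}v]_c=n(A,{\rm d}{\rm b}v)$ and ${\rm b}\{v,A\}=\{{\rm b}v,A\}=[{\rm b}v,A]_c=0$, where I use $[\mathcal{V}_c^0,\cdot]_c=0$ and $[A,u]_c=n(A,{\rm d}u)$ from the Courant description. Applying ${\rm c}$ then recovers $\{A,v\}={\rm c}\,n(A,{\rm d}{\rm b}v)$ and $\{v,A\}=0$ uniquely.

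For uniqueness of $\mu$, I would note that $[{\rm b},\mu](V_0,V'_1)={\rm b}\mu(V_0,V'_1)=\{V_0,V'_1\}=0$ forces $\mu(V_0,V'_1)\subset V'_1$, so that $\mu$ restricted to $V_0\times V_0$ and to $V_0\times V'_1$ agrees with $\mu_0$ by the restriction condition $\mu|_{\mathcal{V}_c}=\mu_0$ (the ${\rm c}$-equivariance fixing any remaining ambiguity). The only undetermined component is $\mu\colon V_0\times V'_0\to V'_0$, and the relation $0=\{u,v\}=[{\rm b},\mu](u,v)={\rm b}\mu(u,v)-\mu(u,{\rm b}v)$ fixes it as $\mu(u,v)={\rm c}\,\mu_0(u,{\rm b}v)$.

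For existence I would define $\{~\cdot~,~\cdot~\}$ and $\mu$ by precisely these formulas and verify the Leibniz relation (\ref{Leib}), the ${\rm b}$-derivation property, and the compatibility conditions for $\mu$. The relations restricted to $\mathcal{V}_c^{\bullet}$ hold by construction, and every identity involving a $V'_0$-argument arises by transporting a Courant relation through the isomorphisms ${\rm b}$ and ${\rm c}$, using $[Q,{\rm b}]=0$ and $[{\rm b},{\rm c}]=1$. I expect the main obstacle to be the Leibniz identity for triples containing one or two elements of $V'_0$, where one must check that the new products $\{A,v\}={\rm c}\,n(A,{\rm d}{\rm b}v)$ interact correctly with both the bracket and $\mu$. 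This should reduce cleanly to the facts, established in the preceding Proposition, that $[A,\cdot]_c$ is a derivation of $\mu_0$ and of $n$ and that $n$ is $\mathcal{V}_c^0$-bilinear; these are exactly the identities needed to absorb the extra ${\rm c}\,{\rm b}$-terms, so that no genuinely new relation beyond the Courant axioms should be required.
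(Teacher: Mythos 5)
Your proposal is correct and follows essentially the same route as the paper: the filtration condition plus the ${\rm b}$-derivation property force $\{V'_1,V'_1\}$ to coincide with the Courant bracket, determine $\{A,v\}={\rm c}\,n(A,{\rm d}{\rm b}v)$ and $\{v,A\}=0$ through the isomorphism ${\rm b}\colon V'_0\to V_0$, and the relation $\{\cdot,\cdot\}=[{\rm b},\mu]$ together with ${\rm c}$-equivariance pins down $\mu(u,v)={\rm c}\,\mu_0(u,{\rm b}v)$. The paper leaves the existence verification implicit, so your added remark that the Leibniz identities with $V'_0$-arguments reduce to the Courant derivation axioms for $\mu_0$ and $n$ is a sensible completion rather than a divergence.
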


However, the homotopy anticommutativity of the bracket requires extra structure as one can see while attempting produce a homotopy for the bracket on $V'_0$. That will require extension of these structures to the entire complex.

\section{Homotopy BV algebra on $(\mathcal{F}^{\bullet},Q)$ and the BV double}

\subsection{Homotopy BV-LZ algebra}

There are several versions of the homotopy BV algebras in the literature. In this paper we call {\it homotopy BV algebra of Lian-Zuckemran type \cite{lz}} or, simply {\it homotopy BV-LZ algebra} the following object. 

Consider the $\mathbb{Z}$-graded vector space $V$ with an operator $Q$ of degree $1$ such that $Q^2=0$ and the bilinear operation $\mu$ is homotopy $Q$-commutative and homotopy $Q$-associative:

\begin{align}\label{lzrel}
&&Q\mu(a_1,a_2)=\mu(Q a_1,a_2)+(-1)^{|a_1|}\mu(a_1,Q a_2),\\
&&\mu(a_1,a_2)-(-1)^{|a_1||a_2|}\mu(a_2,a_1)= Qm(a_1,a_2)+m(Qa_1,a_2)+(-1)^{|a_1|}m(a_1,Qa_2),\nonumber\\
&&\mu(\mu(a_1,a_2),a_3)-\mu(a_1,\mu(a_2,a_3))=\nonumber\\
&& Q\nu(a_1,a_2,a_3)+\nu(Qa_1,a_2,a_3)+(-1)^{|a_1|}\nu(a_1,Qa_2,a_3)+
(-1)^{|a_1|+|a_2|}\nu(a_1,a_2,Qa_3),\nonumber
\end{align}
where $\nu$ is a trilinear operation on $V$.

Finally assume that the there is a differential ${\bf b}$ of degree -1, such that ${\bf b}^2=0$ and define the bilinear bracket operation via the following formula:
\begin{align}\label{brack}
\{a_1,a_2\}=(-1)^{|a_1|}({\bf b}\mu(a_1,a_2)-\mu({\bf b}a_1,a_2)-(-1)^{|f_1|}\mu(a_1,{\bf b}a_2).
\end{align}
so that operation $\{a~, ~\cdot\}$ is a derivation of $\mu$ of degree $|a|-1$ and $Q$ is a derivation of $\{\cdot~, ~\cdot\}$,  
i.e.:
\begin{align}\label{deriv}
Q\{a_1,a_2\}=\{Qa_1,a_2\}+(-1)^{|a_1|-1}\{a_1, Qa_2\}\\
\{a_1,\mu(a_2,a_3)\}=\mu(\{a_1,a_2\},a_3)+(-1)^{(|a_1|-1)||a_2|}\mu(a_2,\{a_1, a_3\}).\nonumber\\
\end{align}
In the language of Penkava and Schwarz \cite{penkava} the second property means that ${\bf b}$ is second order superderivation. We will call the $\mathbb{Z}$-graded vector space with operations $\mu, \nu, \{\cdot~, ~\cdot\}$, $n$ satisfying (\ref{lzrel}), (\ref{brack}), (\ref{deriv}) {\it the homotopy BV-LZ algebra}. This in particular means the following properties \cite{lz}, \cite{penkava}.

\begin{Prop}

The operations ${\bf b}$, $\mu(\cdot~,~\cdot)$, and $\{\cdot~,~\cdot\}$ for the BV-LZ algebra satisfy the relations: 
\begin{enumerate}
\item ${\bf b}$ is a derivation of degree -1 for $\{\cdot ~, ~\cdot\}$:
$$
{\bf b}\{a_1,a_2\}=\{{\bf b}a_1,a_2\}+(-1)^{|a_1|-1}\{a_1, {\bf b}a_2\}.
$$
\item $\{\cdot, ~, ~\cdot\}$ is symmetric up to $Q$-homotopy:
$$
\{a_1,a_2\}+(-1)^{(|a_1|-1)(|a_2|-1)}\{a_2,a_1\}=
(-1)^{|a_1|-1}(Qn(a_1,a_2)-n(Qa_1,a_2)-(-1)^{|a_2|}n(a_1,Qa_2)),
$$
where $n=[{\bf{b}}, m]$.
\item $\{\cdot ~, ~\cdot\}$ satisfies the Jacobi identity:
$$
\{\{a_1,a_2\},a_3\}-\{a_1,\{a_2,a_3\}\}+(-1)^{(|a_1|-1)(|a_2|-1)}\{a_2,\{a_1,a_3\}\}=0.
$$
\item $\{\cdot, ~, ~\cdot\}$ and $\mu(\cdot~,~\cdot)$ satisfy derivation rule up to homotopy:
\begin{align}
&&\{\mu(a_1,a_2),a_3\}-\mu(a_1,\{a_2,a_3\})-(-1)^{(|a_3|-1)|a_2|}\mu(\{a_1,a_3\},a_2)=\nonumber\\
&&(-1)^{|a_1|+|a_2|-1}(Qn'(a_1,a_2,a_3)-n'(Qa_1,a_2,a_3)-\nonumber\\
&&(-1)^{|a_1|}n'(a_1,Qa_2,a_3)-(-1)^{|a_1|+|a_2|}n'(a_1,a_2,Qa_3),\nonumber
\end{align}
where
\begin{equation}
n'(a_1,a_2, a_3)=\mu(a_1, n(a_2, a_3))-n(\mu(a_1, a_2), a_3)+(-1)^{|a_2||a_3|}\mu(n(a_1,a_2), a_3).
\end{equation}
\end{enumerate}

\end{Prop}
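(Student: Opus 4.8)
The plan is to derive all four identities directly from the defining relations (\ref{lzrel}), (\ref{brack}) and (\ref{deriv}), using only $Q^2=0$, ${\bf b}^2=0$ and the compatibility $[Q,{\bf b}]=0$; I would establish them in the stated order, since (1) feeds into (2) and (3) and the homotopies $m,\nu$ feed into (2) and (4). Item (1) is the easiest: I would apply ${\bf b}$ to the defining formula (\ref{brack}) of $\{a_1,a_2\}$ and re-expand ${\bf b}\mu({\bf b}a_1,a_2)$ and ${\bf b}\mu(a_1,{\bf b}a_2)$ by (\ref{brack}) again. Every term carrying ${\bf b}^2$ is killed, and the survivors regroup into $\{{\bf b}a_1,a_2\}+(-1)^{|a_1|-1}\{a_1,{\bf b}a_2\}$. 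This is pure sign bookkeeping and needs only ${\bf b}^2=0$.

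Items (2) and (4) are both homotopy identities whose right-hand sides are produced by the homotopies $m$ and $\nu$ of (\ref{lzrel}). For (2) I would symmetrize (\ref{brack}): forming $\{a_1,a_2\}+(-1)^{(|a_1|-1)(|a_2|-1)}\{a_2,a_1\}$ produces the graded commutator $\mu(a_1,a_2)-(-1)^{|a_1||a_2|}\mu(a_2,a_1)$, which by the homotopy-commutativity line of (\ref{lzrel}) equals $Qm(a_1,a_2)+m(Qa_1,a_2)+(-1)^{|a_1|}m(a_1,Qa_2)$; acting with the ${\bf b}$'s from (\ref{brack}) and pushing them past $Q$ via $[Q,{\bf b}]=0$ produces the operator $n=[{\bf b},m]$ and the asserted right-hand side. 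For (4), which involves a product in the \emph{first} slot, I would first use the homotopy-anticommutativity just proved in (2) to move $a_3$ so that the product sits in the second slot, apply the \emph{strict} derivation property of $\{a_1,\cdot\}$ from (\ref{deriv}), and then swap back; the homotopy $m$ (hence $n$) from the two swaps together with the associator homotopy $\nu$ from the homotopy-associativity line of (\ref{lzrel}) collect into the displayed $Q$-homotopy with $n'(a_1,a_2,a_3)=\mu(a_1,n(a_2,a_3))-n(\mu(a_1,a_2),a_3)+(-1)^{|a_2||a_3|}\mu(n(a_1,a_2),a_3)$. Both of these are longer but structurally routine computations with no conceptual obstruction.

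The main obstacle is (3), the \emph{strict} graded Jacobi identity. I would recast it in operator form: writing $D_a:=\{a,\cdot\}$, the Jacobiator equals $D_{\{a_1,a_2\}}-[D_{a_1},D_{a_2}]$, so (3) is equivalent to $a\mapsto D_a$ being a morphism to the graded commutator of operators. From (\ref{brack}) one has the operator identity $D_a=(-1)^{|a|}\bigl([{\bf b},\mu(a,\cdot)]-\mu({\bf b}a,\cdot)\bigr)$, and by (\ref{deriv}) each $D_a$, as well as $[D_{a_1},D_{a_2}]$, is a genuine derivation of $\mu$, so the difference is itself a derivation. The delicate point is that $\mu$ is only homotopy-associative, so expanding $[D_{a_1},D_{a_2}]$ generates terms in which left multiplications fail to compose, $\mu(a,\mu(b,\cdot))\neq\mu(\mu(a,b),\cdot)$, and these carry the associator homotopy $\nu$. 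The crux — and the reason the Jacobi identity holds on the nose rather than only up to homotopy — is that all $\nu$-dependent contributions cancel once one invokes ${\bf b}^2=0$ and $[Q,{\bf b}]=0$; this is exactly the cancellation carried out in the Lian-Zuckerman \cite{lz} and Penkava-Schwarz \cite{penkava} computations, which I would adapt term by term.
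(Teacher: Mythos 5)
The paper itself offers no proof of this Proposition (it is stated with a pointer to \cite{lz} and \cite{penkava}), so your attempt must stand on its own. Your plans for (1) and (2) are correct: (1) really is pure sign bookkeeping from ${\bf b}^2=0$ applied to (\ref{brack}), and (2) follows by graded-symmetrizing (\ref{brack}), substituting the homotopy-commutativity line of (\ref{lzrel}), and commuting ${\bf b}$ past $Q$, which is where $n=[{\bf b},m]$ and the hypothesis $[Q,{\bf b}]=0$ enter. For (4) the two-swap strategy is also the right one, but note that the associator homotopy $\nu$ does not in fact occur in $n'$: once you flip the bracket so that the product sits in the second slot, the \emph{strict} Leibniz axiom from (\ref{deriv}) applies verbatim and no reassociation of $\mu$ is ever required, so only $m$ and $n$ appear; invoking the homotopy-associativity line here is a red herring.

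The genuine gap is in (3). Your operator reformulation ($D_a=\{a,\cdot\}$, Jacobi $\Leftrightarrow D_{\{a_1,a_2\}}=[D_{a_1},D_{a_2}]$) is the right starting point, but the route you then sketch --- expanding $[D_{a_1},D_{a_2}]$ into compositions of left multiplications, letting associator terms appear, and asserting that ``all $\nu$-dependent contributions cancel once one invokes ${\bf b}^2=0$ and $[Q,{\bf b}]=0$'' --- both leaves the crux unproven and misidentifies the mechanism: the cancellation is not a consequence of $[Q,{\bf b}]=0$, and in \cite{lz} the Jacobi identity is obtained by contour manipulations in the vertex-operator realization, so there is nothing to ``adapt term by term'' in the abstract setting. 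The clean argument never expands a composition $\mu(a,\mu(b,\cdot))$ at all. Writing $L_a=\mu(a,\cdot)$, the axioms give three operator identities: (\ref{brack}) reads $(-1)^{|a|}D_a=[{\bf b},L_a]-L_{{\bf b}a}$; the strict derivation property in (\ref{deriv}) reads $[D_{a_1},L_{a_2}]=L_{\{a_1,a_2\}}$; and your item (1) reads $[{\bf b},D_{a}]=D_{{\bf b}a}$. Then
$$
(-1)^{|a_1|+|a_2|-1}D_{\{a_1,a_2\}}=[{\bf b},[D_{a_1},L_{a_2}]]-L_{{\bf b}\{a_1,a_2\}}=(-1)^{|a_1|-1}\bigl[D_{a_1},[{\bf b},L_{a_2}]-L_{{\bf b}a_2}\bigr]=(-1)^{|a_1|+|a_2|-1}[D_{a_1},D_{a_2}],
$$
where the middle equality uses the graded Jacobi identity for operator commutators together with $L_{{\bf b}\{a_1,a_2\}}=[D_{{\bf b}a_1},L_{a_2}]+(-1)^{|a_1|-1}[D_{a_1},L_{{\bf b}a_2}]$, so that the $[D_{{\bf b}a_1},L_{a_2}]$ terms cancel. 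Neither homotopy associativity nor $[Q,{\bf b}]=0$ plays any role in (3): the strict Jacobi identity is forced by the strict second-order-derivation axiom alone. Replace the deferred ``term-by-term adaptation'' with this commutator computation and the proof of (3) closes.
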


\subsection{The BV double of a Courant algebroid} We would like to extend the bracket and deform the multiplication $\mu$ in such a way that the multipication and the bracket would satisfy the homotopy BV-LZ algebra. 
We will extend the properties from the Proposition \ref{halfleib} to satisfy the following:
The operation $\mu$ is a biliniear operation of degree $0$ on the entire complex, namely
\begin{equation}
\mu: \mathcal{F}^i\otimes \mathcal{F}^j\rightarrow \mathcal{F}^{i+j},
\end{equation}
which coincides with the one from Proposition \ref{halfleib} on $\mathcal{F}^0\otimes \mathcal{F}^1$ and satisfies the conditions:
\begin{equation}
 \mu:\mathcal{V}_i\otimes\mathcal{V}_j\rightarrow \oplus_{k\ge 0}\mathcal{V}_{i+j-k};\quad {\bf c}~\mu(a_1, a_2)=(-1)^{|a_1|}\mu(a_1, {\bf c}~a_2).
\end{equation}
This, in particular, gives $\mathcal{V}_0, \mathcal{V}_1$ the module structure for the abelian algebra $V_0=\mathcal{F}^0$, i.e. 
$$
\mu:\mathcal{F}^0\otimes \mathcal{V}_i \to \mathcal{V}_i, \quad i=0,1.
$$

We assume that $\mu$ is abelian up to homotopy, and therefore we have the operation
\begin{equation}
m: \mathcal{F}^i\otimes \mathcal{F}^j\to \mathcal{F}^{i+j-1}
\end{equation}
such that 
\begin{equation}
\mu(a_1,a_2)=(-1)^{|a_1||a_2|}\mu(a_2,a_1)+Qm(a_1,a_2)+m(Qa_1, a_2)+(-1)^{|a_1|}m(a_1, Qa_2).
\end{equation}
In addition, we need the following property:
\begin{align}
\{f_1,f_2\}=(-1)^{|f_1|}({\rm b}\mu(f_1,f_2)-\mu({\rm b}f_1,f_2)-(-1)^{|f_1|}\mu(f_1,{\rm b}f_2),
\end{align}
which leads to the folowing relation:
\begin{equation}
\mu(A,u)|_{V'_0}=-{\rm c} n(A,{\rm d} u)=-{\rm c}\{A,u\}.
\end{equation}
We require that operation $m$, which is a homotopy for 
commutativity of $\mu$ is such that 
\begin{equation}
m:\mathcal{V}_i\otimes \mathcal{V}_j\to\mathcal{V}_{i+j-2}.
\end{equation}
This restriction leads to:
\begin{equation}
\mu(A,u)=\mu(u,A)-{\rm c}n(A, {\rm d} u), \quad m(A, {\rm d} u)={\rm c}n(A,{\rm d}u).
\end{equation}
Notice that the operation $n$ is related to $m$ as follows: 
\begin{equation}
n=[{\rm b},m].
\end{equation}


We impose the restriction on the operation 
$m$ so that it is nontrivial only on the half-complex, i.e. it is nonzero on the following subspaces:
\begin{equation}
m:V'_1\otimes V'_1\to V'_0.
\end{equation}

Then the following theorem holds.

\begin{Thm}\label{extfull}
There is a unique homotopy BV-LZ algebra structure on the complex 
$(\mathcal{F}^{\bullet},Q)$ generated by operators $\mu(~\cdot~,~\cdot~)$, $\{~\cdot~,~\cdot~\}$, ${\bf b}$, extending the Courant algebroid structure on $(\mathcal{V}_c^{\bullet}, {\rm d})$,
satisfying the following conditions:
\begin{enumerate}
\item The bracket operation $\{~\cdot~,~\cdot~\}: \mathcal{F}^i\otimes \mathcal{F}^j\to  \mathcal{F}^{i+j-1}$ satisfies:
\begin{equation}
\{\mathcal{V}_i, \mathcal{V}_j\}\subset\oplus_{k \ge 1}\mathcal{V}_{i+j-k}, \quad
\{\mathcal{V}_c^{\bullet}, \mathcal{V}_c^{\bullet}\}|_{\mathcal{V}_c^{\bullet}}=
[\mathcal{V}_c^{\bullet}, \mathcal{V}_c^{\bullet}]_c
\end{equation}
\item The operation $\mu: \mathcal{F}^i\otimes \mathcal{F}^j\to  \mathcal{F}^{i+j}$ satisfies:
\begin{equation}
\mu(\mathcal{V}_i, \mathcal{V}_j)\subset\oplus_{k\ge 0}\mathcal{V}_{i+j-k}, \quad 
\quad 
\mu(\mathcal{V}^0_c, \mathcal{V}^\bullet_c)|_{\mathcal{V}^\bullet_c}=\mu_0(\mathcal{V}_c^0, \mathcal{V}^\bullet_c), 
\end{equation}
while the restriction $\mu(\mathcal{V}_1,\mathcal{V}_1) |_{\mathcal{V}_0}$
is a symmetric bilinear form.

\item The operator {\bf c} satisfies the following property:
\begin{equation}
{\bf c}~\mu(a_1, a_2)=(-1)^{|a_1|}\mu(a_1,{\bf c}~ a_2)
\end{equation}
where $a_1, a_2\in (\mathcal{F}^\bullet, Q)$.
\item The homotopy of the product $m:  \mathcal{F}^i\otimes\mathcal{F}^j\subset\mathcal{F}^{i+j-1}$ is non-vanishing only on a half-complex:
\begin{equation}
m:\mathcal{V}^{1/2}_i\otimes \mathcal{V}^{1/2}_j\to\mathcal{V}^{1/2}_{i+j-2}
\end{equation}
\item The operator ${\rm d^*}:~V'_1\rightarrow V''_0$ gives a Calabi-Yau structure 
for a Courant algebroid on $\mathcal{V}^{\bullet}$, so that ${\rm b}\tilde{Q}^{-1}{\rm d^*}|_{V'_1}=\frac{1}{2}{\rm div}$.
\end{enumerate}

\end{Thm}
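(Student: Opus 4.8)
The plan is to regard $\mathbf{b}$ and $\mathbf{c}$ as ladder operators intertwining the four summands $V_0$, $V'_0\oplus V'_1$, $V''_0\oplus V''_1$, $V'''_0$, and to use the covariance of every structure map with respect to them. Since $\mathbf{b}$ and $\mathbf{c}$ restrict to isomorphisms between these pieces and obey $[\mathbf{b},\mathbf{c}]=1$, any operation satisfying the covariance conditions of the theorem is completely determined by its restriction to the half-complex $V_0\oplus V'_1$. Thus existence and uniqueness both reduce to propagating the half-complex data of Proposition~\ref{halfleib} — the Courant bracket $[\cdot,\cdot]_c$, the module product $\mu_0$, and the symmetric form $n$ — across the complex, and then checking that the homotopy BV-LZ relations (\ref{lzrel}), (\ref{brack}), (\ref{deriv}) hold. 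The ``doubling'' is literal: the operations involving $V''_\bullet$ and $V'''_0$ are forced to be $\mathbf{c}$-images of those on the lower half.

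First I would pin down $\mu$. By Proposition~\ref{halfleib} it is already fixed on $\mathcal{F}^0\otimes\mathcal{F}^1$, and condition (3), $\mathbf{c}\,\mu(a_1,a_2)=(-1)^{|a_1|}\mu(a_1,\mathbf{c}a_2)$, together with the support condition $\mu(\mathcal{V}_i,\mathcal{V}_j)\subset\oplus_{k\ge 0}\mathcal{V}_{i+j-k}$, extends it uniquely to all arguments and outputs involving $V''_\bullet$ and $V'''_0$: applying $\mathbf{b}$ and using $[\mathbf{b},\mathbf{c}]=1$ recovers the value on any $\mathbf{c}$-source. The bracket is then not independent — formula (\ref{brack}) produces $\{\cdot,\cdot\}$ from $\mu$ and $\mathbf{b}$, and one verifies directly that it restricts to $[\cdot,\cdot]_c$ on $\mathcal{V}_c^\bullet$ (condition (1)) and lands in $\oplus_{k\ge 1}\mathcal{V}_{i+j-k}$. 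Finally, condition (4) forces the commutativity homotopy $m$ to vanish off the half-complex, where it is fixed by $m(A,B)\in V'_0$ and by $n=[\mathbf{b},m]$; this identifies $n$ with the symmetric pairing already present on $V'_1$.

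With all operations in hand, I would verify (\ref{lzrel}) and the consequences collected in the Proposition following (\ref{deriv}) by reducing each identity to the half-complex. Because every operation commutes appropriately with $\mathbf{b}$ and $\mathbf{c}$, and $Q={\rm d}+{\rm d^*}+\tilde{Q}$ with $[Q,\mathbf{b}]=0$, applying $\mathbf{b}$ or $\mathbf{c}$ transports a proposed relation between halves; it therefore suffices to check it when all arguments lie in $V_0\oplus V'_1$, where it is exactly a Courant-algebroid axiom — the Leibniz relation (\ref{Leib}), the module and derivation properties of Proposition~\ref{halfleib}, and Theorem~\ref{RWth} for the antisymmetrized bracket. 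The genuinely new input is condition (5): I would show that ${\rm d^*}:V'_1\to V''_0$, transported back by $\mathbf{b}\tilde{Q}^{-1}$, satisfies the three divergence axioms of the Calabi-Yau structure, with $\mathbf{b}\tilde{Q}^{-1}{\rm d^*}|_{V'_1}=\frac{1}{2}{\rm div}$. Here the conjugacy of ${\rm d}$ and ${\rm d^*}$ under the odd pairing of Subsection 2.2 yields ${\rm div}\,\partial=0$ and the compatibility with $\mu_0$, while the derivation property of $\{\cdot,\cdot\}$ supplies the Leibniz identity for ${\rm div}$.

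The hard part will be this last step, where the interaction of $\tilde{Q}$ with the product must be controlled. The relations ${\rm div}\,\partial=0$ and the module compatibility follow cleanly from pairing-conjugacy, but the identity ${\rm div}[A_1,A_2]_c=[A_1,{\rm div}A_2]_c-[A_2,{\rm div}A_1]_c$ couples the second-order (non-derivation) behaviour of $\mathbf{b}$ — encoded by the very bracket (\ref{brack}) — with $\tilde{Q}^{-1}$, and must be matched against the derivation-up-to-homotopy relation between $\{\cdot,\cdot\}$ and $\mu$; the combination $n'$ appearing there has to be shown to produce no anomalous terms, and tracking the normalization coming from $[\mathbf{b},\mathbf{c}]=1$ is what fixes the factor $\frac{1}{2}$. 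Running parallel to this, one must exhibit an associativity homotopy $\nu$ so that the third line of (\ref{lzrel}) holds: $\mu$ is strictly associative only on $\mathcal{V}_c^\bullet$, so its failure elsewhere must be shown to be $Q$-exact, which again reduces to the half-complex by covariance but requires the obstruction to land in the image of $Q$. Establishing that these requirements are simultaneously met — leaving $\mu$ and $\{\cdot,\cdot\}$ with no residual freedom — is the crux of both the existence and the uniqueness claims.
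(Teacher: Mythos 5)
Your overall architecture is the paper's: use the $\mathbf{b}$-, $\mathbf{c}$-covariance conditions together with the support conditions to propagate $\mu_0$, $[\cdot,\cdot]_c$, $n$ from the half-complex to all of $(\mathcal{F}^{\bullet},Q)$, obtain $\{\cdot,\cdot\}$ from $\mu$ via (\ref{brack}), and then verify (\ref{lzrel}). The paper executes the first step as an exhaustive case-by-case determination of $\mu$ on all pairs drawn from $V_0, V'_1, V'_0, V''_1, V''_0, V'''_0$ (the table (\ref{mutab})), which is exactly the "$\mathbf{c}$-image" propagation you describe, so on uniqueness your plan is sound.

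There is, however, one step in your proposal that would fail as written: you derive the Calabi--Yau axioms (${\rm div}\,\partial=0$ and ${\rm div}(uA)=u\,{\rm div}A+\langle\partial u,A\rangle$) from "the conjugacy of ${\rm d}$ and ${\rm d^*}$ under the odd pairing of Subsection 2.2." That pairing is not among the hypotheses of Theorem \ref{extfull}, and even where it exists, conjugacy of ${\rm d}$ and ${\rm d^*}$ alone does not produce the module-compatibility axiom --- that would additionally require an invariance of the pairing under $\mu$, which is only introduced later for $\mathcal{O}_X$-algebroids. The paper instead extracts all three CY axioms from the requirement (\ref{qmu}) that $Q$ be a derivation of $\mu$: the case $\mu(u,A)$ yields ${\rm d}^*\mu_0(uA)=\tfrac12\tilde Q{\rm c}\,n({\rm d}u,A)+\mu(u,{\rm d}^*A)$, i.e.\ the second CY axiom after applying ${\rm b}\tilde Q^{-1}$, and the case $\mu(A_1,A_2)$ yields ${\rm d}^*[A_1,A_2]_c=[A_1,{\rm d}^*A_2]_c-[A_2,{\rm d}^*A_1]_c$; the first axiom is just ${\rm d}^*{\rm d}=0$ from $Q^2=0$. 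Relatedly, your claim that every relation "reduces to the half-complex, where it is exactly a Courant-algebroid axiom" is too strong: $\mu(A_1,A_2)$ lands in $V''_1\oplus V''_0$, so the $Q$-Leibniz rule and the associativity defect genuinely mix the two halves, and the latter closes only after an explicit chain of identities (derivation property of $\{A,\cdot\}$ on $m$, $[A,u]_c=n(A,{\rm d}u)$, and the CY axioms) producing the concrete homotopy $\nu(A_1,A_2,A_3)=\mu(m(A_1,A_3),A_2)-\mu(m(A_2,A_3),A_1)$ with the extra components on $V''_0\otimes V'_1\otimes V'_1$ and $V'_1\otimes V''_0\otimes V'_1$. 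You correctly flag this as the crux, but the $Q$-exactness of the associativity defect is not a consequence of covariance alone; it has to be computed.
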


\begin{proof}
We have to show that the bilinear operations are uniquely defined and satisfy the given properties while its relation with operator ${\rm div}$ give a Calabi-Yau structure.

We notice that for anny $f_1, f_2\in (\mathcal{F}^{\bullet}, Q)$:
\begin{equation}\label{cbrprop}
{\bf c}~\{a_1, a_2\}=(-1)^{|a_1|-1}\{a_1,{\bf c}~ a_2\}
\end{equation}
Indeed,
\begin{align}
{\bf c}\{a_1,a_2\}=&&\\
(-1)^{|a_1|}({\bf c}{\bf b}\mu(a_1,a_2)-{\bf c}\mu({\bf b}a_1,a_2)-(-1)^{|f_1|}{\bf c}\mu(a_1,{\bf b}a_2))=&&\nonumber\\
(-1)^{|f_1|}(1-{\bf b}{\bf c})\mu(a_1,a_2)-{\bf c}\mu({\bf b}a_1,a_2)-\mu(a_1,(1-{\bf b}{\bf c})a_2))=&&\nonumber\\
(-1)^{|a_1|}(-(-1)^{|a_1|}{\bf b}\mu(a_1,{\bf c}a_2)+(-1)^{|a_1|}\mu({\bf b}a_1,{\bf c}a_2)+\mu(a_1,{\bf b}{\bf c}a_2))=&&\nonumber\\
(-1)^{|a_1|-1}\{a_1,{\bf c}~ a_2\}.&&\nonumber
\end{align}

That in particular implies that 
the operation $\{V'_1, \cdot\}$ preserves both degrees, namely:
\begin{equation}
\{V'_1,\mathcal{F}^i\}\subset \mathcal{F}^i, \quad \{V'_1,\mathcal{V}_i\}\subset \mathcal{V}_i.
\end{equation}
To show that it is enough to prove that $\{V'_1,V''_1\}\subset V''_1$ which follows from
\begin{equation}
\{V'_1, {\rm  c}V'_1\}\subset {\rm c}V'_1
\end{equation} 
since ${\rm c}$ is isomorphism.


Following the property of ${\bf c}$-operator 
we have 
\begin{equation}
\mu(v, {\bf c} u)=-{\bf c}\mu(v, u)=0
\end{equation}
for $u\in V_0, v\in V'_0$. Therefore, since  $\mu(u,v)=\mu(v,u)\in V'_0$,
\begin{eqnarray}
 \mu(v_1,v_2)=0 
\end{eqnarray}
for any $v_1, {v}_2\in V'_0$. 

We also obtain that if $v\in V'_0, A\in V'_1$ and the fact that 
${\rm c}\mu(A,u)=-\mu(A,{\rm c}u)$,
\begin{equation}
\mu(A, v)=-\mu(v, A)=-{\rm c}\mu_0(A, {\rm b} v)\in V_1''
\end{equation}
given the structure of the homotopy commutativity of the product.

Now an interesting product is between two elements $V'_1$. That leads to 
\begin{equation}
\mu(A_1,A_2)=[A_1,A_2]_1+\frac{1}{2}\langle A_1,A_2 \rangle_1,
\end{equation}
where the first and the second term belong respectively to $V''_1$, $V''_0$. 
Note, that $\langle A_1,A_2 \rangle_1$ is symmetric pairing since $\mu(\mathcal{V}_1,\mathcal{V}_1) |_{\mathcal{V}_0}$ is.
The relation between the bracket operation and the product shows that 
\begin{equation}
[A_1,A_2]_c=-{\rm b}\mu(A_1,A_2)=-{\rm b}[A_1,A_2]_1
\end{equation}
Similarly, given that  
\begin{equation}
\mu(A_1,A_2)+\mu(A_2,A_1)=(Qm+mQ)(A_1,A_2)
\end{equation}
we obtain 
\begin{eqnarray}
{\tilde{Q}}~m(A_1,A_2)=\langle A_1,A_2 \rangle_1.
\end{eqnarray}
If $\tilde{A}\in V''_0$, we have:
\begin{equation}
\mu(u, \tilde{A})=c\mu_0(u, {\rm b}\tilde{A}) 
\end{equation}
for $u\in V_0$. Also, the following product vanishes, since $V'''_1=0$:
\begin{eqnarray}
\mu(v, \tilde{A})=-{\rm c}\mu(v, {\rm b}\tilde{A})=
\mu(\tilde{A}, v)=0.
\end{eqnarray}
Furthermore, if ${v} \in V'_0$, we have:
\begin{equation}
Q\mu(u,v)=\mu(Qu,v)+\mu(u,Qv)
\end{equation}
and thus $\tilde{Q}\mu(u,v)=\mu(u,\tilde{Q}v)=\mu(\tilde{Q}v, u)$ and hence we have:
\begin{equation}
\mu(u, \tilde{v})=\mu(\tilde{v},u)=\tilde{Q}{\rm c}\mu_0(u, {\rm b}\tilde{Q}^{-1}\tilde{v})
\end{equation}
for $\tilde{v} \in V''_0$ and 
\begin{equation}
\mu(u, \tilde{u})=\mu(\tilde{u},u)={\rm c}\mu(u, {\rm b}\tilde{u})=
{\rm c}\tilde{Q}{\rm c}\mu_0(u,{\rm b}\tilde{Q}^{-1}{\rm b}\tilde{u}).
\end{equation}
At the same time:
\begin{eqnarray}
\mu(\tilde{v},{A})=\mu({A}, \tilde{v})=
-{\rm c}\{A, \tilde{v}\}\in V'''_0.
\end{eqnarray}
The last equality follows from the relation between the bracket and the product.
One can relate that indeed with the bracket structure:
\begin{eqnarray}
\{{A}, \tilde{u}\}=-\mu(A, {\rm b}\tilde{u})={\rm c}\{A, {\rm b}\tilde{u}\}.
\end{eqnarray}
To obtain the relation between these brackets and the original anchor maps of Courant algebroid, relate it to $\{A,v\}$, where $v\in V'_0$.
Indeed,
\begin{equation}
{\tilde{Q}}\{A,v\}=Q\{A,v\}|_{V'_0}=\{A,Qv\}|_{V''_0}=\{A, \tilde{Q}v\}.
\end{equation}
The last equality follows from the fact that $\{V'_1, V''_1\}\subset V''_1$. 

Since ${\tilde Q}$ is an isomorphism we can characterize
\begin{align}
&&\{A, \tilde{v}\}={\tilde Q}\{A, {\tilde Q}^{-1}\tilde{v}\}=
{\tilde Q}{\rm c}[A, {\rm b}{\tilde Q}^{-1}\tilde{v}]_c.\nonumber\\
&&\mu(\tilde{v},{A})=\mu({A}, \tilde{v})=-{\rm c}{\tilde Q}{\rm c}[A, {\rm b}{\tilde Q}^{-1}\tilde{v}]_c
\end{align}
The product structure between $v\in V'_0$ and $\tilde{v}\in V''_0$ is obtained from teh following relation
\begin{equation}
\mu( v, \tilde v)=\mu(\tilde v, v)=c\mu(\tilde v, {\rm b}v)={\rm c}\mu_0({\rm b}v,\tilde v).
\end{equation}
Finally, the product structure of $\tilde{A}\in V''_1$ and $A\in V'_1$ 
\begin{equation}
\mu(A, \tilde{A})= \mu(\tilde{A}, A)\in V'''_0
\end{equation}
since the homotopy is nontrivial only on $V'_1\otimes V'_1$.  
We have 
\begin{equation}
\mu(A, \tilde{A})=\mu(A, {\rm cb}\tilde{A})=-{\rm c}\mu(A, {\rm b}\tilde{A})=
-\frac{1}{2}{\rm c}\langle A, {\rm b}\tilde{A}\rangle_1=
-\frac{1}{2}{\rm c}\tilde{Q}m(A, {\rm b}\tilde{A})
\end{equation}
At the same time, the relation 
\begin{equation}
{\rm b}\{A, \tilde{A}\}=\{A, {\rm b}\tilde{A}\}
\end{equation}
fixes the entire bracket structure of $\{A, \tilde{A}\}$. 

At the same time we have:
\begin{equation}
\{\tilde{A}, A\}= {\rm b}\mu( \tilde{A}, A)-\mu({\rm b}\tilde{A}, A),
\end{equation}
where the right hand side is known. 
Finally for $\tilde{A}_1, \tilde{A}_2\in V''_1$ we obtain:
\begin{eqnarray}
\{\tilde A_1, \tilde A_2\}=-\mu(\tilde{A}_1, {\rm b}\tilde{A_2})-\mu({\rm b}\tilde{A}_1, \tilde{A_2})={\rm c}\langle {\rm b}\tilde{A}_1, {\rm b} \tilde{A}_2\rangle_1.
\end{eqnarray}

Thus we fixed uniquely the bilinear operations $\mu(~\cdot~,~\cdot ~)$ and $\{\cdot, \cdot\}$. 

Let us summarize the results:

\begin{equation}\label{mutab}
\mu(a_1,a_2)=
\end{equation}
\begin{equation}
\scalebox{0.65}[0.7]{
\begin{tabular}{|l|c|c|c|c|c|r|}
\hline
 \backslashbox{$a_2$}{$a_1$}&          $u_1$ & $A_1$ & $v_1$ &$\tilde A_1$ &$\tilde v_1$& $\tilde u_1$ \\
\hline
$u_2$ &                  $\mu_0(u_1,u_2)$    &$\mu_0(u_2,A_1)$ &${\rm c}\mu_0(u_2, {\rm b}v_1)$&
${\rm c}\mu_0(u_2, {\rm b}\tilde{A_1})$&
${\tilde{Q}}'\mu_0(u_2, \tilde{Q'}^{-1}\tilde{v}_1)$ 
&${\tilde{Q}}''\mu_0(u_2,{\tilde{Q}}''^{-1}\tilde{u}_1)$\\
&  & $-m(A_1,{\rm d}u_2)$  &  &   && \\
\hline
$A_2$     &  $\mu_0(u_1,A_2)$ & $-c[A_1,A_2]_c+$ &${\rm c}\mu_0(A_2, {\rm b} v_1)$  
&  $-\frac{1}{2}{\rm c}\tilde{Q}m(A_2, {\rm b}\tilde{A}_1)$&$-{\tilde Q}''[A_2, {\rm b}{\tilde Q}'^{-1}\tilde{v}_1]_c$&0\\                    
& & $\frac{1}{2}\tilde{Q}m(A_1, A_2)$  &   &   && \\
\hline
$v_2$& ${\rm c}\mu_0(u_1, {\rm b}v_2)$ &  $-{\rm c}\mu_0(A_1, {\rm b} v_2)$ & 0 & $0$ &${\rm c}{\tilde Q}'\mu_0({\rm b}v_2,{\tilde Q}'^{-1}\tilde{v}_1)$& 0\\
\hline
$\tilde A_2$ & ${\rm c}\mu_0(u_1, {\rm b}\tilde{A_2})$ &$-\frac{1}{2}{\rm c}\tilde{Q}m(A_1, {\rm b}\tilde{A}_2)$&  
0 & 0& 0&0\\
\hline
$\tilde v_2$& ${\tilde{Q}}'\mu_0(u_1, \tilde{Q'}^{-1}\tilde{v}_2)$ &   $-{\tilde Q}''[A_1, {\rm b}{\tilde Q}'^{-1}\tilde{v}]_c$& ${\rm c}{\tilde Q}'\mu_0({\rm b}v_1,{\tilde Q}'^{-1}\tilde{v}_2)$ & 0  &0& 0\\
\hline
$\tilde u_2$    & ${\tilde{Q}}''\mu_0(u_1,{\tilde{Q}}''^{-1}\tilde{u}_2)$ &   0& 0 & 0  &0& 0\\
\hline
\end{tabular}}\nonumber\\
\end{equation}
\vspace{3mm}

Here $\tilde{Q}'=\tilde{Q}{\rm c}$, $\tilde{Q}''={\rm c}\tilde{Q}{\rm c}$,and $u_1, u_2\in V_0$, $v_1, v_2\in V'_0$, $A_1, A_2\in V'_1$, $\tilde{A}_1, \tilde{A}_2\in V''_1$, $\tilde{u}_1, \tilde{u}_2\in V'''_0$.

Let us investigate how the differential affects the bilinear operation and show that it is indeed a Calabi-Yau structure.

Let us look at the property 
\begin{eqnarray}\label{qmu}
Q\mu(a_1, a_2)=\mu(Qa_1, a_2)+(-1)^{|a_1|}\mu(a_1, Qa_2). 
\end{eqnarray}
Suppose $a_1=u\in V_0$, $a_2=A \in V'_1$, then 
\begin{align}
Q\mu(u, A)={\rm d^*}\mu_0(uA),&&\\
\nonumber\mu(Qu, A)+\mu(u, QA)=[{\rm d}u, A]_1+\frac{1}{2}\langle {\rm d}u, A\rangle_1+
\mu(u, {\rm d^*}A)=\frac{1}{2}\tilde{Q}{\rm c} n({\rm d}u, A)+\mu(u, {\rm d^*}A),&&
\end{align}
where we remind that $[{\rm d}u,A]_c=0$. Thus, if ${\rm b}\tilde{Q}^{-1}{\rm d^*}|_{V'_1}=\frac{1}{2}{\rm div}$ then we obtain the first property of the Calabi-Yau structure.

Let's consider the case $f_1=A_1, f_2=A_2 \in V'_1$:
\begin{align}
Q\mu(A_1, A_2)=-{\rm d}^*{\rm c}[A_1, A_2]_c={\rm c}{\rm d}^*[A_1, A_2]_c,\\
\mu(QA_1, A_2)+(-1)^{|f_1|}\mu(A_1, QA_2)=-{\rm c}[A_2, {\rm d}^*A_1]_c+{\rm c}[A_1, {\rm d}^*A_2]_c,\nonumber
\end{align}
which leads exactly to the second Calabi-Yau property.
The rest of the cases corresponding to (\ref{qmu}) are verified in a similar way.

Let us verify he associativity conditions: we will verify it in the nontrivial situations.

Let $A_1, A_2, A_3 \in\mathcal{F}^1$. Then 
\begin{align}
&&\mu(\mu(A_1, A_2), A_3)-\mu(A_1, \mu(A_2, A_3))=\\
&&\mu(-{\rm c}[A_1, A_2]_c+\frac{1}{2}\tilde{Q}m(A_1,A_2), A_3)-
\mu(A_1, -{\rm c}[A_2, A_3]_c+\frac{1}{2}\tilde{Q}m(A_2,A_3))=\nonumber\\
&&\frac{1}{2}{\rm c}\tilde{Q}\Big(m(A_3, [A_1, A_2]_c)-\{A_3, m(A_1, A_2)\}-
m(A_1, [A_2,A_3]_c)+\{A_1, m(A_2, A_3)\}\Big)=\nonumber\\
&&\frac{1}{2}{\rm c}\tilde{Q}\Big(m(A_3, [A_1, A_2]_c)-\{A_3, m(A_1, A_2)\}-
m(A_1, [A_2,A_3]_c)+\nonumber\\
&&m([A_1,A_2], A_3)+m(A_2, [A_1,A_3])\Big)=\nonumber\\
&&\frac{1}{2}{\rm c}\tilde{Q}\Big(m(A_3, [A_1, A_2]_c)-\{A_3, m(A_1, A_2)\}-
m(A_1, [A_2,A_3]_c)+\nonumber\\
&&m(dn(A_1, A_2), A_3)-m([A_2,A_1]_c, A_3)+m(A_2, [A_1,A_3]_c)\Big)=\nonumber\\
&&\frac{1}{2}{\rm c}\tilde{Q}\Big(m(A_3, [A_1, A_2]_c)-
m(A_1, [A_2,A_3]_c)-m([A_2,A_1]_c, A_3)+m(A_2, [A_1,A_3]_c)\Big)=\nonumber\\
&&\frac{1}{2}c\tilde{Q}\Big(\{A_1, m(A_2, A_3)\}-\{A_2, m(A_1, A_3)\}\Big)=\nonumber\\
&&\frac{1}{2}{\rm c}\tilde{Q}(m(A_1, {\rm d}n(A_2, A_3))-m(A_2, {\rm d}n(A_1, A_3))=\nonumber\\
&&\frac{1}{2}{\rm c}\tilde{Q}{\rm c}({\rm div}(\mu(n(A_2,A_3),A_1)-\mu(n(A_2,A_3),{\rm div}A_1)-\mu(n(A_1,A_3),A_2)+\mu(n(A_1,A_3),{\rm div}A_2)=\nonumber\\
&&\frac{1}{2}{\rm c}\tilde{Q}{\rm c}~{\rm div}~{\rm b}(\mu(m(A_2,A_3),A_1)-\mu(m(A_1,A_3),A_2))+\nonumber\\
&&\mu (m(A_1, A_3), \frac{1}{2}\tilde{Q}{\rm c}{\rm div}A_2)-
\mu(m(A_2,A_3), \frac{1}{2}\tilde{Q}{\rm c}{\rm div}A_1)=\nonumber\\
&&{\rm d^*}(\mu(m(A_1,A_3),A_2)-\mu(m(A_2,A_3),A_1))+\mu (m(A_1, A_3), {\rm d}^*A_2)-
\mu(m(A_2,A_3), {\rm d}^*A_1).\nonumber
\end{align}
We used the fact that 
$$
{\rm d^*}|_{V''_1}=-\frac{1}{2}{\rm c}\tilde{Q}{\rm c}{\rm div} {\rm b}, \quad m(A,B)={\rm c}n(A, B)
$$
 for $A, B\in V'_1$, and
\begin{align}
\{A,m(B, C)\}=m([A, B]_c, C)+m(B, [A, C]_c), \quad [A,u]_c=n(A, {\rm d}u), \nonumber\\
 {\rm div}\mu(u, A)=\mu(u,{\rm div}A)+n({\rm d u}, A),
\end{align}
where $u\in V_0$, $A,B,C\in V'_1$.

Thus we obtain explicit formulae for the homotopy operation $\nu$:
\begin{align}
\nu(A_1, A_2, A_3)=\mu(m(A_1,A_3),A_2)-\mu(m(A_2,A_3),A_1)\\
\nu(\tilde{v}, A_2, A_3)=\nu(A_2,\tilde{v}, A_3)=-m(A_2, A_3)\tilde{v},\nonumber
\end{align}
where $\tilde{v}\in V''_0$. One can show that this operation is nontrivial
only on the spaces $V'_1\otimes V'_1\otimes V'_1$, $V''_0\otimes V'_1\otimes V'_1$, and $V'_1\otimes V''_0\otimes V'_1$ and their values is given by above formulas. We leave the reader to check all other cases. 
\end{proof}

We will call the resulting homotopy BV-LZ algebra {\it the BV double of Courant algebroid}. 
 
The Theorem \ref{extfull} actually allows us to reformulate Courant algebroid with a Calabi-Yau structure as a homotopy BV algebra. Indeed, one can impose certain properties on the BV-LZ-operations on $(\mathcal{F}^{\bullet}, Q)$ to reproduce the Courant algebroid on $(\mathcal{V}^{\bullet}_c, {\rm d})$ and the related CY structure. Namely, the following Theorem holds.

\begin{Thm}\hfill
\begin{enumerate}
\item The homotopy BV-LZ algebra on the complex $(\mathcal{F}^{\bullet}, Q)$, satisfying the following properties:\\
\begin{enumerate}
\item The action $\mu(\mathcal{F}^0~,~\cdot):\mathcal{F}^{i}\rightarrow \mathcal{F}^{i}$ produces associative algebra structure on $\mathcal{F}^0$ and $\mathcal{F}^0$-module structure on $\mathcal{F}^{i}$ for all $i$.
\item $\{\mathcal{V}_i, \mathcal{V}_j\}\subset\oplus_{k \ge 1}\mathcal{V}_{i+j-k}.$
\item
$
\mu(\mathcal{V}_i, \mathcal{V}_j)\subset\oplus_{k\ge 0}\mathcal{V}_{i+j-k},\nonumber
$
while the restriction $\mu(\mathcal{V}^{1/2}_1,\mathcal{V}^{1/2}_1) |_{\mathcal{V}_0}$
is a symmetric bilinear form.
\item 
${\bf c}~\mu(a_1, a_2)=(-1)^{|a_1|}\mu(a_1,{\bf c}~ a_2)$
\item \label{flinear}The homotopy of the product $m:  \mathcal{F}^i\otimes\mathcal{F}^j\subset\mathcal{F}^{i+j-1}$ is non-vanishing only on a half-complex:
$$m:\mathcal{V}^{1/2}_i\otimes_{\mathcal{F}^0} \mathcal{V}^{1/2}_j\to\mathcal{V}^{1/2}_{i+j-2},$$
\end{enumerate}
gives rise to the Courant algebroid on $(\mathcal{V}^{\bullet}_c, {\rm d})$ and its BV double on $(\mathcal{F}^{\bullet}, Q)$. The corresponding CY structure is given by ${\rm div}=2{\rm b}\tilde{Q}^{-1}{\rm d^*}|_{V'_1}$.\\
 
\item The associativity homotopy $\nu$ for this homotopy BV-LZ algebra is nontrivial only only on the spaces $V'_1\otimes V'_1\otimes V'_1$, $V''_0\otimes V'_1\otimes V'_1$, and $V'_1\otimes V''_0\otimes V'_1$ and the values are given by:
\begin{align}
\nu(A_1, A_2, A_3)=\mu(m(A_1,A_3),A_2)-\mu(m(A_2,A_3),A_1)\nonumber\\
\nu(\tilde{v}, A_2, A_3)=\nu(A_2,\tilde{v}, A_3)=-m(A_2, A_3)\tilde{v},\nonumber
\end{align}
where $A_i\in V'_1$ $(i=1,2,3)$,  $\tilde{v}\in V''_0$.

\end{enumerate}

\end{Thm}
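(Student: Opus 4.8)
The plan is to run the argument of Theorem~\ref{extfull} in reverse: instead of building the BV-LZ operations from a prescribed Courant algebroid, I extract Courant and Calabi--Yau data from the abstract BV-LZ structure and check that the axioms of Section~3 hold. Since the hypotheses (a)--(e) are exactly conditions (1)--(5) of Theorem~\ref{extfull} with the phrase ``extending the Courant algebroid structure'' removed, the uniqueness clause of that theorem will, once the axioms are verified, identify the given algebra with the BV double. I begin by defining the candidate operations on $(\mathcal{V}^\bullet_c,\mathrm{d})$ by restriction. Property (a) (together with the support condition (e), which kills all the $m$-corrections) makes $\mu_0:=\mu|_{\mathcal{V}^\bullet_c}$ an abelian product on $V_0$ with a compatible $V_0$-action on $V'_1$. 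Property (d) yields the $\mathbf{c}$-equivariance (\ref{cbrprop}) of the bracket exactly as in Theorem~\ref{extfull}, and since $\mathbf{b}$ kills $V'_1$ the derivation property of $\mathbf{b}$ over $\{\cdot,\cdot\}$ gives $\mathbf{b}\{A,B\}=0$ for $A,B\in V'_1$; hence $\{A,B\}$ has no $V'_0$-component and I may set $[A,B]_c:=\{A,B\}=-\mathbf{b}\mu(A,B)\in V'_1$. The pairing is $\langle A,B\rangle:=n(A,B)$ with $n=[\mathbf{b},m]$, symmetric and supported on $V'_1\otimes V'_1\to V_0$ by the half-complex restriction (e).

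Next I derive the six Courant axioms by specializing the BV-LZ Proposition of Section~4.1 to arguments in $V_0$ and $V'_1$. Its Jacobi identity restricts directly to axiom~(4). In its homotopy-symmetry relation one observes that $QA=\mathrm{d}^*A\in V''_0$ and $QB\in V''_0$ lie outside the half-complex domain of $n$, so $n(QA,B)=n(A,QB)=0$ and the relation collapses to $\{A,B\}+\{B,A\}=Qn(A,B)$, i.e.\ axiom~(3) with $\partial=\mathrm{d}$. Evaluating the derivation rule (\ref{deriv}) of $\{A,\cdot\}$ over $\mu$ on the pair $(u,B)$ and reading off the anchor $\{A,u\}=n(A,\mathrm{d}u)$ produces axiom~(1), while the same derivation applied to $n$ yields axiom~(2). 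Axioms (5) and (6) follow from $\{V_0,\cdot\}=0$ and the vanishing of $n$ on exact arguments, both forced by the grading hypotheses (b) and (e).

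For the Calabi--Yau structure I set $\mathrm{div}:=2\mathbf{b}\tilde{Q}^{-1}\mathrm{d}^*|_{V'_1}$ and verify the three divergence axioms from the $Q$-derivation property of $\mu$, the first line of (\ref{lzrel}). Projecting $Q\mu(u,A)=\mu(Qu,A)+\mu(u,QA)$ by $\mathbf{b}\tilde{Q}^{-1}$ reproduces $\mathrm{div}(uA)=u\,\mathrm{div}A+\langle\mathrm{d}u,A\rangle$; the same identity on $\mu(A_1,A_2)$ gives $\mathrm{div}[A_1,A_2]_c=[A_1,\mathrm{div}A_2]_c-[A_2,\mathrm{div}A_1]_c$, and $\mathrm{div}\,\mathrm{d}=0$ is immediate from $\mathrm{d}^*\mathrm{d}=0$. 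Having produced a Courant algebroid with CY structure, the uniqueness part of Theorem~\ref{extfull} forces the original operations to agree with those it constructs from this data, so the algebra is its BV double. Part~(2) then needs no further work: the stated formula for $\nu$ is exactly the output of the associativity computation that closes the proof of Theorem~\ref{extfull}, and its vanishing off the three listed triples of spaces follows from the support conditions on $m$ and $\mu$.

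The main obstacle will be the signs and bookkeeping in axiom~(2), where the derivation of $\{A,\cdot\}$ over the pairing must be traced through the identifications $m(A,B)=\mathbf{c}\,n(A,B)$ and $\mathrm{d}^*|_{V''_1}=-\tfrac12\mathbf{c}\tilde{Q}\mathbf{c}\,\mathrm{div}\,\mathbf{b}$---the same interlocking of $m$, $\tilde{Q}$, and the divergence that drives the long associativity chain in Theorem~\ref{extfull}. A secondary subtlety is confirming that each restricted operation genuinely closes inside the half-complex rather than leaking into $V'_0$ or $V''_0$; this is precisely what the $\mathbf{c}$-equivariance (\ref{cbrprop}) and the relation $\mathbf{b}\{V'_1,V'_1\}=0$ are there to guarantee.
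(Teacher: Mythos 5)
Your overall strategy coincides with the paper's: restrict the BV-LZ operations to the half-complex $V_0\xrightarrow{\rm d}V'_1$, use the support and $\mathbf{c}$-equivariance hypotheses to show the restricted bracket closes in $V'_1$ (via $\mathbf{b}\{V'_1,V'_1\}=0$) and that $n=[\mathbf{b},m]$ lands in $V_0$, read off the Courant axioms from the BV-LZ relations, identify ${\rm div}=2\mathbf{b}\tilde{Q}^{-1}{\rm d}^*|_{V'_1}$ from the $Q$-derivation property of $\mu$, and delegate part (2) to the associativity computation already carried out in the proof of Theorem \ref{extfull}. All of that matches the paper.

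The one place where your argument as written does not go through is Courant axiom (2), the derivation property $[A,n(B,C)]_c=n([A,B]_c,C)+n(B,[A,C]_c)$. You assert it follows from ``the same derivation applied to $n$,'' but the BV-LZ axioms only make $\{A,\cdot\}$ an exact derivation of $\mu$ (relation (\ref{deriv})); there is no axiom making it a derivation of $n$, and establishing that is precisely the content of the paper's proof of this theorem. The paper's route is: apply (\ref{deriv}) to $\mu(B,C)$ for $B,C\in V'_1$ and project onto the $V''_0$-component $\langle B,C\rangle_1=\tilde{Q}m(B,C)$ of the product (using that $\{V'_1,\cdot\}$ preserves both gradings, a consequence of (\ref{cbrprop})), which gives $\{A,\langle B,C\rangle_1\}=\langle[A,B]_c,C\rangle_1+\langle B,[A,C]_c\rangle_1$; then use $Q$-equivariance of the bracket together with the vanishing $\{QA,\langle B,C\rangle_1\}=0$ (which rests on $\mu(V''_0,V''_0)=0$, hence $\{V''_0,V''_0\}=0$) to trade $\tilde{Q}m$ for $m$; and finally apply $\mathbf{b}$ to descend from $m$ to $n$. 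This is a genuine chain of identifications rather than the ``signs and bookkeeping'' you anticipate, and your sketch supplies neither the key vanishing statement nor the $\tilde{Q}$-transfer. With that step filled in, the rest of your outline reproduces the paper's proof.
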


\begin{proof}
We have to show that the projection on $(\mathcal{V}_c^{\bullet}, Q)=V_0\xrightarrow{\rm d} V'_1$ reproduces the properties of the Courant algebroid. 
Property (1) gives the structure of associative algebra reproducing the operation $\mu_0$. 
It indeed satisfies property:
$$
{\rm d}\mu_0(u_1, u_2)= \mu_0({\rm d}u_1, u_2)+\mu_0(u_1, {\rm d}u_2)
$$
Notice that if $A, B\in V'_1$ we obtain that the bracket defined as 
$[A, B]_c:=\{A,B\}|_{(\mathcal{V}_c^{\bullet}, Q)}$ satisfies:
\begin{align}
&&[A, \mu_0(u_1,u_2)]_c=\mu_0([A, u_1]_c,u_2)+\mu_0(u_1,[A, u_2]_c)\nonumber \\
&&[A, \mu_0(u,B)]_c=\mu_0([A, u]_c, B)+\mu_0(u,[A,B]_c)\nonumber\\
\end{align}

Since $m:V'_1\otimes V'_1\to V_0$ we obtain that the homotopy for the bracket is $n:={\bf b}m$, so that 
\begin{equation}\label{pracom}
[A,B]_c=-[B,A]_c+{\rm d}n(A, B)
\end{equation}
for $A, B\in V'_1$, and  by property (\ref{flinear}) we have:
\begin{eqnarray}
\mu_0(u, n(A,B))=n(\mu_0(u,A),B)=n(A, \mu_0(u,B))
\end{eqnarray}
From the relation between the product and the bracket as well as the property (\ref{pracom}) we have:
$$
[u, A]_c=0, \quad [A, u]_c=n(A, {\rm d}u)
$$
The Leibniz property of the bracket with respect to differential implies
$$
[{\rm d}u_1, u_2]_c=0, \quad [A, {\rm d}u]={\rm d}[A, u]_c
$$

There is one final relation one has to check and it is that $[A,\cdot]_c$ satisfies the derivation property for $n$. 

\begin{equation}\label{nleibpr}
[A, n(B,C)]_c=n([A, B]_c, C)+n(B,[A,C]_c)
\end{equation}

To show that we notice that 
$$
\mu(A_1,A_2)=[A_1,A_2]_1+\frac{1}{2}\langle A_1,A_2 \rangle_1,
$$
where $[A_1,A_2]_1\in V''_1$, $\langle A_1,A_2 \rangle_1\in V_0''$, so that 
$$
\langle A_1,A_2 \rangle_1=\tilde{Q}m(A_1, A_2), \quad [A_1,A_2]_1=-{\bf c}[A_1, A_2]_c.
$$
Note that this immediately leads to:
\begin{equation}
\{A, \langle B,C\rangle\}=
\langle [A, B]_c, C\rangle_1+\langle B, [A, C]_c\rangle_1.
\end{equation}
Using the fact that the $\{\cdot, \cdot\}$ is $Q$-invariant, we have
\begin{equation}
Q\{A, m(B,C)\}=-\{QA, \langle B,C\rangle_1\}+\{A, Qm(B,C)\}.
\end{equation}
Notice, that by construction $\{V'_1, V''_1\}\in V''_1$, 
since
${\bf c}\{f_1, f_2\}=(-1)^{|f_1|-1}\{ f_1, {\bf c}f_2 \}$. and therefore we can project the relation above to:
\begin{equation}
\tilde{Q}\{A, m(B,C)\}=-\{QA, \langle B,C\rangle_1\}+\{A, \tilde{Q}m(B,C)\}.
\end{equation}
Now $\{QA, \langle B,C\rangle_1\}=0$ because $\mu(V''_0, V''_0)=0$ implies $\{V''_0,V''_0\}=0$ and thus 
\begin{equation}
\{A, m(B,C)\}= m([A,B]_c,C)+m(B,[A,C]_c),
\end{equation}
which implies $\ref{nleibpr}$ by application of ${\bf b}$ operator and using its derivation property for the bracket operation.
\end{proof}

\subsection{$C_\infty$-subalgebra}

The $A_{\infty}$-algebra (see, e.g., \cite{stashbook}) is a generalization of a differential graded associative algebra. Namely, consider a graded vector space $V$ with a differential $Q$. Consider the multilinear operations $\mu_i: V^{\otimes i}\to V$ of the degree $2-i$, such that $\mu_1=Q$. 

The graded space $V$ is an $A_{\infty}$-algebra if the operations 
$\mu_n$ satisfy bilinear identity:
\begin{eqnarray}\label{arel}
\sum^{n-1}_{i=1}(-1)^{i}M_i\circ M_{n-i+1}=0 
\end{eqnarray}
on $V^{\otimes n}$,  
where $M_s$ acts on $V^{\otimes m}$ for any $m\ge s$ as the sum of all possible operators of the form 
${\bf 1}^{\otimes^l}\otimes\mu_s\otimes{\bf 1}^{\otimes^{m-s-l}}$ taken with appropriate signs. In other words, 
\begin{eqnarray}
M_s=\sum^{n-s}_{l=0}(-1)^{l(s+1)}{\bf 1}^{\otimes^l}\otimes\mu_s\otimes{\bf 1}^{\otimes^{m-s-l}}.
\end{eqnarray}
Let us write several relations which are satisfied by $Q$, $\mu_1$, $\mu_2$, $\mu_3$:
\begin{eqnarray}
&&Q^2=0,\\
&&Q\mu_2(a_1,a_2)=\mu_2(Q a_1,a_2)+(-1)^{|a_1|}\mu_2(a_1,Q a_2),\nonumber\\
&&Q\mu_3(a_1,a_2, a_3)+\mu_3(Q a_1,a_2, a_3)+(-1)^{|a_1|}\mu_3(a_1,Q a_2, a_3)+\nonumber\\
&&(-1)^{|a_1|+|a_2|}\mu_3( a_1, a_2, Q a_3)=\mu_2(\mu_2(a_1,a_2),a_3)-\mu_2(a_1,\mu_2(a_2,a_3)).\nonumber
\end{eqnarray}

If the operations $\mu_n$ vanish for all $n>k$, such $A_{\infty}$-algebras are sometimes called 
$A_{k}$-algebras \cite{stasheff}, so, e.g., differential graded algebras (DGA) is an $A_{2}$ algebra.   

We observe that putting $\mu_2\equiv \mu$ and $\mu_3=\nu$, these relations are manifestly the same as 
the ones relating $Q$, $\mu$ and $\nu$.

An important object in the theory of $A_{\infty}$-algebras is the generalized Maurer-Cartan (GMC) equation. The space of elements $X\in V$ of degree 1, is known as the space of Maurer-Cartan elements. 
The equation 
\begin{eqnarray}
QX+\sum_{n\ge2}\mu_n(X,...,X)=0
\end{eqnarray}
is called the \emph{generalized Maurer-Cartan equation} on $X$. It is worth mentioning that it is well defined in general only on nilpotent elements, i.e. such that $\mu_n(X,\dots, X)=0$ for $n>k$. This will not be a problem in the following, because the only $A_{\infty}$ algebras we will consider in this article, are $A_{3}$-algebras and therefore GMC equation will be well defined for all elements of degree 1. 
 
Generalized Maurer-Cartan equation is known to have the following infinitesimal symmetry:
\begin{eqnarray}
X\mapsto X +\epsilon (Q\alpha+\sum_{n\ge2,k}(-1)^{n-k}\mu_n(X,...,\alpha,...,X)), 
\end{eqnarray}
where $\epsilon$ is infinitesimal, $\alpha$ is an element of degree 0 and $k$ means the position of $\alpha$ in $\mu_n$. 

A $C_{\infty}$-algebra is an $A_{\infty}$-algebra $(V,\{\mu_n\}^{\infty}_{n=1})$ such that each map  $\mu_n: A^{\otimes n}\rightarrow A$ is a Harrison cochain, i.e. $\mu_n$ vanishes on the sum of all $(p,q)$-shuffles for $p + q$ = n, the sign of the shuffle coming from the grading of $V$ shifted by 1.

One can tensor $C_{\infty}$-algebra and $A_{\infty}$-algebra in general, with an  associative algebra. The induced product structure gives $A_{\infty}$-algebra.

\begin{Thm}
The symmetrized operation $\mu^{sym}(a_1,a_2)=\frac{1}{2}\mu(a_1,a_2)+(-1)^{|a_1||a_2|}\mu(a_2, a_1)$ and the operation $\nu^{sym}$ with nontrivial values given by the following expressions:
\begin{align}
&&\nu^{sym}(A_1, A_2, A_3)=\mu(m(A_1,A_3),A_2)-\frac{1}{2}\mu(m(A_2,A_3),A_1)-\frac{1}{2}\mu(m(A_1,A_2),A_3),\nonumber\\
&&\nu^{sym}(A_1, A_2, \tilde{v})=-\frac{1}{2}m(A_1, A_2)\tilde{v},\nonumber\\
&&\nu^{sym}(\tilde{v}, A_2, A_3)=-\frac{1}{2}m(A_2, A_3)\tilde{v},\nonumber\\
&&\nu^{sym}(A_1,\tilde{v}, A_3)=-m(A_1, A_3)\tilde{v},\nonumber
\end{align} 
where $A_1, A_2, A_3\in V'_1$, $\tilde{v}\in V''_0$, generate a $C_{\infty}$-algebra.
\end{Thm}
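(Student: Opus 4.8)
The plan is to exhibit $\mu_1=Q$, $\mu_2=\mu^{sym}$, $\mu_3=\nu^{sym}$ (with $\mu_n=0$ for $n\ge 4$) as a $C_{\infty}$-algebra, which by the definition recalled above means verifying two things: that these operations satisfy the $A_{\infty}$-relations (\ref{arel}), and that each is a Harrison cochain, i.e. vanishes on shuffles. Theorem \ref{extfull} already equips $(\mathcal{F}^{\bullet},Q)$ with a homotopy BV-LZ structure in which $(\mu,\nu)$ obey the first three relations of (\ref{lzrel}); these are precisely the $A_{\infty}$-relations for $\mu_2=\mu$, $\mu_3=\nu$. The content of the argument is therefore to show that passing to the symmetrizations $\mu^{sym},\nu^{sym}$ keeps the $A_{\infty}$-relations intact and, in addition, produces the shuffle conditions.

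First I would clear the low-order relations. The quadratic relation $Q\mu^{sym}(a_1,a_2)=\mu^{sym}(Qa_1,a_2)+(-1)^{|a_1|}\mu^{sym}(a_1,Qa_2)$ follows at once from (\ref{qmu}), since that identity is stable under interchanging the two arguments and hence descends to $\mu^{sym}$. The level-$2$ Harrison condition is equally immediate: it is exactly the (graded) commutativity of $\mu^{sym}$, which holds by construction.

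The heart of the proof is the cubic $A_{\infty}$-relation for $\mu^{sym}$ with homotopy $\nu^{sym}$. Symmetrizing the product genuinely alters the associator, so $\nu^{sym}$ cannot be the naive symmetrization of $\nu$; it must be the correct compensating homotopy. I would expand $\mu^{sym}(\mu^{sym}(a_1,a_2),a_3)-\mu^{sym}(a_1,\mu^{sym}(a_2,a_3))$ using the homotopy-commutativity $\mu(a_1,a_2)-(-1)^{|a_1||a_2|}\mu(a_2,a_1)=Qm(a_1,a_2)+m(Qa_1,a_2)+(-1)^{|a_1|}m(a_1,Qa_2)$ and the original cubic relation, and then reduce the resulting $m$- and $Q$-terms with the identities established inside the proof of Theorem \ref{extfull}: $m(A,B)={\rm c}\,n(A,B)$ with $n$ symmetric and supported on $V'_1\otimes V'_1$, the derivation law $\{A,m(B,C)\}=m([A,B]_c,C)+m(B,[A,C]_c)$, and the Courant/Calabi-Yau relations $[A,u]_c=n(A,{\rm d}u)$ and ${\rm div}\,\mu_0(u,A)=\mu_0(u,{\rm div}A)+n({\rm d}u,A)$. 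The target is to rewrite the symmetrized associator as $Q\nu^{sym}(a_1,a_2,a_3)+\nu^{sym}(Qa_1,a_2,a_3)+(-1)^{|a_1|}\nu^{sym}(a_1,Qa_2,a_3)+(-1)^{|a_1|+|a_2|}\nu^{sym}(a_1,a_2,Qa_3)$; matching the two sides pins down the coefficients in the stated formula for $\nu^{sym}$ and, by the restricted support of $\mu$, $m$, $\nu$ on the half-complex, also forces the top $A_{\infty}$-relation to close with $\mu_{\ge 4}=0$.

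Finally I would check the level-$3$ Harrison condition, namely that $\nu^{sym}$ vanishes on all $(1,2)$- and $(2,1)$-shuffles. Because $\nu^{sym}$ is nonzero only on $V'_1\otimes V'_1\otimes V'_1$ and on the three mixed slots containing two factors from $V'_1$ and one $\tilde v\in V''_0$, and because elements of $V'_1$ carry trivial shifted degree, the shuffle signs collapse to a handful of cases; one then verifies that the symmetry $m(A,B)=m(B,A)$ makes the $V'_1{}^{\otimes 3}$ shuffle sum cancel with coefficients $1,-\tfrac12,-\tfrac12$, and checks that, with the prescribed coefficients on the mixed slots, the remaining shuffle sums vanish under the same sign convention. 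The main obstacle is the cubic step of the previous paragraph: it is there that one must prove $\nu^{sym}$ is the true associativity homotopy for the symmetrized product, which tightly couples the numerical coefficients to the Courant-bracket and divergence identities and demands careful Koszul-sign bookkeeping between the unshifted symmetrization defining $\mu^{sym},\nu^{sym}$ and the shifted grading used in the shuffle conditions.
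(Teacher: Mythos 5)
Your overall architecture (verify the $A_{\infty}$-relations for $\mu_1=Q$, $\mu_2=\mu^{sym}$, $\mu_3=\nu^{sym}$ with $\mu_{\ge 4}=0$, then the shuffle conditions) is the right one, and your handling of the quadratic relation, the level-2 Harrison condition, and the shuffle identities matches what the paper does. You are also right that the cubic relation for the \emph{symmetrized} pair $(\mu^{sym},\nu^{sym})$ is not literally contained in Theorem \ref{extfull} and has to be re-derived to pin down the coefficients of $\nu^{sym}$; the paper itself only gestures at this (``similarly, one can check this relation for the symmetrized version''), so your attention to it is legitimate and, on that point, somewhat more careful than the source.

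The genuine gap is the claim that ``the restricted support of $\mu$, $m$, $\nu$ on the half-complex \dots forces the top $A_{\infty}$-relation to close with $\mu_{\ge 4}=0$.'' The $n=4$ relation (equation (\ref{23rel}) of the paper),
\begin{align*}
(-1)^{|a_1|}\mu(a_1,\nu(a_2,a_3,a_4))+\mu(\nu(a_1,a_2,a_3),a_4)&=\nu(\mu(a_1,a_2),a_3,a_4)\\
&\quad-\nu(a_1,\mu(a_2,a_3),a_4)+\nu(a_1,a_2,\mu(a_3,a_4)),
\end{align*}
does \emph{not} close for degree or support reasons. For $a_i=A_i\in V'_1$ every term is a nonzero element of $V'''_0\subset\mathcal{F}^3$: for instance $\nu(A_2,A_3,A_4)\in V''_1$ and $\mu(V'_1,V''_1)\subset V'''_0$, while $\nu(\mu(A_1,A_2),A_3,A_4)$ picks up the $V''_0$-component $\tfrac12\tilde{Q}m(A_1,A_2)$ of the product through $\nu(\tilde v,A,B)=-m(A,B)\tilde v$. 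Their cancellation is a genuine identity relying on $m={\rm c}\,n$ and the explicit table for $\mu$, and it constitutes the main computation of the paper's proof. Only the $n\ge 5$ relations are automatic (there the composite $\nu\circ\nu$ lands in $\nu(V''_1,\cdot,\cdot)=0$). As written, your proposal never verifies this quartic identity for either $(\mu,\nu)$ or $(\mu^{sym},\nu^{sym})$, so the claimed $A_3$- (hence $C_\infty$-) structure is not actually established.
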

\begin{proof}
To show that $Q$,  $\mu$, $\nu$ satisfy the relations of $A_{\infty}$ algebra (with all higher products vanishing), one must check the higher associativity relations. The first one is:
\begin{align}\label{23rel}
&&(-1)^{|a_1|}\mu(a_1,\nu(a_2,a_3,a_4))+\mu(\nu(a_1,a_2,a_3),a_4)=\\
&&\nu(\mu(a_1,a_2),a_3,a_4)-\nu(a_1,\mu(a_2,a_3),a_4)+\nu(a_1,a_2,\mu(a_3,a_4)).\nonumber
\end{align}
Note, that the higher relations are satisfied automatically by counting degrees. 
Let us prove (\ref{23rel}) in the most nontrivial case, when $a_i=A_i\in \mathcal{F}^1$: 

\begin{align}
&&(-1)^{|A_1|}\mu(A_1,\nu(A_2,A_3,A_4))+\mu(\nu(A_1,A_2,A_3),A_4)=\nonumber\\
&&\frac{1}{2}\tilde{Q}m(A_1,A_3){\rm c}m(A_2,A_4)-
\frac{1}{2}\tilde{Q}m(A_1,A_2){\rm c}m(A_3,A_4)-\nonumber\\
&&-\frac{1}{2}\tilde{Q}m(A_2,A_4){\rm c}m(A_1,A_3)+
\frac{1}{2}\tilde{Q}m(A_1,A_4){\rm c}m(A_2,A_3)=\nonumber\\
&&-\frac{1}{2}\tilde{Q}m(A_1,A_2){\rm c}m(A_3,A_4)+
\frac{1}{2}\tilde{Q}m(A_1,A_4){\rm c}m(A_2,A_3)=\nonumber\\
&&\nu(\mu(A_1,A_2),A_3,A_4)-\nu(A_1,\mu(A_2,A_3),A_4)+\nu(A_1,A_2,\mu(A_3,A_4)).
\end{align}

Similarly, one can check this relation for symmetrized version. Finally, it is easy to check that trilnear relation satisfy $C_{\infty}$-algebra relation for shuffled permutations:
\begin{equation}
\nu(a_1, a_2, a_3)-(-1)^{a_1||a_2|}\nu(a_2, a_1, a_3)+(-1)^{|a_1|(|a_2|+|a_3|)}\nu(a_2, a_3, a_1)=0
\end{equation}

which is the last step required by the theorem.
\end{proof}

\subsection{$\mathcal{O}_X$-Courant algebroid and the cyclic structure}

Consider the subcomplex $(\mathcal{F}^{\bullet}_c,Q)$ of 
$(\mathcal{F}^{\bullet},Q)$ consisting of the following subspaces:
\begin{align}
\mathcal{F}_c^0=V_0, \quad \mathcal{F}_c^1=\{(A,v):~ A\in V'_1,v\in V'_0, ~v=-\tilde{Q}^{-1}{\rm d}^*A\},\quad \mathcal{F}_c^2=V''_1,\quad \mathcal{F}_c^3=V''_0
\end{align}
We denote elements of $\mathcal{F}_c^1$ as $A^c=(A,-\tilde{Q}{\rm d}^*A)$.  
Note that $(\mathcal{F}_c^{\bullet}, Q)$ is not ${\bf b}$-invariant. At the same time, the following statement is true.

\begin{Prop}
The complex $(\mathcal{F}_c^{\bullet}, Q)$ is invariant under the $C_{\infty}$ operations $\mu^{sym}$, $\nu^{sym}$ and induce the $C_{\infty}$ structure on $(\mathcal{F}_c^{\bullet}, Q)$.
\end{Prop}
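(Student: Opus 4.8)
The plan is to check directly that the graded-symmetric operations $\mu^{sym}$ and $\nu^{sym}$ send $\mathcal{F}_c^{\bullet}\otimes\mathcal{F}_c^{\bullet}$ (resp.\ its triple power) into $\mathcal{F}_c^{\bullet}$, and then to observe that the $C_{\infty}$ relations descend by restriction from the ambient algebra on $(\mathcal{F}^{\bullet},Q)$ established in the previous theorem. Since $\mathcal{F}_c^0=\mathcal{F}^0$ and $\mathcal{F}_c^3=\mathcal{F}^3$, any output of cohomological degree $0$ or $3$ lies automatically in $\mathcal{F}_c^{\bullet}$; the only degrees in which $\mathcal{F}_c^i$ is a proper subspace are $i=1$, the graph $\{(A,-\tilde{Q}^{-1}{\rm d}^{*}A):A\in V'_1\}\subset V'_1\oplus V'_0$, and $i=2$, the summand $V''_1\subset V''_1\oplus V''_0$. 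Thus everything reduces to the degree-$1$ and degree-$2$ outputs, which I will read off from the multiplication table for $\mu$ and from the explicit values of $\nu^{sym}$.

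For $\nu^{sym}$ invariance I will use that no element of $\mathcal{F}_c^{\bullet}$ has a $V''_0$-component (indeed $\mathcal{F}_c^2=V''_1$), whereas the only nonvanishing values of $\nu^{sym}$ take their arguments in $V'_1$ and $V''_0$. Decomposing each $\mathcal{F}_c$-input into graded components, the only allowed argument that occurs is the $V'_1$-part of a degree-$1$ element, so on $\mathcal{F}_c$-inputs only $\nu^{sym}(A_1^c,A_2^c,A_3^c)=\nu^{sym}(A_1,A_2,A_3)$ survives, and by its formula this lands in $V''_1=\mathcal{F}_c^2$. For $\mu^{sym}$ the degree-$2$ outputs split in two. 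On $\mathcal{F}_c^0\otimes\mathcal{F}_c^2$ the table gives $\mu^{sym}(u,\tilde A)={\rm c}\mu_0(u,{\rm b}\tilde A)\in V''_1$ at once. On $\mathcal{F}_c^1\otimes\mathcal{F}_c^1$ the only possibly offending piece is the $V''_0$-component $\frac14\tilde{Q}\big(m(A_1,A_2)-m(A_2,A_1)\big)$, which vanishes because $m$ is symmetric (forced by the homotopy-commutativity relation together with $m$ being supported on $V'_1\otimes V'_1$); hence $\mu^{sym}(A_1^c,A_2^c)\in V''_1=\mathcal{F}_c^2$.

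The crux is the degree-$1$ output $\mu^{sym}(u,A^c)$ with $u\in V_0$ and $A^c=A+v$, $v=-\tilde{Q}^{-1}{\rm d}^{*}A$. Its $V'_1$-component is $\mu_0(u,A)$, and reading the table (including the homotopy term $m(A,{\rm d}u)={\rm c}\,n(A,{\rm d}u)$) its $V'_0$-component comes out as ${\rm c}\mu_0(u,{\rm b}v)-\frac12{\rm c}\,n(A,{\rm d}u)$. I will then substitute ${\rm b}v=-\frac12{\rm div}\,A$, coming from ${\rm div}=2{\rm b}\tilde{Q}^{-1}{\rm d}^{*}|_{V'_1}$, and apply the second Calabi-Yau identity ${\rm div}\,\mu_0(u,A)=\mu_0(u,{\rm div}\,A)+n({\rm d}u,A)$ to rewrite this component as $-\frac12{\rm c}\,{\rm div}\,\mu_0(u,A)=-\tilde{Q}^{-1}{\rm d}^{*}\mu_0(u,A)$. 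This is precisely the graph condition defining $\mathcal{F}_c^1$ for the $V'_1$-datum $\mu_0(u,A)$, so $\mu^{sym}(u,A^c)\in\mathcal{F}_c^1$. I expect this to be the main obstacle: it is the single case where counting components does not suffice, and the way the symmetrization's homotopy correction reproduces the divergence identity is exactly what makes the Calabi-Yau structure indispensable. With all closures in place, the $A_{\infty}$-identities for $(Q,\mu^{sym},\nu^{sym})$ and the shuffle (Harrison) relations, already valid on $(\mathcal{F}^{\bullet},Q)$, restrict verbatim to the $Q$-subcomplex $\mathcal{F}_c^{\bullet}$, giving the induced $C_{\infty}$ structure.
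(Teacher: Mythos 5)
Your proposal is correct and follows essentially the same route as the paper: both reduce the check to the nontrivial degree-$1$ and degree-$2$ outputs, verify $\mu^{sym}(A_1^c,A_2^c)\in V''_1$ and dismiss the trilinear operation because the modifications do not feed into it, and both hinge on the same key computation for $\mu^{sym}(u,A^c)$, where the Calabi--Yau identity ${\rm div}\,\mu_0(u,A)=\mu_0(u,{\rm div}A)+n({\rm d}u,A)$ converts the homotopy correction $-\tfrac12{\bf c}\,n(A,{\rm d}u)$ plus the ${\rm c}\mu_0(u,{\rm b}v)$ term into exactly the graph condition $-\tilde{Q}^{-1}{\rm d}^*\mu_0(u,A)$. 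Your write-up is a bit more systematic than the paper's (it makes explicit the vanishing of the $V''_0$-component via symmetry of $m$ and the absence of $V''_0$-components in $\mathcal{F}_c^{\bullet}$ for the $\nu^{sym}$ case), but it is the same argument.
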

\begin{proof}
We have to check the cases which involve the modified vector space $\mathcal{F}^1_c$, $\mathcal{F}^2_c$:
\begin{eqnarray}
&&\mu^{sym}(u, A^c)=\mu^{sym}(u, A)=\\
&&\mu(uA)-\mu(u, \tilde{Q}^{-1}{\rm d}^*A)-\frac{1}{2}{\bf c} n(A, {\rm d} u)=\nonumber\\
&&\mu(u,A)-\mu(u, {\bf c}{\rm div}A)-\frac{1}{2}{\bf c} n(A, {\rm d} u)=\mu(u,A)-\frac{1}{2}{\bf c}{\rm div}\mu(u, A)=\mu(u,A)^c.\nonumber
\end{eqnarray}
Here we used the fact that ${\rm b}\tilde{Q}^{-1}{\rm d^*}|_{V_1}=\frac{1}{2}{\rm div}$.
Similarly we have: 
$$
\mu^{sym}(A^c_1,A^c_2)=-{\bf c}[A_1,A_2]_c-\frac{1}{2}{\rm d}m(A_1,A_2)-\mu(\tilde{Q}^{-1}{\rm d}^*A_1,A_2)-\mu(A_1,\tilde{Q}^{-1}{\rm d}^*A_2)\in \mathcal{F}_c^2.
$$
One doesn't need to check the trilinear operations since the modifications doesn't contribute there.
\end{proof}

Given a Courant $\mathcal{O}_X$-algebroid with the CY structure given by the volume form $\Omega$, we can define the following symmetric pairing:
\begin{equation}
(~\cdot~,~ \cdot ~): \mathcal{F}^i\otimes \mathcal{F}^{3-i}\to \mathbb{K},
\end{equation}
so that nontrivial pairings between subs
\begin{align}\label{omegapair}
(u,\tilde u)=2\int {\bf b}\tilde{Q}^{-1}{\bf b}\mu(u,\tilde u)~\Omega, \quad 
(A,\tilde A)=-\int n(A,{\bf b}\tilde{A})~ \Omega, \quad (v, \tilde v)=-2\int {\bf b}\tilde{Q}^{-1}{\bf b}\mu(v,\tilde{v})~\Omega,
\end{align}
where $u\in V_0$, $v\in V'_0$, $\tilde{v}\in V''_0$, $\tilde u\in V'''_0$,  $A\in V'_1$, 
$\tilde{A}\in V''_1$.

The subcomplex $(\mathcal{F}^{\bullet}_c, Q)$ satisfies the following properties:
\begin{Prop}
Consider the $\mathcal{O}_X$-Courant algebroid on a compact manifold $X$ with the CY structure generated by the volume form $\Omega$, then the corresponding pairing
$( \cdot, \cdot )_{\Omega}$ is such that 
\begin{itemize}

\item The pairing (\ref{omegapair}) satisfies the following relations:
\begin{align}
 (Qa_1,a_2)+(-1)^{|a_1||a_2|}(Qa_2,a_1)=0, &&\\ 
 ({\bf c}a_1,a_2)+(-1)^{|a_1||a_2|}({\bf c}a_2,a_1)=0,\quad ({\bf b}a_1,a_2)-(-1)^{|a_1||a_2|}({\bf b}a_2,a_1)=0.&&\nonumber
\end{align}

\item There is a following orthogonal decomposition:
$$
(\mathcal{F}^{\bullet},Q)=(\mathcal{F}_c^{\bullet},Q)\oplus(\mathcal{G}^{\bullet}, Q),
$$
where $(\mathcal{G}^{\bullet}, Q)$ is an acyclic complex.

\item The $C_{\infty}$-structure on complex $(\mathcal{F}_c^{\bullet}, Q)$ is cyclic, namely 
the forms 
\begin{align}
&(\Phi_1, \Phi_2)=(Q\Phi_1, \Phi_2),\nonumber\\
&(\Phi_1, \Phi_2, \Phi_3)=(\mu^{sym}(\Phi_1, \Phi_2), \Phi_3),\nonumber\\
&(\Phi_1, \Phi_2, \Phi_3, \Phi_4)=(\nu^{sym}(\Phi_1,\Phi_2, \Phi_3), \Phi_4)
\end{align}
produce the following sign under the cyclic permutation:
\begin{equation}
(\Phi_1, \Phi_2,\dots, \Phi_n)=(-1)^{n-1}(-1)^{|\Phi_n|(|\Phi_{1}|+\dots+|\Phi_{n-1}|)}(\Phi_n, \Phi_1,\dots, \Phi_{n-1}).
\end{equation}
 
\end{itemize}
\end{Prop}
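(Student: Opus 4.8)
The plan is to dispatch the three bullet points in turn, funneling each into a single geometric fact: on a compact $X$ one has $\int_X {\rm div}_\Omega(a)\,\Omega=0$, since ${\rm div}_\Omega a=\mathcal{L}_{\pi(a)}\log\Omega$ gives $\int_X\mathcal{L}_{\pi(a)}\Omega=\int_X d\,i_{\pi(a)}\Omega=0$ by Stokes. Combined with the Calabi--Yau relations for ${\rm div}$, the identification ${\rm div}=2{\bf b}\tilde{Q}^{-1}{\rm d}^*|_{V'_1}$ from Theorem \ref{extfull}, and the conjugacy $({\rm div}_\Omega a,u)_\Omega=-(a,\partial u)_\Omega$, this is precisely what turns the abstractly postulated pairing axioms of Section 2.2 into checkable identities for the explicit pairing (\ref{omegapair}).

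For the first bullet I would verify the three relations degree by degree, the pairing being supported only on $V_0\otimes V'''_0$, $V'_0\otimes V''_0$ and $V'_1\otimes V''_1$. The ${\bf c}$- and ${\bf b}$-symmetries are purely algebraic: they follow by direct substitution into (\ref{omegapair}), using the isomorphism structure of ${\bf b},{\bf c}$, the compatibility ${\bf c}\,\mu(a_1,a_2)=(-1)^{|a_1|}\mu(a_1,{\bf c}\,a_2)$, and the symmetry of $n$ and of $\mu(\mathcal{V}_1,\mathcal{V}_1)|_{\mathcal{V}_0}$. The $Q$-symmetry is where integration by parts enters: for $u\in V_0$, $\tilde{A}\in V''_1$ one rewrites $({\rm d}u,\tilde{A})=-\int n({\rm d}u,{\bf b}\tilde{A})\,\Omega$ via ${\rm div}\,\mu(u,A)=\mu(u,{\rm div}A)+n({\rm d}u,A)$ and $\int_X{\rm div}(\,\cdot\,)\Omega=0$, then matches it against $(u,{\rm d}^*\tilde{A})$ after applying ${\rm div}=2{\bf b}\tilde{Q}^{-1}{\rm d}^*$; the remaining degree pairs are handled identically.

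The second bullet is then immediate. Having checked that $(\,\cdot\,,\,\cdot\,)_\Omega$ satisfies exactly the hypotheses imposed in Section 2.2, and since $\tilde{Q}$ is assumed to be an isomorphism, the orthogonal decomposition Proposition of that section yields $(\mathcal{F}^\bullet,Q)=(\mathcal{F}_c^\bullet,Q)\oplus(\mathcal{G}^\bullet,Q)$ with $(\mathcal{G}^\bullet,Q)$ acyclic, with no further work. For the cyclicity of the $C_\infty$-structure I would argue by arity. The case $n=2$ is precisely the $Q$-antisymmetry just established. For $n=3$, since $\mu^{sym}$ is graded-symmetric, it suffices to prove the one-step relation $(\mu^{sym}(\Phi_1,\Phi_2),\Phi_3)=(-1)^{|\Phi_3|(|\Phi_1|+|\Phi_2|)}(\mu^{sym}(\Phi_3,\Phi_1),\Phi_2)$, which — after inserting the explicit products from the table in Theorem \ref{extfull} and restricting to $\mathcal{F}_c^\bullet$ — reduces to the Courant invariance axiom $\langle[A_1,A_2]_c,A_3\rangle+\langle A_2,[A_1,A_3]_c\rangle=\langle A_1,\partial\langle A_2,A_3\rangle\rangle$ integrated against $\Omega$, where the $\partial$-term is killed by $\int_X{\rm div}(\cdot)\Omega=0$ through $\langle A,\partial u\rangle={\rm div}(uA)-u\,{\rm div}A$.

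For $n=4$ I would substitute the explicit $\nu^{sym}$ and pair with a fourth element; every resulting term has the shape $\int n(\cdot,\cdot)\,n(\cdot,\cdot)\,\Omega$ up to ${\bf c},\tilde{Q}$ factors, and cyclicity follows from the symmetry of $m={\bf c}\,n$ and of the bilinear form. I expect this $n=4$ step to be the main obstacle: the symmetrized $\nu$ has four distinct nonzero components, so pairing each against $\Phi_4$ produces a proliferation of terms whose recombination into the single prescribed sign $(-1)^{3}(-1)^{|\Phi_4|(|\Phi_1|+|\Phi_2|+|\Phi_3|)}$ demands careful tracking of the Koszul signs and repeated use of the integration-by-parts identity to commute ${\rm div}/\partial$ across the pairing. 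The algebraic content, however, is entirely governed by the Courant axioms and the vanishing of $\int_X{\rm div}\,\Omega$, so no geometric input beyond compactness of $X$ is required.
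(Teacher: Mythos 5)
Your proposal is correct and follows essentially the same route as the paper: the first two bullets are reduced to the pairing axioms of Section 2 (verified via $\int_X\mathrm{div}_\Omega(\cdot)\,\Omega=0$ and the Calabi--Yau relations), and cyclicity is checked arity by arity, with the trilinear case resting on the Courant invariance axiom plus the $\mathrm{div}$--$\partial$ conjugacy and the quadrilinear case reducing to symmetric combinations of $\int n(\cdot,\cdot)\,n(\cdot,\cdot)\,\Omega$. If anything, you are more explicit than the paper on the first bullet, which the paper dispatches with a reference to its earlier section rather than verifying the axioms for the concrete pairing (\ref{omegapair}).
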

\begin{proof}
The first two parts of the statement follows from the results of Section 1. 
To obtain the third we need to compare:
\begin{align}\label{AAA1}
(\mu^{sym}(A^c_1,A^c_2), A_3^c)=&&\\
(-{\bf c}[A_1,A_2]_c-\frac{1}{2}{\rm d}m(A_1,A_2)-\mu(\tilde{Q}^{-1}{\rm d}^*A_1,A_2)-\mu(A_1,\tilde{Q}^{-1}{\rm d}^*A_2),A_3)&&\nonumber\\
(\mu^{sym}(A^c_1,A^c_3), A^c_2)=&&\label{AAA2}\\
(-{\bf c}[A_1,A_3]_c-\frac{1}{2}{\rm d}m(A_1,A_3)-\mu(\tilde{Q}^{-1}{\rm d}^*A_1,A_3)-\mu(A_1,\tilde{Q}^{-1}{\rm d}^*A_3), A_2).&&\nonumber
\end{align}
Using the following relations: 
\begin{align}
[A_1,n(A_2,A_3)]_c=n([A_1,A_2]_c,A_3)+n([A_1,A_3]_c,A_2), \\ 
\int n({\rm d}n(A_1,A_2), A_3)\Omega=-\int \mu({\rm div}A_3[1],n(A_1,A_2)),\nonumber
\end{align}
we see that the expressions (\ref{AAA1}) and (\ref{AAA2}) are the same up to a negative sign as expected.

Similarly for trilinear operation we have:
\begin{align}
(A_4,\nu^{sym}(A_1, A_2, A_3))=(A_4,\mu(m(A_1,A_3),A_2)-\frac{1}{2}\mu(m(A_2,A_3),A_1)-\frac{1}{2}\mu(m(A_1,A_2),A_3)),\nonumber\\
(A_1,\nu^{sym}(A_2, A_3, A_4))=(A_1,\mu(m(A_2,A_4),A_3)-\frac{1}{2}\mu(m(A_3,A_4),A_2)-\frac{1}{2}\mu(m(A_2,A_3),A_4)),\nonumber
\end{align}
which are equal to each other. We leave to the reader to check the rest of the relations.
\end{proof}

\subsection{$L_\infty$-subalgebra} To describe the $L_{\infty}$ structure on the complex $(\mathcal{F},Q)$ one doesn't need to compute much, because the homotopy was already computed by Roytenberg and Weinstein on the half-complex \cite{weinstein}, namely, on $V'_1$ we obtain:
\begin{align}
[[A_1, A_2],A_3]+[[A_3, A_1],A_2]+[[A_2, A_3],A_1]=d[A_1, A_2, A_3],
\end{align}
where $[~\cdot~,~\cdot~]$ denotes antisymmetrized version of $\{~\cdot~,~\cdot~\}$-operation and 
\begin{equation}
[A_1, A_2, A_3]=\frac{1}{3}(n(A_1,[A_2,A_3])+n(A_2,[A_1,A_3])+n(A_3,[A_1,A_2])),
\end{equation}
which was computed in \cite{weinstein}.
An elementary computation which uses property (\ref{cbrprop}) is as gives the following proposition.
\begin{Prop} 
The $L_{\infty}$ structure generated by antisymmetrized version of $\{~\cdot~,~\cdot~\}$ is an $L_{3}$-algebra
and the nontrivial homotopies are nontrivial on 
$\mathcal{V}_1$ only:
\begin{align}
[A_1, A_2, A_3]=\frac{1}{3}(n(A_1,[A_2,A_3])+n(A_2,[A_1,A_3])+n(A_3,[A_1,A_2]))\nonumber\\
[A_1,A_2,\tilde{A}]=-\frac{1}{3}(m(A_1,[A_2,{\bf b}\tilde{A}])+m(A_2,[A_1,{\bf b}\tilde{A}])+m({\bf b}\tilde{A},[A_1,A_2])),
\end{align}
where $A_1,A_2, A_3 \in V'_1$ and $\tilde{A}\in V''_1$   
This trilinear operation satisfies the following derivation property:
\begin{equation}
{\bf b}[a_1,a_2, a_3]=-[{\bf b}a_1, a_2,a_3]-(-1)^{|a_1|}[a_1,{\bf b}a_2, a_3]-(-1)^{|a_1|+|a_2|}[a_1,a_2,{\bf b} a_3].
\end{equation}
\end{Prop}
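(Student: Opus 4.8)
The plan is to realize the $L_\infty$-structure by antisymmetrizing $\{\cdot,\cdot\}$, to read off its binary bracket from the Courant data and its trilinear homotopy from the Roytenberg--Weinstein expression on the half-complex, and then to transport that homotopy to the full complex using the operator ${\bf b}$. The general mechanism is that $\{\cdot,\cdot\}$ obeys the graded Jacobi identity exactly while being graded-antisymmetric only up to the $Q$-homotopy $n=[{\bf b},m]$ (the homotopy BV-LZ relations of Section 4.1). Hence its antisymmetrization $l_2:=[\cdot,\cdot]$ fails Jacobi precisely by a $Q$-coboundary of a trilinear operation $l_3:=[\cdot,\cdot,\cdot]$ assembled from $l_2$ and $n$; this is the $n=3$ homotopy-Jacobi identity, and it determines $l_3$. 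On the sub-half-complex spanned by $V'_1$ this is exactly Theorem~\ref{RWth}: the homotopy is $[A_1,A_2,A_3]=\tfrac13\big(n(A_1,[A_2,A_3])+n(A_2,[A_1,A_3])+n(A_3,[A_1,A_2])\big)$.

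Next I would argue that $l_3$ is supported on $\mathcal{V}_1$. Since $l_3$ is built out of $n$, which is nonzero only on $V'_1\otimes V'_1$, together with $[u,\cdot]_c=0$ for $u\in V_0$, any argument lying in $\mathcal{V}_0=V_0\oplus V'_0\oplus V''_0\oplus V'''_0$ produces a vanishing factor in every term of the Jacobiator; thus $l_3$ can be nonzero only on $\mathcal{V}_1^{\otimes 3}=(V'_1\oplus V''_1)^{\otimes 3}$. To pin down the genuinely new values, those with a $V''_1$-entry, I would first establish the ${\bf b}$-derivation rule for $l_3$. Because ${\bf b}$ is a derivation of $\{\cdot,\cdot\}$ (hence of $l_2$) and because $l_3$ is functorially built from $l_2$ and $n$ with $n={\bf b}m$, the operator ${\bf b}$ acts as a graded derivation of $l_3$, which yields precisely the stated identity ${\bf b}[a_1,a_2,a_3]=-[{\bf b}a_1,a_2,a_3]-(-1)^{|a_1|}[a_1,{\bf b}a_2,a_3]-(-1)^{|a_1|+|a_2|}[a_1,a_2,{\bf b}a_3]$.

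I would then use this rule to transport the formula from $V'_1$ to $V''_1$. Specializing to $a_1=A_1,a_2=A_2\in V'_1$ and $a_3=\tilde A\in V''_1$ kills the first two terms, since ${\bf b}|_{V'_1}=0$, and gives ${\bf b}[A_1,A_2,\tilde A]=-[A_1,A_2,{\bf b}\tilde A]$, whose right-hand side is the $V'_1$-formula evaluated at ${\bf b}\tilde A\in V'_1$ and thus lands in $V_0$. As $[A_1,A_2,\tilde A]\in V'_0$ and ${\bf b}\colon V'_0\xrightarrow{\sim}V_0$ with inverse ${\bf c}$, I invert by applying ${\bf c}$ and use $m={\bf c}n$ to read off $[A_1,A_2,\tilde A]=-\tfrac13\big(m(A_1,[A_2,{\bf b}\tilde A])+m(A_2,[A_1,{\bf b}\tilde A])+m({\bf b}\tilde A,[A_1,A_2])\big)$, as claimed.

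Finally, to conclude that the structure is a genuine $L_3$-algebra I would verify that no quaternary bracket is needed. The $n=3$ identity holds by construction and $l_1=Q$ is a derivation of $l_2$ by \ref{deriv}; the remaining content is that the $n=4$ homotopy-Jacobi identity closes with $l_4=0$. Here a degree count does the work: compatibly with $\{\mathcal{V}_i,\mathcal{V}_j\}\subset\oplus_{k\ge 1}\mathcal{V}_{i+j-k}$, each $l_n$ lowers the internal $\mathcal{V}$-grading by $n-2$, so that $l_2$ maps $\mathcal{V}_1^{\otimes 2}$ into $\mathcal{V}_1$ and $l_3$ maps $\mathcal{V}_1^{\otimes 3}$ into $\mathcal{V}_0$; since only $\mathcal{V}_0,\mathcal{V}_1$ occur, any $l_n$ with $n\ge 4$ would land in negative internal degree and therefore vanishes, and the $n\ge 5$ identities are trivial. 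The main obstacle I anticipate is the sign bookkeeping together with the honest verification that the $n=4$ identity really closes with $l_4=0$ rather than by heuristic alone; I would settle this by reducing it, via the ${\bf b}$-equivariance above, to the Leibniz identity (\ref{Leib}) for $[\cdot,\cdot]_c$ and the derivation property (\ref{nleibpr}) of the bracket on $n$, exactly as in the Roytenberg--Weinstein argument.
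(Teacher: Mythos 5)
Your proposal is correct and follows essentially the same route as the paper, which reduces everything to the Roytenberg--Weinstein formula on $V'_1$ and then transports it to arguments in $V''_1$ via the intertwining of the bracket with ${\bf b}$ and ${\bf c}$ (the paper invokes the ${\bf c}$-version, property (\ref{cbrprop}), while you use the equivalent ${\bf b}$-version together with $m={\bf c}n$ and ${\bf b}|_{V'_0}\colon V'_0\xrightarrow{\sim}V_0$). The only point to make explicit is that the ${\bf b}$-derivation rule for $l_3$ is not circular: it follows from $[{\bf b},n]=[{\bf b},[{\bf b},m]]=0$ and the derivation property of ${\bf b}$ for $l_2$ once $l_3$ is taken to be the cyclic sum $\tfrac13\sum \pm\, n(a_i,[a_j,a_k])$ on the whole complex, after which your transport step is exactly the paper's ``elementary computation.''
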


\section{Vertex algebra realization}

\subsection{VOA and Topological VOA}

Let's remind some notion from the theory of vertex operator (super)algebras (see, e.g.,  \cite{benzvi}, \cite{fhl}).  

Let us briefly remind the basic ingredients of a vertex operator sueperalgebra (VOA), which are summarized in the points $(1)$-$(5)$ below:

\begin{enumerate}

\item
Graded vector space $$\mathscr{V}=\oplus_i \mathscr{V}_i=\oplus_{i,\mu}\mathscr{V}_i(\mu),$$ where $i$ represents grading of $V$ with respect to {\it conformal weight} and $\mu$ 
represents {\it fermionic grading} of $\mathscr{V}_i$.\\

\item Vertex operators $Y: \mathscr{V}\rightarrow {\rm End}(\mathscr{V})[[z^{\pm 1}]]$ explicitly described as:

$$Y: A\mapsto A(z)=\sum_{n\in \mathbb{Z}}A_nz^{-n-1},$$

where $A_n$ are called modes of the vertex operator $A(z)$.\\

\item A vector $|0\rangle\in V_0[0]$, such that:

$$\lim_{z\rightarrow 0}A(z)|0\rangle=A; \quad Y(|0\rangle,z)=Id_V.$$

\item Locality property: 
$$(z-w)^N[A(z), B(w)]=0,$$ 
where $[,\cdot, \cdot]$ stands for supercommutator.\\

\item Virasoro element $|L\rangle\in \mathscr{V}_2(0)$, such that $L(z)=\sum_nL_nz^{-n-2}$ satisfies the relations of Virasoro algebra:
$$
[L_n, L_m]=(n-m)L_{n+m}+\frac{c}{12}(n^3-n)\delta_{n,-m}
$$
$L_0$ provides grading with respect to conformal weight and $L_{-1}$ is a translation operator: 
$$[L_{-1}, A(z)]=\partial_zA(z), \quad L_{-1}|0\rangle=0.$$
\end{enumerate}

An important feature of vertex operator algebra is the so-called 
operator product expansion (OPE):
\begin{equation}
A(z)B(w)C=\sum_n\frac{(A_nB) (w)C}{(z-w)^n}\in \mathscr{V}((w))((z-w)),
\end{equation}
where $A,B,C\in \mathscr{V}$ and by locality it is an element of of 
$\mathscr{V}[[z,w]][z^{-1}, w^{-1},(z-w)^{-1}]$ which is further embedded into $\mathscr{V}((w))((z-w))$. 
It turns out that the singular part in $z-w$ of OPE determines the commutators between the modes of vertex operators and thus it is common to write only singular terms:
\begin{equation}
 A(z)B(w)\sim \sum_{n\in \mathbb{N}}\frac{(A_nB) (w)}{(z-w)^n},
\end{equation}
On the other hand, the nonsingular part of OPE is called a normal ordered product and is denoted  as $:A(z)B(w):$.

{\it Topological vertex operator algebra} (TVOA) is a vertex superalgebra (see e.g.\cite{benzvi}) that has an additional odd operator 
$Q$ which makes the graded vector space of VOA a chain complex, such that the Virasoro element $L(z)$ is $Q$-exact. The formal definition is as follows (see e.g. \cite{zuck} for more details).

We call $\mathscr{V}$ a {\it topological vertex operator algebra (TVOA)} if there exist four elements: $J\in \mathscr{V}_1(1)$, $b\in \mathscr{V}_2(-1)$, $F\in \mathscr{V}_1(0)$, $L\in \mathscr{V}_2(0)$, such that 
\begin{equation}
[Q,b(z)]=\mathcal{L}(z),\quad  Q^2=0,\quad b_0^2=0,
\end{equation}
where 
\begin{eqnarray}
&&Q=J_0,~J(z)=\sum_n J_nz^{-n-1},\\
&&b(z)=\sum_n b_nz^{-n-2}, \quad \mathcal{L}(z)=\sum_n\mathcal{L}_nz^{-n-2},~
F(z)=\sum_nF_nz^{-n-1}.\nonumber
\end{eqnarray}
Here $\mathcal{L}(z)$ is the Virasoro element of $\mathscr{V}$; the operators $F_0$, $\mathcal{L}_0$ are diagonalizable, commute with each other and 
their egenvalues coincide with fermionic grading and conformal weight correspondingly.

Lian and Zuckerman \cite{lz} observed that each TVOA possesses operations leading to homotopical algebra. Namely, one can define two operations which are cochain maps with respect to $Q$: 
\begin{eqnarray}\label{mub}
\mu(a_1,a_2)=Res_z\frac{ a_1(z)a_2}{z},\quad \{a_1,a_2\}=(-1)^{|a_1|} Res_z(b_{-1}a_1)(z)a_2.
\end{eqnarray}

\begin{Thm}
The operations $\mu(\cdot~,~ \cdot)$, $\{\cdot~,~\cdot\}$ satisfy the relations of the homotopy BV-LZ algebra, where: 
\begin{align}
m(a_1,a_2)=\sum_{i\ge 0}\frac{(-1)^i}{i+1}Res_wRes_{z-w}(z-w)^iw^{-i-1}b_{-1}
(a_1(z-w)a_2)(w)\mathbf{1},
\nonumber\\
\nu(a_1,a_2,a_3)=\sum_{i\ge 0}\frac{1}{i+1}Res_zRes_w w^iz^{-i-1}(b_{-1}a_1)(z)a_2(w)a_3+\nonumber\\
\ \ \ \ \ \ (-1)^{|a_1||a_2|}\sum_{i\ge 0}\frac{1}{i+1}Res_wRes_z z^iw^{-i-1}(b_{-1}a_2)(w)a_1(z)a_3.
\end{align}
\end{Thm}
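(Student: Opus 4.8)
The plan is to reduce the entire statement to the standard structural identities of a vertex operator superalgebra together with one homotopical input: the fact that the translation generator is $Q$-exact. Writing $a_{(n)}b$ for the $n$-th product (the coefficient of $z^{-n-1}$ in $a(z)b$), one has $\mu(a_1,a_2)=a_1{}_{(-1)}a_2$, while the defining TVOA relation $[Q,b(z)]=\mathcal{L}(z)$ gives, at the level of modes, $L_{-1}=\{Q,b_{-1}\}$. This is the crucial observation: the operator $L_{-1}$ governs the failure of both commutativity and associativity of the $(-1)$-product, so once it is realized as a $Q$-coboundary, every obstruction we produce is automatically $Q$-exact up to terms in which $Q$ hits an argument. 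As a warm-up, I would first verify that $Q=J_0$ is a derivation of $\mu$, i.e. the first line of \eqref{lzrel}; this is immediate from $[J_0,Y(a,z)]=Y(J_0a,z)$, which says that $J_0$ commutes with the state-field correspondence and hence differentiates each product $a_{(n)}b$.

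For homotopy commutativity, the second line of \eqref{lzrel}, I would invoke the skew-symmetry formula $Y(a,z)b=(-1)^{|a||b|}e^{zL_{-1}}Y(b,-z)a$. Isolating $\mu$ by the appropriate residue and expanding $e^{zL_{-1}}$, the $L_{-1}^0$ term reproduces $(-1)^{|a_1||a_2|}\mu(a_2,a_1)$, while the higher powers of $L_{-1}$, after the substitution $L_{-1}=\{Q,b_{-1}\}$, assemble into $Q\,m(a_1,a_2)+m(Qa_1,a_2)+(-1)^{|a_1|}m(a_1,Qa_2)$. Reading off the coefficient of each power of $z-w$ and resumming — the integration of the $L_{-1}$-flow produces the factors $\tfrac{(-1)^i}{i+1}$ — yields precisely the stated formula for $m$. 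The identity $n=[{\bf b},m]$ then follows by applying the zero mode ${\bf b}=b_0$ and using the commutator of $b_0$ with $Y(a,z)$.

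For homotopy associativity, the third line of \eqref{lzrel}, I would use the Borcherds (Jacobi) identity in the form that expresses $\mu(\mu(a_1,a_2),a_3)$ and $\mu(a_1,\mu(a_2,a_3))$ as iterated residues of products of three fields. Their difference is a sum over the nonnegative products $a_1{}_{(i)}a_2$ with $i\ge 0$, each of which, upon integrating the associated $L_{-1}$-translation, carries the coefficient $\tfrac{1}{i+1}$. Substituting $L_{-1}=\{Q,b_{-1}\}$ converts this sum into $Q\nu(a_1,a_2,a_3)+\nu(Qa_1,a_2,a_3)+(-1)^{|a_1|}\nu(a_1,Qa_2,a_3)+(-1)^{|a_1|+|a_2|}\nu(a_1,a_2,Qa_3)$, and matching the surviving $b_{-1}$-insertions against the two relevant contour orderings (inner radius $z>w$ versus $w>z$) reproduces exactly the two terms of the stated $\nu$. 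Finally I would establish the bracket relations: formula \eqref{brack} relating $\{\cdot,\cdot\}$ to $b_0$ and $\mu$ is obtained by commuting $b_0$ through $Y(a,z)$ and comparing with the defining expression $\{a_1,a_2\}=(-1)^{|a_1|}\mathrm{Res}_z(b_{-1}a_1)(z)a_2$; the second-order-derivation property of $b_0$ recorded in \cite{penkava} is precisely what makes this comparison close. The derivation properties \eqref{deriv} then follow, since $[Q,b_{-1}]=L_{-1}$ acts by translation and commutes with taking residues, and the Leibniz rule tying $\{\cdot,\cdot\}$ to $\mu$ is read off from the commutator formula together with the already-established derivation property of $b_0$.

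The main obstacle I expect is the homotopy-associativity step: controlling the iterated-residue bookkeeping in the Borcherds identity, correctly matching the two contour orderings to the two summands of $\nu$, and carrying out the resummation that produces the coefficients $\tfrac{1}{i+1}$ all demand careful sign and combinatorial tracking. Once the single structural fact $L_{-1}=\{Q,b_{-1}\}$ is in hand, every other relation of the homotopy BV-LZ algebra in \eqref{lzrel}, \eqref{brack} and \eqref{deriv} is a comparatively direct consequence of the vertex algebra axioms, so the essential difficulty is concentrated entirely in the explicit verification of the trilinear homotopy $\nu$.
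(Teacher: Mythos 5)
The paper does not prove this theorem itself: it is quoted verbatim from Lian--Zuckerman \cite{lz} (with the bracket-versus-$b_0$ comparison going back to \cite{penkava}), so there is no in-paper argument to compare against. Your outline reconstructs exactly the standard proof from that source -- $Q=J_{0}$ as a derivation of all $(n)$-products, skew-symmetry plus $L_{-1}=[Q,b_{-1}]$ for the commutativity homotopy $m$, the Borcherds identity plus the same substitution for the associativity homotopy $\nu$, and the commutator formula for $b_{0}$ for the bracket relations -- and all the structural ingredients you identify are the right ones. The only caveat is that this remains a plan rather than a verification: the sign and coefficient bookkeeping in the $\nu$ computation, which you correctly single out as the hard part, is exactly where the actual content of the proof lives and is not carried out here.
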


\begin{Rem}
An interesting question is whether one can extend this algebra to become $G_{\infty}$- or furthermore $BV_{\infty}$-algebra \cite{gj}, \cite{tamarkin}. The first attempt was in \cite{zuck} with a follow-up by \cite{voronov} (with the use of \cite{huang}), which led to the extension to what the author called ``weak" $G_{\infty}$-algebra. At the same time, in the papers \cite{gorbounov}, \cite{vallette} it was proven that BV-LZ homotopy algebra can be extended to $G_{\infty}$- and $BV_{\infty}$-algebra from \cite{tamarkin} correspondingly, but for vertex algebras of positive conformal weight only. Notice, however, that semi-infinite complex is not one of them since it involves $c$-vertex operator corresponding to the state of conformal dimension -1. So, existence of such an extance of $G_{\infty}$-, $BV_{\infty}$ -algebras in this case is still an open problem.
\end{Rem}

\begin{Rem} One can also describe parameter-dependent version of Lian-Zukerman operations which can be applied to 2d quantum field theories of various kinds, see \cite{aztcft1}, \cite{aztcft2}. The higher homotopies in this can be described as integrals over various compactified moduli spaces of configurations of points, in particular, to Stasheff polytopes \cite{stashbook}.
\end{Rem}

A natural example of such TVOA, which will be crucial in the following, is the semi-infinite cohomology (or simply BRST) complex for the Virasoro algebra \cite{fgz} of some VOA with central charge equal to $26$. The necessary setup for the construction of the semi-infinite complex (for more details, see, e.g, \cite{fgz}) 
is the VOA $\Lambda$,obtained from the following super Heisenberg algebra:
\begin{equation}
\{b_n, c_m\}=\delta_{n+m,0}, \quad n,m \in \mathbb{Z}.
\end{equation}
One can construct the space of $\Lambda$ as a Fock module:
\begin{eqnarray}
\Lambda&=&\{b_{-n_1}\dots b_{-n_k}c_{-m_1}\dots c_{-m_l}\mathbf{1},  n_1,\dots,  n_k>0, m_1,\dots, m_l>0;\nonumber\\
&& c_k \mathbf{1}=0,\ k\geqslant 2;\quad b_k\mathbf{1}=0, \ k\geqslant -1\}.
\end{eqnarray}
Then one can define two fields:
\begin{equation}
b(z)=\sum_m b_mz^{-m-2}, \qquad c(z)=\sum_n c_nz^{-n+1},
\end{equation}
which according to the commutation relations between modes have the following operator product:
\begin{equation}
b(z)c(w)\sim\frac{1}{z-w}.
\end{equation}
The Virasoro element is given by the formula:
\begin{equation}
L^{\Lambda}(z)=2:\partial b(z)c(z):+:b(z)\partial c(z):,
\end{equation}
so that $b(z)$ has conformal weight $2$, and $c(z)$ has conformal weight $-1$. Here, as usual, $: :$ stand for normal ordered product, e.g. $b(z)c(w)=\frac{1}{z-w}+:b(z)c(w):$ (for more details see e.g. \cite{benzvi}, Section 2.2). 

Now let $V$ be a VOA with the Virasoro element $L(z)$. Let us consider the tensor product $V\otimes\Lambda$. Then we have the following Proposition. 
\\

\begin{Prop} If $\mathscr{V}$ is a VOA with the central charge of Virasoro algebra equal to 26, then $\mathscr{V}\otimes \Lambda$ is a topological vertex algebra, where 
\begin{eqnarray}
&&J(z)=c(z)L(z)+:c(z)\partial c(z) b(z):+\frac{3}{2}\partial^2 c(z), \quad G(z)=b(z),\nonumber\\ 
&&F(z)=:c(z)b(z):, \quad \mathcal{L}(z)=L(z)+L^{\Lambda}(z).
\end{eqnarray}
\end{Prop}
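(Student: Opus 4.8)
The plan is to verify, one at a time, the defining axioms of a TVOA listed just before the statement: that the four distinguished fields $J$, $G=b$, $F=\,:cb:$ and $\mathcal{L}=L+L^{\Lambda}$ carry the prescribed conformal weights $1,2,1,2$ and fermion numbers $1,-1,0,0$; that $b_0^2=0$; that $[Q,b(z)]=\mathcal{L}(z)$ with $Q=J_0$; and, carrying the real content, that $Q^2=0$. Every one of these is an operator product computation done by Wick's theorem, so the first step is to record the elementary contractions $b(z)c(w)\sim\tfrac{1}{z-w}$, $b(z)b(w)\sim 0$, $c(z)c(w)\sim 0$ coming from $\{b_n,c_m\}=\delta_{n+m,0}$, together with the matter OPE $L(z)L(w)\sim\tfrac{c/2}{(z-w)^4}+\tfrac{2L(w)}{(z-w)^2}+\tfrac{\partial L(w)}{z-w}$ and the ghost stress-tensor OPE for $L^{\Lambda}=2:\partial b\,c:+:b\,\partial c:$, whose central charge for a $bc$-system of weights $(2,-1)$ is exactly $-26$.

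With these contractions in hand the bookkeeping axioms are immediate. The relation $b_0^2=\tfrac12\{b_0,b_0\}=0$ is read directly off the regular OPE $b(z)b(w)\sim 0$. The weights and fermion numbers follow by inspection of the mode expansions ($c$ has weight $-1$, $b$ has weight $2$), so that $cL$, $:c\partial c\,b:$ and $\partial^2 c$ each have weight $1$ and ghost number $1$, giving $J\in\mathscr{V}_1(1)$, while $F=:cb:$ is the ghost-number current of weight $1$ and $\mathcal{L}$ is a weight-two field of ghost number $0$. Finally $F_0$ is the ghost-number operator and $\mathcal{L}_0$ the total energy; both act semisimply on the bigraded Fock space $\mathscr{V}\otimes\Lambda$ and commute, since $\mathcal{L}$ is ghost-number neutral and hence $F(z)\mathcal{L}(w)$ carries no anomaly relating the two zero modes.

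Next I would compute the simple pole of $J(z)b(w)$, whose coefficient is $[Q,b(w)]$. Contracting the $c$ in $c(z)L(z)$ against $b(w)$ produces $L(w)$; the contractions of $b(w)$ against the cubic ghost term $:c\partial c\,b:$ together with the improvement term $\tfrac{3}{2}\partial^2 c$ reassemble precisely into the ghost stress tensor $L^{\Lambda}(w)$, so the total simple pole is $L(w)+L^{\Lambda}(w)=\mathcal{L}(w)$. I would stress here that the improvement term, being a total derivative, is invisible in $Q=J_0$ but is fixed by exactly this requirement: it is what makes the weight-two part of the contraction reproduce $L^{\Lambda}$ (equivalently, what makes $J$ primary).

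The crux, and the only place the hypothesis is used, is $Q^2=0$. Since $Q$ is odd, $Q^2=\tfrac12\{Q,Q\}$, and by contour deformation this is the zero mode of the simple-pole coefficient of $J(z)J(w)$; the higher-order poles cannot contribute, as they do not survive the residue integral $\oint_w dz$. Assembling the full OPE $J(z)J(w)$ from the matter piece $\tfrac{c/2}{(z-w)^4}$, all ghost contractions, and the improvement term, one finds that the triple and double poles are proportional to $(c-18)$ --- harmless for nilpotency --- while the simple-pole coefficient is proportional to $(c-26)\,c\partial^3 c(w)$. Hence $Q^2=0$ if and only if $c=26$, which is the hypothesis; this is precisely the cancellation of the matter central charge against the ghost central charge $-26$. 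I expect the main obstacle to be organizing this last computation: the normal-ordering subtractions among $c$, $\partial c$, $b$ and $L$ generate many terms, and the work is to track them until every anomalous contribution collapses onto the single $(c-26)$-proportional obstruction (cf. \cite{fgz}).
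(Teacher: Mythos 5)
Your proposal is correct and follows the standard argument: the paper itself offers no proof of this Proposition, treating it as the classical BRST construction of \cite{fgz} (see also \cite{pol}), and your OPE verification --- in particular that only the simple pole of $J(z)J(w)$, proportional to $(c-26)\,c\partial^3c(w)$, survives the double residue computing $\tfrac12\{Q,Q\}$, while the $(c-18)$-proportional higher poles drop out --- is exactly the computation those references carry out. Nothing is missing.
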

The operator $Q=J_0$ is traditionally called the $BRST$ $operator$ and the eigenvalue of $F_0$, i.e. fermionic grading is usually called the $ghost$ $number$.

However, there is a subspace of   $\mathscr{V}\otimes \Lambda$ which is a complex with respect to BRST operator regardless of the value of the central charge.

\subsection{Homotopy BV algebra on the complex of light modes}

We call the subspace 
$F_{\mathcal{L}_0}\subset\mathscr{V}\otimes \Lambda$ 
which is annihilated by $\mathcal{L}_0$, i.e. the space of states of conformal weight zero, the space of {\rm light modes}. Since the conformal weights in the VOA are greater than zero, the following proposition holds.

\begin{Prop} \hfill
\begin{enumerate}
\item The space of light modes $F_{\mathcal{L}_0}$ 
is linearly spanned by the elements which correspond to the operators:
\begin{eqnarray}
&&u(z), \quad c(z)A(z), \quad \partial c(z) a(z), \quad c(z)\partial c(z) \tilde A(z), \nonumber\\
&& c(z)\partial^2 c(z) \tilde a(z), \quad c(z)\partial c(z)\partial^2 c(z) \tilde u(z).  
\end{eqnarray}
Here $u, \tilde u$, $a, \tilde a\in \mathscr{V}_0$ and $A, \tilde A\in\mathscr{V}_1$. \\
\item The space $F_{\mathcal{L}_0}$ is a chain complex, quasi-isomorphic to the semi-infinite complex when the central charge is equal to 26. The differential acts on $F_{\mathcal{L}_0}$ in the 
following way (we recall that the grading is given by the ghost number):

\begin{align}
\xymatrixcolsep{30pt}
\xymatrixrowsep{3pt}
\xymatrix{
\mathscr{V}_{0}\ar[r]^{L_{-1}}& \mathscr{V}_{1}[-1]
\ar[r]^{-\frac{1}{2}L_1} & \mathscr{V}_{0}[-2]& \\
&&  &&\\
& \bigoplus & \bigoplus & \\
&&  &&\\
& \mathscr{V}_{0}[-1]\ar[uuuur]^{\rm s}\ar[r]^{L_{-1}} & 
\mathscr{V}_{1}[-2]\ar[r]^{\frac{1}{2}L_1}  & \mathscr{V}_{0}[-3]
}
\end{align}

where $s$ is a suspension, $\mathscr{V}_{i}$ $(i=0,1)$ are the spaces of the elements of $\mathcal{V}$ of conformal weight $i$. 
\end{enumerate}
\end{Prop}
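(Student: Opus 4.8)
The plan is to handle the two assertions separately: Part (1) is a weight count in the ghost Fock module, while Part (2) combines a contracting-homotopy argument for the quasi-isomorphism with a mode computation of $Q=J_0$ for the differential.

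\emph{Part (1).} I would decompose a homogeneous vector of $\mathscr{V}\otimes\Lambda$ as (matter)$\,\otimes\,$(ghost); the total conformal weight is the matter weight $h$ plus the ghost weight, and a light mode is a vector of total weight $0$. Since $\mathscr{V}$ has weights $\geq 0$ one has $h\geq 0$, so the ghost factor must carry weight $-h\leq 0$. A ghost monomial $c_{n_1}\cdots c_{n_l}b_{-m_1}\cdots b_{-m_k}\mathbf{1}$ (with $n_i\leq 1$ and $m_j\geq 2$ forced by $c_k\mathbf{1}=0,\,k\geq 2$ and $b_k\mathbf{1}=0,\,k\geq -1$) has ghost weight $-\sum_i n_i+\sum_j m_j$. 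As every $b$-creation mode contributes weight $\geq 2$ and the only $c$-mode of positive weight is $c_1$, a ghost weight $\leq 0$ forces no $b$-modes and $\sum_i n_i=h\geq 0$ with distinct $n_i\leq 1$. A short enumeration by ghost number $g=l$ gives: $g=0$ yields $h=0$; $g=1$ yields $\{1\}$ $(h=1)$ or $\{0\}$ $(h=0)$; $g=2$ yields $\{1,0\}$ $(h=1)$ or $\{1,-1\}$ $(h=0)$; $g=3$ yields $\{1,0,-1\}$ $(h=0)$; and $g\geq 4$ is impossible since the maximal distinct sum is already negative. Translating the ghost states $\mathbf{1},c_1,c_0,c_1c_0,c_1c_{-1},c_1c_0c_{-1}$ through the state--field correspondence into $1,c,\partial c,c\partial c,c\partial^2 c,c\partial c\,\partial^2 c$ and pairing with $\mathscr{V}_0$ or $\mathscr{V}_1$ according to $h$ produces exactly the six listed families.

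\emph{Part (2), the complex and the quasi-isomorphism.} Because $J(z)$ has conformal weight $1$, one has $[\mathcal{L}_0,J_0]=0$, so $Q$ preserves each $\mathcal{L}_0$-eigenspace and $F_{\mathcal{L}_0}=\ker\mathcal{L}_0$ is a subcomplex, for any central charge. For the quasi-isomorphism I would use that at $c=26$ the TVOA relation $[Q,b(z)]=\mathcal{L}(z)$ gives $\mathcal{L}_0=\{Q,b_0\}$. Grading $\mathscr{V}\otimes\Lambda=\bigoplus_\lambda(\mathscr{V}\otimes\Lambda)_\lambda$ by the $\mathcal{L}_0$-eigenvalue $\lambda$, each summand is a subcomplex, and on those with $\lambda\neq 0$ the operator $\tfrac{1}{\lambda}b_0$ is a contracting homotopy, $\{Q,\tfrac1\lambda b_0\}=\mathrm{id}$ (using $b_0^2=0$), so they are acyclic. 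Hence the cohomology lives in $\lambda=0$ and the inclusion $F_{\mathcal{L}_0}\hookrightarrow(\mathscr{V}\otimes\Lambda,Q)$ is a quasi-isomorphism.

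\emph{Part (2), the differential.} Expanding $Q=J_0$ from $J=c\,L^m+{:}c\,\partial c\,b{:}+\tfrac32\partial^2 c$ into modes, I would apply it to the six families and collect the light-mode components. The matter piece $\sum_n c_{-n}L^m_n$ supplies the weight-raising arrows (attaching $c_1$): on $u\in\mathscr{V}_0$ only $n=-1$ survives, giving $\mathscr{V}_0\xrightarrow{L_{-1}}\mathscr{V}_1[-1]$, and on $\partial c\,a$ it gives $c_1c_0\otimes L_{-1}a$, i.e. $\mathscr{V}_0[-1]\xrightarrow{L_{-1}}\mathscr{V}_1[-2]$; the term $c_{-1}L^m_1$ supplies the weight-lowering arrows $\mathscr{V}_1[-1]\xrightarrow{-\frac12 L_1}\mathscr{V}_0[-2]$ and $\mathscr{V}_1[-2]\xrightarrow{\frac12 L_1}\mathscr{V}_0[-3]$. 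The crucial point that $Q(cA)$ carries \emph{no} $\mathscr{V}_1[-2]$ component (the arrow missing already in §2) follows structurally: on a light mode $\mathcal{L}_0=0$, so $b_0Q=-Qb_0$, and since $b_0(cA)=0$ while $b_0$ sends the $c_1c_0$ (i.e. $\mathscr{V}_1[-2]$) component isomorphically to $c_1$ and annihilates the $c_1c_{-1}$ component, the $\mathscr{V}_1[-2]$ part of $Q(cA)$ must vanish. Finally the diagonal suspension $s\colon\mathscr{V}_0[-1]\to\mathscr{V}_0[-2]$ (the vertex incarnation of $\tilde Q$) arises from the ghost trilinear mode with $m+n=0$ acting through $b_0c_0\mathbf{1}=\mathbf{1}$, producing a nonzero multiple of $c_1c_{-1}\otimes a$, i.e. the identity on matter; the remaining families map to zero by weight and ghost-number counting.

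\emph{Main obstacle.} The delicate part is the differential: tracking the normal-ordering constants and the ghost-trilinear contributions so that the off-diagonal components cancel and the surviving coefficients come out as $\pm\tfrac12$ together with the suspension. The structural identity $b_0Q+Qb_0=\mathcal{L}_0=0$ on light modes is what makes this tractable, and it is precisely here, and in the contracting homotopy $\tfrac1\lambda b_0$, that the central charge $26$ is used.
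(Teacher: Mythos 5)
The paper offers no proof of this Proposition at all: it is stated as an immediate consequence of the one-line observation that the conformal weights of $\mathscr{V}$ are nonnegative (with the structure of the BRST complex taken as known from \cite{lz}, \cite{fgz}, \cite{zeitcour}). Your write-up supplies the missing argument, and it is correct. The weight count in Part (1) is exactly the intended justification: nonnegativity of matter weights forces the ghost factor to have weight $\le 0$, which excludes all $b$-creation modes (each contributes weight $\ge 2$) and leaves only the six ghost monomials $\mathbf{1}, c_1, c_0, c_1c_0, c_1c_{-1}, c_1c_0c_{-1}$, matching the listed operators under the state--field correspondence. The contracting homotopy $\tfrac1\lambda b_0$ on nonzero $\mathcal{L}_0$-eigenspaces is the standard proof of the quasi-isomorphism. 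For the differential, your structural derivation of the vanishing of the $\mathscr{V}_1[-2]$ component of $Q(cA)$ from $b_0Q+Qb_0=\mathcal{L}_0=0$ together with $b_0(cA)=0$ is exactly the mechanism the paper itself uses abstractly in the proof of Theorem 2.1 ($Q{\rm b}A=0\Rightarrow {\rm b}QA=0\Rightarrow QA|_{V''_1}=0$), so the vertex-algebra picture is consistent with the axiomatic one; and your identification of the surviving $c_{-1}L_1$ contribution with the $\mp\tfrac12 L_1$ arrows (the $\tfrac12$ coming from $c_{-1}\mathbf{1}\leftrightarrow\tfrac12\partial^2c$) and of the ghost-trilinear contribution on $c_0\mathbf{1}$ with the suspension is right.

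One small imprecision: the identity $\{Q,b_0\}=\mathcal{L}_0$ follows from the $J(z)b(w)$ OPE and holds for \emph{any} central charge; what $c=26$ buys is $Q^2=0$ on the full complex $\mathscr{V}\otimes\Lambda$, without which ``quasi-isomorphic to the semi-infinite complex'' is not even a meaningful statement (on $F_{\mathcal{L}_0}$ itself $Q^2=0$ holds regardless, as the paper notes). Also, $b_0^2=0$ plays no role in $\{Q,\tfrac1\lambda b_0\}=\mathrm{id}$. Neither point affects the validity of the argument.
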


\subsection{Quasiclassical limit: Courant algebroid from vertex algebroid}

Vertex algebroid on a complex $(\mathcal{V}_v^{\bullet}, {\rm d}):~
\mathcal{V}_v^0\xrightarrow{{\rm d}_h} \mathcal{V}_v^1$ is defined as follows. 

Let $\mathcal{V}_v^0$ be an abelian 
algebra and $\mathcal{V}^1_v$ is a $\mathcal{V}^0_v$ module.
Assume there also exists a pairing 
\begin{equation}
*: ~\mathcal{V}_v^0\otimes\mathcal{V}_v^{1}  \rightarrow\mathcal{V}_v^{1}, ~{\rm i.e.}~  
f\otimes A  \mapsto  f*A, 
\end{equation}
such that $1* A = A$, equipped with
a structure of a Leibniz $\mathbb{C}$-algebra 
\begin{equation}
[\ \cdot,\cdot \ ]_h :
\mathcal{V}_v^i\otimes\mathcal{V}_v^j\to \mathcal{V}_v^{i+j-1},
\end{equation}
 so that $[\mathcal{V}_v^0, \cdot]\equiv 0$, and a symmetric $\mathbb{C}$-bilinear pairing 
\begin{equation}
n_{h}: \mathcal{V}_v^1\otimes\mathcal{V}_v^1\to \mathcal{V}_v^0\end{equation}

which satisfies the following properties:

\begin{align}
&& f*(g*A) - (fg)*A  =  [A,f]_h*{\rm d}_hg +
[A,g]_h*{\rm d}_hf,\nonumber\\
&&[A_1,f*A_2]_h  =  [A_1, f]_h*v_2 + f*[A_1,A_2]_h, 
\nonumber\\
&&[A_1,A_2]_h + [A_2,A_1]_h  =  d_hn_h(A_1,A_2),
\quad
[f*A~, g]_h = f[A~,g]_h,  \nonumber\\
&&n_h(f*A_1, A_2)  =  fn (A_1,A_2) -
[A_1,[A_2,f]], \nonumber\\
&&[A, n_h(A_1, A_2)]  = n_h([A,A_1],A_2) +
n_h(A_1,[A,A_2]),\nonumber \\
&&{\rm d}_h(fg)  =  f*{\rm d}_hg + g*{\rm d}_hf, \nonumber\\
&&[A,{\rm d}_hf] =  {\rm d}[A,f]_h, \quad
n_h (A,{\rm d}_h f)  =  [A, f]_h, \quad [A, fg]_h=[A, f]_hg+f[A, g]_h,
\end{align}
where $A,A_1,A_2\in \mathcal{V}_v^1$, $f,g\in \mathcal{V}_v^0$.

The resulting object is known as {\it vertex algebroid} \cite{cdr}, \cite{malikov}. 

It is known that every positively graded VOA gives rise to vertex 
algebroid, where 
$$\mathcal{V}^1_v=\mathscr{V}_1[-1], \quad \mathcal{V}^0_v=\mathscr{V}_0,\quad {\rm d}_h=L_{-1},$$
and the corresponding operations are given by:
\begin{align}
&&u*A=Res_z\Big(\frac{u(z)A}{z}\Big), \quad [A, u]_h=Res_z(A(z) u), \quad [A_1, A_2]_h=Res_z(A_1(z) A_2), \nonumber\\
&&n_h(A_1, A_2)=Res_z(zA_1(z)A_2)
\end{align}

The $L_1$-operator, i.e. ${\rm div}_h=L_1:\mathcal{V}_v^1\rightarrow \mathcal{V}_v^0$ gives a {\it Calabi-Yau structure for  vertex algebroid} \cite{malikov}, namely an operator ${\rm div}$, which satisfies the following properties:
\begin{align}
{\rm div}_h~{\rm d}=0,&&\\
{\rm div}_h(u* A)=u~{\rm div}_hA+n_h({\rm d}u, A),&&\\
{\rm div_h}[A_1,A_2]_h=[A_1,{\rm div}_hA_2]_h-[A_2, {\rm div}_hA_1]_h.&&
\end{align}

Thus we obtain the following Proposition.

\begin{Prop}
Operations (\ref{mub}) restricted to light cone complex give a functor from category of vertex algebroids to the category of homotopy BV algebras of LZ type on the complex $(F^{\bullet}_{\mathcal{L}_0}, Q)$.
\end{Prop}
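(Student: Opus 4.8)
The plan is to construct the functor in two stages — first fixing the object assignment, then transporting morphisms for free — and to reduce the whole statement to the universal nature of the Lian--Zuckerman residue formulas (\ref{mub}) once they are restricted to weight-zero states. The first step is to observe that the light-mode complex $F_{\mathcal{L}_0}$ depends only on the vertex algebroid data. By the description of the light modes, the underlying graded space is spanned by the six families $u$, $c\,A$, $\partial c\, a$, $c\partial c\,\tilde A$, $c\partial^2 c\,\tilde a$, $c\partial c\partial^2 c\,\tilde u$ with $u,\tilde u,a,\tilde a\in\mathscr{V}_0$ and $A,\tilde A\in\mathscr{V}_1$, tensored against the fixed ghost factor $\Lambda$, while the differential $Q$ acts through $L_{-1}={\rm d}_h$, $L_1={\rm div}_h$ and the ghost zero modes. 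Since $\mathcal{V}_v^0=\mathscr{V}_0$ and $\mathcal{V}_v^1=\mathscr{V}_1[-1]$ are exactly the spaces of the vertex algebroid, both the graded vector space and $Q$ are determined functorially by $(\mathcal{V}_v^\bullet,{\rm d}_h)$ together with its divergence $L_1$.

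Next I would express the restricted operations $\mu$, $\{\cdot,\cdot\}$ and their homotopies $m,\nu$ purely in terms of the vertex algebroid operations $*$, $[\cdot,\cdot]_h$, $n_h$, $L_{-1}$, $L_1$. Evaluating (\ref{mub}) on the explicit light-mode operators reduces each residue, via the operator product of the $bc$-system and the mode expansions, to a polynomial in $*$, $[\cdot,\cdot]_h$, $n_h$ and the two Virasoro modes; this is precisely the quasiclassical computation of \cite{zeitcour}, and the resulting formulas match the tables appearing in Theorem \ref{extfull}. The key point to verify here is closedness: the factor $z^{-1}$ in $\mu$ and the insertion $b_{-1}$ in $\{\cdot,\cdot\}$ make the relevant modes weight-preserving on total-weight-zero states, so that every residue lands back in $F_{\mathcal{L}_0}$. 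Once these universal formulas are in hand, the homotopy BV-LZ relations can be obtained either by restricting the Lian--Zuckerman theorem \cite{lz} — after realizing the given vertex algebroid by a central charge $26$ VOA, which one can always arrange by tensoring the underlying VOA with an auxiliary one whose weight-$0$ part is $\mathbb{C}\mathbf{1}$ and whose weight-$1$ part vanishes, leaving $\mathscr{V}_0,\mathscr{V}_1$ and hence $F_{\mathcal{L}_0}$ untouched — or, equivalently, directly from the vertex algebroid axioms once the relations are rewritten through the universal expressions.

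Finally, functoriality on morphisms becomes formal. A morphism of vertex algebroids is a map intertwining $*$, $[\cdot,\cdot]_h$, $n_h$, $L_{-1}$ and $L_1$; by the first step it extends componentwise, family by family, to a chain map of light-mode complexes, and by the second step this chain map automatically intertwines $\mu$, $\{\cdot,\cdot\}$, $m$, $\nu$, since each of these is given by a fixed expression in the intertwined operations. Identities and composites are preserved because the extension is defined separately on each of the six light-mode families. The hard part will be the second step: establishing closedness and producing the universal formulas requires the careful mode-and-weight bookkeeping of \cite{zeitcour}, whereas both the object-level grading statement and the transport of morphisms are routine once those formulas are in place.
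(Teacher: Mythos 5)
Your proposal is correct, and it follows the route the paper intends: in the paper this Proposition is stated \emph{without} proof, as a direct consequence of the preceding constructions (the residue formulas expressing $*$, $[\cdot,\cdot]_h$, $n_h$ as modes of weight-$0$ and weight-$1$ vertex operators, the Lian--Zuckerman theorem, and the identification of the light-mode subcomplex), which is exactly the skeleton you flesh out. Two of your additions genuinely improve on what the paper leaves implicit. First, the weight bookkeeping showing that every residue of weight-zero states lands back in $F^{\bullet}_{\mathcal{L}_0}$ and can only involve $*$, $[\cdot,\cdot]_h$, $n_h$, $L_{\pm 1}$ is the real content here, and you correctly isolate it. Second, the central-charge issue is a genuine subtlety the paper glosses over: the Lian--Zuckerman theorem requires $Q^2=0$ on all of $\mathscr{V}\otimes\Lambda$, i.e.\ $c=26$, whereas the paper only remarks that the light modes form a complex for arbitrary $c$; your device of tensoring with an auxiliary VOA having $W_0=\mathbb{C}\mathbf{1}$ and $W_1=0$ (e.g.\ a Virasoro vertex algebra of complementary central charge) leaves the light modes, the differential, and the restricted operations unchanged while making the LZ theorem applicable, and this is a clean and valid way to close that gap. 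The one step you pass over too quickly is the very first one of the functor: an abstract vertex algebroid does not come equipped with vertex operators, so before the residue formulas make sense one must invoke the enveloping vertex algebra generated by the vertex algebroid (as in the chiral de Rham construction of Malikov--Schechtman--Vaintrob and Gorbounov--Malikov--Schechtman, which the paper itself also takes for granted in the Proposition of Section 5.4), or else actually carry out in full your alternative of verifying the homotopy BV-LZ relations directly from the vertex algebroid axioms, which is a substantial computation done in the paper only in the quasiclassical Courant limit. The statement stands either way, but that existence step should be made explicit rather than folded into the phrase ``the underlying VOA.''
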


The {\it commutative vertex algebroid} with Calabi-Yau structure is the one which has trivial bracket and CY structure. 
Let us assume that $\mathcal{V}^1_v=\mathcal{V}^1_c[[h]]$, where $h$ is a formal parameter, and we assume that the resulting vertex algebroid $\mathcal{V}^1_c=\mathcal{V}^1_v/h\mathcal{V}^1_v$ is commutative, namely both pairing and bracket are both identically equal to zero. Let $\mathcal{V}^{\bullet}\to \mathcal{V}^\bullet_c$ be the $\h\rightarrow 0$ reduction map, such that $A\rightarrow {\bar A}$
Then, the following operation 
\begin{equation}
\mu_0(f,\bar{A})=\overline{f*A}
\end{equation}
gives $\mathcal{V}_c^\bullet$, where $\mathcal{V}^0_v=\mathcal{V}^0_c$ the structure of the Courant $\mathcal{V}_0$-algebroid.
In addition we can define 
\begin{align}
&&[\bar{A}, u]_c=\overline{\frac{1}{h}[A, u]_h}, \quad 
[\bar{A}_1,\bar{A}_2]_c=\overline{\frac{1}{h}[A_1, A_2]_h}, \nonumber\\
&&n(A_1,A_2)=\overline{\frac{1}{h}n_h(A_1,A_2)},\quad 
{\rm div}\bar{A}=\overline{\frac{1}{h}{\rm div}_hA}, \quad {\rm d}u=\overline{{\rm d}_hu}.
\end{align}
Then the following Proposition holds \cite{bressler}.

\begin{Prop}
 
The bracket $[~\cdot ~,~ \cdot ~]_c$ and the $\mathcal{V}^0_c$-bilinear  pairing $n(~\cdot~,~\cdot ~)$, together with operation ${\rm div}$ satisfy the axioms of Courant algebroid with a Calabi-Yau structure on $(\mathcal{V}^{\bullet}_c, {\rm d})$.

\end{Prop}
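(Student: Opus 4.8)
The plan is to derive every Courant-algebroid axiom and every Calabi--Yau axiom by extracting the leading coefficient in $h$ of the corresponding vertex-algebroid identity. The organizing device is a bookkeeping of $h$-orders. By the commutativity hypothesis the bracket $[\cdot,\cdot]_h$, the pairing $n_h$ and the operator ${\rm div}_h=L_1$ all vanish modulo $h$, whereas the product $*$ (whose reduction is $\mu_0$) and ${\rm d}_h=L_{-1}$ survive at $h=0$; this is exactly what makes the definitions
\begin{equation*}
\overline{h^{-1}[A_1,A_2]_h}=[\bar A_1,\bar A_2]_c,\qquad \overline{h^{-1}n_h(A_1,A_2)}=n(A_1,A_2),\qquad \overline{h^{-1}{\rm div}_hA}={\rm div}\bar A
\end{equation*}
legitimate, and each iterated bracket or pairing acquires one extra power of $h$ per operation. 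First I would check that these leading coefficients are well defined on $\mathcal{V}^\bullet_c$: replacing a lift $A$ of $\bar A$ by $A+hA'$ changes $[\cdot,\cdot]_h$, $n_h$ and ${\rm div}_h$ only by $O(h^2)$, which disappears after applying $\overline{h^{-1}(\cdot)}$.

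Then I would run through the vertex-algebroid relations one at a time, substituting the $h$-expansions and reading off the identity at the lowest surviving order. The conceptual point is that relations which hold in a vertex algebroid only up to a bracket correction become strict in the limit, precisely because the correction carries an extra power of $h$. Thus $f*(g*A)-(fg)*A=[A,f]_h*{\rm d}_hg+[A,g]_h*{\rm d}_hf$ has an $O(h)$ right-hand side and yields, at order $h^0$, the strict module axiom $\mu_0(f,\mu_0(g,\bar A))=\mu_0(fg,\bar A)$; and $n_h(f*A_1,A_2)=f\,n_h(A_1,A_2)-[A_1,[A_2,f]_h]_h$, whose last term is a double bracket and hence $O(h^2)$, gives after division by $h$ the exact $\mathcal{V}^0_c$-bilinearity $n(\mu_0(f,\bar A_1),A_2)=\mu_0(f,n(A_1,A_2))$. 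In the same manner: the Leibniz rule $[A_1,f*A_2]_h=[A_1,f]_h*A_2+f*[A_1,A_2]_h$ gives Courant axiom (1), once one records from $n_h(A,{\rm d}_hf)=[A,f]_h$ that $[\bar A,u]_c=n(\bar A,{\rm d}u)=\langle A,\partial u\rangle$; the relation $[A_1,A_2]_h+[A_2,A_1]_h={\rm d}_hn_h(A_1,A_2)$ gives axiom (3); and the Leibniz rules for ${\rm d}_h$ reduce at order $h^0$ to the derivation property of $\partial$ and of $[\bar A,\cdot]_c$ on $\mathcal{V}^0_c$. The two identities that are $O(h^2)$ on both sides, namely $[A,n_h(A_1,A_2)]_h=n_h([A,A_1]_h,A_2)+n_h(A_1,[A,A_2]_h)$ and the Leibniz-algebra property of $[\cdot,\cdot]_h$, give after $\overline{h^{-2}(\cdot)}$ respectively axiom (2), using again $[\bar A,u]_c=\langle A,\partial u\rangle$, and the Jacobi identity, axiom (4).

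It remains to treat axioms (5) and (6) and the Calabi--Yau axioms. Axiom (5), $[\partial u,A]_c=0$, already holds at the vertex level: combining $[A_1,A_2]_h+[A_2,A_1]_h={\rm d}_hn_h(A_1,A_2)$ with the symmetry of $n_h$, the identities $n_h(A,{\rm d}_hf)=[A,f]_h$ and $[A,{\rm d}_hf]_h={\rm d}_h[A,f]_h$ yields $[{\rm d}_hu,A]_h=-{\rm d}_h[A,u]_h+{\rm d}_h n_h({\rm d}_hu,A)=0$ identically, whose leading coefficient is $[\partial u,A]_c=0$. The Calabi--Yau axioms transfer directly: ${\rm div}_h\,{\rm d}_h=0$ reduces to ${\rm div}\,\partial=0$; the order-$h$ relation ${\rm div}_h(u*A)=u\,{\rm div}_hA+n_h({\rm d}u,A)$ reduces to ${\rm div}\,\mu_0(u,\bar A)=\mu_0(u,{\rm div}\bar A)+n(\partial u,A)$; and the order-$h^2$ relation ${\rm div}_h[A_1,A_2]_h=[A_1,{\rm div}_hA_2]_h-[A_2,{\rm div}_hA_1]_h$ reduces to ${\rm div}[\bar A_1,\bar A_2]_c=[\bar A_1,{\rm div}\bar A_2]_c-[\bar A_2,{\rm div}\bar A_1]_c$. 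Finally axiom (6), $\langle\partial u_1,\partial u_2\rangle=0$, follows from the Calabi--Yau axioms: writing $u\,\partial v=\partial(uv)-v\,\partial u$ and evaluating ${\rm div}(u\,\partial v)$ two ways gives $\langle\partial u,\partial v\rangle$ from the second axiom and $-\langle\partial u,\partial v\rangle$ from ${\rm div}\,\partial=0$ together with the derivation property of $\partial$ and the symmetry of $n$, whence $\langle\partial u,\partial v\rangle=0$. The main obstacle is the bookkeeping itself: one must assign to every iterated operation the correct power of $h$ so as to read off the right coefficient, and verify that the double-bracket corrections are genuinely higher order, which is exactly the mechanism turning the vertex-algebroid relations ``up to a bracket'' into the strict Courant relations.
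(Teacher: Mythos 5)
Your argument is correct and is exactly the quasiclassical degeneration the paper intends: the Proposition is stated there without proof (only a citation to Bressler), but the surrounding definitions $[\bar A_1,\bar A_2]_c=\overline{h^{-1}[A_1,A_2]_h}$, $n(\cdot,\cdot)=\overline{h^{-1}n_h(\cdot,\cdot)}$, ${\rm div}=\overline{h^{-1}{\rm div}_h}$ presuppose precisely the $h$-order bookkeeping you carry out, and each Courant and Calabi--Yau axiom is indeed the leading coefficient of the corresponding vertex-algebroid identity, with the double-bracket corrections dying at higher order as you note. The one place you depart from a purely term-by-term reduction is axiom (6), which you derive from the Calabi--Yau axioms rather than from $n_h(A,{\rm d}_hf)=[A,f]_h$ directly; that detour is valid and in fact cleaner, since the direct route is circular without extra input.
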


\subsection{Quasiclassical limit for homotopy BV algebra}

Consider the VOAs on the spaces of the form $\mathscr{V}^h=\oplus_{n\in \mathbb{Z}_{\ge 0}}\mathscr{V}^h_n$, where $n$ stands for the grading with respect to conformal weight, 
and $\mathscr{V}^h_n=\mathscr{V}_n((h))$, and $\mathscr{V}_n$ is some vector spaces with $h$ being a formal parameter. Let us also denote $\mathscr{V}=\oplus_{n\in \mathbb{Z}_{\ge 0}}\mathscr{V}_n$. 

\noindent Then, we require the operator products to meet the following conditions: \\

\begin{itemize}

\item  For any state $A$ the associated vertex operator $A(z)=\sum_nA_nz^{-n-1}$ is such that its Fourier modes $A_n\in End_{\mathbb{C}((h))}(\mathscr{V})$, i.e. they commute with the natural action of $\mathbb{C}((h))$ on $\mathscr{V}$.\\
  
\item  Let $A,B\in \mathscr{V}$ and $A(z)=\sum_nA_nz^{-n-1}$,  then
\begin{equation}
A_nB\in h\mathscr{V}[h],  \quad n\ge 0. 
\end{equation}
\end{itemize}

Moreover, we put the following conditions on the Virasoro action:\\

\begin{itemize}

\item $\mathscr{V}$ is invariant under the action of the operator $L_{-1}$. \\

\item  Let $A\in \mathscr{V}_1$, then $L_1A\in h\mathscr{V}_0$.\\

\end{itemize}

We note immediately that $\mathscr{V}$ is a vertex algebra as well. 
For such parameter-dependent $VOA$ the 
complex 
\begin{equation}\label{+comp}
\mathscr{V}_0[[h]]\xrightarrow{L_{-1}} \mathscr{V}_1[[h]][-1]
\end{equation}
has the structure of vertex algebroid with Calabi-Yau structure given by $L_1$ operator and the following proposition holds.

\begin{Prop}
Vertex algebroid corresponding to (\ref{+comp}) generates a vertex subalgebra $\mathscr{V}^h_+$, and $\mathscr{V}^h_+\otimes \Lambda$ is invariant under the action of BRST operator. 
\end{Prop}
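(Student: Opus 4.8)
\emph{Proof proposal.} The plan is to separate the statement into its two assertions. For the first, I would define $\mathscr{V}^h_+$ to be the vertex subalgebra of $\mathscr{V}^h$ generated by the conformal weight $0$ and $1$ subspaces $\mathscr{V}_0[[h]]$ and $\mathscr{V}_1[[h]]$, i.e.\ the smallest $\mathbb{C}((h))$-subspace containing them, closed under all Fourier modes $a_{(n)}$ and containing the vacuum. So defined, it is a vertex subalgebra tautologically; the actual content is that the vertex-algebroid operations on $(\ref{+comp})$ (the products $u*A$, $[A_1,A_2]_h$, $n_h$ and the operators $L_{-1}={\rm d}_h$, $L_1$) are precisely the restrictions of the ambient vertex products, so that $\mathscr{V}^h_+$ is the enveloping vertex algebra of this vertex algebroid in the sense of Gorbounov--Malikov--Schechtman \cite{cdr}, \cite{malikov}, \cite{bressler}. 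The hypotheses that $A_{(n)}B\in h\mathscr{V}[h]$ for $n\ge 0$ and that all modes are $\mathbb{C}((h))$-linear guarantee that this subalgebra is well defined over the coefficient ring and inherits the weight grading, with $(\mathscr{V}^h_+)_0=\mathscr{V}_0[[h]]$ and $(\mathscr{V}^h_+)_1=\mathscr{V}_1[[h]]$.

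The structural input I would isolate next is that $\mathscr{V}^h_+$ contains the conformal vector of $\mathscr{V}^h$. This is where the Calabi--Yau structure is essential: the divergence operator, identified here with $L_1$, together with the translation $L_{-1}={\rm d}_h$, is exactly the datum from which the enveloping construction produces a weight-$2$ conformal element $\omega_{\mathrm{Vir}}$, built from the weight-$1$ generators and the CY term (cf.\ \cite{malikov}, \cite{bressler}). Hence $\omega_{\mathrm{Vir}}\in\mathscr{V}^h_+$, and since a vertex subalgebra is closed under the modes of its own elements, all Virasoro modes $L_n=(\omega_{\mathrm{Vir}})_{(n+1)}$ preserve it: $L_n\,\mathscr{V}^h_+\subseteq\mathscr{V}^h_+$ for every $n\in\mathbb{Z}$, not merely for the translation and divergence modes $L_{\pm 1}$.

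With that in hand the BRST invariance is short. I would use that $Q=J_0$ is a derivation of all products, $[J_0,a_{(n)}]=(J_0 a)_{(n)}$, together with $Q|0\rangle=0$, so that $Q$ preserves any vertex subalgebra on whose generators it acts internally; it then suffices to test $Q$ on the generators of $\mathscr{V}^h_+\otimes\Lambda$, namely $\mathscr{V}_0\otimes 1$, $\mathscr{V}_1\otimes 1$, and $1\otimes b$, $1\otimes c$. Writing $J(z)=c(z)L(z)+{:}c(z)\partial c(z)b(z){:}+\frac{3}{2}\partial^2 c(z)$, the last two summands act only on the $\Lambda$-factor and manifestly keep us inside $\mathscr{V}^h_+\otimes\Lambda$; the first summand contributes $\sum_n {:}c_n L_{-n}{:}$, in which the ghost modes $c_n$ act on $\Lambda$ and the Virasoro modes $L_{-n}$ act on the $\mathscr{V}^h_+$-factor, staying inside it by the previous paragraph. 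Therefore $Q\bigl(\mathscr{V}^h_+\otimes\Lambda\bigr)\subseteq\mathscr{V}^h_+\otimes\Lambda$.

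The main obstacle is the middle step, proving $\omega_{\mathrm{Vir}}\in\mathscr{V}^h_+$, equivalently that $\mathscr{V}^h_+$ is stable under the deep Virasoro modes $L_{-2},L_{-3},\dots$ and not only under $L_{-1}$ and $L_1$. Indeed, without the conformal vector inside the subalgebra, the term $\sum_n {:}c_n L_{-n}{:}$ of $Q$ would, when applied to $1\otimes\omega$ with $\omega\in\Lambda$, produce Virasoro descendants of the vacuum of the shape $\omega_{\mathrm{Vir}}\otimes(\cdots)$ that might escape $\mathscr{V}^h_+\otimes\Lambda$; the check on the generators $1\otimes b$, $1\otimes c$ fails unless these descendants lie in $\mathscr{V}^h_+$. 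It is precisely the Calabi--Yau/divergence datum that supplies the conformal element inside the enveloping algebra and closes the argument, so I would devote the bulk of the proof to making the construction of $\omega_{\mathrm{Vir}}$ from the weight $\le 1$ vertex-algebroid data explicit and to verifying that it coincides with the ambient Virasoro element appearing in $J$.
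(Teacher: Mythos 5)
The paper states this Proposition without proof, so there is no argument of the author's to compare against; what follows assesses your proposal on its own terms. Your architecture is sound and, I think, the right one: defining $\mathscr{V}^h_+$ as the vertex subalgebra generated by the weight-$0$ and weight-$1$ subspaces makes the first assertion tautological, and reducing $Q$-invariance of $\mathscr{V}^h_+\otimes\Lambda$ to a check on generators is legitimate because $Q\mathbf{1}=0$ and $[Q,a_{(n)}]=(Qa)_{(n)}$. You have also correctly isolated the one dangerous generator: since $[Q,b(z)]=\mathcal{L}(z)$, one has $Q(\mathbf{1}\otimes b_{-2}\mathbf{1})=\omega_{\mathscr{V}}\otimes\mathbf{1}+\mathbf{1}\otimes\omega_{\Lambda}$ with $\omega_{\mathscr{V}}:=L_{-2}\mathbf{1}$, so invariance forces $\omega_{\mathscr{V}}\in\mathscr{V}^h_+$; conversely, once $\omega_{\mathscr{V}}\in\mathscr{V}^h_+$ all modes $L_n$ preserve $\mathscr{V}^h_+$, while the generators from $\mathscr{V}_0$, $\mathscr{V}_1$ and the $c$-ghost only ever produce $L_n$ with $n\ge -1$, which act internally on the weight-$\le 1$ data by the hypotheses $L_{-1}\mathscr{V}\subset\mathscr{V}$ and $L_1\mathscr{V}_1\subset h\mathscr{V}_0$. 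Identifying this as the crux is a genuine contribution that the paper leaves implicit.

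The gap is that you never establish $\omega_{\mathscr{V}}\in\mathscr{V}^h_+$, and the appeal to the Calabi--Yau datum does not close it for two separate reasons. First, the divergence $L_1$ only lets one \emph{attempt} to assemble a conformal element from the weight-$\le 1$ generators; the hypotheses of Section 5.4 do not guarantee the algebroid is large enough to support one. For instance, a rescaled Virasoro vertex algebra with $\tilde L_n=hL_n$ has $\mathscr{V}_0=\mathbb{C}$, $\mathscr{V}_1=0$, satisfies every displayed condition (the $L_1$-condition vacuously), yet $\mathscr{V}^h_+=\mathbb{C}((h))\mathbf{1}$ does not contain $\omega_{\mathscr{V}}$, and $Q(\mathbf{1}\otimes b_{-2}\mathbf{1})$ already escapes $\mathscr{V}^h_+\otimes\Lambda$ --- so the statement genuinely needs an extra hypothesis, not just a citation. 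Second, even when the enveloping construction of \cite{malikov}, \cite{bressler} produces a conformal element $\omega'\in\mathscr{V}^h_+$, agreement of $L'_n$ with $L_n$ on $\mathscr{V}^h_+$ for $n\ge -1$ does not by itself yield $\omega'=\omega_{\mathscr{V}}$, and it is the ambient $L(z)$ that enters $J(z)$; you flag this verification and defer it, but it is exactly the content of the Proposition. The repair is to assume (or to verify in the intended examples) that $\omega_{\mathscr{V}}$ lies in the subalgebra generated by $\mathscr{V}_0[[h]]$ and $\mathscr{V}_1[[h]]$ --- as it manifestly does for the $\beta$-$\gamma$ systems of Section 6.3, where $\omega_{\mathscr{V}}=-h^{-1}\sum_i (p_i)_{(-1)}L_{-1}X^i$ is a normal-ordered product of algebroid generators --- after which your generator check does constitute a complete proof.
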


We denote the resulting complex of light modes as $(\mathcal{F}^{\bullet}_{+,\mathcal{L}_0}, Q^h)$ and the one for $\mathscr{V}$ as $(\mathcal{F}^{\bullet}_{\mathcal{L}_0}, Q)=(\mathcal{F}^{\bullet}_{+,\mathcal{L}_0}, Q^h)|_{h=1}$.

Notice, that  $\mathscr{V}_0\xrightarrow{ {L}_{-1}} \mathscr{V}_1[-1]$ has the structure of Courant algebroid, where the Calabi-Yau structure is given by $\frac{1}{h}L_1$. However, the quasiclassical limit of the corresponding homotopy BV algebra is a little more involved.

\begin{Prop}
 Let $(\mathcal{F}^{\bullet}_{0,\mathcal{L}_0}, Q^h)$ 
 be the subcomplex of $(\mathcal{F}^{\bullet}_{+,\mathcal{L}_0}, Q^h)$, spanned by the elements corresponding to the following vertex operators: 
\begin{align}
&&u(z),\quad c(z)A(z),\quad h\partial c(z) v(z), \quad hc\partial c(z) \tilde A(z),\nonumber\\
&& hc(z)\partial^2 c(z) \tilde v(z),\quad h^2c(z)\partial c(z)\partial^2 c(z) \tilde u(z),
\end{align}
where $u,\tilde u, v, \tilde v\in \mathscr{V}_0$, $A, \tilde A\in \mathscr{V}_1$. Then the following two statements hold:
\begin{itemize}
\item  The complex $(\mathcal{F}_{\mathcal{L}_0}^{\bullet}, Q^h)$ is isomorphic to $(\mathcal{F}^\bullet_{\mathcal{L}_0}, Q)$. Moreover, it is invariant with respect to the operators $h^{-1}b_0$ and $hc_0$.

\item The Lian-Zuckerman operations act as follows on $(\mathcal{F}^{\bullet}, Q)$:
\begin{eqnarray}
&&\mu_h: \mathcal{F}_{\mathcal{L}_0}^i\otimes \mathcal{F}_{\mathcal{L}_0}^j\to \mathcal{F}_{\mathcal{L}_0}^{i+j}[h], \quad  m_h: \mathcal{F}_{\mathcal{L}_0}^i\otimes \mathcal{F}_{\mathcal{L}_0}^j\to \mathcal{F}_{\mathcal{L}_0}^{i+j-1}[h], \nonumber\\
&& \nu_h: \mathcal{F}_{\mathcal{L}_0}^i\otimes\mathcal{F}_{\mathcal{L}_0}^j\otimes \mathcal{F}_{\mathcal{L}_0}^k\to \mathcal{F}_{\mathcal{L}_0}^{i+j+k-1}[h], 
\quad  \{\mathcal{F}_{\mathcal{L}_0}^i,\mathcal{F}_{\mathcal{L}_0}^j\}_h\to h\mathcal{F}_{\mathcal{L}_0}^{i+j-1}[h].
\end{eqnarray}
\end{itemize}
\end{Prop}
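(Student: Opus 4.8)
The plan is to read the whole statement as a bookkeeping of powers of $h$, governed by two ingredients: the explicit rescaling carried by each of the six generating light-mode operators listed in the statement, and the two quantum conditions on $\mathscr{V}^h$, namely $A_nB\in h\mathscr{V}[h]$ for $n\ge 0$ and $L_1\mathscr{V}_1\subset h\mathscr{V}_0$. I record the rescaling powers as $h^0,h^0,h^1,h^1,h^1,h^2$ on $V_0,V'_1,V'_0,V''_1,V''_0,V'''_0$ respectively, exactly as dictated by the prefactors in $u(z),\ c(z)A(z),\ h\partial c(z)v(z),\ hc\partial c(z)\tilde A(z),\ hc(z)\partial^2 c(z)\tilde v(z),\ h^2 c(z)\partial c(z)\partial^2 c(z)\tilde u(z)$.

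For the first statement I would check arrow by arrow that $\mathcal{F}^{\bullet}_{0,\mathcal{L}_0}$ is $Q^h$-invariant against the light-mode differential. The three ${\rm d}$-arrows are $L_{-1}$, which is $\mathbb{C}((h))$-linear and hence preserves the rescaling power, so they connect equal-power slots ($V_0\to V'_1$ and $V'_0\to V''_1$); the $\tilde{Q}$-arrow $V'_0\to V''_0$ is the $h$-free suspension and also connects equal-power slots. The two ${\rm d}^*$-arrows $V'_1\to V''_0$ and $V''_1\to V'''_0$ are proportional to $L_1$, and $L_1\mathscr{V}_1\subset h\mathscr{V}_0$ raises the power by exactly one, matching the jumps $h^0\to h^1$ and $h^1\to h^2$. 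Thus $Q^h$ preserves $\mathcal{F}^{\bullet}_{0,\mathcal{L}_0}$, and since the classical ${\rm d}^*$ is built from ${\rm div}=h^{-1}L_1$, differing from the $L_1$ in $Q^h$ by the single factor of $h$ absorbed by the rescaling, the specialization $h=1$ identifies the generators and $Q^h$ with those of $(\mathcal{F}^{\bullet}_{\mathcal{L}_0},Q)$ and gives the asserted isomorphism. The invariance under $h^{-1}b_0$ and $hc_0$ is the same kind of count: $b_0$ and $c_0$ are $h$-free and realize ${\bf b}$ and ${\bf c}$, lowering and raising the $b$-grading by one; comparing with the rescaling powers, which drop by one precisely along $b_0$ (e.g. $V''_1\to V'_1$, $V'_0\to V_0$, $V'''_0\to V''_0$), shows $h^{-1}b_0$ and $hc_0$ map $\mathcal{F}^{\bullet}_{0,\mathcal{L}_0}$ into itself, while $[h^{-1}b_0,hc_0]=\{b_0,c_0\}=1$ recovers the normalization of ${\bf b},{\bf c}$.

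For the Lian-Zuckerman operations I would reduce to the six generating types and use the mode forms $\mu(a_1,a_2)=(a_1)_{-1}a_2$ and $\{a_1,a_2\}=(-1)^{|a_1|}(b_{-1}a_1)_0 a_2$ coming from (\ref{mub}), together with the stated residue formulas for $m$ and $\nu$. Factoring each generator as (ghost)$\times$(matter), the ghost manipulations are $h$-free, so every power of $h$ beyond the explicit rescaling is produced in the matter sector by $A_nB\in h\mathscr{V}[h]$, $n\ge 0$. The product $\mu$ picks out the regular $(-1)$-mode, which carries no forced $h$ and is therefore $O(1)$; combined with the non-negative input powers and after dividing out the normalization of the output slot this lands in $\mathcal{F}^{i+j}[h]$, and the same holds for $m$ and $\nu$. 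The bracket and its higher analogues instead pick out a matter $0$-mode, which lies in $h\mathscr{V}[h]$: this is precisely the mechanism by which $[A_1,A_2]_h=(A_1)_0A_2$ is $O(h)$ and reduces to $h[A_1,A_2]_c$ in the limit, and it forces $\{\,\cdot\,,\,\cdot\,\}_h$ into $h\mathcal{F}^{i+j-1}[h]$.

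The main obstacle is the remaining arithmetic: verifying in each pairing of the six types that (sum of input rescaling powers) $+$ (powers forced by the quantum condition) $-$ (rescaling normalization of the output component) is $\ge 0$ for $\mu,m,\nu$ and $\ge 1$ for the bracket. The delicate cases are those feeding or combining high-power generators such as $\tilde u\sim h^2$ into a low-power output, where one must confirm that the compensating power of $h$ is actually supplied by $A_nB\in h\mathscr{V}[h]$ (and by $L_1\mathscr{V}_1\subset h\mathscr{V}_0$ wherever the output passes through ${\rm d}^*$ or ${\rm div}$). Since all bilinear values are already tabulated in (\ref{mutab}) and the trilinear ones are given in Theorem \ref{extfull}, this step reduces to substituting those expressions and reading off the minimal power of $h$, which I would present as a short table rather than as case-by-case prose.
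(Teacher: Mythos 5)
The paper does not actually prove this Proposition: it is recalled from \cite{zeitcour} and stated without argument, so there is no internal proof to compare against. Your reconstruction identifies the right mechanism: the two sources of powers of $h$ are the explicit rescalings $h^0,h^0,h^1,h^1,h^1,h^2$ on the six generating slots and the quantum conditions $A_nB\in h\mathscr{V}[h]$ ($n\ge0$) and $L_1\mathscr{V}_1\subset h\mathscr{V}_0$; the arrow-by-arrow check of $Q^h$-invariance, the $b_0/c_0$ count, and the observation that the bracket is forced into $h\mathcal{F}[h]$ because it extracts a non-negative matter mode are all correct and are surely what the omitted proof amounts to.

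Two caveats. First, your phrase that $\mu_h$ "picks out the regular $(-1)$-mode, which carries no forced $h$ and is therefore $O(1)$" is slightly misleading as stated: the $(-1)$-mode of a composite (ghost $\times$ matter) operator decomposes into sums of ghost modes times matter modes, and whenever the output lands in a slot of strictly higher rescaling power (e.g.\ $\mu_h(cA_1,cA_2)$ landing in the $h^1$-normalized $V''_1\oplus V''_0$), the compensating $h$ must come from a \emph{non-negative} matter mode such as $(A_1)_0A_2$ or $(A_1)_1A_2$. Your final paragraph sets up exactly the right inequality, so you clearly know this, but the earlier sentence should not be read as saying the product is automatically power-neutral. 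Second, the actual content of the second bullet of the Proposition lives entirely in the finite table you defer: until one substitutes the tabulated values of $\mu$, $m$, $\nu$ and confirms the power count for every pair (in particular those involving the $h^2$-normalized $\tilde u$ slot and those passing through ${\rm d}^*=\pm\frac12 L_1$), the argument is a correct and complete plan rather than a complete proof. Since the cases are finitely many and each reduces to reading off a single matter mode, this is a bookkeeping gap rather than a conceptual one.
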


\noindent Therefore, one can consider the expansions: 
\begin{align}
{\bf b}=h^{-1}b_0, \quad {\bf c}=hc_0&&\\
\mu_h(\cdot,\cdot)=\mu(\cdot,\cdot)+O(h),\quad \{\cdot,\cdot\}_h=h\{\cdot,\cdot\}+O(h^2)&&\nonumber \\
m_h(\cdot,\cdot)=m(\cdot,\cdot)+ O(h),\quad \nu_h(\cdot,\cdot,\cdot)=\nu(\cdot,\cdot,\cdot)+ O(h),&&\nonumber
\end{align}
and the following theorem is true.\\

\begin{Thm} 
\begin{enumerate}
\item The operations $\mu(\cdot,\cdot)$, $m(\cdot,\cdot)$, $\{\cdot,\cdot\}$, $\nu(\cdot,\cdot,\cdot)$ together with ${\bf b}$-operator defined on the space of the light modes of $\mathscr{V}$, satisfy the relations of the homotopy BV algebra of LZ type.\\

\item The complex of light modes of $\mathscr{V}$ is isomorphic to the full complex $(\mathcal{F}^{\bullet}, Q)$ with $L_{-1}={\rm d}$, $\frac{1}{2}L_1={\rm d}^{*}$ and the homotopy BV algebra structure is the one extending the Courant algebroid structure induced by VOA on  the complex $\mathscr{V}_0\xrightarrow{ {L}_{-1}} \mathscr{V}_1[-1]$.
\end{enumerate}
\end{Thm}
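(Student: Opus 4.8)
The plan is to derive both statements from the Lian--Zuckerman construction for the topological vertex algebra $\mathscr{V}^h\otimes\Lambda$, together with the uniqueness supplied by Theorem~\ref{extfull}. For part (1), I would start from the Lian--Zuckerman theorem: the operations (\ref{mub}) satisfy the homotopy BV--LZ relations (\ref{lzrel}), (\ref{brack}), (\ref{deriv}), so on the light-mode complex of $\mathscr{V}^h\otimes\Lambda$ the parameter-dependent operations $\mu_h,m_h,\nu_h,\{\,\cdot\,,\,\cdot\,\}_h$ satisfy all these identities, with the honest mode $b_0$ in the role of ${\bf b}$. Working in the rescaled frame of the preceding Proposition, in which $Q$ is $h$-independent and ${\bf b}=h^{-1}b_0$, ${\bf c}=hc_0$, each such identity becomes an identity of formal power series in $h$ once the operations are expanded as $\mu_h=\mu+O(h)$, $m_h=m+O(h)$, $\nu_h=\nu+O(h)$ and $\{\,\cdot\,,\,\cdot\,\}_h=h\{\,\cdot\,,\,\cdot\,\}+O(h^2)$. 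Substituting $b_0=h{\bf b}$ into (\ref{brack}) exhibits exactly the factor $h$ predicted for $\{\,\cdot\,,\,\cdot\,\}_h$; dividing each relation by the power of $h$ dictated by these weights and letting $h\to0$ reproduces the same identities for the leading operations $\mu,m,\nu,\{\,\cdot\,,\,\cdot\,\}$ and the operator ${\bf b}$. Since $\{b_0,c_0\}=1$ forces $[{\bf b},{\bf c}]=1$, the rescaled operators also satisfy the structural axioms imposed on $(\mathcal{F}^{\bullet},Q)$, which establishes part (1).

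For part (2), I would first identify the complexes. The Proposition describing the light modes lists the spanning vertex operators and the action of the differential; matching ghost number with cohomological degree yields $V_0=\mathscr{V}_0$, $V'_1=\mathscr{V}_1[-1]$, $V'_0=\mathscr{V}_0[-1]$, $V''_1=\mathscr{V}_1[-2]$, $V''_0=\mathscr{V}_0[-2]$, $V'''_0=\mathscr{V}_0[-3]$, while the displayed diagram identifies ${\rm d}=L_{-1}$, ${\rm d}^*=\tfrac12 L_1$ and $\tilde{Q}=s$, in agreement with the decomposition $Q={\rm d}+{\rm d}^*+\tilde{Q}$ established earlier. The Courant algebroid on $\mathscr{V}_0\xrightarrow{L_{-1}}\mathscr{V}_1[-1]$ is the quasiclassical limit of the vertex algebroid constructed above, so by the uniqueness in Theorem~\ref{extfull} it suffices to verify that the quasiclassical operations obey the five hypotheses listed there. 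The grading constraints on $\mu$ and $\{\,\cdot\,,\,\cdot\,\}$ follow from conformal-weight and ghost-number bookkeeping of the residues (\ref{mub}); the compatibility ${\bf c}\,\mu(a_1,a_2)=(-1)^{|a_1|}\mu(a_1,{\bf c}\,a_2)$ follows by commuting $c_0$ through the operator-product residue; and the vanishing of $m$ off the half-complex follows from the explicit $h$-expansion of $m_h$ restricted to the light-mode subspaces.

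The remaining and most delicate hypothesis is the fifth: that ${\rm d}^*=\tfrac12 L_1$ induces a Calabi--Yau structure with ${\bf b}\,\tilde{Q}^{-1}{\rm d}^*|_{V'_1}=\tfrac12{\rm div}$. Here I would combine the fact that $L_1$ realizes the Calabi--Yau structure of the vertex algebroid with the quasiclassical normalization ${\rm div}=\tfrac1h L_1$, and then track the powers of $h$ carried by ${\bf b}=h^{-1}b_0$, by $\tilde{Q}$ acting as the suspension $V'_0\to V''_0$, and by its inverse. The main obstacle is precisely this $h$-weight bookkeeping: one must check that the normalizations assigned to the generators $c(z)A(z),\ h\partial c(z)v(z),\ hc\partial c(z)\tilde{A}(z),\dots$ are exactly those for which $b_0$, $L_1$ and $\tilde{Q}$ compose, after the limit, to reproduce $\tfrac12{\rm div}$. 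Once this is checked, Theorem~\ref{extfull} identifies the quasiclassical homotopy BV--LZ algebra with the BV double extending the induced Courant algebroid, while the residue formula for $\nu_h$ reproduces at leading order the explicit $\nu$ recorded there, completing the proof.
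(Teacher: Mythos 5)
The paper itself gives no proof of this theorem: it is stated as a recollection from the earlier work \cite{zeitcour} (the introduction says Section 5 ``reminds'' the vertex algebra realization and the quasiclassical limit), so there is no argument in the text to compare yours against line by line. That said, your two-step strategy is exactly the one the surrounding architecture of Section 5 is set up to support. For part (1), the homogeneity argument is sound: each BV--LZ relation for $\mu_h,m_h,\nu_h,\{\cdot,\cdot\}_h$ with $b_0=h{\bf b}$ is homogeneous under the weight assignment $\mu_h,m_h,\nu_h\sim h^0$, $\{\cdot,\cdot\}_h\sim h^1$, $n_h=[b_0,m_h]\sim h^1$, so dividing by the appropriate power of $h$ and letting $h\to 0$ yields the same relations for the leading terms --- provided one invokes the preceding Proposition, which is what guarantees that the limits exist and that $\{\cdot,\cdot\}_h$ really starts at order $h$ (you use this implicitly; it is worth saying explicitly, since without it the division by $h$ is not legitimate). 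For part (2), reducing the identification to the five hypotheses of Theorem~\ref{extfull} and then appealing to its uniqueness clause is the right move and is consistent with how the paper positions that theorem.

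The one substantive gap is that you name, but do not carry out, the verification of hypothesis (5), i.e.\ that ${\bf b}\tilde{Q}^{-1}{\rm d}^*|_{V'_1}=\tfrac12{\rm div}$ after all the $h$-rescalings. This is not a formality: the factors $h^{-1}$ in ${\bf b}$, the normalization ${\rm div}=\tfrac1h L_1$ from the commutative-limit construction, and the relative $h$-weights of the generators $c A$, $h\partial c\, v$, $hc\partial c\,\tilde A$, \dots{} must conspire to produce exactly the coefficient $\tfrac12$, and the sign conventions for ${\rm d}^*$ differ between the two rows of the light-mode diagram ($-\tfrac12 L_1$ on the top row, $+\tfrac12 L_1$ on the bottom). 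A similar, smaller omission is the claim that the grading constraints and the ${\bf c}$-compatibility ``follow by commuting $c_0$ through the operator-product residue'' --- this is true but should be tied to the conformal-weight-zero condition defining the light modes, since that is what forces $\mu$ and $\{\cdot,\cdot\}$ to respect the $\mathcal{V}_0\oplus\mathcal{V}_1$ decomposition. With those computations supplied, the proposal would constitute a complete proof.
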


\section{Flat metric deformation, Yang-Mills equations and $BV_{\infty}^{\square}$-algebra}

\subsection{Flat metric deformation}

Let us consider the homotopy BV algebra of LZ type 
on the complex $(\mathcal{F}, Q)$ associated with Courant algebroid. 
one can introduce the following set of elements $\{f_i\}^d_{i=1}$ of degree $1$, satisfying the following conditions:
\begin{eqnarray}
Qf_i=0, \quad \mu(f_i,f_j)=0, \quad \forall i,j.
\end{eqnarray}

The immediate consequence  of $\mu(f_i,f_j)=0$ is that $\{f_i,f_j\}=0$.

Introducing the operator 
\begin{eqnarray}\label{r}
R^{\eta}=\sum_{i,j}\eta^{ij}\mu(f_i,\{f_j, \cdot\}),
\end{eqnarray}
depending on the constant matrix $\eta^{ij}$, we obtain the following proposition:

\begin{Prop}
The operator $R^{\eta}$ obeys the  properties:
\begin{equation}
{(R^{\eta})}^2=0,\quad [Q,R^{\eta}]=0.
\end{equation}
\end{Prop}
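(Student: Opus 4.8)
The plan is to verify the two identities separately, reducing everything to the defining relations (\ref{lzrel}), (\ref{brack}), (\ref{deriv}) of the homotopy BV-LZ algebra together with the three facts $Qf_i=0$, $\mu(f_i,f_j)=0$, and their consequence $\{f_i,f_j\}=0$ (immediate from the definition (\ref{brack}) of the bracket, since the $f_i$, being $\mathbf{b}$-closed degree-$1$ elements of $V'_1$, satisfy ${\bf b}f_i=0$, so all three terms in (\ref{brack}) drop out once $\mu(f_i,f_j)=0$). Note first that $R^{\eta}$ is an odd (degree $+1$) operator, so that $[Q,R^{\eta}]=QR^{\eta}+R^{\eta}Q$.

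For $[Q,R^{\eta}]=0$ I would compute $QR^{\eta}(a)$ directly. Applying the $Q$-Leibniz rule for $\mu$ (first line of (\ref{lzrel})) and using $Qf_i=0$ kills the first term, leaving $QR^{\eta}(a)=-\sum_{i,j}\eta^{ij}\mu(f_i,Q\{f_j,a\})$. Since $Q$ is a derivation of the bracket (first line of (\ref{deriv})) and $Qf_j=0$, one has $Q\{f_j,a\}=\{f_j,Qa\}$, whence $QR^{\eta}(a)=-R^{\eta}(Qa)$; this is precisely $QR^{\eta}+R^{\eta}Q=0$.

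For $(R^{\eta})^2=0$ the first steps are purely algebraic simplifications. Writing $M_i=\mu(f_i,\,\cdot\,)$ and $D_j=\{f_j,\,\cdot\,\}$, the fact that $D_j$ is a derivation of $\mu$ (second line of (\ref{deriv})) together with $\{f_j,f_k\}=0$ gives $D_jM_k=M_kD_j$, and the Jacobi identity for the bracket together with $\{f_j,f_l\}=0$ gives that $D_jD_l$ is symmetric in $j,l$. Hence $(R^{\eta})^2=\sum\eta^{ij}\eta^{kl}M_iM_kD_jD_l$, and symmetrizing over the interchange of the two factors (legitimate because $\eta$ is symmetric and $D_jD_l=D_lD_j$) yields $2(R^{\eta})^2=\sum\eta^{ij}\eta^{kl}(M_iM_k+M_kM_i)D_jD_l$.

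It remains to show the symmetrized double product vanishes, and this is the step I expect to carry the weight. Here I would invoke homotopy associativity (third line of (\ref{lzrel})): with $\mu(f_i,f_k)=0$ and $Qf_i=Qf_k=0$ it collapses to $M_iM_k=-\big(Q\,\nu(f_i,f_k,\,\cdot\,)+\nu(f_i,f_k,Q\,\cdot\,)\big)$, so that $M_iM_k+M_kM_i=-[Q,\,\nu(f_i,f_k,\,\cdot\,)+\nu(f_k,f_i,\,\cdot\,)]$. The crucial point is that the explicit homotopy $\nu$ produced in Theorem \ref{extfull}, namely $\nu(A_1,A_2,A_3)=\mu(m(A_1,A_3),A_2)-\mu(m(A_2,A_3),A_1)$, is antisymmetric under $A_1\leftrightarrow A_2$ on $V'_1$, and moreover vanishes on $V'_1\otimes V'_1\otimes V''_0$; since $f_i,f_k\in V'_1$, on every slot that can actually occur one gets $\nu(f_i,f_k,\,\cdot\,)+\nu(f_k,f_i,\,\cdot\,)=0$. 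Therefore $M_iM_k+M_kM_i=0$ as operators and $(R^{\eta})^2=0$. The only real obstacle is controlling this associativity-homotopy term; once the antisymmetry of $\nu$ in its first two $V'_1$-arguments is in hand, everything collapses.
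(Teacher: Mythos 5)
Your proposal is correct and follows essentially the same route as the paper: the commutator $[Q,R^{\eta}]=0$ from the derivation properties of $Q$, and $(R^{\eta})^2=0$ by commuting the brackets through the products via the derivation property and $\{f_i,f_j\}=0$, invoking homotopy associativity with $\mu(f_i,f_k)=0$, and killing the residual $\nu$-terms by the antisymmetry of $\nu$ in its first two arguments against the symmetry of the remaining expression. Your explicit operator bookkeeping ($D_jM_k=M_kD_j$, $D_jD_l=D_lD_j$, then symmetrization) and your remark that one needs ${\bf b}f_i=0$ for $\{f_i,f_j\}=0$ to be immediate merely make precise what the paper leaves implicit.
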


\begin{proof} The second relation is an immediate consequence of the fact that $Q$ is a derivation of both $\mu(\cdot,\cdot)$ and $\{\cdot,\cdot\}_0$. Let us prove the first one. In order to do that let us write in detail ${(R^{\eta})}^2a$ for some 
$a\in V$. 
\begin{align}
&&{(R^{\eta})}^2 a=\sum_{i,j,k,l }\eta^{ij}\mu(f_i,\{f_j, \eta^{kl}\mu(f_k,\{f_l, a\})\})=\sum_{i,j,k,l}\eta^{ij}\eta^{kl}\mu(f_i\mu(f_k,\{f_j\{f_l, a\}_0\}_0)=\nonumber\\
&&\sum_{i,j,k,l}\eta^{ij}\eta^{kl}(\mu(\mu(f_i,f_k), \{f_j\{f_l, a\}\})+(Q\nu+\nu Q)(f_i,f_k, \{f_j\{f_l, a\}\})).
\end{align}
Since $\{f_j,\{f_l, a\}\}=\{f_l,\{f_j, a\}\}$, and $n$ is antisymmetric in the first two arguments, ${(R^{\eta})}^2 a=0.$
\end{proof}

\begin{Rem}
In principle Proposition above could be considered for LZ algebras for any vertex algebra and thus true for LZ-algebras associated to vertex algebroids with CY structure.
\end{Rem}

Let us look in more detail at the structure of the operator $R^{\eta}$:

\begin{align}
\xymatrixcolsep{30pt}
\xymatrixrowsep{3pt}
\xymatrix{
V_{0}~\ar[r]^{\rm \hat{d}}~\ar[ddddr]^{-{\bf c}\Delta}& ~V'_{1}~\ar[ddddr]^{-{\bf c}\Delta}
\ar[r]^{{\rm \hat{d}}^*} & {V}''_{0}\ar[ddddr]^{-{\bf c}\Delta}& \\
&&  &&\\
& \bigoplus & \bigoplus & \\
&&  &&\\
& V'_0~\ar[r]^{\rm \hat{d}}~ & ~V''_1\ar[r]^{\hat{\rm d}^*} ~ & ~V'''_0
}
\end{align}

where 

\begin{align}
&&\Delta~\cdot =\eta^{ij}\{f_i, \{f_j, \cdot\}\}, \quad \hat{\rm d}~\cdot=(-1)^{|\cdot|}\eta^{ij}\mu(\{f_j,~ \cdot\},f_i),\nonumber\\
&&\hat{\rm d}^*~\cdot=\frac{1}{2}\eta^{ij}\tilde Q m (f_i, \{f_j,~ \cdot\})~ {\rm on} ~ V'_1, \quad \hat{\rm d}^*~\cdot=-\frac{1}{2}\eta^{ij}{\bf c}\tilde Q m (f_i, {\bf b}\{f_j,~ \cdot\})~ {\rm on} ~ V''_1, 
\end{align}

and for simplicity of the calculations we assumed Einstein summation convention, i.e. when an index variable appears twice in a single term, once in an upper (superscript) and once in a lower (subscript) position, 
it implies summation over all of possible values of subscript and superscript. 

If we deform  the differential 
\begin{equation}
Q\to Q+R_{\eta}
\end{equation}

we obtain that such deformation extends to the entire $C_{\infty}$-structure.

\begin{Thm}
\begin{enumerate}
\item There exist a flat metric deformation of the homotopy associative subalgebra of 
quasiclassical LZ homotopy BV algebra, i.e. there exist $\eta$-deformed multilinear maps $\mu^{\eta}$, $m^{\eta}$, $\nu^{\eta}$ on $(\mathcal{F}^{\bullet}, Q^{\eta})$, which satisfy the relations of homotopy associative algebra. Moreover, $m^{\eta}\equiv m$ and $\nu^{\eta}\equiv \nu$.\\ 
\item 
The homotopy associative algebra on $(\mathcal{F}^{\bullet}, Q^{\eta})$ genrated by $\mu^{\eta}, \nu^{\eta}$ is an $A_{\infty}$-algebra, such that all multilinear operations vanish starting from the tetralinear one. The symmetrized version gives a $C_{\infty}$-algebra.
\end{enumerate}
\end{Thm}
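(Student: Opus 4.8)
The plan is to regard $(\mathcal{F}^{\bullet}, Q^{\eta})$ with $Q^{\eta}=Q+R^{\eta}$ as a deformation of the $A_{3}$-algebra $(Q,\mu,\nu)$ produced by the $C_{\infty}$-subalgebra theorem, and to prove that the deformation of the differential can be absorbed entirely into the binary product, leaving $\nu$ and $m$ untouched. The identity $(Q^{\eta})^{2}=0$ is already given by the preceding Proposition (from $(R^{\eta})^{2}=0$ and $[Q,R^{\eta}]=0$), so the relation $(Q^{\eta})^{2}=0$ of a homotopy associative algebra holds for free. What remains is to restore the two higher relations: that $Q^{\eta}$ differentiates $\mu^{\eta}$, and that $\mu^{\eta}$ is associative up to the $Q^{\eta}$-homotopy $\nu^{\eta}$; the content of part~(1) is that both can be met with $\nu^{\eta}\equiv\nu$ and $m^{\eta}\equiv m$. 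Throughout I will use three facts established earlier: each $\{f_{j},\cdot\}$ is a degree-$0$ derivation of $\mu$ (by (\ref{deriv})); $Q$ is a derivation of $\{\cdot,\cdot\}$ and $Qf_{i}=0$, so $Q$ commutes with $\{f_{j},\cdot\}$; and $\mu(f_{i},f_{j})=0$ forces $\{f_{i},f_{j}\}=0$.

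The construction of $\mu^{\eta}$ proceeds order by order in $\eta$. I would first measure the failure of $Q^{\eta}$ to differentiate the undeformed product,
$$
\mathcal{D}(a_{1},a_{2})=R^{\eta}\mu(a_{1},a_{2})-\mu(R^{\eta}a_{1},a_{2})-(-1)^{|a_{1}|}\mu(a_{1},R^{\eta}a_{2}),
$$
expand $R^{\eta}\mu(a_{1},a_{2})=\eta^{ij}\mu(f_{i},\{f_{j},\mu(a_{1},a_{2})\})$ by the derivation property of $\{f_{j},\cdot\}$, and then regroup the resulting double products by the associativity-up-to-$\nu$ relation. This rewrites $\mathcal{D}$ as a sum of terms of the shape ``$Q$-homotopy of $\nu$'' evaluated on arguments of the form $(f_{i},\{f_{j},a_{1}\},a_{2})$ and $(f_{i},a_{1},\{f_{j},a_{2}\})$. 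Because $Qf_{i}=0$ and $Q$ commutes with $\{f_{j},\cdot\}$, each such term is exactly the derivation defect of the bilinear map $a_{1}\otimes a_{2}\mapsto\eta^{ij}\nu(f_{i},\{f_{j},a_{1}\},a_{2})$ and its companion; taking $\mu^{(1)}$ to be their signed sum cancels $\mathcal{D}$ to first order in $\eta$. One then iterates: the residual defect is $[R^{\eta},\mu^{(1)}]$, again a $Q$-coboundary built from $\nu$ carrying one more factor $\{f_{j},\cdot\}$, and the recursion defines $\mu^{\eta}=\mu+\mu^{(1)}+\mu^{(2)}+\cdots$.

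The step I expect to be the main obstacle is showing that this recursion closes, terminates, and genuinely leaves $\nu$ and $m$ undeformed. Two structural inputs should do the work. First, since the undeformed algebra is $A_{3}$ (all operations above $\nu$ vanish), every defect produced along the way is $Q$-exact through $\nu$ alone, so no four-ary or higher homotopy is ever forced; this is precisely what pins $\nu^{\eta}=\nu$, and the parallel rigidity of $m$ on the half-complex, where $n=[\mathbf{b},m]$ and $m$ is supported only on $V'_{1}\otimes V'_{1}$, gives $m^{\eta}=m$ once one checks the commutativity-up-to-$m$ relation holds with $m$ unchanged. Second, each application of $R^{\eta}$ inserts a factor $\{f_{j},\cdot\}$ and a left multiplication by $f_{i}$, so the vanishing $\mu(f_{i},f_{j})=0$ and $\{f_{i},f_{j}\}=0$ annihilate exactly the terms in which two such factors would collide; together with the fact that $\mathcal{F}^{\bullet}$ is concentrated in four degrees, this forces $\sum_{k}\mu^{(k)}$ to truncate after finitely many steps. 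I would then verify the associativity-up-to-$\nu$ relation for the assembled $\mu^{\eta}$: after the same regrouping it reduces to the undeformed $A_{3}$ associativity together with the four-term relation (\ref{23rel}) evaluated on arguments containing the $f_{i}$, the extra contributions canceling precisely because $\nu$ is a strict associativity homotopy with nothing above it. In practice this is the computation one checks case by case against the product table of Theorem~\ref{extfull} and the explicit components $\hat{\mathrm{d}},\hat{\mathrm{d}}^{*},-\mathbf{c}\Delta$ of $R^{\eta}$.

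For part~(2), once $(Q^{\eta},\mu^{\eta},\nu)$ is shown to satisfy the $A_{3}$ relations with all higher operations zero, it is by definition an $A_{\infty}$-algebra truncating at the ternary level, so every multilinear operation from the tetralinear one onward vanishes. The $C_{\infty}$ statement then follows exactly as in the $C_{\infty}$-subalgebra theorem: I would pass to the symmetrized product $\mu^{\eta,\mathrm{sym}}$ and to $\nu^{\mathrm{sym}}$ and check the shuffle (Harrison) identities. These are unaffected by the deformation, since each $\mu^{(k)}$ is built from $\mu$ and $\nu$, which already satisfy the shuffle identities, and the inserted operators $\{f_{j},\cdot\}$ act as derivations compatible with shuffles; hence $\mu^{\eta,\mathrm{sym}}$ and $\nu^{\mathrm{sym}}$ generate a $C_{\infty}$-algebra.
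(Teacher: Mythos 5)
Your overall strategy coincides with the paper's: measure the failure of $Q^{\eta}=Q+R^{\eta}$ to be a derivation of $\mu$, absorb that defect into a correction of the binary product built from the existing homotopies while leaving $m$ and $\nu$ untouched, and then verify homotopy commutativity and associativity against the same $m$ and $\nu$. But there is a concrete gap in the first step. The correction you propose --- a signed sum of $\eta^{ij}\nu(f_i,\{f_j,a_1\},a_2)$ and its companion $\eta^{ij}\nu(f_i,a_1,\{f_j,a_2\})$ --- does not cancel the defect. Re-association via $\nu$ turns $\eta^{ij}\mu(f_i,\mu(a_1,\{f_j,a_2\}))$ into $\eta^{ij}\mu(\mu(f_i,a_1),\{f_j,a_2\})$, but to reach the required term $(-1)^{|a_1|}\mu(a_1,R^{\eta}a_2)$ you must still move $f_i$ past $a_1$, and that step uses the commutativity homotopy $m$, producing an extra $Q$-exact contribution with primitive $r(a_1,a_2)=\eta^{ij}\mu(m(f_i,a_1),\{f_j,a_2\})$. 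The correct first-order correction is $\bar{\mu}^{\eta}(a_1,a_2)=\eta^{ij}\nu(f_i,\{f_j,a_1\},a_2)-\eta^{ij}\mu(m(f_i,a_1),\{f_j,a_2\})$; a purely $\nu$-built correction leaves the defect uncancelled, and the $m$-built term is precisely what makes the homotopy-commutativity identity close with $m^{\eta}\equiv m$.

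The second weak point is the termination of your order-by-order recursion. The argument that collisions are killed by $\mu(f_i,f_j)=0$ and $\{f_i,f_j\}=0$ together with the shortness of the complex is heuristic, not a proof; and if the residual defect $[R^{\eta},\bar{\mu}^{\eta}]$ were merely $Q$-exact rather than zero, you would still owe an argument that a primitive $\mu^{(2)}$ exists and that the process stops. What actually happens is that the recursion terminates after one step because $[R^{\eta},\bar{\mu}^{\eta}]=0$ identically; the paper establishes this by checking the handful of nonzero entries in the explicit table for $\bar{\mu}^{\eta}$, using the derivation property of $\{f_k,\cdot\}$. Once that is in place, the rest of your outline matches the paper: the associativity defect of $\mu^{\eta}=\mu+\bar{\mu}^{\eta}$ is matched by the $Q^{\eta}$-homotopy of the unchanged $\nu$ (verified in the only nontrivial case $A_1,A_2,A_3\in\mathcal{F}^1$), $\bar{\mu}^{\eta}$ contributes nothing to the higher associativity relation, and the $A_{\infty}$ and $C_{\infty}$ claims of part (2) follow as you state.
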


\begin{proof} 

Let us find an obstruction for $R^{\eta}$ to be a derivation of $\mu$:
\begin{align}
&&R^{\eta}\mu(a_1,a_2)=\mu(f_i, \mu(\{f_j,a_1\},a_2))\eta^{ij}+\mu(f_i, \mu(a_1,\{f_j,a_2\}))\eta^{ij}=\nonumber\\
&&=\mu(\mu(f_i, \{f_j,a_1\}),a_2)\eta^{ij}+\mu(\mu(f_i, a_1),\{f_j,a_2\})\eta^{ij}-\nonumber\\
&&(Q\nu+\nu Q)(f_i,\{f_j,a_1\},a_2)\eta^{ij}-(Q\nu+\nu Q)(f_i,a_1,\{f_j,a_2\})\eta^{ij}=\nonumber\\
&&\mu(\mu(f_i, \{f_j,a_1\}),a_2)+(-1)^{|a_1|}\mu(\mu(a_1,f_i),\{f_j,a_2\}_0)-\nonumber\\
&&(Q\nu+\nu Q)(f_i,\{f_j,a_1\},a_2)\eta^{ij}-(Q\nu+\nu Q)(f_i,a_1,\{f_j,a_2\})\eta^{ij}+\nonumber\\
&&(Qr+rQ)(f_i,a_1,\{f_j,a_2\})\eta^{ij}=\nonumber\\
&&\mu(\mu(f_i, \{f_j,a_1\}),a_2)\eta^{ij}+(-1)^{|a_1|}\mu(a_1, \mu(f_i,\{f_j, a_2\}))\eta^{ij}+ \nonumber\\
&&(Qr+rQ)(f_i,a_1,\{f_j,a_2\})\eta^{ij}=\nonumber\\
&&\mu(R^{\eta} a_1, a_2)+(-1)^{|a_1|}\mu(a_1, R^{\eta} a_2) -(Q{\bar \mu}^{\eta}+{\bar \mu}^{\eta}Q)(a_1,a_2),
\end{align}
where $r(a_1,a_2)=\mu(m(f_i,a_1),\{f_j,a_2\})\eta^{ij}$ and 
\begin{eqnarray}
\bar{\mu}^{\eta}(a_1,a_2)=\nu(f_i,\{f_j,a_1\},a_2)\eta^{ij}-\mu(m(f_i,a_1),\{f_j,a_2\})\eta^{ij}, 
\end{eqnarray}
and explicitly:

\begin{center}
$\bar{\mu}^{\eta}(a_1,a_2)$=
\end{center}
\begin{tabular}{|l|c|c|c|c|c|r|}
\hline
\backslashbox{$ a_2$}{$a_1$}&          $u_1$ & $A_1$ & $v_1$ &$\tilde A_1$ &$\tilde v_1$& $\tilde u_1$ \\
\hline
$u_2$ &    0    &$-\eta^{ij}\mu(m(f_i,A_1),\{f_j,u_2\})$ & 0 & 0& 0 & 0\\
\hline
$A_2$     &  0 & $\bar{\mu}^{\eta}(A_1,A_2)$ &0  & 0 &$-\eta^{ij}\mu(m( f_i,A_2),\{f_j,\tilde v_1\})$& 0 \\                    
\hline
$v_2$& 0 &  0 & 0 & 0  & 0 & 0  \\
\hline
$\tilde A_2$ & 0&  0 &0 & 0&0&0\\
\hline
$\tilde v_2$& 0 & $-\eta^{ij}\mu(m(f_i,A_1),\{f_j,\tilde v_2\})$  & 0 & 0 &0& 0\\
\hline
$\tilde u_2$    & 0 &   0& 0 & 0  &0& 0\\
\hline
\end{tabular}\\
\vspace{3mm}

\noindent Here
\begin{eqnarray}
&&\bar{\mu}^{\eta}(A_1,A_2)=\\
&&-\mu(m(f_i,A_1)\{f_j,A_2\})\eta^{ij}-\mu(m(\{f_j,A_1\},A_2),f_i)\eta^{ij}+\mu(m(f_i,A_2),\{f_j,A_1\})\eta^{ij}.\nonumber
\end{eqnarray}

Thus, since
$[R^{\eta},\mu]+[Q,\bar{\mu}^{\eta}]=0$. If one shows that $[R^{\eta}, \bar{\mu}^{\eta}]=0$, i.e.,
\begin{equation}
R^{\eta}\bar{\mu}(a_1,a_2)={\bar \mu} (R^{\eta}a_1, a_2)+(-1)^{|a_1|}{\bar{\mu}}(a_1,R^{\eta} a_2),
\end{equation} 
then $Q^{\eta}$ is a derivation of the bilinear operation $\mu^{\eta}=\mu+{\bar\mu}^{\eta}$. 
One can show it explicitly. 

Let $a_1=u\in\mathcal{F}^0$ and $a_2=A\in\mathcal{F}^1$. Then 
$\bar{\mu}^{\eta}(u,A)=0$ and $\bar{\mu}^{\eta}(u,R_0^{\eta}A)=0$. 
At the same time 
\begin{eqnarray}
&&\bar{\mu}^{\eta}(R^{\eta}u,A)=\bar{\mu}^{\eta}(\mu(f_i,\{f_j,u\}),A)\eta^{ij}=\nonumber\\
&&-\mu(m(\{f_l, \mu(f_i,\{f_j,u\}\}),A), f_r)+
\mu(m(f_i, A),\{f_j,\mu(f_r,\{f_l,u\}\}) \eta^{rl}\eta^{ij}=0.
\end{eqnarray}
Now we check the case $a_2=u\in\mathcal{F}^0$, $a_1=A\in\mathcal{F}^1$.
The bilinear operation between these two elements is nontrivial, moreover:
\begin{equation}
R^{\eta}\bar{\mu}^{\eta}(A,u)=-\eta^{ij}\eta^{kl}\mu(f_k,\{f_l, \mu(m(f_i, A),\{f_j,u\})\}).
\end{equation}
From the table above we see that ${\bar\mu}^{\eta}(R^{\eta}A, u)=0$ and 
\begin{align}
-{\bar{\mu}}^{\eta}(A,R^{\eta}u)=
\mu(m( f_k, A)\{ f_l,\mu(f_i,\{f_j,u\})\})\eta^{ij}\eta^{kl}+\mu(m(\{f_j,A\},\mu(f_k,\{f_l,u\}), f_i)\eta^{ij}\eta^{kl}.\nonumber
\end{align}
and thus $R^{\eta}$ is a derivation by using the fact that  $\{f_k, \cdot \}$ is a derivation. Let us consider the last nontrivial case, $a_1=A_1\in\mathcal{F}^1$ and $a_2=A_2\in \mathcal{F}^1$:
\begin{align}
&&R^{\eta}\bar{\mu}^{\eta}(A_1,A_2)=\mu( f_k,\{f_l,
\mu( m(f_i,A_2),\{f_j,A_1\})\})\eta^{ij}\eta^{kl}-
\nonumber\\
&&\mu(f_k,\{f_l,\mu(m(f_i,A_1),\{f_j,A_2\})\})\eta^{ij}\eta^{kl},\nonumber\\
&&\bar{\mu}^{\eta}(R^{\eta}A_1,A_2)=-\mu(m(f_i, A_2),\{f_j, \mu(f_k, \{f_l, A_1\})\})\eta^{ij}\eta^{kl}\nonumber\\
&&-\bar{\mu}^{\eta}(A_1,R^{\eta}A_2)=
\mu(m(f_i, A_1), \{f_j, \mu(f_k, \{f_l, A_2\})\})\eta^{ij}\eta^{kl}.
\end{align}
Using the Leibniz property of the bracket and aanticommutativity between elements of $V'_0$ and $V'_1$, we find that $R^{\eta}{\bar \mu}^{\eta}(A_1,A_2)={\bar \mu}^{\eta}(R^{\eta}A_1,A_2)-{\bar \mu}^{\eta}(A_1,R^{\eta}A_2)$, i.e. $R^{\eta}$ is a derivation.
We leave all the remaining cases to the reader.

The next step is to show that $\mu^{\eta}$ satisfies the homotopy commutativity relation. 
From the table for $\bar{\mu}^{\eta}$ we obtain:
\begin{eqnarray}
&&\bar{\mu}^{\eta}(a_1,a_2)-\bar{\mu}^{\eta}(a_2,a_1)=\nonumber\\
&&R^{\eta}m^{\eta}(a_1,a_2)+m^{\eta}(R^{\eta}a_1,a_2)+
(-1)^{|a_1|}m^{\eta}(a_1,R^{\eta}a_2),
\end{eqnarray}
and thus $m^{\eta}\equiv m$. Now we prove that $\mu^{\eta}$ satisfies the homotopy associativity relation and we show this in the case when all the arguments belong to $\mathcal{F}^1$, leaving to check the other combinations to the reader. Let $A_1,A_2,A_3\in \mathcal{F}^1$, then
\begin{eqnarray} 
&&\mu(\bar{\mu}^{\eta}(A_1,A_2),A_3)=
-\mu(\mu(m(f_i,A_1),\{f_j,A_2\}), A_3)\eta^{ij}-\\
&&\mu(\mu(m(\{f_j,A_1\}, A_2),f_i), A_3)\eta^{ij}+\mu(\mu(m(f_i,A_2),\{f_j,A_1\}), A_3)\eta^{ij}\nonumber\\
&&\bar{\mu}^{\eta}(\mu(A_1,A_2),A_3)=\bar{\mu}^{\eta}(\frac{1}{2}\tilde{Q}m( A_1,A_2),A_3)=-\frac{1}{2}\eta^{ij}\mu(m (f_i,A_3),\{f_j,\tilde{Q} m(A_1,A_2)\}),\nonumber\\
&&{\bar\mu}^{\eta}(A_1,\mu(A_2,A_3))={\bar\mu}^{\eta}(A_1,\frac{1}{2}\tilde{Q} m(A_2,A_3))=-\frac{1}{2}\eta^{ij}\mu(m(f_i,A_1), \{f_j,\tilde Q m(A_2,A_3)\}),\nonumber\\
&&\mu(A_1,{\bar\mu}^{\eta}(A_2,A_3))=
-\mu(A_1,\mu(m(f_i,A_2)\{f_j,A_3\}))\eta^{ij}-\nonumber\\
&&\mu(A_1,\mu(m(\{f_j,A_2\},A_3),f_i)\eta^{ij}+\mu(A_1,\mu(m(f_i,A_3),\{f_j,A_2\}))\eta^{ij}.\nonumber
\end{eqnarray}
Therefore, 
\begin{eqnarray} \label{defass}
&&\mu({\bar\mu}^{\eta}(A_1,A_2),A_3)+{\bar\mu}^{\eta}(\mu(A_1,A_2),A_3)-{\bar \mu}^{\eta}(A_1,\mu(A_2,A_3))-\mu(A_1,{\bar \mu}^{\eta}(A_2,A_3))=\nonumber\\
&&\mu(m(A_1, f_i),\{f_j,\mu(A_2,A_3)\}) -\mu(m(A_2, f_i), \{f_j,\mu(A_1, A_3)\}).
\end{eqnarray}
On the other hand, 
\begin{eqnarray} \label{defhomass}
&&R^{\eta}\nu(A_1,A_2,A_3)+\nu(R^{\eta}A_1,A_2,A_3)- \nu(A_1,R^{\eta}A_2,A_3)+\nu(A_1,A_2,R^{\eta}A_3)=\nonumber\\
&&\eta^{ij}\mu(f_i,\{f_j,\mu(m(A_1,A_3),A_2)\})-
\eta^{ij}\mu(f_i,\{f_j,\mu(m(A_2,A_3),A_1)\})-\nonumber\\
&&\eta^{ij}m(A_2, A_3)\mu(f_i,\{f_j, A_1\})+\eta^{ij}m(A_1, A_3)\mu(f_i,\{f_j, A_2\}).
\end{eqnarray}
One can see that  (\ref{defass}) and (\ref{defhomass}) are equal to each other
and therefore $\mu^{\eta}$ is associative up to homotopy provided by $\nu$. From the table we see that $\mu^{\eta}$ has no contribution to the higher associativity relation involving 
$\mu^{\eta}, \nu^{\eta}$. Therefore that fact that it forms $A_{\infty}$ algebra follows from the proof of the similar statement for $Q,\mu, \nu$ and that also leads to the fact it satisfies axioms of $C_{\infty}$-algebra as well.
\end{proof}

The Maurer-Cartan equation for such $C_{\infty}$ algebra 
is linear due to the symmetry of properties of $C_{\infty}$-operations. 
One can multiply the  $C_{\infty}$-algebra with some noncommutative associative algebra $S$ so that the resulting object is an $A_{\infty}$-algebra. 
Let us choose $S=U(\mathfrak{g})$, the universal 
enveloping algebra of some Lie algebra $\mathfrak{g}$. 
Let us consider the $A_{\infty}$-algebra on 
$(\mathcal{F}^{\bullet}\otimes U(\mathfrak{g})), Q^{\eta}$ generated by 
$Q^{\eta}$, $\mu^{\eta}$, $\nu^{\eta}$. Let consider the
 Maurer-Cartan element: $\Psi\in\mathcal{F}^1\otimes \mathfrak{g}$. We call the Maurer-Cartan equation
\begin{eqnarray}
Q^{\eta}\Psi+\mu^{\eta}(\Psi,\Psi)+\nu^{\eta}(\Psi,\Psi,\Psi)=0
\end{eqnarray}
the {\it generalized Yang-Mills equation} associated to the corresponding Courant algebroid with Calabi-Yau structure and the flat metric $\eta$.
The symmetries 
\begin{eqnarray}
\Psi\to \Psi+(Q^{\eta}u+\mu^{\eta}(\Psi,u)-\mu^{\eta}(u,\Psi)),
\end{eqnarray}
where $u\in\mathcal{F}^0\otimes\mathfrak{g}$ is infinitesimal will be referred to as {\it gauge symmetries}.

It is indeed true that the homotopy BV structure is destroyed through the deformation given by $R^{\eta}$, in particular
\begin{equation}
[Q, {\bf b}]=-\Delta=-\eta^{ij}\{f_i,\{f_j,\cdot\}\}
\end{equation}
which is not a derivation for the product and thus $Q$ is not a derivation for the bracket operation. Defining the corresponding bracket operation at usual:
\begin{equation}
\{a_1,a_2\}^{\eta}=(-1)^{|a_1|}({\bf b}\mu^{\eta}(a_1,a_2)-\mu^{\eta}({\bf b}a_1,a_2)-(-1)^{|a_1|}\mu^{\eta}(a_1,{\bf b}a_2)
\end{equation}
we find that it certainly won't satisfy Jacobi or other relations in homotopy algebra of LZ type, there will be the $\eta$-corrections, which can be described through the combination of the action of the operation 
$m$ and applications of $\{f_i~, ~\cdot~\}$ and $\mu(~f_i~, \cdot~)$ contracted with metric $\eta^{ij}$.
One called the resulting algebraic structure a {\it  $BV^{\square}_{\infty}$-algebra}.
One example of the algebra of this type was constructed in \cite{reiterer}. We will return to this reduction in the next section.

\subsection{Homotopy algebras related to Yang-Mills equations}

Here we will look at an example of Courant algebroid, where $V_0=\mathscr{F}(M)$, the space of differentiable functions on some manifold $M$, and $V_1$ is the space of sections $\Gamma(TM\oplus T^*M)[-1]$. The corresponding differential ${\rm d}$ is given by de Rham operator.
Calabi-Yau structure is given by the divergence operator ${\rm div}_{\Phi}$ which vanishes on covectors and is equal to divergence operator for some volume form $\Phi$ on $M$.

If the action of the BV operator ${\bf b}$ is given by the following diagram:

\begin{align}
\xymatrixcolsep{40pt}
\xymatrixrowsep{5pt}
\xymatrix{
& \Gamma(TM)[-1] & \Gamma(TM)[-2]\ar[l]_{-\rm{id}} & \\
& \bigoplus & \bigoplus & \\
\mathscr{F}(M) & \mathscr{F}(M)[-1]\ar[l]_{\rm{id}} & \mathscr{F}(M)[-2] &\mathscr{F}(M)[-3]\ar[l]_{-\rm{id}}\\
& \bigoplus & \bigoplus &\\
& \Gamma(T^*M)[-1] & \Gamma(T^*M)[-2]\ar[l]_{-\rm{id}} &
}
\end{align}

one can write down the action of differential $Q$:

\begin{align}\label{qbg}
\xymatrixcolsep{40pt}
\xymatrixrowsep{5pt}
\xymatrix{
& \Gamma(TM)[-1]\ar[ddr]^{\frac{1}{2}\rm{div}_{\Phi}} & \Gamma(TM)[-2]\ar[ddr]^{-\frac{1}{2} \rm{div}_{\Phi}} & \\
& \bigoplus & \bigoplus & \\
\mathscr{F}(M)\ar[ddr]_{d} & \mathscr{F}(M)[-1]\ar[r]^{\rm{id}}\ar[ddr]_{d} & \mathcal{F}(M)[-2] &\mathscr{F}(M)[-3]\\
& \bigoplus & \bigoplus &\\
& \Gamma(T^*M)[-1] & \Gamma(T^*M)[-2] &
}
\end{align}

It is useful to write down the explicit values for the 
operation $\mu$ on the complex $(\mathcal{F}^{\bullet}, Q)$:

\begin{center}
$\mu(a_1,a_2)$=\\
\vspace*{2mm}
\begin{tabular}{|l|c|c|c|c|c|r|}
\hline
 \backslashbox{$a_2$}{$a_1$}&          $u_1$ & $\mathcal{X}_1$ & $v_1$ &$\tilde {\mathcal{X}_1}$ &$\tilde v_1$& $\tilde u_1$ \\
\hline
$u_2$ &                  $u_1u_2$    &$\mathcal{X}_1 u_2$ &$v_1u_2$&
$\tilde {\mathcal{X}_1} u_2$&$\tilde v_1 u_2$ &$\tilde u_1 u_2$\\
&  & $-L_{\mathcal{X}_1}u_2$  &  &   && \\
\hline
$\mathcal{X}_2$     &  $u_1\mathcal{X}_2$ & $(\mathcal{X}_1,\mathcal{X}_2)_D+$ &$-v_1\mathcal{X}_2$  
&  $-\frac{1}{2}\langle \tilde {\mathcal{X}_1},\mathcal{X}_2\rangle$&$L_{\mathcal{X}_2}\tilde v_1$&0\\                    
& & $\frac{1}{2}\langle \mathcal{X}_1, \mathcal{X}_2\rangle$  &  &  && \\
\hline
$v_2$& $u_1\t u_2$ &  $v_2\mathcal{X}_1$ & 0 & 0  &$-\tilde v_1v_2$& 0\\
\hline
$\tilde {\mathcal{X}_2}$ & $u_1\tilde {\mathcal{X}_2}$ &$-\frac{1}{2}\langle \mathcal{X}_1,\tilde {\mathcal{X}_2}\rangle$&  
0 & 0& 0&0\\
\hline
$\tilde v_2$& $u_1\tilde u_2$ &   $L_{\mathcal{X}_1}\tilde v_2$& $-v_1\tilde v_2$ & 0  &0& 0\\
\hline
$\tilde u_2$    & $u_1\tilde u_2$ &   0& 0 & 0  &0& 0\\
\hline
\end{tabular}\\
\vspace{3mm}
\end{center}

\noindent where $\mathcal{X}_i,\tilde {\mathcal{X}_i}$ stand for pairs 
$(\mathbf{A}_i,\mathbf{B}_i), (\tilde{\mathbf{A}}_i,\tilde{\mathbf{B}}_i)$, 
such that $\mathbf{A}_i,\tilde{\mathbf{A}}_i\in \Gamma(TM)$ and 
$\mathbf{B}_i,\tilde{\mathbf{B}}_i\in \Gamma(T^*M)$.  
The expression $(\mathcal{X}_1,\mathcal{X}_2)_D$ stands for the Dorfman bracket (see e.g. \cite{courant}, \cite{roytenberg1}) defined as 
\begin{eqnarray}
((\mathbf{A}_1,\mathbf{B}_1), (\mathbf{A}_2,\mathbf{B}_2))_D=(L_{\mathbf{A}_1}\mathbf{A}_2, L_{\mathbf{A}_1}\mathbf{B}_2-i_{\mathbf{A}_2}d\mathbf{B}_1).
\end{eqnarray}
The pairing $\langle \cdot,\cdot \rangle$ between two elements 
$(\mathbf{A}_1,\mathbf{B}_1)$ and $(\mathbf{A}_2,\mathbf{B}_2)$ is defined 
as follows:
\begin{equation}
\langle (\mathbf{A}_1,\mathbf{B}_1), (\mathbf{A}_2,\mathbf{B}_2)\rangle=\langle \mathbf{A}_1,\mathbf{B}_2\rangle+\langle \mathbf{A}_2,\mathbf{B}_1\rangle,
\end{equation}
where $\langle \mathbf{A}_i,\mathbf{B}_j\rangle$ is the standard pairing between vectors and covectors.
The operation $L_\mathcal{X} u$ for $\mathcal{X}=(\mathbf{A},\mathbf{B})$  
denotes the Lie derivative with respect to the vector field $\mathbf{A}$. 

From now on we assume $M=\mathbb{R}^D$ and $\eta^{ij}$ is a symmetric tensor. We assume that $\Phi$ is a canonical volume form for $\mathbb{R}^D$

\begin{Prop} Let $f_i=\partial_i$. Then $R^{\eta}=\sum_{\i,j}\eta^{ij}\mu(f_i,\{f_j,\cdot\})$ acts on the complex $(\mathcal{F}^{\bullet}, Q)$ as follows:
\begin{align}
\xymatrixcolsep{40pt}
\xymatrixrowsep{5pt}
\xymatrix{
& \Gamma(TM)[-1]\ar[r]^{\Delta} & \Gamma(TM)[-2] & \\
& \bigoplus & \bigoplus & \\
\mathscr{F}(M)\ar[uur]^{\hat{\rm{d}}}\ar[r]^{-\Delta} & \mathscr{F}(M)[-1]\ar[uur]^{\hat{\rm{d}}} & \mathscr{F}(M)[-2]\ar[r]^{\Delta} & \mathscr{F}(M)[-3]\\
& \bigoplus & \bigoplus &\\
& \Gamma(T^*M)[-1]\ar[uur]^{\frac{1}{2}\widehat{\rm{div}}} \ar[r]^{\Delta} & \Gamma(T^*M)[-2]\ar[uur]^{-\frac{1}{2}\widehat{\rm{div}}}&
}
\end{align}
where 
\begin{align}
&&\Delta=\sum_{i,j}\eta^{ij}\partial_i\partial_j,\nonumber\\
&&\widehat{\rm{div}}\mathbf{B}=\sum_{i,j}\eta^{ij}\partial_iB_j,\quad  \mathbf{B}\in \Gamma(T^*M);\quad  (\hat{\rm{d}}u)^j=\sum_{i}\eta^{ij}\partial_i u, \quad u\in \mathscr{F}(M).
\end{align}
\end{Prop}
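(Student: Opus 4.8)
The plan is to reduce everything to the description of $R^{\eta}$ recorded just above this proposition, in which $R^{\eta}$ is assembled from the three operators $\Delta\,\cdot=\eta^{ij}\{f_i,\{f_j,\cdot\}\}$, $\hat{\rm d}\,\cdot=(-1)^{|\cdot|}\eta^{ij}\mu(\{f_j,\cdot\},f_i)$ and $\hat{\rm d}^{*}$, and then to evaluate these in the concrete model $V_0=\mathscr{F}(M)$, $V'_1=\Gamma(TM\oplus T^{*}M)[-1]$ with $f_i=\partial_i$. The whole computation rests on one elementary observation, which I would record first: because $f_j=(\partial_j,0)$ is a \emph{constant} vector field with vanishing covector part, the bracket $\{f_j,\cdot\}$ acts as the componentwise partial derivative $\partial_j$. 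On $A=(\mathbf A,\mathbf B)\in V'_1$ this is immediate from the Dorfman formula $((\partial_j,0),(\mathbf A,\mathbf B))_D=(L_{\partial_j}\mathbf A,\,L_{\partial_j}\mathbf B)=(\partial_j\mathbf A,\partial_j\mathbf B)$; on $u\in V_0$ it follows from $\{f_j,u\}=n(f_j,{\rm d}u)=\partial_j u$; and on the higher (``tilde'') spaces $V''_1,V''_0,V'''_0$ it follows from the extended bracket identities of Theorem~\ref{extfull} (for instance $\{A,\tilde v\}=\tilde Q{\bf c}[A,{\bf b}\tilde Q^{-1}\tilde v]_c$), since ${\bf b}$, ${\bf c}$, $\tilde Q$ and $\tilde Q^{-1}$ all commute with the constant-coefficient operator $\partial_j$.

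Granting this, I would compute $\Delta$ by iterating the observation: $\eta^{ij}\{f_i,\{f_j,\cdot\}\}=\eta^{ij}\partial_i\partial_j=\sum_{i,j}\eta^{ij}\partial_i\partial_j$ on every summand, so $\Delta$ is literally the $\eta$-Laplacian, acting diagonally on the vector and covector components. Next, for $\hat{\rm d}$ on a function $u\in V_0$ one has $\{f_j,u\}=\partial_j u$ and then, reading the explicit multiplication table for this example, $\mu(\partial_j u,\partial_i)=(\partial_j u)\,\partial_i$, so $\hat{\rm d}u=\eta^{ij}(\partial_j u)\partial_i$, i.e. the vector field with components $(\hat{\rm d}u)^k=\eta^{kj}\partial_j u$; the same computation on $V'_0$ (using the entry $\mu(v_1,\mathcal X_2)=-v_1\mathcal X_2$ together with the sign $(-1)^{|v|}=-1$) gives the identical formula, now landing in the vector part of $V''_1$. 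Finally, for $\hat{\rm d}^{*}$ on a covector $\mathbf B\in V'_1$ I would use $m(A,B)={\bf c}\,n(A,B)$ and the pairing $\langle\partial_i,\partial_j\mathbf B\rangle=\partial_j B_i$ to get $\tfrac12\eta^{ij}\tilde Q\,m(\partial_i,\partial_j\mathbf B)=\tfrac12\eta^{ij}\partial_i B_j=\tfrac12\widehat{\rm div}\,\mathbf B$, after identifying $\tilde Q{\bf c}$ with the canonical isomorphism onto $V''_0$; the second formula for $\hat{\rm d}^{*}$ on $V''_1$, which carries an extra ${\bf b}$, yields the corresponding $-\tfrac12\widehat{\rm div}$ into $V'''_0$. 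One also checks that $\hat{\rm d}^{*}$ annihilates the vector parts, because the form on $TM\oplus T^{*}M$ pairs vectors only with covectors.

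The one genuinely delicate point, and the step I expect to be the main obstacle, is the sign and identification bookkeeping. The abstract arrows carry factors $-{\bf c}\Delta$, ${\bf c}$, $\tilde Q$, while the target diagram displays bare $\pm\Delta$ and $\pm\tfrac12\widehat{\rm div}$; matching the two forces one to feed in the explicit ${\bf b}$-diagram for this example, whose identifications ${\bf b}$ (and hence ${\bf c}$) are $+{\rm id}$ on $V'_0\to V_0$ but $-{\rm id}$ on $V''_1\to V'_1$ and on $V'''_0\to V''_0$. Thus $-{\bf c}\Delta$ from $V_0$ to $V'_0$ reads as $-\Delta$ (since ${\bf c}=+{\rm id}$ there), whereas $-{\bf c}\Delta$ from $\Gamma(TM)[-1]$ to $\Gamma(TM)[-2]$ reads as $+\Delta$ (since ${\bf c}=-{\rm id}$), exactly as displayed. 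I would organize the verification as a single pass over the six nonzero entries: in each case (i) substitute $\{f_j,\cdot\}=\partial_j$, (ii) evaluate $\mu$ from the explicit table, and (iii) convert the residual ${\bf b},{\bf c},\tilde Q$ into the $\pm{\rm id}$ identifications of the given diagram. All remaining entries vanish by degree count and by $\mu(f_i,f_j)=0$.
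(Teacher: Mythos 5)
Your proof is correct and is exactly the computation the paper leaves implicit: the Proposition is stated without proof, and the intended argument is plainly to combine the general decomposition of $R^{\eta}$ into $\hat{\rm d}$, $\hat{\rm d}^{*}$ and $-{\bf c}\Delta$ from the preceding subsection with the explicit $\mu$-table and the $\pm\,{\rm id}$ identifications of ${\bf b}$ (hence of ${\bf c}$ and $\tilde Q$) for this example, which is what you do. Your key observation that $\{f_j,\cdot\}=\partial_j$ acts componentwise for the constant sections $f_j=(\partial_j,0)$, together with the sign bookkeeping through ${\bf c}$ and $\tilde Q{\bf c}$ that converts $-{\bf c}\Delta$ into $-\Delta$ on $V_0$ but $+\Delta$ on $V'_1$ and $V''_0$, is precisely the content of the verification.
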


Let us now assume that the matrix $\eta^{\{ij\}}$ is invertible, such that $\eta_{\{ij\}}$ is the inverse matrix and let us introduce the following direct sum of three complexes:
\begin{align}\label{omegacomp}
\xymatrixcolsep{30pt}
\xymatrixrowsep{-5pt}
\xymatrix{
0\ar[r]&\Omega^0(M) \ar[r]^{\rm{d}} & \Omega^1(M)[-1] \ar[r]^{*{\rm d}*{\rm d}}  &\Omega^1(M)[-2] \ar[r]^{*{\rm d}*} & \Omega^0(M)[-3]\ar[r]&0\\
 &\quad &     & \quad    &&\\
 && \bigoplus & \bigoplus     &&\\
 &&           & \quad    &&\\
 &0\ar[r]& \Omega^1(M)[-1] \ar[r]^{\Delta}   & \Omega^1(M)[-2]\ar[r]&0\\
&& \bigoplus & \bigoplus     &&\\
&0\ar[r]& \Omega^0(M)[-1] \ar[r]^{\rm id}   & \Omega^0(M)[-2]\ar[r]&0
}
\end{align}
Here $\Omega^0(M)\equiv \mathcal{F}(M)$ and $\Omega^1(M)\equiv \Gamma(T^*M)$ and the Hodge star operator $*$ is constructed 
via the metric corresponding to the invertible and symmetric matrix $\eta_{\{ij\}}$. Let us denote the three 
subcomplexes above as $(\mathcal{G}_i^{\bullet},{Q'}^{\eta})$ $(i=1,2,3)$, where $i$ runs from top to bottom.  

Consider the following maps from $(\mathcal{G}_i^{\bullet},{Q'}^{\eta})$ to $(\mathcal{F}^{\bullet}, Q)$:
\begin{eqnarray}
\mathcal{G}_1^0\xrightarrow{id} \mathcal{F}^0,\quad &\mathcal{G}_1^1\xrightarrow{f_1}\mathcal{F}^1, \quad  \mathcal{G}_1^1\xrightarrow{g_1}\mathcal{F}^1,&\quad \mathcal{G}_1^3\xrightarrow{id} \mathcal{F}^3\nonumber\\
&\mathcal{G}_2^1\xrightarrow{f_2}\mathcal{F}^1, \quad  \mathcal{G}_2^1\xrightarrow{g_2}\mathcal{F}^1,&\nonumber\\
&\mathcal{G}_3^1\xrightarrow{f_3}\mathcal{F}^1,  \quad \mathcal{G}_3^1\xrightarrow{g_3}\mathcal{F}^1, &
\end{eqnarray}
where $f_i, g_i$ are explicitly given by:
\begin{eqnarray}
f_1(\mathbf{B})=\mathbf{B}+\mathbf{B}^*-\hat{\rm div}\mathbf{B},&&
\quad  g_1(\tilde{\mathbf{B}})=\tilde{\mathbf{B}}+
\tilde{\mathbf{B}}^*,\nonumber\\
f_2(\mathbf{B})=\mathbf{B}-\mathbf{B}^*, &&\quad g_2(\tilde{\mathbf{B}})=\tilde {\mathbf{B}}-\tilde{\mathbf{B}}^*,\nonumber\\
f_3\equiv id,&& \quad g_3(\tilde{v})=\tilde{v}- ({\rm d}\tilde{v}+\hat {\rm d}\tilde{v}), 
\end{eqnarray}
so that $\mathbf{B}^*\in \Gamma(TM)$ such that ${\mathbf{B}^*}^i=\eta^{ij}B_j$. This yields the following proposition \cite{zeitcour},\cite{cftym}.

\begin{Prop} \label{ymcomp}\hfill 
\begin{enumerate}
\item The complex $(\mathcal{F}^{\bullet}, Q^{\eta})$ is isomorphic  to $\oplus^3_{i=1}(\mathcal{G}_i^{\bullet}, {Q}^{\eta})$.\\
 
\item There is a structure of $C_{\infty}$-subalgebra on subcomplexes $(\mathcal{G}_1^{\bullet}, {Q}^\eta)$ and 
$(\mathcal{G}_1^{\bullet}\oplus \mathcal{G}_3^{\bullet}, {Q}^\eta)$.\\

\item If we assume that the components of the functions and 1-forms in the complex $(\mathcal{G}_1^{\bullet}, {Q}^\eta)$ are $L_2$-integrable, then the pairing inherited from $(\mathcal{F}^{\bullet}, Q)$ with respect to the satndard volume form on $\mathbb{R}^d$ gives the structure of the cyclic $C_{\infty}$-algebra.
\end{enumerate}
\end{Prop}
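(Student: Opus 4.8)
The plan is to treat the three assertions in order, with part (1) doing essentially all of the work. For part (1) I would assemble the degreewise linear map $\Phi\colon\bigoplus_{i=1}^3(\mathcal{G}_i^\bullet,Q^\eta)\to(\mathcal{F}^\bullet,Q^\eta)$ out of the maps $f_i,g_i$ together with the identity maps on $\mathcal{G}_1^0$ and $\mathcal{G}_1^3$, and show it is an isomorphism of complexes. The one algebraic fact I will use throughout is the Hodge-type splitting on $M=\mathbb{R}^D$: applied componentwise, $\Delta=\sum_{i,j}\eta^{ij}\partial_i\partial_j$ decomposes on $1$-forms (up to the conventional signs) as $\Delta=*{\rm d}*{\rm d}+{\rm d}*{\rm d}*$, while $\widehat{\rm div}=*{\rm d}*$ and $\hat{\rm d}u=(\mathrm du)^*$ is the $\eta$-dual of $\mathrm du$. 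Under this dictionary $(\mathcal{G}_1^\bullet,Q^\eta)$ is precisely the linearized Yang--Mills complex $\Omega^0\xrightarrow{\mathrm d}\Omega^1\xrightarrow{*{\rm d}*{\rm d}}\Omega^1\xrightarrow{*{\rm d}*}\Omega^0$, which is what makes its deformed algebra the $C_\infty$-algebra of Yang--Mills theory.

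I would verify the chain-map identity $\Phi\circ Q^\eta=Q^\eta\circ\Phi$ one arrow at a time, reading $Q^\eta=Q+R^\eta$ off the diagram (\ref{qbg}) and the $R^\eta$-diagram. In degree $0$, $Q^\eta u=\mathrm du+\hat{\rm d}u-\Delta u$ must match $f_1(\mathrm du)=\mathrm du+(\mathrm du)^*-\widehat{\rm div}\,\mathrm du$, and this holds because $\hat{\rm d}u=(\mathrm du)^*$ and $\widehat{\rm div}\,\mathrm du=\Delta u$; the $\mathcal{G}_3$-edge, the identity $\Omega^0\to\Omega^0$, is matched by the identity edge $\mathscr{F}(M)[-1]\to\mathscr{F}(M)[-2]$. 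The only informative computation is in the middle degree $1\to2$, where the splitting $\Delta=*{\rm d}*{\rm d}+{\rm d}*{\rm d}*$ separates the $\mathcal{G}_1$-edge $*{\rm d}*{\rm d}$ from the $\mathcal{G}_2$-edge $\Delta$. Bijectivity in each degree is an immediate consequence of $\eta$ being invertible: on $\mathcal{F}^1=\Gamma(TM)\oplus\Gamma(T^*M)\oplus\mathscr{F}(M)$ the covector and vector slots are recovered from the images of $f_1,f_2$ via $\mathbf{B}=\tfrac{1}{2}(\mathbf{Y}+\mathbf{X}_\flat)$ and $\mathbf{B}'=\tfrac{1}{2}(\mathbf{Y}-\mathbf{X}_\flat)$, where $\mathbf{X}_\flat$ is the $\eta$-lowered vector slot, after which $f_3,g_1,g_2,g_3$ fix the function slots uniquely.

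For part (2) I would transport the deformed operations $\mu^\eta=\mu+\bar\mu^\eta$ and $\nu$, together with their symmetrizations, through $\Phi$ and prove closure of $(\mathcal{G}_1^\bullet,Q^\eta)$ and of $(\mathcal{G}_1^\bullet\oplus\mathcal{G}_3^\bullet,Q^\eta)$: it suffices to check that, when all inputs lie in these subspaces, the $\mathcal{G}_2$-components of the outputs vanish. This is a finite verification using the explicit $\mu$-table, the formula for $\bar\mu^\eta$, and $m(A,B)={\bf c}\,n(A,B)$; the mechanism is that the symmetric combinations $\mathbf{B}+\mathbf{B}^*$ carried by $f_1$ are annihilated by the antisymmetric projection onto $\mathcal{G}_2$, so the products never leak into $\mathcal{G}_2$. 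Because $\Phi$ is a chain isomorphism intertwining the operations, the $C_\infty$-relations restricted to these subcomplexes are inherited verbatim from the ambient $C_\infty$-structure established in the preceding flat-metric theorem, so nothing beyond closure needs to be checked.

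For part (3) I would restrict the pairing (\ref{omegapair}); for the standard volume form on $\mathbb{R}^D$ it becomes $\int u\,\tilde u\,\mathrm dx$ on the function slots and $\int\langle\mathbf{B},\mathbf{B}'\rangle\,\mathrm dx$ on the $1$-form slots, which pairs $\mathcal{G}_1^0$ with $\mathcal{G}_1^3$ and $\mathcal{G}_1^1$ with $\mathcal{G}_1^2$ nondegenerately. The cyclic identities for the symmetrized operations are then established by the same integration-by-parts reductions as in the cyclic-structure proposition of Subsection 4.4, combined with the $(\cdot,\cdot)$-(anti)symmetry of $Q^\eta$, ${\bf b}$ and ${\bf c}$. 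The $L_2$-integrability hypothesis enters precisely here: it makes $\mathrm d$ and $*{\rm d}*$ adjoint and $\Delta$ self-adjoint under $\int\cdots\mathrm dx$, so the boundary terms from integration by parts on $\mathbb{R}^D$ vanish and $Q^\eta$ becomes (anti)self-adjoint, which gives the bilinear form $(Q^\eta\,\cdot\,,\,\cdot\,)$ its cyclic symmetry. The main obstacle I anticipate is the middle-degree matching in part (1): one must split the componentwise Laplacian on $1$-forms into its coexact part $*{\rm d}*{\rm d}$ and exact part ${\rm d}*{\rm d}*$ and confirm that the divergence corrections $-\widehat{\rm div}\,\mathbf{B}$ in $f_1$ and the terms in $g_3$ are tuned so that $\Phi$ is a strict chain isomorphism rather than merely a homotopy equivalence; once this change of basis is secured, parts (2) and (3) are laborious but mechanical.
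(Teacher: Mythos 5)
The paper does not actually prove this proposition: it only records the maps $f_i, g_i$, the diagrams for $Q$, $R^{\eta}$ and the $\mu$-table, and then cites \cite{zeitcour} and \cite{cftym}, so there is no internal argument to compare yours against. Your route is the natural one and is consistent with every ingredient the paper does supply: assembling $\Phi$ from the $f_i,g_i$, matching $Q^{\eta}=Q+R^{\eta}$ arrow by arrow via the flat-space identities $\hat{\rm d}u=({\rm d}u)^{*}$, $\widehat{\rm div}\,{\rm d}u=\Delta u$ and the splitting of the componentwise Laplacian on $1$-forms into $*{\rm d}*{\rm d}$ and ${\rm d}*{\rm d}*$, and inverting $\Phi$ in the middle degrees by the symmetric/antisymmetric decomposition of the $(\Gamma(TM),\Gamma(T^{*}M))$ slots. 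I checked the representative case $Q^{\eta}f_1(\mathbf{B})=g_1\bigl((\Delta-{\rm d}\,\widehat{\rm div})\mathbf{B}\bigr)$ with vanishing function component, and it works exactly as you describe; your hedge about sign conventions is warranted (as written, $g_3$ appears to carry a sign that only closes the square up to the orientation of the ${\rm d}\tilde v+\hat{\rm d}\tilde v$ term, which is a matter of the paper's conventions rather than of your argument).

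Two points of execution deserve tightening. First, in part (2) you say it suffices to check that the $\mathcal{G}_2$-components of all outputs vanish; that criterion is sufficient for the subcomplex $\mathcal{G}_1^{\bullet}\oplus\mathcal{G}_3^{\bullet}$, but for $\mathcal{G}_1^{\bullet}$ alone you must also rule out leakage into $\mathcal{G}_3$ (the pure function slot in degree $1$ and the $g_3$-image in degree $2$); the final $\mu^{sym}$-table of Section~6.2 confirms this holds, but your stated reduction does not cover it. Second, the actual finite verification against the $\mu$-table and $\bar\mu^{\eta}$ is where essentially all of the content of part (2) lives, and you assert rather than perform it; the "symmetric combinations are killed by the antisymmetric projection" mechanism is the right heuristic but needs to be run through the $[A_1,A_2]_1+\frac12\langle A_1,A_2\rangle_1$ and $\bar\mu^{\eta}(A_1,A_2)$ entries to become a proof. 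Part (3) is correctly reduced to the (anti)self-adjointness of ${\rm d}$, $*{\rm d}*$ and $\Delta$ under $\int\cdot\,dx$ granted $L_2$-decay, mirroring the cyclicity argument of Subsection~4.4. With those two repairs your proposal is a complete substitute for the omitted proof.
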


One can see that the cohomology of the complex $(\mathcal{F},Q^{\eta})$ in degree 1 is equivalent to the solutions of Maxwell equations and scalar field equations
\begin{eqnarray}
*{\rm d}*{\rm d}\mathbf{A}=0, \quad \Delta\Phi=0,
\end{eqnarray} 
modulo gauge transformations $\mathbf{A}\to \mathbf{A}+{\rm d} u$.
 
Let us now describe the generalized Yang-Mills equations in this case. The answer is given in the Theorem below proved in \cite{zeitcour}.\\

\begin{Thm} \label{ymthm}
\begin{enumerate}
\item The generalized Yang-Mills equations assocaited to Courant algebroid in $\mathbb{R}^d$ with CY structure given by canonical form and the flat metric $\eta^{ij}$ is equivalent to the following system of equations:
\begin{align}\label{aphi}
&&\sum_{i,j}\eta^{ij}[\nabla_{i},[\nabla_{j},\nabla_{k}]]=\sum_{i,j}\eta^{ij}[[\nabla_{k},\Phi_{i}],\Phi_{j}],\nonumber\\
&&\sum_{i,j}\eta^{ij}[\nabla_{i},[\nabla_{j},\Phi_{k}]]=\sum_{i,j}\eta^{ij}[\Phi_{i},[\Phi_{j},\Phi_{k}]],
\end{align}
where $\Phi_{i}={B}_{i}- \sum_jA^{j}\eta_{ij}$, 
$\mathcal{A}_{i}=B_{i}+\sum_jA^{j}\eta_{ij}$ and $\nabla_{i}=\partial_{i}+\mathcal{A}_{i}$.  Here $B_i$ are the components of $\mathbf{B}\in \Gamma(T^*M)\otimes \mathfrak{g}$ and $A_i$ are the components of $\mathbf{A}\in \Gamma(TM)\otimes \mathfrak{g}$, constituting the components of the Maurer-Cartan element.\\
\item 
The gauge symmetries correspond to the following transformation of fields:
\begin{eqnarray}\label{aphisym}
\mathcal{A}_{i}\rightarrow \mathcal{A}_{i}+\partial_i u+[\mathcal{A}_{i},u], \quad 
\Phi_{i}\rightarrow \Phi_{i}+[\Phi_{i},u].
\end{eqnarray}
with $u\in \Omega^0(M)$ is infinitesimal.\\

\item The Maurer-Cartan equations and their symmetries for the subcomplexes  $(\mathcal{G}_1^{\bullet}, {Q'}^\eta)$ and 
$(\mathcal{G}_1^{\bullet}\oplus \mathcal{G}_3^{\bullet}, {Q'}^\eta)$ are equivalent to standard Yang-Mills equations and their gauge symmetries:
\begin{equation}
\sum_{i,j}\eta^{ij}[\nabla_{i},[\nabla_{j},\nabla_{k}]]=0,\quad 
\mathcal{A}_{i}\rightarrow \mathcal{A}_{i}+\partial_i u+[\mathcal{A}_{i},u].
\end{equation}
\end{enumerate}
\end{Thm}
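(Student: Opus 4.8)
The plan is to evaluate the generalized Yang--Mills (Maurer--Cartan) equation $Q^{\eta}\Psi+\mu^{\eta}(\Psi,\Psi)+\nu^{\eta}(\Psi,\Psi,\Psi)=0$ in the explicit geometric model of this section and identify it with the stated system by a change of variables. I would parametrise the degree-one field $\Psi\in\mathcal{F}^1\otimes\mathfrak{g}$ by a vector field $\mathbf{A}\in\Gamma(TM)\otimes\mathfrak{g}$, a one-form $\mathbf{B}\in\Gamma(T^*M)\otimes\mathfrak{g}$ (the summands of $V'_1$) and a scalar in $V'_0$. The natural repackaging is the isomorphism $\mathcal{F}^{\bullet}\cong\bigoplus_{i=1}^{3}\mathcal{G}_i^{\bullet}$ of Proposition \ref{ymcomp}, under which the gauge field $\mathcal{A}$ is the $\mathcal{G}_1$-combination $B_i+\sum_jA^j\eta_{ij}$, the field $\Phi$ is the $\mathcal{G}_2$-combination $B_i-\sum_jA^j\eta_{ij}$, and the remaining scalar lives in the acyclic complex $\mathcal{G}_3$ and may be dropped. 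Since $\Psi$ has degree $1$, the equation lies in $\mathcal{F}^2=V''_1\oplus V''_0$; its $\Gamma(TM)$- and $\Gamma(T^*M)$-components inside $V''_1$ carry the two field equations, and I would verify that the $V''_0$-component is consistent.

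By Proposition \ref{ymcomp}(1) the linear term $Q^{\eta}\Psi=(Q+R^{\eta})\Psi$ is already diagonalised by this change of variables: it acts as the Maxwell operator $*{\rm d}*{\rm d}$ on $\mathcal{A}$ and as $\Delta=\sum_{i,j}\eta^{ij}\partial_i\partial_j$ on $\Phi$, which are precisely the linearisations of the left-hand sides of (\ref{aphi}). The nonlinear terms are undeformed, since the earlier theorem gives $m^{\eta}\equiv m$ and $\nu^{\eta}\equiv\nu$. I would compute the quadratic term from $\mu^{\eta}(\Psi,\Psi)=\mu(\Psi,\Psi)+\bar{\mu}^{\eta}(\Psi,\Psi)$ using the explicit $\mu$-table (whose $V'_1\otimes V'_1$ entry is the Dorfman bracket $(\cdot,\cdot)_D$ plus $\tfrac12\langle\cdot,\cdot\rangle$) and the $\bar{\mu}^{\eta}$-table, and the cubic term from $\nu(A_1,A_2,A_3)=\mu(m(A_1,A_3),A_2)-\mu(m(A_2,A_3),A_1)$ with $m={\rm c}\,n$. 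The crucial structural point is that, upon tensoring with $U(\mathfrak{g})$, the spatially antisymmetric parts of these operations combine with the noncommutative product to produce Lie-algebra commutators, completing the derivative terms into the nonabelian curvature $[\mathcal{A}_i,\mathcal{A}_j]$, the first-order couplings $[\mathcal{A}^i,\cdot\,]$, and the cubic self-interaction $[\Phi_i,[\Phi_j,\Phi_k]]$.

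Assembling all three contributions and rewriting them through $\mathcal{A}_i=B_i+\sum_jA^j\eta_{ij}$, $\Phi_i=B_i-\sum_jA^j\eta_{ij}$ and $\nabla_i=\partial_i+\mathcal{A}_i$ should collapse the $\Gamma(TM)$- and $\Gamma(T^*M)$-components into the two equations of part (1), with the quadratic-and-cubic $\Phi$-terms assembling into the source $\sum_{i,j}\eta^{ij}[[\nabla_k,\Phi_i],\Phi_j]$ on the right of the first and $\sum_{i,j}\eta^{ij}[\Phi_i,[\Phi_j,\Phi_k]]$ on the right of the second. For part (2) I would substitute $u\in\mathcal{F}^0\otimes\mathfrak{g}$ into the infinitesimal symmetry $\delta\Psi=Q^{\eta}u+\mu^{\eta}(\Psi,u)-\mu^{\eta}(u,\Psi)$: the linear piece supplies $\partial_iu$, while $\mu^{\eta}(\Psi,u)-\mu^{\eta}(u,\Psi)$ supplies the commutators $[\mathcal{A}_i,u]$ and $[\Phi_i,u]$, giving (\ref{aphisym}) after the same change of variables. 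For part (3) I would restrict to the subcomplex $\mathcal{G}_1^{\bullet}$ (respectively $\mathcal{G}_1^{\bullet}\oplus\mathcal{G}_3^{\bullet}$), which by Proposition \ref{ymcomp}(2) is closed under $\mu^{sym}$ and $\nu^{sym}$; this corresponds to $\Phi=0$, so the source and self-interaction terms drop and pure Yang--Mills remains.

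The step I expect to be the main obstacle is the nonlinear matching in the third paragraph: one must check that after the change of variables the Dorfman-bracket contribution, the pairing $\langle\cdot,\cdot\rangle$, the deformation $\bar{\mu}^{\eta}$ and the cubic $\nu$-term reorganise, with exactly the right signs and orderings, into the covariant commutator expressions $\sum_{i,j}\eta^{ij}[\nabla_i,[\nabla_j,\nabla_k]]$ and the stated sources. This is delicate precisely because the noncommutative $U(\mathfrak{g})$-structure must conspire with the antisymmetrisation built into $\mu^{\eta}$ and with the off-diagonal mixing of $\mathbf{A}$ and $\mathbf{B}$; tracking this interplay consistently across all index types, rather than any single computation, is where the real work lies.
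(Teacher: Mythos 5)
The paper does not contain its own proof of this theorem: it states explicitly that the result is ``proved in \cite{zeitcour}'', so there is no internal argument to compare yours against. Measured against the ingredients the paper does supply in Sections 5--6, your strategy is the correct and essentially the only natural one: the change of variables $\mathcal{A}_i=B_i+\eta_{ij}A^j$, $\Phi_i=B_i-\eta_{ij}A^j$ is exactly the decomposition $\mathcal{F}^{\bullet}\cong\oplus_i\mathcal{G}_i^{\bullet}$ of Proposition \ref{ymcomp} (the maps $f_1,f_2$ force the $TM$-component to equal $\pm\eta^{ij}B_j$ on the two summands), the linear term does diagonalize as you say, $\nu^{\eta}\equiv\nu$ while $\mu^{\eta}=\mu+\bar\mu^{\eta}$ is read off from the two explicit tables, the symmetry computation in part (2) is routine, and part (3) is the restriction $\Phi=0$ justified by the closure statement in Proposition \ref{ymcomp}.

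The one place your outline would need repair is the auxiliary scalar $v\in V'_0$ inside $\mathcal{F}^1$. You propose both to ``drop'' it (as living in the acyclic $\mathcal{G}_3$) and to ``verify that the $V''_0$-component is consistent''; neither is quite right. Since $\tilde{Q}$ is an isomorphism, the $V''_0$-component of the Maurer--Cartan equation is not a consistency condition but an algebraic equation determining $v$ (at linear order it sets $v$ equal to a divergence of the gauge field, which is why $f_1$ carries the $-\hat {\rm div}\mathbf{B}$ term). Moreover $v$ re-enters the $V''_1$-components through the table entries $\mu(v,\mathcal{X})$ and $\mu(\mathcal{X},v)$, which survive after tensoring with $U(\mathfrak{g})$ as commutators of $v$ with the gauge and scalar fields; so eliminating $v$ feeds back nontrivially into the two displayed equations, and this back-substitution is part of how the bare derivatives get completed to $\nabla_i=\partial_i+\mathcal{A}_i$. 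That step belongs to the ``delicate nonlinear matching'' you correctly flag as the real work, not to the part of the data that can be discarded at the outset.
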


In the end, for completeness we write down explicitly the $C_{\infty}$-operations on the Yang-Mills complex, $(\mathcal{G}_1^{\bullet}, {Q}^\eta)$, which we rewrite as follows for simplicity, using the hodge star applied to $\mathcal{G}_1^{2}$, $\mathcal{G}_1^{3}$:
\begin{eqnarray}\label{maxwell}
0\xrightarrow{}\Omega^{0}(M)\xrightarrow{{\rm d}}\Omega^{1}(M)[-1]
\xrightarrow{{\rm d}*{\rm d}}\Omega^{D-1}(M)[-2]\xrightarrow{{\rm d}}\Omega^{D}(M)[-3]\xrightarrow{} 0,
\end{eqnarray}
and the bilinear operation $\mu^{sym}(~\cdot~,~\cdot~)$ is represented by the following table:\\ 

$\mu^{sym}(a_1,a_2)$=
\begin{tabular}{|l|c|c|c|r|}
\hline
\backslashbox{$ a_2$}{$a_1$}&\makebox{$v$} & \makebox{$\bf A$} & \makebox{$\bf V$} & \makebox{$a$} \\
\hline
$w$ & $vw$& ${\bf A} w$& ${\bf V} w$&$a w$\\
\hline
${\bf  B}$ &$v{\bf B}$  & ${\bf A}\wedge *{\rm d}{\bf B}-{\bf B}\wedge*\rm{d}{\bf A}+{\rm d} *({\bf A}\wedge {\bf B})$   &   ${\bf B}\wedge{\bf V}$&0\\
\hline
${\bf W}$ & $v{\bf W}$ &  $ {\bf A}\wedge {\bf W}$ &    0&0\\
\hline
$b$ & $vb$ &   0 &    0&0\\
\hline
\end{tabular}
\\
\vspace*{3mm}

where $v,w\in\mathcal{G}_1^0$; 
${\bf A},{\bf B}\in \mathcal{G}^0_1$; ${\bf V},{\bf W}\in\mathcal{G}_2^1$; $a,b\in\mathcal{G}_1^3$. 

The trilinear operation $\nu^{sym}(\ \cdot, \ \cdot, \ \cdot)$ is nonzero only when all the arguments belong to ${\mathcal{G}}^1_1$. For ${\bf A}$, 
${\bf B}$, ${\bf C}$ $\in \mathcal{G}^1_1$ we have: 
\begin{eqnarray}
\nu^{sym}({\bf A},{\bf B} ,{\bf C} )={\bf A}\wedge *({\bf B} \wedge {\bf C})-{\bf C}\wedge*({\bf A}\wedge {\bf B}).
\end{eqnarray}

\begin{Rem}
We note that the $C_{\infty}$-operations on the deformed complex can be easily generalized to any Riemannian manifold $M$, however in this case the deformation in general cannot be represented by the formula $(\ref{r})$. We will discuss it further in the next subsection. 
\end{Rem}

\subsection{$\beta$-$\gamma$ systems and the physical interpretation} 

Let us consider the Heisenberg algebra of the following kind:
\begin{eqnarray}\label{heis}
[X^{i}_n,p_{m,j}]=h\delta_{n,-m}\delta^i_j.
\end{eqnarray}
The construction of the corresponding VOA space $F_{p,X}[h^{-1},h]$ is as follows.   
Let us consider again the Fock space for the Heisenberg algebra (\ref{heis}):  
\begin{align}
&&\tilde F_{p,X}=\{p_{-n_1, j_1}....p_{-n_k, j_k}X^{i_1}_{-m_1}...X^{i_l}_{-m_l}|0\rangle, n_1,..., n_k>0, m_1, ..., m_l>0; \\
&&j_1, \dots, j_k, i_1, \dots, i_l=1, \dots, D~|~p_{j,n}|0\rangle=0, n\ge 0, X^i_n |0\rangle=0, n>0\}.\nonumber
\end{align}
In other words, $\tilde F_{X,p}=
\mathbb{K}[p_{j,-n}, X^i_{-m}]_{n>0, m\ge 0; i,j=1, \dots, D}$. The space  $\tilde F_{p,X}[h^{-1},h]$ already carries an algebraic structure of a VOA, but one can update it further, namely 
\begin{equation}
F_{p,X}=\tilde F_{p,X}\otimes_{A} \hat{A},
\end{equation}
where $A=\mathbb{K}[X^j_0]_{j=1, \dots, D}$ and $\hat{A}=\mathbb{K}[[X^j_0]]_{j=1, \dots, D}$ or any other function field on 
$M=\mathbb{R}^D$, containing 
$\mathbb{K}[X^j_0]_{j=1, \dots, D}$.

 One can prove that $F_{p,X}[h^{-1},h]$ is a VOA with a formal parameter, such that the commutation relations (\ref{heis}) can be expressed via the operator product expansions:
\begin{eqnarray}
X^{i}(z)p_{j}(w)\sim\frac{h\delta_{i,j}}{z-w}, \quad X^{i}(z)X^{j}(w)\sim 0, \quad p_{i}(z)p_{j}(w)\sim 0,
\end{eqnarray}
 where $p_j(z)=\sum_n p_{i,n}z^{-n-1}$, $X^i(z)=\sum_n X^i_nz^{-n-1}$ corresponding to the states of  conformal dimensions $1$ and $0$ correspondingly, and the Virasoro element is given by the formula
\begin{eqnarray}
L(z)=-\frac{1}{h}\sum_ip_i\partial X^i.
\end{eqnarray}
The operators from the VOA $F_{p,X}$ of conformal dimensions $0$ and $1$ are:
\begin{equation}
u(z)=u(X)(z),\quad  \mathbf{A}(z)=\sum_i:A^{i}(X)(z)p_{i}(z):,\quad  \mathbf{B}(z)=\sum_jB_{j}(X)(z)\partial X^{j}(z),
\end{equation}
where $u$, ${\bf A}$, ${\bf B}$ can be identified with sections of $\mathcal{O}_M$, $TM$, $T^*M$ correspondingly and lead to the vertex algebroid and the Courant algebroid on $\Gamma(TM\oplus T^*M)$ as in Section 4. 

From a physics perspective the action describing the vertex operator algebra $F_{p,X}$ is given by:
\begin{eqnarray}
S_{p,X}=\frac{1}{2\pi ih}\int_{\Sigma} \langle p\wedge \bar{\partial} X\rangle,
\end{eqnarray} 
where $X:\Sigma\rightarrow M$, $p\in X^*(\Omega^{1}(M))\otimes \Omega^{(1,0)}(\Sigma)$, where $\Sigma=\mathbb{C}$, $M=\mathbb{C}^D$. 
Then $X^i(z)$, $p_j(z)$ could be interpreted as components of the corresponding sections of vector bundles (see, e.g., \cite{wittenchiral},\cite{nekrasov}). 
One can obtain an open version of this action:
\begin{eqnarray}
S_{p,X}=\frac{1}{2\pi ih}\int_{H^+} (\langle p\wedge \bar{\partial} X\rangle-\langle {\bar p}\wedge {\partial} \bar{X}\rangle),
\end{eqnarray} 
where $H^+$ is the upper half-plane,  $p\in {\bar X}^*(\Omega^{1}(M))\otimes \Omega^{(1,0)}(\Sigma)$
with the boundary conditions $p_i(z)|_{H^+}=\bar{p}_i(z)|_{H^+}$,  $X^i(z)|_{H^+}=\bar{X}^i(z)|_{H^+}$. Given these boundary conditions the corresponding physical theory is still described by the vertex algebra $F_{X,p}$.

One can further deform the action $S_{p,X}$:
\begin{eqnarray}
S^{\eta}_{p,X}=S_{p,X}-\int_{H^+}\phi^{(2)}, \quad \phi^{(2)}=\frac{1}{2\pi i h}\eta^{ij}p_i\wedge \bar p_j.
\end{eqnarray}
The regularized deformation of BRST operator for the operator $\phi^{(2)}$ is given by the following operator acting on $F_{p,X}\otimes \Lambda$:
\begin{align}
Q+P_0\int_{C_{\epsilon,0}} \phi^{(1)}=Q+ R^{\eta}, \quad \phi^{(1)}= \frac{1}{2}dzc(\bar{z})\eta^{ij}p_i(z){p}_j(\bar{z})-\frac{1}{2}d{z}c({z})\eta^{ij}p_i(z){p}_j(\bar{z}),&&\nonumber\\
R^{\eta}=\eta^{ij}\mu(cp_i,\{cp_j,\cdot \}\},\quad [Q,\phi^{(2)}]=d\phi^{(1)}.
\end{align}
In the formula above coefficients of $\phi^{(1)}$, $\phi^{(2)}$ are already given by the  corresponding vertex operators $p_i(z)$. The regularized integral is understood as integral over the circle of radius $\epsilon$ and  the projection $P_0$ is given by removing all $\epsilon$-dependent terms. 

With the deformation $Q^{\eta}$ the space of $VOA$ $F_{p,X}$ becomes the Hilbert space of open string tensored with Heisenberg VOA as described in \cite{azbg}. On the level of BRST subcomplexes  of light modes, those correspond to $(\mathcal{G}_1^{\bullet}\oplus \mathcal{G}_3^{\bullet},{Q}^{\eta})$ and $(\mathcal{G}_2^{\bullet}\oplus \mathcal{G}_3^{\bullet},{Q}^{\eta})$ correspondingly. We will return to the discussion of open string in subsection 6.2.

We finally remark, that this construction of flat metric deformation as can in general be applied to any other open conformal field theory based on vertex algebroid using perturbation of the form: 
\begin{equation}
\int\phi^{(2)}=\int dz\wedge d\bar z\eta^{ij}\Big([b_{-1}f_i(z)] [b_{-1}f_j(\bar z)]\Big)
\end{equation}
where $f_i\in V\otimes \Lambda$, $i=1, \dots, D$.

\section{Homotopy algebras for Double Field Theory and Gravity}

\subsection{Holomorphic double copy and the first order sigma-model}
Let us consider the homotopy BV-LZ algebra corresponding to 
holomorphic Courant $\mathcal{O}_M$-algebroid, soo that $M$ is a complex manifold with $V_0=\mathcal{O}_M$ is a sheaf of holomorphic functions and $V_1=\mathcal{O}(T^{(1,0)}(M)\oplus{T^*}^{(1,0)}(M))$ is an $\mathcal{O}_M$-module associated with holomorphic vector fields and 1-forms. In this section we will refer to the corresponding complex as $(\mathcal{F}^{\bullet}, Q)$ while the complex conjugate complex associated with antiholomorphic sections  as $(\bar{\mathcal{F}}^{\bullet}, Q)$.

Assume that locally there exists a holomorphic volume form $\omega$ which gives locally a Calabi-Yau structure (we do not require it globally).  
At the same time we can consider $V_0=\bar{\mathcal{O}}_M$ is a sheaf of atiholomorphic functions and $V_1=\mathcal{O}(T^{(0,1)}(M)\oplus{T^*}^{(0,1)}(M))$ and Calabi-Yau structure given by $\bar{\omega}$ thus producing the conjugated homotopy BV-LZ algebra associated with the complex $(\bar{\mathcal{F}}^{\bullet}, \bar{Q})$. 

One consider the tensor product of these two BV-LZ algebras.  
Naturally there is a homotopy Gerstenhaber algebra on the completed tensor product. One can choose the bracket structure using operator  
$\mathbf{b_-}=\mathbf{b}-\bar{\mathbf{b}}$, which is physically motivated (see below):
\begin{equation}
(-1)^{|a_1|}\{a_1,a_2\}_-=
\mathbf{b_-}\mu(a_1,a_2)-\mu(\mathbf{b_-}a_1,a_2)-(-1)^{|a_1|}\mu(a_1\mathbf{b_-}a_2)\nonumber,
\end{equation}
Together with the differential $\mathcal{Q}=Q+\bar{Q}$ induced product there is a structure of a homotopy Gerstenhaber algebra on the tensor  product:
\begin{equation}
({\bf F}^{\bullet},\mathcal{Q})=(\mathcal{F}^{\bullet}, Q){\hat\otimes}(\bar{\mathcal{F}}^{\bullet}, \bar{Q}), 
\end{equation}
where we consider a completion of the tensor product. 
Let us start from the simplest case when instead of the homotopy Gerstenhaber algebras we have the product of Gerstenhaber algebras associated to vector fields.

\subsubsection{Doubling of Gerstanhaber subalgebras}
Consider a subcomplex of $(\mathcal{F}_{sm}^{\bullet}, Q)\subset
(\mathcal{F}^{\bullet}, Q)$ corresponding to vector fields only:
\begin{align}
\xymatrixcolsep{30pt}
\xymatrixrowsep{3pt}
\xymatrix{
\mathbb{C}\ar[r]^{0\qquad}& \mathcal{O}(T^{(1,0)}M)[-1]
\ar[r]^{\frac{1}{2}{\rm div}_{\omega}} & \mathcal{O}_M[-2]& \\
&&  &&\\
& \bigoplus & \bigoplus & \\
&&  &&\\
& \mathbb{C}[-1]\ar[uuuur]^{i}\ar[r]^{0} & \mathcal{O}(T^{(1,0)}M)\ar[r]^{\quad\frac{1}{2}{\rm div}_{\omega}}[-2]  & \mathcal{O}_M[-3]
}\nonumber
\end{align}

Then the following proposition holds.
\begin{Prop}
The homotopy BV-LZ algebra on $(\mathcal{F}_{sm}^{\bullet}, Q)$ reduces to BV algebra and thus to Gerstenhaber algebra.
\end{Prop}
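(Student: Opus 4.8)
The plan is to show that, on the vector-field sector, every higher homotopy of the BV-LZ structure vanishes, so that the homotopy-commutative and homotopy-associative product $\mu$ becomes strictly graded-commutative and associative; what remains is exactly the data of a (differential) BV algebra, hence of a Gerstenhaber algebra. The only substantive input needed is that the symmetric pairing $n$ degenerates to zero once we throw away the $1$-form directions.

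First I would check that $(\mathcal{F}_{sm}^{\bullet},Q)$ is genuinely closed under $\mu$, $\{\,\cdot\,,\cdot\,\}$ and $\mathbf{b}$. On vector fields the Courant bracket $[\cdot,\cdot]_c$ reduces to the Lie bracket, which again produces a vector field, and the product formula from the proof of Theorem~\ref{extfull}, $\mu(A_1,A_2)=-\mathbf{c}[A_1,A_2]_c+\tfrac12\tilde Q m(A_1,A_2)$, shows that $\mu$ lands in the degree-$2$ vector-field slot $V_1''$, so no covector components are ever generated; the operator $\mathbf{b}$ clearly preserves the sector as well. The crux is then the observation that $n|_{V_1'\otimes V_1'}=0$ on this subcomplex: for the standard Courant $\mathcal{O}_M$-algebroid the inner product $\langle(\mathbf{A}_1,\mathbf{B}_1),(\mathbf{A}_2,\mathbf{B}_2)\rangle=\langle\mathbf{A}_1,\mathbf{B}_2\rangle+\langle\mathbf{A}_2,\mathbf{B}_1\rangle$ pairs a vector against a covector, so restricting to pairs of holomorphic vector fields (all $\mathbf{B}_i=0$) annihilates it. Since Theorem~\ref{extfull} localizes the product homotopy to $V_1'\otimes V_1'$ with $m(A,B)=\mathbf{c}\,n(A,B)$ there, I conclude $m\equiv 0$ on the whole subcomplex, and therefore also $n=[\mathbf{b},m]\equiv 0$.

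With $m\equiv 0$ the remaining relations collapse to the strict case. The homotopy-commutativity line of \eqref{lzrel}, $\mu(a_1,a_2)-(-1)^{|a_1||a_2|}\mu(a_2,a_1)=Qm(a_1,a_2)+m(Qa_1,a_2)+(-1)^{|a_1|}m(a_1,Qa_2)$, reads $0$ on the right, so $\mu$ is graded commutative. The explicit values of the associativity homotopy, $\nu(A_1,A_2,A_3)=\mu(m(A_1,A_3),A_2)-\mu(m(A_2,A_3),A_1)$ together with $\nu(\tilde v,A_2,A_3)=\nu(A_2,\tilde v,A_3)=-m(A_2,A_3)\tilde v$, are all built from $m$, so $\nu\equiv 0$, and the homotopy-associativity line of \eqref{lzrel} gives strict associativity. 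Likewise the homotopy $n'$ governing the Poisson rule is assembled from $n$ and $m$ and hence vanishes, while the Jacobi identity for $\{\,\cdot\,,\cdot\,\}$ already holds on the nose in the BV-LZ axioms; the antisymmetry defect of the bracket is controlled by $n$ and so also disappears. Thus $\mu$ is a strict graded-commutative associative product, $\mathbf{b}$ is a square-zero second-order differential operator for it, and $Q$ is a compatible derivation: this is precisely a differential BV algebra, and the bracket defined by \eqref{brack} exhibits it as a Gerstenhaber algebra.

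The main obstacle is really the single conceptual point of paragraph two, namely identifying that the Courant pairing degenerates on the vector-field sector; everything downstream — the vanishing of $m$, $\nu$, $n'$ and the strictness of commutativity, associativity, antisymmetry and the Poisson rule — then follows mechanically from the BV-LZ axioms and the homotopy formulas already computed, with no further calculation.
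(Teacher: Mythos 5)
Your argument is correct, and it supplies the proof that the paper actually leaves implicit (the Proposition is stated without proof): the key point is exactly the one you isolate, namely that on the vector-field sector the Courant pairing $\langle\cdot,\cdot\rangle$ vanishes, so $m$, and with it $n=[\mathbf{b},m]$, $\nu$ and $n'$, are identically zero, collapsing every homotopy relation of the BV-LZ structure to its strict form. The only thing worth spelling out a little more is the closure check you defer: besides $\mu(A_1,A_2)\in V_1''$ one should note that forcing $Q$-closure is what shrinks $V_0$ and $V_0'$ to the constants $\mathbb{C}$ in $(\mathcal{F}_{sm}^{\bullet},Q)$, after which the remaining entries of the product table (Lie derivatives landing in $\mathcal{O}_M[-2]$ and $\mathcal{O}_M[-3]$) visibly stay in the subcomplex.
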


Now let us consider a completed tensor product: 

\begin{equation}
({\bf F}_{sm}^{\bullet},\mathcal{Q})=(\mathcal{F}_{sm}^{\bullet}, Q){\hat\otimes}(\bar{\mathcal{F}}_{sm}^{\bullet}, \bar{Q}), 
\end{equation}

For the elements $\Psi\in {\bf F}_{sm}^{2}$, $\Lambda\in {\bf F}_{sm}^{1}$ one can write a Maurer Cartan equation with its symmetries:
\begin{equation}\label{mclsm}
\mathcal{Q}\Psi+\frac{1}{2}\{\Psi, \Psi\}_-=0, \quad 
\Psi\rightarrow \Psi+\mathcal{Q}\Lambda+\{\Psi,\Lambda\}_-
\end{equation}

We will further restrict the space of Maurer-Cartan elements and symmetries. The following complex 
\begin{equation}
({\bf F}_{sm, {\mathbf{b}^-}}^{\bullet},\mathcal{Q})=
({\bf F}_{sm}^{\bullet},\mathcal{Q})|_{\mathbf{b}^-=0}
\end{equation}
is closed under the bracket $\{\cdot~, ~\cdot ~\}_-$. The corresponding space Maurer-Cartan elements, namely ${\bf F}_{sm, {\mathbf{b}^-}}^{2}$ has the following form: 
\begin{equation}
\Gamma(T^{(1,0)}(M)\otimes T^{(0,1)}(M))\oplus \mathcal{O}(T^{(0,1)}(M))\oplus \mathcal{O}(T^{(1,0)}(M))\oplus
\mathcal{O}_M\oplus \bar{\mathcal{O}}_M 
\end{equation}
At the same time the space of symmetries, namely ${\bf F}_{sm, {\mathbf{b}^-}}^{1}$ is given by the following direct sum:
\begin{equation}
\mathcal{O}(T^{(0,1)}(M))\oplus \mathcal{O}(T^{(1,0)}(M))\oplus \mathbb{C}
\end{equation}

Let us denote the components of Maurer-Cartan elements as:
\begin{equation}
\Psi=(g, \bar v, v, \phi', \bar \phi'), 
\end{equation} 
where 
\begin{align}
&g\in \Gamma(T^{(1,0)}(M)\otimes T^{(0,1)}(M)),&\\
 &v \in\mathcal{O}(T^{(1,0)}(M)),\quad  \bar{v} \in \mathcal{O}(T^{(0,1)}(M)),\quad  
\phi'\in \mathcal{O}(M),\quad  \bar{\phi}'\in \bar{\mathcal{O}}(M),\nonumber&
\end{align}
and introducing the notation:
\begin{equation}
\log{\Omega}=-2\Phi_0=\log{\omega}+\log{\bar{\omega}}-2(\phi'+\bar \phi')
\end{equation}
where $\Omega$ stands for the volume form on $M$, we obtain the following theorem.

\begin{Thm}\label{EinstG}
\begin{enumerate}

\item The Maurer-Cartan equation (\ref{mclsm}), where 
$$\Psi=(g, \bar v, v, \phi', \bar \phi')\in \mathbf{F}_{sm, {\bf b}^-}^{2}$$ 
is equivalent to the following set of equations on components:\\

\begin{itemize}
\item $div_{\Omega}g\in \mathcal{O}(T^{(1,0)} M)\oplus \bar{\mathcal{O}}(T^{(0,1)} M)$.\\

\item  Bivector field $g\in \Gamma(T^{(1,0)}M\otimes T^{(0,1)}M)$ obeys the following equation: 
\begin{equation}\label{bil}
[[g,g]]+\mathcal{L}_{div_{\Omega}(g)}g=0, 
\end{equation} 
where $\mathcal{L}_{div_{\Omega}(g)}$ is a Lie derivative with respect to 
the corresponding vector field and the double bracket is given by the following expression:
\begin{equation}
[[g,h]]^{k\bar l}\equiv
(g^{i\bar j}\partial_i\partial_{\bar j}h^{k\bar l}+h^{i\bar j}\partial_i\partial_{\bar j}g^{k\bar l}-\partial_ig^{k\bar j}\partial_{\bar j}h^{i\bar l}-
\partial_ih^{k\bar j}\partial_{\bar j}g^{i\bar l})
\end{equation}\\

\item $div_{\Omega}div_{{\Omega}}(g)=0$.\\

\end{itemize}

\item The symmetries correspond to infinitesimal holomorphic transformations of bivector field $g$ and constant shifts of $\Omega$. \\

\item The equations and their symmetries from (1) are equivalent to the Einstein equations with the $B$-field and dilaton if the biveector $g$ is nondegenerate: 
\begin{align}\label{Einsteq}
&&R_{\mu\nu}={\frac{1} {4}} H_{\mu}^{\lambda\rho}H_{\nu\lambda\rho}-2\nabla_{\mu}
\nabla_{\nu}\Phi,\\
&&\nabla^{\mu}H_{\mu\nu\rho}-2(\nabla^{\lambda}\Phi)H_{\lambda\nu\rho}=0,
\nonumber\\
&&4(\nabla_{\mu}\Phi)^2-4\nabla_{\mu}\nabla^{\mu}\Phi+
R+{\frac{1}{12}} H_{\mu\nu\rho}H^{\mu\nu\rho}=0,\nonumber
\end{align}
where 3-form $H=dB$, and $R_{\mu\nu}, R$ are Ricci and scalar curvature correspondingly, with the following constraints imposed:
\begin{equation}
G_{i\bar{k}}=g_{i\bar{k}}, \quad B_{i\bar{k}}=-g_{i\bar{k}}, \quad G_{ik}=G_{\bar i \bar k}=B_{ik}=B_{\bar i \bar k}=0\quad \Phi=\log\sqrt{g}+\Phi_0,
\end{equation}
while the holomorphic symmetries from (2) correspond to the holomorphic coordinate transformations of metric and $B$-field tensor as well as constant shift of the dilaton field $\Phi$. 
\end{enumerate}

\end{Thm}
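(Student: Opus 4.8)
The plan is to establish the three parts in order, turning the abstract Maurer--Cartan equation into explicit tensor equations by computing $\mathcal{Q}$ and $\{\cdot,\cdot\}_-$ componentwise on $\mathbf{F}_{sm,\mathbf{b}^-}^{2}$, and then matching the resulting first-order system to the second-order gravitational one.

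\emph{Part (1).} First I would write out $\mathcal{Q}=Q+\bar Q$ and the bracket $\{\cdot,\cdot\}_-$ on a general degree-two element $\Psi=(g,\bar v,v,\phi',\bar\phi')$, reading the products off the $\mu$-table of the geometric realization and using the preceding Proposition that the LZ structure on each factor collapses to a genuine BV (Gerstenhaber) algebra. The linear term $\mathcal{Q}\Psi$ contributes $\frac{1}{2}{\rm div}_\omega$ and $\frac{1}{2}{\rm div}_{\bar\omega}$ on the vector-field summands together with the suspension map $i$ of the diagram, while the quadratic term $\frac{1}{2}\{\Psi,\Psi\}_-$ produces the double bracket $[[g,g]]$ on the mixed summand $\Gamma(T^{(1,0)}M\otimes T^{(0,1)}M)$ and, through the cross terms of $g$ with the scalar and vector components, the Lie-derivative correction. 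The decisive simplification is that $\phi'$ and $\bar\phi'$ enter only through $\log\Omega=\log\omega+\log\bar\omega-2(\phi'+\bar\phi')$, so that every partial divergence reassembles into the single operator ${\rm div}_\Omega$ and the scalars cease to be independent fields. Collecting the components of the equation that land in $\Gamma(T^{(1,0)}M\otimes T^{(0,1)}M)$, in $\mathcal{O}(T^{(1,0)}M)\oplus\bar{\mathcal{O}}(T^{(0,1)}M)$, and in the remaining scalar summand then yields, respectively, equation (\ref{bil}), the holomorphicity condition ${\rm div}_\Omega g\in\mathcal{O}(T^{(1,0)}M)\oplus\bar{\mathcal{O}}(T^{(0,1)}M)$, and ${\rm div}_\Omega{\rm div}_\Omega g=0$.

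\emph{Part (2).} For the symmetries I would substitute $\Lambda=(\bar w,w,\lambda)\in\mathcal{O}(T^{(0,1)}M)\oplus\mathcal{O}(T^{(1,0)}M)\oplus\mathbb{C}$ into $\Psi\mapsto\Psi+\mathcal{Q}\Lambda+\{\Psi,\Lambda\}_-$. Using the same bracket table, $\{g,w\}_-$ and $\{g,\bar w\}_-$ are Lie derivatives of $g$ along the (anti)holomorphic vector fields $w,\bar w$, so these components act as infinitesimal holomorphic reparametrizations of the bivector, while the constant $\lambda\in\mathbb{C}$ shifts $\phi'+\bar\phi'$ and hence rescales $\Omega$ by a constant. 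This is a direct computation once Part (1) has fixed the bracket.

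\emph{Part (3).} The hard step is the equivalence with (\ref{Einsteq}) under nondegeneracy of $g$. Here I would invert the bivector and define the metric and $B$-field by the stated constraints $G_{i\bar k}=g_{i\bar k}$, $B_{i\bar k}=-g_{i\bar k}$ (equivalently identifying $g^{-1}$ with $G+B$ in the generalized-geometry sense) and the dilaton by $\Phi=\log\sqrt{g}+\Phi_0$. Substituting these into the double-bracket equation (\ref{bil}) and the two divergence conditions, and rewriting everything through the Levi-Civita connection $\nabla$, the Ricci tensor $R_{\mu\nu}$ and $H=dB$, should reproduce the three equations of (\ref{Einsteq}) one at a time, with the symmetries of Part (2) turning into holomorphic coordinate changes of $(G,B)$ together with constant dilaton shifts. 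I expect this tensorial reduction---matching the first-order holomorphic double-bracket and divergence data to the second-order curvature, flux and dilaton terms---to be the principal obstacle, and I would carry it out following the reductions of \cite{lmz},\cite{zeit2},\cite{azginf},\cite{azginfrev}, verifying each equation of (\ref{Einsteq}) separately.
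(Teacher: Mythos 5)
The paper itself offers no proof of this theorem: it is imported from \cite{lmz}, \cite{zeit2}, \cite{azginf}, \cite{azginfrev}, so there is no in-text argument to compare against. Your outline does follow the same strategy as those references --- compute $\mathcal{Q}\Psi+\frac{1}{2}\{\Psi,\Psi\}_-$ componentwise in the graded pieces of $\mathbf{F}^{3}_{sm,\mathbf{b}^-}$, identify the bivector, mixed, and scalar components with (\ref{bil}), the holomorphicity of ${\rm div}_\Omega g$, and ${\rm div}_\Omega{\rm div}_\Omega g=0$, then pass to $(G,B,\Phi)$ --- and parts (1) and (2) are plausible as stated, though even there the key computation is asserted rather than performed: you should exhibit how the specific second-derivative structure of $[[g,h]]^{k\bar l}$ arises from the $\mathbf{b}_-$-derived bracket evaluated on decomposable tensors $g=\sum_I a^I\hat\otimes\bar a^I$ (the bracket acts as a Lie bracket in one factor times multiplication in the other, and the four terms of $[[g,h]]$ are exactly the cross terms of this), since that is the only place the formula for the double bracket can come from.

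The genuine gap is in Part (3). You present the equivalence with (\ref{Einsteq}) as a substitution-and-rewriting exercise, but the two sides are of fundamentally different algebraic type: the Maurer--Cartan system is \emph{quadratic} in the bivector $g^{i\bar j}$, while $R_{\mu\nu}$, $\nabla^\mu H_{\mu\nu\rho}$ and the dilaton equation are \emph{nonpolynomial} in $G=g^{-1}$ (through the inverse metric entering the Christoffel symbols and the contractions of $H$). A naive substitution of $G_{i\bar k}=g_{i\bar k}$, $B_{i\bar k}=-g_{i\bar k}$, $\Phi=\log\sqrt{g}+\Phi_0$ into the Einstein system produces an infinite series in $g^{-1}$, and the content of the theorem is precisely that all higher-order terms cancel against the $H^2$, $\nabla\nabla\Phi$ and curvature contributions, leaving the bilinear expression $[[g,g]]+\mathcal{L}_{{\rm div}_\Omega g}g$ (contracted appropriately with $g^{-1}$). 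Without exhibiting this cancellation --- or at least the identity relating $R_{\mu\nu}$ of the constrained metric to $g^{-1}\cdot([[g,g]]+\mathcal{L}_{{\rm div}_\Omega g}g)\cdot g^{-1}$ --- the proof of Part (3) is missing its essential step; deferring to the references is not a proof, since that identity is the theorem.
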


\subsubsection{Physical interpretation}
It was known since 1980s \cite{friedan}, \cite{eeq1}, \cite{eeq2}, \cite{fts1}, \cite{fts2} that the Einstein equations emerge in sigma model with the action:
\begin{eqnarray}
S_{so}=\frac{1}{4\pi h}\int_{\Sigma}(\langle dX\wedge *dX\rangle_G+X^*B)+\int_{\Sigma}\Phi(X)R^{(2)}(\gamma){\rm vol}_{\Sigma}
\end{eqnarray}
as the conformal invariance conditions on the quantum level at one loop. Here $X: \Sigma\to M$, where $\Sigma$ is a Riemann surface (worldsheet) and  $M$ is a Riemannian manifold (target space) with metruc $G$, 2-form $B$ and $\Phi$ is known as a dilaton field and it is just a function on $M$. String Field Theory suggest that these conformal invariance conditions appear as Maurer-Cartan equations for a certain $L_{\infty}$-algebra \cite{sen}, \cite{zwiebach}.

There is another approach to sigma models by requiring $M$ to be complex and introducing the first order action:
$$
S^{free}_{fo}=\frac{1}{2\pi i h}\int_\Sigma (\langle p\wedge\bar{\partial} X\rangle-
\langle \bar{p}\wedge{\partial} \bar{X}\rangle)+\int_{\Sigma} R^{(2)}(\gamma)\Phi_0(X), 
$$
where $$p\in X^*(\Omega^{(1,0)}(M))\otimes \Omega^{(1,0)}(\Sigma)
, \quad \bar{p}\in X^*(\Omega^{(0,1)}(M))\otimes \Omega^{(0,1)}(\Sigma).$$
where $-2\Phi_0=\log(\omega+\bar{\omega})$, where $\omega$ is a holomorphic volume form. 
Mathematically, the quantum field theory for this action is described by the tensor product of sheaves of vertex operator algebras corresponding to the vertex $\mathcal{O}_M$-algebroids on $\mathcal{O}(T^{(1,0)}M\oplus T^{(0,1)}M)$. 

The corresponding vertex algebra is given by a family of $\beta-\gamma$ systems, corresponding to the components of sections of $p, \bar{p}$ and coordinate maps $X, \bar{X}$: 
\begin{equation}
X^i(z)p_j(w)\sim \frac{h}{z-w}, \quad { X}^{\bar i}(\bar{z}){\bar p}_{\bar j}({\bar w})\sim \frac{h}{\bar{z}-\bar{w}}.
\end{equation}
To get to the sigma-model, one can deform the action in following way:
\begin{equation}\label{fosimp}
S_{fo}=S^{free}_{fo}
-\frac{1}{2\pi i h}\int_{\Sigma}\langle g, p\wedge \bar{p} \rangle,
\end{equation}
where $g\in \Gamma(T^{(1,0)}M\otimes T^{(0,1)}M)$.

From the physics perspective one is interested in the study of the following path integral:
\begin{equation}
\int [dp][d\bar p][dX][d\bar X]e^{-S_{fo}[X, \bar{X}, p, \bar{p}]}.
\end{equation}
One can integrate $p$-fields and arrive to the original action of the sigma-model: 
\begin{align}
&&\int [dX][d\bar X]e^{\frac{-1}{4\pi h}\int \langle dX\wedge *dX\rangle_G+X^*B+\int R^{(2)}(\gamma)(\Phi_0(X)+\log\sqrt{g})},\nonumber\\
&& {\rm where}\quad G_{i\bar{k}}=g_{i\bar{k}}, \quad B_{i\bar{k}}=-g_{i\bar{k}}, \quad G_{ik}=G_{\bar i \bar k}=B_{ik}=B_{\bar i \bar k}=0.
\end{align}
Notice that it exactly corresponds to the conditions on the Einstein equations in the Theorem \ref{EinstG}. This suggests that the corresponding $L_{\infty}$-algebra is produced by the homotopy BV-LZ algebra on the tensor product. In the next subsection we describe the generalization of the action (\ref{fosimp}) which leads to the full sigma model and its symmetries.

\subsubsection{Full action, Beltrami-Courant differential and the homotopy algebra}
The generalization of the action (\ref{fosimp}) which is classically conformally invariant involves considering the element of $\Gamma(\mathcal{E}\otimes\bar{\mathcal{E}})$, where $\mathcal{E}=T^{(1,0)}M\oplus {T^*}^{(1,0)}M, \quad \bar{\mathcal{E}}=T^{(0,1)}M\oplus {T^*}^{(0,1)}M$.
Namely, we introduce what we call {\it Beltrami-Courant differential} 
\begin{equation*}\label{mmat}
\mathbb{M}=\begin{pmatrix} g & \mu \\ 
\bar{\mu} & b \end{pmatrix}
\in \Gamma(\mathcal{E}\otimes\bar{\mathcal{E}}),
\end{equation*}
with the following components:
\begin{align}
g\in \Gamma(T^{(1,0)}M\otimes {T}^{(0,1)}M), \quad b\in \Gamma({T^*}^{(1,0)}M\otimes {T^*}^{(0,1)}M)&&\\
\mu\in \Gamma(T^{(1,0)}M\otimes {T^*}^{(0,1)}M),\quad \bar{\mu}\in \Gamma({T^*}^{(1,0)}M\otimes {T}^{(0,1)}M).&&\nonumber
\end{align}
the corresponding full first order action is:
\begin{align}
S^{full}_{fo}=\frac{1}{2\pi i h}\int_\Sigma \Big(\langle p\wedge\bar{\partial} {X}\rangle-
\langle \bar{p}\wedge{\partial} \bar X\rangle-\langle \bar{\rm v}\wedge\mathbb{M}{\rm v}\rangle\Big)+\int_{\Sigma} R^{(2)}(\gamma)\Phi_0(X),
\end{align}
where ${\rm v}=(p,\partial X)$, $\bar {\rm v}=(\bar p,\bar{\partial}\bar{X})$.

Integrating over $p, \bar{p}$ as in previous subsection we obtain:

\begin{align}
&\int [dp][d\bar p][dX][d\bar X]e^{-S^{full}_{fo}[X, \bar{X}, p, \bar{p}]}=\\
&\int [dX][d\bar X]e^{\frac{-1}{4\pi h}\int \langle dX\wedge *dX\rangle_G+X^*B+\int R^{(2)}(\gamma)(\Phi_0(X)+\log\sqrt{g})},&&\nonumber
\end{align}
where
\begin{align}\label{twistor}
G_{s\bar{k}}=g_{\bar{i}j}
\bar{\mu}^{\bar{i}}_s\mu^{j}_{\bar{k}}+g_{s\bar{k}}-
b_{s\bar{k}}, \quad
B_{s\bar{k}}=g_{\bar{i}j}\bar{\mu}^{\bar{i}}_s\mu^{j}_{\bar{k}}-g_{s\bar{k}}-
b_{s\bar{k}}&&\\
G_{si}=-g_{i\bar{j}}\bar{\mu}^{\bar{j}}_s-g_{s\bar{j}}\bar{\mu}^{\bar{j}}_i
, \quad
G_{\bar{s}\bar{i}}=-g_{\bar{s}j}\mu^{j}_{\bar{i}}-g_{\bar{i}j}\mu^{j}_{\bar{s}}
\nonumber&&\\
B_{si}=g_{s\bar{j}}\bar{\mu}^{\bar{j}}_i-g_{i\bar{j}}\bar{\mu}^{\bar{j}}_s,
\quad
B_{\bar{s}\bar{i}}=g_{\bar{i}j}\mu^{j}_{\bar{s}}-g_{\bar{s}j}\mu^{j}_{\bar{i}},\quad && \Phi=\Phi_0(X)+\log\sqrt{g}.\nonumber
\end{align}

The symmetries of sigma model are the diffeomorphism symmetry and the shift of the $B$-field by the exact form. To reproduce these symmetries on the level of the parametrization (\ref{twistor}) of the metric and $B$-field, and we introduce
\begin{align}
\alpha=(v, \lambda, \bar{v}, \bar{\lambda})\in \Gamma(\mathcal{E}\oplus \bar{\mathcal{E}}), &&\\
v\in \Gamma(T^{(1,0)}M),\quad {\bar v}\in \Gamma(T^{(0,1)}M),\quad  \lambda\in \Gamma({T^*}^{(1,0)}M),\quad {\bar \lambda}\in \Gamma({T^*}^{(0,1)}M).&& \nonumber
\end{align}

Let $D:\Gamma(\mathcal{E}\oplus\bar{\mathcal{E}})\to \Gamma(\mathcal{E}\otimes\bar{\mathcal{E}})$, such that

\begin{equation}
 D\alpha=\left( \begin{array}{cc}
0 & -{\bar \partial }v\\
-{\partial \bar v} & {\bar\partial} \lambda-\partial{\bar\lambda} \end{array} \right).
\end{equation}

Let us consider elements $\mathbb{M}$ and $\alpha$ on some neighborhood $U\subset M$ as elements of the completed tensor product of holomorphic and antiholomorphic sections:
\begin{equation}
\alpha=\sum_Jf^J\hat{\otimes} {\bar{b}}^J+\sum_Kb^K\hat{\otimes} {\bar{f}}^K,\quad  
{\mathbb{M}}=\sum_I a^I\hat{\otimes} \bar{a}^I, 
\end{equation}
where $a^I, b^J\in {\mathcal{O}}(\mathcal{E})(U)$, $f^I\in 
\mathcal{O}_U$ and ${\bar{a}}^I, {\bar{b}}^J\in {\bar{\mathcal{O}}}(\bar{\mathcal{E}})(U)$, 
${\bar{f}}^I\in 
\bar{\mathcal{O}}_U$.\\

Let us locally define the following operations:
\begin{align}
\phi_2(\alpha,{\mathbb{M}})=
\sum_{I,J}[b^J, a^I]_D\otimes 
{\bar{f}}^{J}{\bar{a}}^I+\sum_{I,K}f^Ka^I\otimes[{\bar{b}}^K, {\bar{a}}^I]_D,&&\nonumber\\
\phi_3(\alpha, {\mathbb{M}},{\mathbb{M}})=\frac{1}{2}\sum_{I,J,K}\langle b^I, a^K\rangle a^J\otimes \langle \bar{a}^J, {\rm d}\bar{f}^I\rangle~ {\bar{a}}^K+\frac{1}{2}\sum_{I,J,K}
 \langle a^J, {\rm d}f^I\rangle {a}^K\otimes \langle {\bar{b}}^I, {\bar{a}}^K\rangle~ {\bar{a}}^J.&&\nonumber
\end{align}

Then the following theorem is true \cite{azginf}.

\begin{Thm}\hfill

\begin{itemize}
\item The operations $\phi_2$, $\phi_3$ can be defined globally on $M$.\\
\item
The following transformation of ${\mathbb{M}}$, with infinitesimal $\alpha=(v, \lambda, \bar{v}, \bar{\lambda})\in \Gamma(\mathcal{E}\oplus \bar{\mathcal{E}})$:
\begin{equation}\label{Mtransf}
{\mathbb{M}}\rightarrow {\mathbb{M}}+D\alpha+ \phi_2(\alpha,{\mathbb{M}})+\phi_3(\alpha, {\mathbb{M}},{\mathbb{M}}).
\end{equation}
is equivalent to
\begin{align}
G\rightarrow G-\mathcal{L}_{\bf v}G, \quad B\rightarrow B-\mathcal{L}_{\bf v}B,&&\nonumber\\
B\rightarrow B-2{\rm d}{\boldsymbol {\lambda}},&&
\end{align}
where ${\mathbf{v}}=(v,{\bar v})\in \Gamma(T^{(1,0)}M\oplus T^{(0,1)}M)$, ${\boldsymbol \lambda}=(\lambda, \bar \lambda)\in \Omega^{(1,0)}(M)\oplus\Omega^{(0,1)}(M)$, and $\mathcal{L}_{\bf v}$ is a Lie derivative with respect to ${\bf v}$.
\end{itemize}
\end{Thm}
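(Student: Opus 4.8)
The plan is to treat the two assertions of the theorem separately: first the global existence of $\phi_2$ and $\phi_3$, and then the identification of the transformation (\ref{Mtransf}) with the geometric symmetries. For the global statement, my approach is to recognize $D$, $\phi_2$, $\phi_3$ as the first structure maps of the gauge symmetry of the generalized Maurer-Cartan equation for the homotopy Gerstenhaber algebra on $(\mathbf{F}^{\bullet},\mathcal{Q})=(\mathcal{F}^{\bullet}, Q)\hat\otimes(\bar{\mathcal{F}}^{\bullet}, \bar{Q})$: here $D\alpha$ is the linear piece coming from $\mathcal{Q}$, $\phi_2(\alpha,\mathbb{M})$ matches the bracket $\{\cdot,\cdot\}_-$ already appearing in the symmetry (\ref{mclsm}), and $\phi_3(\alpha,\mathbb{M},\mathbb{M})$ is the trilinear homotopy correction that arises because the full homotopy algebra (not its Gerstenhaber truncation) is used; it is assembled from the Lian-Zuckerman product $\mu$, its commutativity homotopy $m$, and the associativity homotopy $\nu$ of the two tensor factors.

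Since each factor is the homotopy BV-LZ algebra of a Courant $\mathcal{O}_M$-algebroid, whose operations $[\cdot,\cdot]_D$, $\langle\cdot,\cdot\rangle$, $\mu_0$ were shown in Sections 3--4 to be intrinsic sheaf-theoretic data, the combined operations are globally defined. The only point to verify is that the local formulas, written through a non-unique decomposition $\mathbb{M}=\sum_I a^I\hat\otimes\bar a^I$ and $\alpha=\sum_J f^J\hat\otimes\bar b^J+\sum_K b^K\hat\otimes\bar f^K$, do not depend on the choice of decomposition. This follows because the Dorfman bracket is a derivation and the pairing $\langle\cdot,\cdot\rangle$ together with the anchor $\langle\cdot,\mathrm{d}\cdot\rangle$ are $\mathcal{O}_M$-bilinear up to precisely the Leibniz terms already built into $\phi_2$ and $\phi_3$, so the decomposition ambiguities cancel and the sums over $I,J,K$ descend to global sections of $\Gamma(\mathcal{E}\otimes\bar{\mathcal{E}})$; this is the strategy carried out in \cite{azginf}.

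For the second assertion I would split $\alpha$ by linearity into its vector part $\mathbf{v}=(v,\bar v)$ and its one-form part $\boldsymbol{\lambda}=(\lambda,\bar\lambda)$ and analyze each. For the vector part, the claim is that the component transformation of $\mathbb{M}=\begin{pmatrix} g & \mu \\ \bar\mu & b\end{pmatrix}$ produced by $D\alpha+\phi_2+\phi_3$ is exactly the infinitesimal Lie derivative $-\mathcal{L}_{\mathbf{v}}$ acting entrywise: the linear term supplies $-\bar\partial v$ and $-\partial\bar v$ in the off-diagonal slots, $\phi_2$ completes these into the holomorphic Lie derivatives via the Dorfman bracket, and $\phi_3$ furnishes the quadratic correction. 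One then pushes this entrywise transformation through the parametrization (\ref{twistor}); because that map is built from tensor contractions of $g,\mu,\bar\mu,b$ that are natural under holomorphic diffeomorphisms, the chain rule yields $G\to G-\mathcal{L}_{\mathbf{v}}G$ and $B\to B-\mathcal{L}_{\mathbf{v}}B$. For the one-form part, $D\alpha$ shifts only the $b$-entry by $\bar\partial\lambda-\partial\bar\lambda$ while $\phi_2,\phi_3$ contribute the remaining pure-type shifts; feeding these into (\ref{twistor}) and tracking the normalization $B_{s\bar k}=\cdots-b_{s\bar k}$ produces a pure B-field shift $B\to B-2\mathrm{d}\boldsymbol{\lambda}$ with $G$ unchanged.

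The main obstacle is the vector-part computation in the second assertion: one must verify that $\phi_3$ delivers precisely the quadratic term needed so that the combined component transformation, after passing through the nonlinear map (\ref{twistor}) with its cross-terms $g_{\bar i j}\bar\mu^{\bar i}_s\mu^j_{\bar k}$, collapses to the honest Lie derivative of $G$ and $B$ with no residual contributions. This is the step where the homotopy-algebraic origin of $\phi_2,\phi_3$ and the diffeomorphism-naturality of the Courant operations must be reconciled with the explicit index bookkeeping of (\ref{twistor}); closure of the symmetry algebra, guaranteed by the underlying homotopy Gerstenhaber structure, can be used throughout as a consistency check on the signs and numerical factors.
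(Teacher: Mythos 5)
The paper does not actually prove this theorem: it is stated with a citation to \cite{azginf} and only a remark afterwards explaining how the transformation is \emph{suggested} by the gauge symmetry $\Psi\to\Psi+\mathcal{Q}\Theta-\{\Theta,\Psi\}_-+\frac{1}{2}\{\Theta,\Psi,\Psi\}_-$ of the homotopy algebra, \emph{up to field redefinitions} (Proposition 4.2 of \cite{azginf}). So there is no in-paper argument to compare against; your proposal has to stand on its own. As a strategy it is the right one and matches what is done in the cited reference: globality via independence of the local decomposition $\mathbb{M}=\sum_I a^I\hat\otimes\bar a^I$, and the second assertion via an explicit push of $D\alpha+\phi_2+\phi_3$ through the parametrization (\ref{twistor}). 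But note that your clean identification of $\phi_2$ with the bracket $\{\cdot,\cdot\}_-$ and of $\phi_3$ with the Jacobi homotopy is not quite what the paper asserts -- the remark after the theorem makes clear this identification holds only after a field redefinition of $\alpha$, so it cannot be invoked as a finished fact to shortcut the computation.

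The genuine gap is that both substantive verifications are described but not carried out. (i) For globality, the Dorfman bracket is a first-order differential operator, not $\mathcal{O}_M$-linear, so each individual sum in $\phi_2$ manifestly depends on the chosen decomposition; you assert that the anomalous Leibniz terms cancel against the structure of $\phi_2$ and $\phi_3$, but this cancellation is the entire content of the first bullet and needs to be exhibited. (ii) For the second bullet you explicitly name the vector-part computation as ``the main obstacle'' and then do not resolve it; a proof cannot defer its own hardest step. Moreover, your one-form argument contains an unjustified leap: you claim $D\alpha$ shifts only the $b$-entry and that this yields $B\to B-2\mathrm{d}\boldsymbol{\lambda}$ ``with $G$ unchanged,'' but in (\ref{twistor}) the entry $b_{s\bar k}$ enters $G_{s\bar k}$ and $B_{s\bar k}$ with the \emph{same} sign, so a bare shift of $b$ would move the metric as well. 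Keeping $G$ fixed requires compensating contributions from the $\phi_2$ and $\phi_3$ terms in the $\lambda$-sector (the Dorfman bracket of a one-form with a section of $\mathcal{E}$ is $[\omega,v]_c=-i_v\mathrm{d}\omega$ and feeds the other entries of $\mathbb{M}$), and verifying that these conspire to cancel in $G$ while doubling in $B$ is precisely the index bookkeeping your sketch omits.
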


The algebraic structure of the symmetry transformation and the cnsiderations from previous subsection suggest that there is an 
$L_{\infty}$-algebra structure on the complex 
\begin{equation}
({\bf F}^{\bullet}_{-},\mathcal{Q})=(\mathcal{F}^{\bullet}, Q){\otimes}(\bar{\mathcal{F}}^{\bullet}, \bar{Q})\vert_{{\bf b}^-=0} 
\end{equation}
which leads to Einstein equations as Maurer-Cartan equations and their symmetries. The corresponding space of Maurer-Cartan elements  has the following form: 
\begin{eqnarray}
\mathbf{F}_-^2\cong \Gamma(\mathcal{E}\otimes\bar{\mathcal{E}})\oplus \Gamma(TM\oplus T^*M)\oplus \mathscr{F}(M)\oplus\mathscr{F}(M).
\end{eqnarray}
Thus any $\Psi\in \mathbf{F}_-^2$ can presented as 
\begin{equation}\label{mcel}
\Psi=(\mathbb{M}, {\bf v}, {\boldsymbol \lambda}, \phi', \bar{\phi}'), 
\end{equation}
so that the first summand corresponds to Beltrami-Courant differential, the latter two provide the dilaton shift and the second term corresponding to vector fields and 1-forms is auxiliary. 
More explicitly, we expect the folllowing conjecture to be true.

\begin{Con}
There exists an $L_{\infty}$ structure on the complex $({\bf F}^{\bullet}_{{\bf b}^-},\mathcal{Q})$, so that the bilinear operation is given by symmetrized version of 
$\{~\cdot~,~\cdot~\}_- $ reproducing Einstein equations and their symmetries as the generalized Maurer-Cartan equations and their symmetries.
\end{Con}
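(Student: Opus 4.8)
The plan is to obtain the $L_\infty$ structure by transfer from the homotopy Gerstenhaber structure already present on the completed tensor product $({\bf F}^{\bullet},\mathcal{Q})=(\mathcal{F}^{\bullet},Q)\hat\otimes(\bar{\mathcal{F}}^{\bullet},\bar Q)$, and then to verify by a component computation on $\mathbf{F}^2_-$ that the generalized Maurer--Cartan equation reproduces the Einstein system (\ref{Einsteq}). First I would record the operations inherited from the two factors: the differential $l_1=\mathcal{Q}=Q+\bar Q$, the product $\mu$ on ${\bf F}^\bullet$, the odd square-zero operator $\mathbf{b}_-=\mathbf{b}-\bar{\mathbf{b}}$, and the bracket $\{\cdot,\cdot\}_-$ obtained from $\mathbf{b}_-$ as a second-order deviation of $\mu$ exactly as in (\ref{brack}). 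The candidate $L_\infty$ brackets are then $l_2=$ the symmetrization of $\{\cdot,\cdot\}_-$, together with higher brackets $l_3$ (and possibly $l_4$) assembled from the homotopies $m$ and $\nu$ of the two BV-LZ factors contracted through $\mathbf{b}_-$. Since each factor is an $L_3$-algebra on its half-complex by the Roytenberg--Weinstein result (Theorem~\ref{RWth}) and the structure of Theorem~\ref{extfull}, I expect the tensor-product bracket to close into a structure with only finitely many nonzero brackets, so that the generalized Maurer--Cartan equation of (\ref{mcel}) is well defined.

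Next I would show that the restriction $({\bf F}^\bullet_{{\bf b}^-},\mathcal{Q})=\ker\mathbf{b}_-$ is a sub-$L_\infty$-algebra. Closure under $\mathcal{Q}$ follows because $[\mathcal{Q},\mathbf{b}_-]$ is the difference of the two Laplacian-type operators, which acts trivially on $\ker\mathbf{b}_-$; closure under $\{\cdot,\cdot\}_-$ follows because $\mathbf{b}_-$ is a derivation of $\{\cdot,\cdot\}_-$, a property inherited from $\mathbf{b}$ and $\bar{\mathbf{b}}$ being derivations of the respective brackets. I have already noted that $({\bf F}_{sm,{\bf b}^-}^{\bullet},\mathcal{Q})$ is closed under $\{\cdot,\cdot\}_-$; the same argument must be extended to the full complex that also carries the $1$-form sectors of $\mathbb{M}$.

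The main obstacle is the verification of the homotopy Jacobi identity and the higher $L_\infty$ relations. In the fully doubled Yang--Mills$\times$Yang--Mills setting these hold only modulo the strongly constrained condition, so the crucial point here is that the holomorphic/antiholomorphic splitting of the Courant algebroid forces $\partial$ to act only on the holomorphic factor and $\bar\partial$ only on the antiholomorphic one. I would isolate the Jacobiator of $l_2$, rewrite it through $\mathcal{Q}$, $m$ and $\nu$ using the BV-LZ relations (\ref{lzrel})--(\ref{deriv}), and try to show that on $\ker\mathbf{b}_-$ it is $\mathcal{Q}$-exact with homotopy $l_3$, the obstruction pieces being precisely those annihilated by imposing $\mathbf{b}_-=0$. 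This is the genuinely hard technical step and is the reason the statement is still conjectural: on the \emph{completed} tensor product the higher homotopies involve infinite normal-ordered sums, and controlling their finiteness and the vanishing of the residual obstruction requires input that the formal BV-LZ machinery does not supply a priori.

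Finally, assuming the $L_\infty$ relations, I would identify the Maurer--Cartan equation on $\mathbf{F}^2_-\cong\Gamma(\mathcal{E}\otimes\bar{\mathcal{E}})\oplus\Gamma(TM\oplus T^*M)\oplus\mathscr{F}(M)\oplus\mathscr{F}(M)$. Writing $\Psi=(\mathbb{M},\mathbf{v},\boldsymbol\lambda,\phi',\bar\phi')$ as in (\ref{mcel}), I would compute $\mathcal{Q}\Psi+\tfrac12\{\Psi,\Psi\}_-+\dots$ component by component and match it, via the field redefinition (\ref{twistor}), to (\ref{Einsteq}). The bivector-only reduction is exactly Theorem~\ref{EinstG}, so the remaining task is to check that turning on the $\mu,\bar\mu,b$ components of $\mathbb{M}$ together with the auxiliary $\mathbf{v},\boldsymbol\lambda$ yields precisely the $G$- and $B$-field equations and the dilaton equation; the gauge symmetries should then coincide with the transformation (\ref{Mtransf}) generated by $\phi_2,\phi_3$, i.e.\ with diffeomorphisms and $B$-field shifts. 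I expect nondegeneracy of $g$ to again be required to pass from these algebraic equations to the geometric Einstein equations.
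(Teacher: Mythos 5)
This statement is a \emph{Conjecture} in the paper; there is no proof of it there to compare against. What the paper supplies instead is supporting evidence: Theorem~\ref{EinstG}, which establishes the Maurer--Cartan/Einstein correspondence on the vector-field-only subcomplex $({\bf F}^{\bullet}_{sm,{\bf b}^-},\mathcal{Q})$ where the BV-LZ structure degenerates to an honest Gerstenhaber algebra; the globally defined operations $\phi_2,\phi_3$ on the Beltrami--Courant differential whose action (\ref{Mtransf}) reproduces diffeomorphisms and $B$-field shifts; and the observation (from \cite{azginf}) that (\ref{Mtransf}) matches the expected $L_\infty$ gauge transformation $\Psi\to\Psi+\mathcal{Q}\Theta-\{\Theta,\Psi\}_-+\frac{1}{2}\{\Theta,\Psi,\Psi\}_-$ only after field redefinitions. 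Your roadmap is consistent with all of this: building $l_2$ from the symmetrization of $\{\cdot,\cdot\}_-$ and the higher brackets from $m$ and $\nu$ is exactly the structure the paper points to, and your final step (component computation of the Maurer--Cartan equation matched through (\ref{twistor})) is the natural extension of Theorem~\ref{EinstG}. You also correctly isolate the genuinely open step --- establishing the homotopy Jacobi and higher $L_\infty$ relations on the completed tensor product --- and correctly flag that this is precisely why the statement remains conjectural. So the proposal should be read as a plausible attack plan, not a proof, and you are candid about that.

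Two smaller points. First, in the holomorphic double copy of Section~7.1 the factors are the \emph{undeformed} BV-LZ algebras, so $[\mathcal{Q},{\bf b}_-]=0$ outright; the ``difference of Laplacians'' language you use belongs to the flat-metric-deformed setting of Sections~6--7.3 and is not needed here --- closure of $\ker{\bf b}_-$ under $\mathcal{Q}$ and under $\{\cdot,\cdot\}_-$ is immediate from $[Q,{\bf b}]=0$ and the derivation property of ${\bf b}_-$. Second, the paper's evidence indicates that $l_3$ is not merely a homotopy correcting the Jacobiator but already enters the gauge transformation at the order computed in \cite{azginf}, and that matching (\ref{Mtransf}) requires nontrivial field redefinitions of $\alpha$; any eventual proof along your lines would have to build those redefinitions into the quasi-isomorphism identifying the transferred $L_\infty$ structure with the geometric one, a point your plan mentions only implicitly via (\ref{twistor}).
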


The observation of the papers \cite{azginf}, \cite{azginfrev} is that the formula \ref{Mtransf} corresponding to the transformation of the Beltrami-Courant differential is obtained from the following formula:
\begin{equation}
\Psi\to \Psi+\mathcal{Q}\Psi-\{\Theta, \Psi\}_-+\frac{1}{2}\{\Theta, \Psi, \Psi\}_-,
\end{equation}
where $\Psi\in \mathbf{F}_-^2$ is a Maurer-Cartan element as in $(\ref{mcel})$, $\Theta \in  \mathbf{F}_-^1$ 
and operation $\{\cdot~,~\cdot, ~\cdot\}_-$ is the homotopy for the Jacobi identity of the nonsymmetrized operation $\{~\cdot~,~ \cdot~\}_-$. That suggests some field redefinitions for $\alpha$ as suggested in Proposition 4.2. of \cite{azginf} and at the same time some specific nontrivial relations within the resulting algebra which takes into account the Leibniz property of the homotopy BV-LZ algebra.

\subsection{C-bracket and Yang-Mills $C_{\infty}$-algebra}

The analogue of vertex algebra of open string is given by the following operator product expansion \cite{pol}, which adds logarithmic terms in the operator product expansion:
\begin{equation}\label{ope}
X^{i}(z_1)X^{j}(z_2)\sim -\frac{h}{2}\eta^{ij}\log|z_1-z_2|^2-\frac{h}{2}\eta^{ij}\log|z_1-\bar z_2|^2, 
\end{equation}
where 
\begin{equation}
X^i(z)=X^i-\eta^{ij}p_j\ln|z|^2-\frac{1}{i\sqrt 2}\sum_{n,n\neq 0}\frac{a^i_n}{n}(z^{-n}+\bar{z}^{-n}),
\end{equation}
where
\begin{equation}
[a^i_n, a^j_m]=hn\delta_{n,-m}, \quad [p_i, X^{j}]=\delta^{i}_{j},
\end{equation}
and the space underlying open string is the 
\begin{equation}
F_X={\rm Fock}\otimes \mathscr{F}(M),
\end{equation}
where the Fock space is: 
\begin{equation}
{\rm Fock}=\{a^{i_1}_{-n_1}....a^{i_k}_{-n_k}|0\rangle, n_1,..., n_k>0; i_1, \dots, i_k=1, \dots, D~|~a^j_{n}|0\rangle=0, n\ge 0,\}.
\end{equation}
and $\mathscr{F}(M)$ is some function space on $M=\mathbb{R}^D$.
The correspondence between those modes and the modes of $\beta-\gamma$ system from Section 5.3., where we discussed flat metric deformation is as follows:
\begin{equation}
a^i_n=\sqrt{2}^{-1}(X^{i}_n+\eta^{ij}p_{j,n}),~ n \neq 0;\quad X^i=X_0^i, \quad p_i=p_{i,0}.  
\end{equation}

The BRST operator given by \cite{pol}:
\begin{align}\label{brst}
Q=J_0, \quad J(z)=c(z)T(z)+b(z) c(z)\partial c(z) +\frac{3}{2}\partial^2 c(z),&&\\ 
\quad T(z)=-h^{-1}\eta_{ij}\partial X^{i}(z) {\partial} X^{j}(z).\nonumber&&
\end{align}
It acts as usual on the space $F_X\otimes \Lambda$, where $\Lambda$ is a $b$-$c$ VOA from Section 4. 

The states corresponding to the following vertex operators:
\begin{align}\label{f0}
:u(X):, \quad 2c:A_{j}(X)\partial X^{j}:-h\partial c\eta^{ij}\partial_{i}A_{j}(X),\quad 2hc\partial c :\tilde{A}_{j}(X)\partial X^{j}:, \quad h^2 c\partial c \partial^2 c :\tilde{u}(X):,&&\nonumber\\
h\partial c:v(x):, \quad hc\partial^2 c:\tilde{v}(X):+2hc\partial c :\tilde{v}(X):\partial X. &&
\end{align} 
produce the familiar direct sum of complexes $(\mathcal{F}^{\bullet}_{YM}, Q^{\eta}):=
(\mathcal{G}_1^{\bullet}\oplus \mathcal{G}_3^{\bullet},{Q'}^{\eta})$ from Proposition \ref{ymcomp}:
\begin{align}
\xymatrixcolsep{30pt}
\xymatrixrowsep{-5pt}
\xymatrix{
0\ar[r]&\Omega^0(M) \ar[r]^{\rm{d}} & \Omega^1(M)[-1] \ar[r]^{*{\rm d}*{\rm d}}  &\Omega^1(M)[-2] \ar[r]^{*{\rm d}*} & \Omega^0(M)[-3]\ar[r]&0\\
 &\quad &     & \quad    &&\\
&& \bigoplus & \bigoplus     &&\\
&0\ar[r]& \Omega^0(M)[-1] \ar[r]^{id}   & \Omega^0(M)[-2]\ar[r]&0
}
\end{align}
This is part of the complex we had for the flat metric deformation of the Courant agebroid, which excludes scalar fields.

An interesting feature here is that Lian-Zuckerman products can be defined in a similar fashion to the standard case:

For any two vertex operators corresponding to the open string, the 
operator product expansion is given by,
\begin{eqnarray}\label{opestr}
V(t)W= \sum_{k,l}t^{-l}(V,W)_l^{(k)}(h\log|t|)^k.
\end{eqnarray}
assuming that the parameter $t$ here is real.
One can define the product using the classical limit of the familiar formula:
\begin{equation}\label{muopen}
\mu_h(V,W)=(V,W)^{(0)}_1
\end{equation}
Namely,
\begin{eqnarray}
\mu_h: \mathcal{F}_{YM}^i\otimes \mathcal{F}_{YM}^j\rightarrow\mathcal{F}_{YM}^{i+j}[h], 
\end{eqnarray}
so that defining $\mu:=\lim_{h\to 0}\mu_h$ we obtain the following proposition first obtained in \cite{cftym}.
\begin{Thm}
The symmetrized version of the operation $\mu$ from (\ref{muopen}) produces the $C_{\infty}$-algebra on the complex $(\mathcal{F}^{\bullet}_{YM}, Q^{\eta})$, which is the reduction of the $C_{\infty}$-algebra from Proposition \ref{ymcomp}, which leads to Yang-Mills equations as in the part (3) of the Theorem \ref{ymthm}. 
\end{Thm}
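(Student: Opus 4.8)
The plan is to reduce the statement to results already established, by showing that the product $\mu=\lim_{h\to0}\mu_h$ extracted from the open-string operator product expansion (\ref{opestr}) coincides, on the complex $(\mathcal{F}^{\bullet}_{YM},Q^{\eta})$, with the flat-metric-deformed product $\mu^{\eta}$ restricted to the subcomplex $(\mathcal{G}_1^{\bullet}\oplus\mathcal{G}_3^{\bullet},{Q'}^{\eta})$ of Proposition \ref{ymcomp}. Once this identification is in place, the $C_{\infty}$ relations and the Yang-Mills interpretation are inherited from the statements of Section 6.2 and Theorem \ref{ymthm}, and nothing further must be proved from scratch.

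First I would compute the OPE (\ref{opestr}) for each pair of vertex operators drawn from the list (\ref{f0}), using the logarithmic two-point function (\ref{ope}) together with the Wick contractions of the $a^i_n$ oscillators. The key point is that $\mu_h(V,W)=(V,W)^{(0)}_1$ isolates precisely the $t^{-1}$ coefficient carrying no factor of $h\log|t|$; in the limit $h\to0$ the contractions that survive are exactly those producing the $\eta^{ij}$ pairings, and I would read off the bilinear map on the fields $u,\mathbf{A},\tilde{\mathbf{A}},v,\tilde v,\tilde u$. I expect the surviving terms to assemble into the wedge/Hodge-star expressions displayed in the $\mu^{sym}$-table for the Maxwell complex (\ref{maxwell}), for instance the combination $\mathbf{A}\wedge\ast{\rm d}\mathbf{B}-\mathbf{B}\wedge\ast{\rm d}\mathbf{A}+{\rm d}\ast(\mathbf{A}\wedge\mathbf{B})$ on $\mathcal{G}^1_1\otimes\mathcal{G}^1_1$.

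Second, I would match these expressions entry by entry against the deformed product $\mu^{\eta}=\mu+\bar\mu^{\eta}$ of Section 6.1. Since the open-string energy-momentum tensor $T(z)=-h^{-1}\eta_{ij}\partial X^i\partial X^j$ is exactly the flat-metric stress tensor, the corresponding BRST operator (\ref{brst}) reproduces $Q^{\eta}$; the content of the step is then to check that the logarithmic part of (\ref{ope}) accounts precisely for the correction terms $\bar\mu^{\eta}$ recorded in the table of Section 6.1, while the nonlogarithmic part reproduces the undeformed $\mu$. The trilinear operation $\nu$ follows the same pattern: it is extracted from the three-point open-string correlator and, by the same contraction count, agrees with the undeformed $\nu$ (consistent with $\nu^{\eta}\equiv\nu$), so that no new homotopy is generated.

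Having identified $(\mu,\nu)$ with $(\mu^{\eta},\nu^{\eta})$ on $\mathcal{G}_1^{\bullet}\oplus\mathcal{G}_3^{\bullet}$, the $A_{\infty}$ and symmetrized $C_{\infty}$ relations are immediate from the flat-metric deformation theorem of Section 6.1 together with the $C_{\infty}$-statement of Section 4.3, the reduction to the Yang-Mills subcomplex is part (2) of Proposition \ref{ymcomp}, and Theorem \ref{ymthm}(3) then identifies the Maurer-Cartan equations with the Yang-Mills equations. The main obstacle is the first step: organizing the logarithmic OPE (\ref{ope}) so that the coefficient $(V,W)^{(0)}_1$ is extracted unambiguously and the $h\to0$ limit is well defined. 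The delicate issue is the bookkeeping of the $\log|t|$ contributions coming from the zero-mode part of $X^i(z)$, and ensuring that only the $k=0,\,l=1$ term is retained; once this normalization is fixed, the emergence of the Hodge-star operators from the $\eta$-contractions is a routine, if lengthy, computation that I would not carry out in full.
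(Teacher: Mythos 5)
The paper does not actually prove this theorem in the text: it states it as a result ``first obtained in \cite{cftym}'' and relies on the surrounding architecture (the algebraic flat-metric deformation of Section 6.1, Proposition \ref{ymcomp}, and Theorem \ref{ymthm}) for the $C_\infty$ relations. Your overall strategy --- compute $(V,W)^{(0)}_1$ from the logarithmic OPE on the states (\ref{f0}), identify the result with $\mu^{\eta}=\mu+\bar{\mu}^{\eta}$ restricted to $(\mathcal{G}_1^{\bullet}\oplus\mathcal{G}_3^{\bullet},Q^{\eta})$, and then inherit the $A_\infty$/$C_\infty$ relations and the Yang--Mills interpretation from Section 6 --- is exactly the route the paper intends, so the reduction step of your argument is sound and correctly locates where the content lives.

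There are, however, two points you should tighten. First, your attribution of the correction terms $\bar{\mu}^{\eta}$ to ``the logarithmic part of (\ref{ope})'' is misleading: by definition $\mu_h(V,W)=(V,W)^{(0)}_1$ retains only the $k=0$ coefficient, i.e.\ the terms carrying \emph{no} factor of $h\log|t|$; the $\eta$-contractions responsible for the deformation come from the image-charge/second-derivative contractions $\partial X^i(z)\partial X^j(w)\sim -h\eta^{ij}(z-w)^{-2}$ (and the boundary doubling from the $\log|z_1-\bar z_2|^2$ term), not from retained logarithms. Second, and more substantively, you do not address why \emph{discarding} the genuinely logarithmic terms still yields operations that are chain maps for $Q^{\eta}$ and close the homotopy-commutativity and homotopy-associativity relations; this is precisely the point the paper flags as nontrivial (``Surprisingly the quasiclassical limit of the vertex operator version of the Lian--Zuckerman trilinear operation also works in this case, when one ignores logarithms''), and it is the part that cannot be inherited from Section 6.1 but must be checked against it case by case. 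Since you explicitly defer the entry-by-entry computation, the one step that actually carries the burden of proof is left open; everything else in your proposal is bookkeeping around results already established in the paper.
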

Surprisingly the quasiclassical limit of the vertex operator version of the Lian-Zuckerman trilinear operation also works in this case, when one ignores logarithms as it was shown in  \cite{cftym}.

\begin{Rem}
We note here that this is different from the construction of open String Field Theory \cite{witopen} and the way Yang-Mills equations emerge from there by  integrating out the "massive" modes \cite{taylor}, \cite{berkovits}. One may expect that there is an $A_{\infty}$ morphism between the two homotopy algebras. 
\end{Rem}

An important feature of this algebra is that the bilinear operation  is described via the so-called $C$-bracket. Indeed, considering 1-form elements of $\Omega^1(M)$ as $\phi_{\bf A}=2cA_i\partial X^i$, $\phi_{\bf B}=2cB_i\partial X^i$:
\begin{align}
&&\mu(\phi_{\bf A}, \phi_{\bf B})={h}c\partial^2 c  A^{\mu}B_{\mu}+2{h}c\partial c \eta_{ij}[A,B]^{\eta,i}\partial X^j, \nonumber\\
&&[{\bf A},{\bf B}]^{\eta,j}=(A^{i}\partial_{i}B^{j}-\partial_{i}A^{j}B^{i}+\eta^{rj}\eta_{kl}\partial_{r}A^{k}B^{l}),\label{etab}
\end{align}
where $A^i=\eta^{ij}A_j$, $B^i=\eta^{ij}B_j$. Here the bilinear operation on vector fields 
\begin{equation}
[~,~]^{\eta} : {\rm Vect}(\mathbb{R}^N)\otimes {\rm Vect}(\mathbb{R}^N)\rightarrow {\rm Vect}(\mathbb{R}^N)
\end{equation}
given by the formula (\ref{etab}) is known as the $C$-bracket. It replaces the standard Courant bracket in what is called $BV^{\square}_{\infty}$ algebra of YM theory which we already mentioned in Section 5.
This bracket does not satisfy Jacobi identity unless the following condition imposed on any of the two participating vector fields:
\begin{equation}
\eta^{ij}\partial_{i}\partial_jA^k=0, \quad \eta^{ij}\partial_{i}\partial_jB^l=0, \quad 
\eta^{ij}\partial_{i}A^k\partial_jB^l=0, \quad k,l=1, \dots, N.
\end{equation}
In particular, if we attempt to construct a Lian-Zuckerman bracket out of $\mu$ using the operator $b_0$ in a similar  we did it for vertex operator algebra it fails to satisfy Jacobi identity unless the condition above is satisfied. In the next section we discuss emergence of this structure in a slightly different context. 

\subsection{Gravity and Double Field Theory from Yang-Mills}

Here we discuss another double copy: $BV^{\square}_{\infty}\otimes \overline{BV}^{\square}_{\infty}$-algebra.
Namely, now we consider the reduced completed tensor product of two isomorphic copies of the same complex,   
$(\mathcal{F}_{YM}^{\bullet}, {Q}^{\eta})$, $(\mathcal{F}_{YM}^{\bullet}, \bar{{Q}}^{\eta})$:
\begin{equation}
({\bf F}_{DFT},\mathcal{Q}^{\eta})=(\mathcal{F}_{YM}^{\bullet},Q^{\eta})\hat{\otimes} (\mathcal{F}_{YM}^{\bullet},\bar{Q}^{\eta})|_{{\bf b}_-=0}
\end{equation}
where $\mathcal{Q}^{\eta}={Q}^{\eta}+\bar{Q}^{\eta}$ and ${\bf b}_{\pm}=\frac{1}{2}({\bf b}\pm{\bar{\bf b})}$. 
One may inquire what structure we obtain on the above complex. It is known that the tensor product of $C_{\infty}$-algebras is a $C_{\infty}$-algebra and thus we have a structure of $C_{\infty}$-structure on $M\times {\bar M}$, where $M, \bar{M}\cong\mathbb{R}^D$, where the bilinear operation is given by the symmetrized version 
\begin{equation}
\hat{\mu}:=\mu^{\eta, sym}\otimes \bar{\mu}^{\eta, sym}.
\end{equation}
However, one wants to better understand the bracket structure. Let us first of all note that: 
\begin{eqnarray}
[Q^{\eta},{\bf b}]=-\Delta, \quad [{\bar Q}^{\eta},\bar{{\bf b}}]=-\bar{\Delta},
\end{eqnarray}
and thus:
\begin{eqnarray}
[\mathcal{Q}^{\eta},{\bf b}_{\pm}]=-\Delta_{\pm}, \quad\Delta_{+}=\frac{1}{2}{\Delta}+\frac{1}{2}{\bar \Delta}, \quad \Delta_{-}=\frac{1}{2}{{\Delta}}-\frac{1}{2}{\bar{\Delta}},
\end{eqnarray}
where $\Delta=\eta^{ij}\partial_{X^i}\partial_{X^j}$, 
$\bar{\Delta}=\eta^{ij}\partial_{\bar{X}^i}\partial_{\bar{X}^j}$
so that  $X^i$, ${\bar X}^i$ are the coordinates for $M$, $\bar M$.
Let us now introduce another set of coordinates:
\begin{equation}
x^i=X^i+\bar{X}^i, \quad \tilde{x}_i=\eta_{ij}(X^j-\bar{X}^j),\quad  i=1, \dots, D.
\end{equation}
This way 
$$
\Delta_{-}=2\sum_i\partial_{i}\tilde{\partial}^{{i}},
$$
where $\partial_{x^i}=\partial_i$, $\partial_{\tilde{x}_i}=\tilde{\partial}^i$.\\

Some of the results of \cite{hohmdouble2} can be summarized in the following Theorem.
\begin{Thm}\label{Jacbv}
The bilinear operation $\{~\cdot ~,~\cdot ~\}_-^{\eta}$ on $({\bf F}_{DFT},\mathcal{Q}^{\eta})$ defined as: 
\begin{equation}\label{bracketa}
(-1)^{|a_1|}\{a_1,a_2\}^{\eta}_-=
\mathbf{b_-}\hat{\mu}(a_1,a_2)-\hat{\mu}(\mathbf{b_-}a_1,a_2)-(-1)^{|a_1|}\hat{\mu}(a_1\mathbf{b_-}a_2)\nonumber
\end{equation}
satisfies homotopy Jacobi identity if the following is true: 
\begin{enumerate}
\item For any section $A$ on $M\times {\bar M}$, which represents an element of $({\bf F}_{DFT},\mathcal{Q}^{\eta})$ is annihilated by $\Delta_-$:
\begin{equation}
\Delta_- A=0.
\end{equation}
\item Any product of two local sections $A,B$ constituting elements of complex $({\bf F}_{DFT},\mathcal{Q}^{\eta})$ is annihilated by $\Delta_-$, or in local coordinates:
\begin{equation}
\sum_i\partial_i A\tilde{\partial}^iB+\tilde{\partial}^i A{\partial}_iB=0
\end{equation}
\end{enumerate}
\end{Thm}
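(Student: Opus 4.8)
The plan is to show that the Jacobiator of $\{~\cdot~,~\cdot~\}_-^\eta$ is $\mathcal{Q}^\eta$-exact once the obstruction terms, which are governed entirely by $\Delta_-$, have been removed by hypotheses (1) and (2). First I would restrict attention to the complex $({\bf F}_{DFT},\mathcal{Q}^\eta)$ itself: since it is defined as the kernel of ${\bf b}_-$, every element satisfies ${\bf b}_- a = 0$, so the defining formula collapses to $\{a_1,a_2\}_-^\eta = (-1)^{|a_1|}{\bf b}_-\hat{\mu}(a_1,a_2)$. I would also record the two structural facts that survive the deformation: the product $\hat{\mu} = \mu^{\eta,sym}\otimes\bar{\mu}^{\eta,sym}$ is commutative and associative up to $\mathcal{Q}^\eta$-homotopy (being the tensor product of two $C_\infty$ products, as noted in Section 4.3 and the $C_\infty$ theorem there), and ${\bf b}_-$ is a second-order superderivation of $\hat{\mu}$ with ${\bf b}_-^2 = 0$, the latter because ${\bf b}$ and $\bar{\bf b}$ act on different tensor factors and hence anticommute. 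These are exactly the inputs that, in the strict homotopy BV-LZ setting of the Proposition in Section 4.1, force the bracket to obey the Jacobi identity.

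The key point is to isolate where the deformed situation departs from that Proposition. In the undeformed homotopy BV-LZ algebra the Jacobi identity for the Lian-Zuckerman bracket relies on $\mathcal{Q}$ being a derivation of the bracket, which in turn rests on $[\mathcal{Q},{\bf b}]=0$. Here, however, $[\mathcal{Q}^\eta,{\bf b}_-] = -\Delta_-$ is nonzero, so I would first compute the defect in the derivation law,
\[
\mathcal{Q}^\eta\{a_1,a_2\}_-^\eta - \{\mathcal{Q}^\eta a_1,a_2\}_-^\eta - (-1)^{|a_1|-1}\{a_1,\mathcal{Q}^\eta a_2\}_-^\eta,
\]
and show that it is built from $\Delta_-\hat{\mu}(a_1,a_2)$ together with insertions of $\Delta_- a_i$. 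Feeding this defect into the standard manipulation that produces the Jacobiator from the second-order property of ${\bf b}_-$ and ${\bf b}_-^2=0$, I expect the cyclic sum $\{\{a_1,a_2\}_-^\eta,a_3\}_-^\eta + (\text{cyclic})$ to reorganize into (a) a manifestly $\mathcal{Q}^\eta$-exact piece coming from the homotopies for commutativity and associativity of $\hat{\mu}$, and (b) genuine obstruction terms in which $\Delta_-$ acts either on a single element $a_i$ or on a pairwise product $\hat{\mu}(a_i,a_j)$.

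It then remains to eliminate the obstruction terms (b). Since $\Delta_- = 2\sum_i\partial_i\tilde{\partial}^i$ is second order, its failure to be a derivation of the product is precisely the bilinear co-Leibniz expression $2\sum_i(\partial_i A\,\tilde{\partial}^i B + \tilde{\partial}^i A\,\partial_i B)$; hypothesis (1), $\Delta_- A = 0$, kills the single-element insertions, while hypothesis (2), which asserts that this co-Leibniz expression vanishes on products of any two local sections, kills the pairwise-product insertions $\Delta_-\hat{\mu}(a_i,a_j)$. With (b) gone, only the $\mathcal{Q}^\eta$-exact piece (a) survives, which is exactly the statement that the homotopy Jacobi identity holds. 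The main obstacle I anticipate lies in the middle step: carefully tracking signs and collecting the many homotopy contributions so that the obstruction is seen to be \emph{exactly} the strong-constraint combination of (1) and (2), rather than some larger expression. This is where the interplay between the second-order nature of ${\bf b}_-$ and the nonvanishing commutator $[\mathcal{Q}^\eta,{\bf b}_-]=-\Delta_-$ must be handled with the greatest care.
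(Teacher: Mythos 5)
First, a point of reference: the paper does not actually prove this statement — it is presented as a summary of results of the cited work of Bonezzi, Chiaffrino, Diaz-Jaramillo and Hohm — so there is no in-text proof to compare yours against line by line. Your strategy is nonetheless the right one and is essentially the logic of that reference: restrict to $\ker\mathbf{b}_-$ so that the bracket collapses to $(-1)^{|a_1|}\mathbf{b}_-\hat{\mu}(a_1,a_2)$, trace every failure of the Lian--Zuckerman axioms to the nonvanishing commutator $[\mathcal{Q}^{\eta},\mathbf{b}_-]=-\Delta_-$ and to the second-order defect of $\Delta_-$, and observe that hypotheses (1) and (2) are precisely the vanishing of $\Delta_-$ on sections and of its co-Leibniz bilinear on pairs of sections.

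However, the central step of your argument is asserted rather than carried out, and the assertion hides the one place where the claim could fail. You write that homotopy commutativity/associativity of $\hat{\mu}$ together with $\mathbf{b}_-$ being second order are ``exactly the inputs that, in the strict homotopy BV-LZ setting, force the bracket to obey the Jacobi identity.'' That is not what the Proposition in Section 4.1 uses: the exact Jacobi identity there rests on the exact Leibniz axioms (\ref{deriv}) — that $\{a,\cdot\}$ is a derivation of $\mu$ and that $Q$ is a derivation of the bracket — which are imposed as axioms of the BV-LZ structure, not consequences of homotopy commutativity and the second-order property. In the deformed setting both of these axioms fail, and Section 6.1 warns that the corrections are $BV^{\square}_{\infty}$-type terms built from $m$, $\{f_i,\cdot\}$ and $\mu(f_i,\cdot)$ contracted with $\eta^{ij}$ — a priori involving $\Delta$ and $\bar{\Delta}$ separately, not only the combination $\Delta_-$. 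The content of the theorem is precisely that for the \emph{minus} bracket on $\ker\mathbf{b}_-$ these corrections conspire so that the obstruction to the Jacobiator is exactly the strong-constraint combination; your proposal defers this to ``I expect the cyclic sum to reorganize,'' which is where the proof actually lives. To close the gap you would need to compute the deformed analogues of the two Leibniz axioms explicitly, or at least show that every correction term carries a factor that vanishes under (1) and (2), rather than inferring the shape of the obstruction from $[\mathcal{Q}^{\eta},\mathbf{b}_-]=-\Delta_-$ alone. A secondary, fixable point: the obstruction $\Delta_-\hat{\mu}(a_i,a_j)$ is killed by (1) and (2) only after noting that $\Delta_-$ commutes with $\partial_i$ and $\tilde{\partial}^i$, so that the constraints propagate to the derivatives of local sections occurring inside $\hat{\mu}$; this should be stated.
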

\begin{Rem}In fact, the result of \cite{hohmdouble2} is stronger. It was 
shown that the product and the bracket satisfy first few relations of the 
$BV_{\infty}^{\square}$ algebra, but we will not go into details here.
\end{Rem}
The reason to introduce the new variables $x^i, \tilde{x}_i$ is a s follows. It is expected that there are higher brackets, satisfying 
$L_{\infty}$-relations on $({\bf F}_{DFT},\mathcal{Q}^{\eta})$ subject to the conditions (1), (2) of the above theorem. 

Based on the holomorphic double copy one could expect that the corresponding hypothetical Maurer-Cartan equations will be related to the Einstein equations (\ref{Einsteq}). Below we will explain the evidence.

If one imposes the condition 
\begin{equation}
X^i=\bar{X}^i, \quad 1, \dots, D,
\end{equation}
and the complex 
\begin{equation}
({\bf F}^{\bullet}_{CS},\mathcal{Q}^{\eta}):=({\bf F}^{\bullet}_{DFT},\mathcal{Q}^{\eta})_{\{\tilde{x}_i=0\}},
\end{equation}
which is the complex on the diagonal 
\begin{eqnarray}
\delta: M\to M\times {\bar M}
\end{eqnarray}
coincides with the BRST complex of the closed string with the metric $\eta^{ij}$. 

In this case, the corresponding Maurer-Cartan equation up to second order was studied earlier, in \cite{zeit3}. There one associates to the local sections in the complex $({\bf F}_{CS},\mathcal{Q}^{\eta})$ the combinations normal ordered products of open string vertex operators $X^i(z,{\bar z})$ where the corresponding OPE is:
\begin{eqnarray}
X^i(z_1,{\bar z}_1)X^j(z_2,{\bar z}_2)\sim -\frac{h}{2}\eta^{ij}\log|z_1-z_2|^2
\end{eqnarray}
Here 
\begin{equation}
X^i(z,\bar{z})=X^i-\frac{1}{2}\eta^{ij}p_j\ln|z|^2-\frac{1}{i\sqrt 2}\sum_{n,n\neq 0}\Big(\frac{a^i_n}{n}z^{-n}+\frac{\bar{a}^i_n}{n}\bar{z}^{-n}\Big),
\end{equation}
where
\begin{equation}
[a^i_n, a^j_m]=hn\eta^{ij}\delta_{n,-m}, \quad, [\bar{a}^i_n, \bar{a}^j_m]=hn\eta^{ij}\delta_{n,-m},\quad  [p_i, X^{j}]=h\delta^{i}_{j}
\end{equation}
and the space underlying open string is the 
\begin{equation}
F^{c}_X={\rm Fock}\otimes \overline{{\rm Fock}}\otimes\mathscr{F}(M),
\end{equation}
where the Fock space is: 
\begin{align}
&&{\rm Fock}=\{a^{i_1}_{-n_1}....a^{i_k}_{-n_k}|0\rangle, n_1,..., n_k>0; i_1, \dots, i_k=1, \dots, D~|~a^j_{n}|0\rangle=0, n\ge 0,\},\\
&&\overline{{\rm Fock}}=\{\bar{a}^{i_1}_{-n_1}....\bar{a}^{i_k}_{-n_k}|0\rangle, n_1,..., n_k>0; i_1, \dots, i_k=1, \dots, D~|~\bar{a}^j_{n}|0\rangle=0, n\ge 0,\}
\end{align}
and $\mathscr{F}(M)$ is some function space on $M=\mathbb{R}^D$.
Accompanying it with two $b$-$c$ systems $b(z)c(w)\sim \frac{1}{z-w}$, $\bar{b}(z)\bar{c}(w)\sim \frac{1}{z-w}$ generating vertex algebras on the spaces $\Lambda$, $\bar{\Lambda}$ give the BRST operator
\begin{align}\label{brstcl}
\mathcal{Q}^{\eta}=J_0+\bar{J}_0, &&\nonumber\\
J(z)=c(z)T(z)+b(z) c(z)\partial c(z) +\frac{3}{2}\partial^2 c(z), \quad
\bar{J}(\bar{z})=\bar{c}(\bar{z})\bar{T}(\bar{z})+\bar{b}(\bar{z}) \bar{c}(\bar{z})\partial \bar{c}(\bar{z}) +\frac{3}{2}\bar{\partial}^2 \bar{c}(\bar{z})&&\nonumber\\ 
T(z)=-h^{-1}\eta_{ij}\partial X^{i}(z) {\partial} X^{j}(z),\quad  {\bar T}(z)=-h^{-1}\eta_{ij}\bar{\partial} X^{i}(\bar{z}) \bar{\partial} X^{j}(\bar{z})&&
\end{align}
acting on the space $F^c_X\otimes \Lambda\otimes {\bar{\Lambda}}$. It is well-known that it nilpotent \cite{pol} when dimension $D=26$. One can realize the complex $({\bf F}^{\bullet}_{CS},\mathcal{Q}^{\eta})$ as a subcomplex in $F^c_X\otimes \Lambda\otimes {\bar{\Lambda}}$.

The bracket (\ref{bracketa}) is identified with the classical limit of the operation
\begin{eqnarray}\label{LZbrdouble}
(-1)^{|a_1|}\{a_1,a_2\}^{\eta}_{-,h}=P_0\int_{C_{\epsilon,0}} a_1^{(1)}(z) a_2-(-1)^{|a_1||a_2|}P_0\int_{C_{\epsilon,0}} a_2^{(1)}(z) a_1,
\end{eqnarray}
where the 1-form $a^{(1)}(z)$ for a given vertex operator $a(z)$ in the BRST complex is given by $$a^{(1)}(z)=dz[b_{-1},a(z,{\bar z})]+d{\bar z}[{\bar b}_{-1},a(z,{\bar z})],$$ and the projection $P_0$ ignores all $\epsilon$-dependent terms in the expansion of the integral over the circular contour $C_{\epsilon,0}$ around 0 of radius $\epsilon$.

In a familiar fashion, the operation we have the following expression for 
its quasiclassical limit of $\{~\cdot~,~\cdot~ \}^{\eta}_{-,h}$ :
\begin{equation}
\{~\cdot~,~\cdot~\}^{\eta}_{-,h}:{\bf F}^i_{CS}\otimes {\bf F}^j_{CS}\rightarrow h{\bf F}^{i+j-1}_{CS}[h],\quad \{~\cdot~,~\cdot~\}^{\eta}_{-}=\lim_{h\rightarrow 0}h^{-1}\{~\cdot~,~\cdot~\}^{\eta}_{-,h}.
\end{equation}

The role of this operation for Einstein equation
was indicated in \cite{zeit3}, where the following partial result was obtained.

\begin{Prop}\label{MCsecond}
The Maurer-Cartan equation second order approximation and its symmetries:
\begin{equation}\label{MCapprox}
\mathcal{Q}^{\eta}\Psi+\{\Psi, \Psi\}^{\eta}_-+\dots=0,\quad  \Psi\rightarrow \mathcal{Q}^{\eta}\Theta+\{\Psi,\Theta\}_-^{\eta}+\dots,
\end{equation}
where $\Psi\in {\bf F}^{2}_{CS}$, $\Theta \in {\bf F}^{1}_{CS}$ 
is equivalent to the Einstein equations (\ref{Einsteq}) with the metric being expanded around the flat metric $\{\eta_{ij}\}$:
\begin{equation}
G_{\ij}=\eta_{ij}+t^2{\rm h}^1_{ij}+t^2{\rm h}^2_{ij}+...,\quad  B_{ij}=0, \quad \Phi=t\Phi_1+t^2\Phi_2+\dots,
\end{equation}
where $t$ is the expansion parameter, and dots stand for the expansion of cubic or higher terms, so that 
\begin{equation}
\Psi=t\Psi_1({\rm h}^1, \Phi_1)+t^2\Psi_2({\rm h}^1, {\rm h}^2, \Phi_1, \Phi_2)+...\in {\bf F}^{2}_{CS}.
\end{equation}
\end{Prop}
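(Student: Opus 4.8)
The plan is to work order by order in the expansion parameter $t$, translating the ghost-number-two vertex operators of $({\bf F}^{\bullet}_{CS}, \mathcal{Q}^{\eta})$ into metric and dilaton fluctuations and matching the Maurer-Cartan equation against the Taylor expansion of the Einstein system (\ref{Einsteq}) under the constraints $B_{ij}=0$, $G_{ij}=\eta_{ij}+O(t)$, $\Phi=t\Phi_1+t^2\Phi_2+\dots$. First I would write out $\Psi\in {\bf F}^2_{CS}$ explicitly: in the weight-zero, ghost-number-two sector the graviton/$B$-field is carried by operators of the form $c\bar c\,{\rm h}_{ij}(X)\,\partial X^i\bar\partial X^j$, together with the dilaton combinations built from $\partial c$, $\bar\partial c$ (the doubled analogues of the weight-zero states listed in the light-mode proposition of Section 5.2). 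This fixes the dictionary relating the components of $\Psi_1,\Psi_2$ to ${\rm h}^1,{\rm h}^2,\Phi_1,\Phi_2$.

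At linear order the equation reduces to $\mathcal{Q}^{\eta}\Psi_1=0$. Here I would compute the action of $\mathcal{Q}^{\eta}=J_0+\bar J_0$ on the graviton vertex operator directly from the currents (\ref{brstcl}); since $[\mathcal{Q}^\eta,\cdot]$ acts through $T(z)$ and $\bar T(\bar z)$, the condition $\mathcal{Q}^\eta\Psi_1=0$ produces exactly the on-shell and transversality conditions for a weight-$(1,1)$ state, i.e. $\Box {\rm h}^1_{ij}=0$ in the appropriate gauge together with the linearized dilaton equation. The linearized gauge freedom $\Psi_1\to\Psi_1+\mathcal{Q}^\eta\Theta_1$ then reproduces the diffeomorphism ${\rm h}^1_{ij}\to {\rm h}^1_{ij}+\partial_i\xi_j+\partial_j\xi_i$ and the $B$-field shift, with $\Theta_1$ encoding the vector field $\xi$ and the one-form parameter.

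At second order the equation becomes $\mathcal{Q}^\eta\Psi_2+\{\Psi_1,\Psi_1\}^\eta_-=0$, and the symmetry becomes $\Psi\mapsto\Psi+\mathcal{Q}^\eta\Theta_2+\{\Psi_1,\Theta_1\}^\eta_-$. The core computation is the bracket $\{\Psi_1,\Psi_1\}^\eta_-$, evaluated from its definition (\ref{LZbrdouble}): I would form $\Psi_1^{(1)}(z)=dz[b_{-1},\Psi_1]+d\bar z[\bar b_{-1},\Psi_1]$, expand the product $\Psi_1^{(1)}(z)\Psi_1$ using the closed-string OPE (\ref{ope}), and apply the projection $P_0$ discarding the $\epsilon$-dependent tail of the contour integral around the origin. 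Matching the resulting weight-zero, ghost-number-two output against $-\mathcal{Q}^\eta\Psi_2$ reproduces the quadratic part of the Ricci tensor and the $-2\nabla_\mu\nabla_\nu\Phi$ term of (\ref{Einsteq}), the $H^2$ contributions being absent since $B_{ij}=0$ to this order; the agreement of the symmetry with the nonlinear diffeomorphism (the Lie-derivative correction $\mathcal{L}_\xi {\rm h}$) follows from the same OPE data applied to $\{\Psi_1,\Theta_1\}^\eta_-$.

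The main obstacle is this second-order bracket computation: the logarithmic terms in the OPE (\ref{ope}) together with the regularization encoded in $P_0$ make extraction of the finite part delicate, and one must carefully normal-order the double contractions of $\partial X$ and $\bar\partial X$ to recover the precise index structure of the quadratic curvature terms. I expect the bookkeeping of signs and of the $h$-weighting (since $\{\cdot,\cdot\}^\eta_-$ is the $h\to 0$ limit of $h^{-1}\{\cdot,\cdot\}^\eta_{-,h}$) to be the most error-prone step, and the identification of the field redefinition relating the naive components of $\Psi_2$ to $({\rm h}^2,\Phi_2)$ to be the conceptual crux, exactly in the spirit of the field redefinitions flagged in \cite{hohmdouble3}.
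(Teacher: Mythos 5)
The paper does not actually prove this Proposition: it is quoted as a ``partial result'' imported from \cite{zeit3}, with only the accompanying Remark on field redefinitions added. So there is no in-paper argument to compare against line by line; the relevant benchmark is the computation carried out in \cite{zeit3}. Your outline — expand in $t$, identify the ghost-number-two weight-zero vertex operators $c\bar c\,{\rm h}_{ij}\partial X^i\bar\partial X^j$ plus the $\partial c$, $\bar\partial c$ dilaton combinations, read off the linearized equations from $\mathcal{Q}^\eta\Psi_1=0$, and then evaluate $\{\Psi_1,\Psi_1\}^\eta_-$ from (\ref{LZbrdouble}) via the OPE and the $P_0$ projection — is precisely the strategy of that reference, and your identification of the field redefinition relating $\Psi_2$ to $({\rm h}^2,\Phi_2)$ as the conceptual crux matches the Remark following the Proposition.

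The gap is that the proposal is a roadmap rather than a proof. The entire content of the statement at second order is the assertion that the finite part of the regularized contour integral defining $\{\Psi_1,\Psi_1\}^\eta_-$, after normal ordering the double contractions of $\partial X$ and $\bar\partial X$ and discarding the $\epsilon$-dependent and logarithmic terms, reproduces the quadratic part of $R_{\mu\nu}+2\nabla_\mu\nabla_\nu\Phi$ up to a $\mathcal{Q}^\eta$-exact piece absorbed into $\Psi_2$ — and you state this as the expected outcome without performing the contractions or exhibiting the field redefinition explicitly. You correctly flag this as ``the most error-prone step,'' but that step is the proof; everything before it (the dictionary, the linear order, the structure of the gauge transformation) is routine. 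As it stands the proposal would need the explicit OPE computation, including the treatment of the $\log|z|^2$ terms under $P_0$ and the $h$-weighting in $\{\cdot,\cdot\}^\eta_-=\lim_{h\to 0}h^{-1}\{\cdot,\cdot\}^\eta_{-,h}$, before it establishes the equivalence claimed.
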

\begin{Rem}
Here the dependence on ${\rm h}^1$ accompanying ${\rm h}^2$ in $\Psi^2$ is known as {\it field redefinition}, and, as indicated in \cite{zeit3}, is related to the perturbation theory of sigma model.  
\end{Rem} 
The problem here is that on the level of symmetries, if one applies naively the operation (\ref{LZbrdouble}) to the elements of ${\bf F}^{1}_{CS}$, one obtains the $C$-bracket from (\ref{etab}), which does not satisfy Jacobi identity. This has direct relation to the constraints (1), (2) from the Theorem \ref{Jacbv}.

Let us return back to the full complex $({\bf F}^{\bullet}_{DFT},\mathcal{Q}^{\eta})$. Hull and Zwiebach introduced the so-called Double Field Theory, which is a field theory on the product of two D-dimensional tori, which are $T$-$dual$ to each other with natural coordinates $\{x^i\}$ and $\{\tilde{x}_i\}$. It gives a certain generalization of Einstein equations, which have symmetries governed by the C-bracket (see  Subsection 7.2) with the $\eta$ metric is replaced by the natural pairing between the T-dual coordinates  on the tori corresponding to the metric $dx^id{\tilde x}_i$.

What has been shown in \cite{hohmdouble2} recently is the generalization of the Proposition \ref{MCsecond}, which relates DFT equations and generalized Maurer-Cartan equations.

\begin{Prop}
The Maurer-Cartan equation (\ref{MCapprox}) and its symmetry transformation, 
where $\Psi\in {\bf F}^2_{DFT}$, so that conditions (1), (2) of the Theorem \ref{Jacbv} are satisfied for all local sections constituting $\Psi$, $\Lambda$,    
is equivalent to the DFT equations and their symmetries expanded around the flat metric up expanded to the second order.
\end{Prop}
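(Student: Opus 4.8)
The plan is to follow the computation of Proposition \ref{MCsecond} essentially verbatim, but now retaining the dependence on the dual coordinates $\tilde x_i$ and using the strong constraint of Theorem \ref{Jacbv} in place of the diagonal reduction $\tilde x_i=0$. First I would write the degree-two Maurer--Cartan element $\Psi\in{\bf F}^2_{DFT}$ in components. Since $({\bf F}_{DFT},\mathcal{Q}^{\eta})$ is the reduced completed tensor product of two copies of $(\mathcal{F}_{YM}^{\bullet},Q^{\eta})$ restricted to $\ker{\bf b}_-$, its degree-two part decomposes, exactly as on the diagonal, into a symmetric rank-two tensor, an antisymmetric rank-two tensor and a scalar, now regarded as functions on $M\times\bar M$ with coordinates $(x^i,\tilde x_i)$. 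These assemble into the generalized metric fluctuation, the $B$-field and the dilaton of DFT. I would then expand $\Psi=t\Psi_1+t^2\Psi_2+\cdots$ and substitute into (\ref{MCapprox}).

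Next I would evaluate the equation order by order. The linear term $\mathcal{Q}^{\eta}\Psi_1=0$ produces the linearized DFT field equations; this is the direct lift of the $\mathcal{Q}^{\eta}$ computation in Proposition \ref{MCsecond}, the only change being that the relevant kinetic operator is now the doubled Laplacian $\Delta_+=\frac{1}{2}(\Delta+\bar\Delta)$ built from both $\partial_i$ and $\tilde\partial^i$. The quadratic term $\{\Psi_1,\Psi_1\}^{\eta}_-$ is computed from (\ref{bracketa}) using $\hat{\mu}=\mu^{\eta,sym}\otimes\bar\mu^{\eta,sym}$ and the operator ${\bf b}_-$; acting on the field components it reproduces the $C$-bracket structure of (\ref{etab}), with $\eta$ replaced by the natural pairing $dx^i d\tilde x_i$, together with the derivative-of-the-metric source terms. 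Matching these to the second-order expansion of the DFT equations of \cite{dft},\cite{gadft} is the arithmetic core of the proof; as in Proposition \ref{MCsecond}, it requires the nontrivial field redefinition that lets ${\rm h}^2$ mix with quadratic combinations of ${\rm h}^1$.

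The essential new input is Theorem \ref{Jacbv}: conditions (1) and (2) — the strong constraint $\Delta_-A=0$ on each section and $\sum_i(\partial_iA\,\tilde\partial^iB+\tilde\partial^iA\,\partial_iB)=0$ on each product — guarantee that $\{\cdot,\cdot\}^{\eta}_-$ obeys the homotopy Jacobi identity, hence that the $L_\infty$-relations hold to the order needed for (\ref{MCapprox}) to be consistent and for its gauge variation to close. Without (1), (2) the bracket collapses to the bare $C$-bracket, which fails Jacobi, so the symmetry transformation $\Psi\mapsto\mathcal{Q}^{\eta}\Theta+\{\Psi,\Theta\}^{\eta}_-+\cdots$ would not be well defined; with them imposed, this variation reproduces exactly the generalized diffeomorphisms of DFT with $\Theta\in{\bf F}^1_{DFT}$. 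I would close by verifying the consistency check that setting $\tilde x_i=0$ collapses the entire computation back to Proposition \ref{MCsecond} and the Einstein equations (\ref{Einsteq}).

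The hard part will be the organization of the quadratic matching: disentangling which combinations of the bracket and product outputs correspond to genuine DFT curvature terms versus gauge-trivial pieces, and tracking the field redefinition so that the result lands precisely on the DFT equations rather than a field-redefined cousin. Since this is exactly the computation carried out in \cite{hohmdouble2}, I would organize the argument as a component-by-component comparison with the relations established there, rather than rederiving the full DFT dynamics from scratch, and emphasize only the two structural facts that make it go through in the present language — the component identification of $\Psi$ and the role of Theorem \ref{Jacbv} in securing the homotopy Jacobi identity.
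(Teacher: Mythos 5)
The paper offers no independent proof of this Proposition: it is stated as a result established in \cite{hohmdouble2}, with only the accompanying remark that it holds up to field redefinition as in Proposition \ref{MCsecond}. Your outline is consistent with that — it correctly identifies the component decomposition of $\Psi$ into DFT fields on the doubled space, the role of conditions (1), (2) of Theorem \ref{Jacbv} in making the bracket and hence the gauge variation consistent, and the necessity of field redefinitions in the quadratic matching — and, like the paper itself, it ultimately defers the computational core to \cite{hohmdouble2}, so there is nothing to fault.
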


This statement is true up to field redefinition as in (\ref{MCsecond}). The big advantage for considering the bigger complex $({\bf F}^{\bullet}_{DFT},\mathcal{Q}^{\eta})$ is that upon constraints (1), (2) of the Theorem (\ref{Jacbv}) we have a version of a homotopy Lie algebra. The version of DFT, where fields satisfy these constraints is known as {\it strongly constrained} DFT. Using the transfer procedure, one can obtain the homotopy Lie algebra structure on the complex  
$({\bf F}^{\bullet}_{DFT},\mathcal{Q}^{\eta})|_{\Delta_-=0}$, leading to {\it weakly constrained} DFT. However, the trilinear operations there involve inversions of the Laplacian operator and become nonlocal, which means the proper trilinear and possible higher operations on the complex $({\bf F}^{\bullet}_{CS},\mathcal{Q}^{\eta})$ are nonlocal as well. Thus, although at the expense of keeping constraints (1), (2), one may expect to have local expressions for higher operations on the Double Field Theory complex $({\bf F}^{\bullet}_{DFT},\mathcal{Q}^{\eta})$.


\begin{thebibliography}{25}
\bibitem{berkovits} N. Berkovits, M. Schnabl, {\it Yang-Mills Action from Open Superstring Field Theory}, hep-th/0307019.
\bibitem{colorkin1}Z. Bern, J. J. M. Carrasco, L. J. Dixon, H. Johansson, R. Roiban, {\it Perturbative Quantum Gravity as a Double Copy of Gauge Theory}, Phys.Rev.Lett. {\bf 105} (2010) 061602, 
arXiv:1008.3327. 
\bibitem{colorkin2} Z. Bern, J.J. Carrasco, M. Chiodaroli, H. Johansson, R. Roiban, {\it The Duality Between Color and Kinematics and its Applications}, arXiv:1909.01358.
\bibitem {hohmdouble1} R. Bonezzi, C. Chiaffrino, F. Diaz-Jaramillo, O. Hohm, {\it Gauge invariant double copy of Yang-Mills theory: the quartic theory}, Phys. Rev. {\bf D107} (2023) 126015, arXiv:2212.04513. 
\bibitem{hohmdouble3} R. Bonezzi, C. Chiaffrino, F. Diaz-Jaramillo, O. Hohm, {\it Gravity=Yang-Mills}, arXiv:2306.14788.
\bibitem{hohmdouble2} R. Bonezzi, C. Chiaffrino, F. Diaz-Jaramillo, O. Hohm, {\it Weakly constrained double field theory as the double copy of Yang-Mills theory}, Phys. Rev. {\bf D109} no. 6 (2024) 066020, arXiv:2309.03289.
\bibitem{saemann1} L. Borsten, B. Jurco, H. Kim, T. Macrelli, C.  Saemann, M. Wolf, {\it Double Copy from Homotopy Algebras}, Fortschr. Phys. {\bf 69} (2021) 2100075, arXiv:2102.11390.
\bibitem{saemann2} L. Borsten, B. Jurco, H. Kim, T. Macrelli, C.  Saemann, M. Wolf, {\it Colour-kinematics duality, double copy, and homotopy algebras}, PoS ICHEP (2022)  426, arXiv:2211.16405.
\bibitem{saemann3} L. Borsten, B. Jurco, H. Kim, T. Macrelli, C.  Saemann, M. Wolf, {\it Double Copy from Tensor Products of Metric $BV_{\infty}^{\square}$-algebras},  arXiv:2307.02563. 
\bibitem{bressler} P. Bressler, {\it The first Pontryagin class}, Compositio Math. {\bf 143} (2007) 1127-1163.
\bibitem{eeq1} C.G. Callan, D. Friedan, E.J. Martinec, M.J. Perry, {\it Strings in background fields}, Nucl. Phys. {\bf B262}
(1985) 593-609. 
\bibitem{eeq2}
C.G. Callan, I.R. Klebanov, M.J. Perry, {\it String theory effective actions}, Nucl. Phys. {\bf B278} (1986) 78-90.
\bibitem{taylor}E. Coletti, I. Sigalov, W. Taylor, {\it Abelian and Nonabelian Vector Field Effective Actions from String Field Theory}, hep-th/0306041.
\bibitem{fts1} E.S. Fradkin, A.A. Tseytlin, {\it Quantum String Theory Effective Action}, Nucl.Phys. {\bf B261} (1985) 1-27, 
\bibitem{fts2} E.S. Fradkin, A.A. Tseytlin, 
{\it Non-linear electrodynamics from quantized strings}, Phys. Lett. B163 (1985) 123-130.
\bibitem{benzvi} E. Frenkel, D. Ben-Zvi, {\it Vertex algebras and algebraic curves}, Mathematical Surveys and Monographs  {\bf 88} AMS (2001).
\bibitem{fgz} I.B. Frenkel, H. Garland, G.J. Zuckerman, {\it Semi-infinite cohomology and string theory}, 
Proc. Nat. Acad. Sci. {\bf 83} (1986) 8442-8446.   
\bibitem{fhl} I.B. Frenkel, Y.-Z. Huang, J. Lepowsky, {\it On axiomatic approach to vertex operator algebras and modules}, Memoirs of AMS 494 (1993). 
 \bibitem{friedan} D. Friedan, {\it Nonlinear models in $2+\epsilon$ dimensions}, Phys. Rev. Lett. {\bf 45} (13) (1980) 1057-1060.
\bibitem{gorbounov}I. Galves, V. Gorbounov, A. Tonks, {\it Homotopy Gerstenhaber Structures and Vertex Algebras}, arXiv:math/0611231.
\bibitem{vallette} I. Galvez-Carrillo, A. Tonks, B. Vallette, {\it Homotopy Batalin-Vilkovisky algebras},  arXiv:0907.2246.
\bibitem{gj} E. Getzler, J. Jones, {\it Operads, homotopy algebras and iterated integrals for double
loop spaces}, hep-th/9403055.
\bibitem{malikov} V. Gorbounov, F. Malikov, V.Schechtman, {\it Gerbes of chiral differential operators. II}, 
Invent. Math. {\bf 155} (2004), no. 3, 605-680.
\bibitem{huang} Y.-Z. Huang,  W. Zhao, {\it Semi-infinite forms and
topological vertex operator algebras}, Comm. Contemp. Math., 2 (2000), 191--241; math.QA/9903014.
\bibitem{dft} C. Hull, B. Zwiebach, {\it Double Field Theory}, J. High Energy Phys. {\bf 9} (2009) 099, arXiv:0904.4664. 
\bibitem{gadft} C. Hull, B. Zwiebach, {\it The gauge algebra of double field theory and Courant brackets}, J. High Energy Phys. {\bf 9} (2009) 090, arXiv:0908.1792.
\bibitem{openclosed} H. Kajiura, J. Stasheff, {\it Homotopy algebras inspired by classical open-closed string field theory}, Comm. Math. Phys. {\bf 263} (2006) 553-581, arXiv:math/0410291.
\bibitem{openclosedrev}H. Kajiura, J. Stasheff, {\it Homotopy algebra of open-closed strings}, Geometry \& Topology Monographs {\bf 13} (2008) 229-259, arXiv:hep-th/0606283. 
\bibitem{cinfinity} T. Kimura, J. Stasheff, A.A. Voronov, {\it Homology of moduli of curves and commutative homotopy algebras}, 
Gelfand Math. Sem. (1996) 151-170, arXiv:alg-geom/9502006.
\bibitem{zuck} T. Kimura, A.A. Voronov, G.J. Zuckerman, {\it  Homotopy Gerstenhaber algebras and topological field theory}, Contemp. Math. {\bf 202} (1997) 305-333, arXiv:q-alg/9602009.
\bibitem{lz}B. Lian, G. Zuckerman, {\it New Perspectives on the BRST-algebraic structure of 
 String Theory},  Commun. Math. Phys., 154 (1993) 613-646.  
\bibitem{courant} Z.-J. Liu, A.Weinstein, P.Xu,  {\it 
Manin triples for Lie Bialgebroids}, J.Diff.Geom. {\bf 45} (1997) 547-574.
 \bibitem{lmz} A.S. Losev, A. Marshakov, A.M. Zeitlin, {\it On the First Order Formalism in String Theory}, 
Phys. Lett. {\bf B633} (2006) 375-381, hep-th/0510065.
\bibitem{malikovlag} F. Malikov, {\it Lagrangian approach to sheaves of vertex algebras}, Commun.Math.Phys. {\bf 278} (2008) 487-548.
\bibitem{cdr} F. Malikov, V. Shechtman, A. Vaintrob, {\it Chiral de Rham Complex}, Commun.Math.Phys. {\bf 204} (1999) 439-473.
\bibitem{stashbook} M. Markl, S. Shnider, J. D. Stasheff, {\it Operads in Algebra, Topology and Physics}, 
Mathematical Surveys and Monographs, v. 96,  AMS, Providence, Rhode Island, 2002.
\bibitem{movshev} M. Movshev, A. Schwarz, {\it Algebraic structure of Yang-Mills theory}, Prog. Math. {\bf 244} (2006) 473-523, arXiv:hep-th/0404183.
\bibitem{nekrasov} N. Nekrasov, {\it Lectures on beta-gamma systems, pure spinors, and anomalies}, hep-th/0511008.
\bibitem{penkava} M. Penkava, A. Schwarz, {\it On Some Algebraic Structures Arising in String Theory}, Conf. Proc. Lecture Notes Math. Phys., III (1994) 219-227, hep-th/9212072.
\bibitem{pol} J. Polchinski, {\it String Theory}, Volume 1, CUP, 1998.
\bibitem{polbook}
A.M. Polyakov, {\it Gauge Fields and Strings},
Harwood Academic Publishers, 1987.
\bibitem{reiterer} M. Reiterer, {\it A homotopy BV algebra for Yang-Mills and color-kinematics}, arXiv:1912.03110.
\bibitem{rocek} M. Rocek, A.M. Zeitlin, {\it Homotopy Algebras of Differential (Super)forms in Three and Four Dimensions}, Lett. Math. Phys. {\bf 108} (12) (2018) 2669-2694, arXiv:1702.03565.
\bibitem{roytenberg1} D. Roytenberg, {\it Courant algebroids, derived brackets, and even symplectic supermanifolds}, PhD thesis, UC Berkeley (1999), arXiv:math/9910078.
\bibitem{roytenberg} D. Roytenberg, {\it Courant-Dorfman algebras and their cohomology}, Lett. Math. Phys. {\bf 90} (2009) 311-351.
\bibitem{weinstein} D. Roytenberg, A. Weinstein, {\it Courant Algebroids and Strongly and Strongly Homotopy Lie Algebras}, Lett. Math. Phys. {\bf 46} (1998) 81-93.
\bibitem{sen} A. Sen, {\it On the Background Independence of String Field Theory}, 
Nucl. Phys. {\bf B345} (1990) 551-583.
\bibitem{stasheff} J.D. Stasheff, {\it Homotopy Associativity of H-spaces I, II}, Trans. Amer. Math. Sot. {\bf 108} (1963) 275-312.
\bibitem{tamarkin} D. Tamarkin, B. Tsygan, {\it Noncommutative differential calculus, homotopy BV algebras and formality conjectures}, Methods Funct. Anal. Topology {\bf 6} No. 2 (2000) 85-97.
\bibitem{voronov} A. Voronov, {\it Homotopy Gerstenhaber Algebras}, 
Conf\'erence Mosh\'e Flato, Vol. II (Dijon), Math. Phys. Stud. {\bf 22} (1999)  307-331.
\bibitem{witopen} E. Witten, {\it Noncommutative Geometry and String Field Theory}, Nucl.Phys. {\bf B268} (1986) 253-294. 
\bibitem{wittenchiral} E. Witten, {\it Two-Dimensional Models With (0,2) Supersymmetry: Perturbative Aspects}, arXiv:hep-th/0504078. 
\bibitem{homym} A.M. Zeitlin, {\it Homotopy Lie Superalgebra in Yang-Mills Theory}, JHEP09(2007)068, arXiv:0708.1773.
\bibitem{bvym}A.M. Zeitlin, {\it BV Yang-Mills as a Homotopy Chern-Simons via SFT}, Int. J. Mod. Phys. {\bf  A24} (2009) 1309-1331, arXiv:0709.1411. 
\bibitem{bvym2} A.M. Zeitlin, 
{\it SFT-inspired Algebraic Structures in Gauge Theories}, J. Math. Phys. {\bf 50} (2009) 063501-063520, arXiv:0711.3843.
\bibitem{cftym} A.M. Zeitlin, {\it Conformal Field Theory and Algebraic Structure of Gauge Theory},  JHEP03(2010)056, arXiv:0812.1840.
\bibitem{zeit2} A.M. Zeitlin, {\it Perturbed Beta-Gamma Systems and Complex Geometry}, Nucl. Phys. {\bf B794} (2008) 381; arXiv:0708.0682.
\bibitem{zeit3} A.M. Zeitlin, 
{\it BRST, Generalized Maurer-Cartan Equations and CFT}, Nucl. Phys. {\bf B759} (2006) 370-398; hep-th/0610208.
\bibitem{zeit4} A.M. Zeitlin, {\it Formal Maurer-Cartan Structures: from CFT to Classical Field Equations}, JHEP12(2007)098, arXiv:0708.0955.
\bibitem{azbg} A.M. Zeitlin, {\it Beta-gamma systems and the deformations of the BRST operator}, J. Phys. {\bf A42} 355401, arXiv:0904.2234.
\bibitem{zeitcour}A.M. Zeitlin, {\it Quasiclassical Lian-Zuckerman Homotopy Algebras, Courant Algebroids and Gauge Theory}, Comm.Math.Phys. {\bf 303} (2011) 331-359, arXiv:0910.3652.
\bibitem{aztcft1} A.M. Zeitlin, {\it Homotopy Relations for Topological VOA}, Int. J. Math. {\bf 23} (2012) 1250012.
\bibitem{aztcft2} A.M. Zeitlin, {\it On higher order Leibniz identities in TCFT}, Contemp. Math. {\bf 623} (2014) 267-280, arXiv:1301.6382.
\bibitem{azginf} A.M. Zeitlin, {\it Beltrami-Courant Differentials and $G_{\infty}$-algebras}, Adv. Theor. Math. Phys. {\bf 19} (2015) 1249-1275, arXiv:1404.3069. 
\bibitem{azginfrev} A.M. Zeitlin, {\it Sigma-models and Homotopy Algebras},  J. Physics: CS {\bf 597} (2015) 012074,
arXiv:1509.06067.
\bibitem{zwiebach} B. Zwiebach, {\it Closed string field theory: Quantum action and the B-V master equation}, 
Nucl.Phys. {\bf B390} (1993) 33-152.
\end{thebibliography}
\end{document}